\newcommand{\Title}{Rigidity for Markovian Maximal Couplings of Elliptic Diffusions}
\newcommand{\x}{\mathbf{x}}
\newcommand{\y}{\mathbf{y}}
\newcommand{\z}{\mathbf{z}}
\newcommand{\Expect}[1]{\operatorname{\mathbb{E}}\left[#1\right]}
\newcommand{\Identity}{\operatorname{\mathbb{I}}}
\newcommand{\Law}[1]{\operatorname{\mathcal{L}}\left(#1\right)}
\newcommand{{\LPC}}{\textbf{LPC}}
\newcommand{\Prob}[1]{\operatorname{\mathbb{P}}\left[#1\right]}
\newcommand{\Mirrors}{\mathcal{H}}
\newcommand{\Reals}{\mathbb{R}}
\newcommand{\ball}{\operatorname{\mathcal{B}}}
\renewcommand{\d}{\operatorname{d}}
\newcommand{\dist}{\operatorname{dist}}
\newcommand{\eps}{\varepsilon}
\newcommand{\generator}{L}
\newtheorem{thm}{Theorem}
\newtheorem{lem}[thm]{Lemma}
\newtheorem{defn}[thm]{Definition}
\newtheorem{rem}[thm]{Remark}
\newtheorem{cor}[thm]{Corollary}
\title{\Title}
\author{Sayan Banerjee and Wilfrid S. Kendall}
\date{\today}							
\begin{document}

\maketitle
\footnotetext[0]{This work was supported by EPSRC Research Grant EP/K013939.}

\begin{abstract}\small
Maximal couplings are (probabilistic) couplings of Markov processes such that the tail probabilities of the coupling time attain the total variation lower bound (Aldous bound) uniformly for all time. 
Markovian (or immersion) couplings are couplings defined by strategies where neither process is allowed to look into the future of the other before making the next transition. 
Markovian couplings are typically easier to construct and analyze than general couplings, and play an important r\^ole in many branches of probability and analysis. 
\citeauthor{HsuSturm-2013}, in a preprint circulating in 2007, but later published in \citeyear{HsuSturm-2013},
proved that the \emph{reflection-coupling} of Brownian motion is the \emph{unique Markovian maximal coupling (MMC)} of Brownian motions starting from two different points. 
Later, \citet{Kuwada-2009} proved that the existence of a MMC for Brownian motions on a Riemannian manifold enforces existence of a reflection structure on the manifold.

In this work, we investigate suitably regular elliptic diffusions on  
manifolds,
and show how consideration of the diffusion geometry (including dimension of the isometry group and flows of isometries) is fundamental in classification of the space and the generator of the diffusion for which an MMC exists,
especially when the MMC also holds under local perturbations of the starting points for the coupled diffusions. 
We also describe such diffusions in terms of \emph{Killing vectorfields} (generators of isometry groups) and \emph{dilation vectorfields} (generators of scaling symmetry groups). 
This permits a complete characterization of those possible manifolds and their diffusions for which there exists a MMC under local perturbations of the starting points of the coupled diffusions.
For example, in the time-homogeneous case it is shown that the only possible manifolds that may arise are Euclidean space, hyperbolic space and the hypersphere.
Moreover the permissible drifts can then derive only from rotation isometries of these spaces (and dilations, in the Euclidean case).
In this sense, a geometric rigidity phenomenon holds good.
\end{abstract}

\noindent
\emph{2010 Mathematics Subject Classification:} 60G05; 58J65, 60J60.
\\
\emph{Key words and phrases:} 
\\
\textsc{characteristic operator;
coupling;
diffusion-geodesic completeness;
diffusion geometry;
diffusion matrix;
elliptic diffusion;
global isometry group;
homogeneous space;
immersion coupling;
infinitesimal generator;
involutive isometry;
Killing vectorfield;
Laplace-Beltrami operator;
large deviations;
local perturbation condition;
Markovian coupling;
maximal coupling;
maximally symmetric space;
orthonormal frame bundle;
Riemannian manifold;
reflection coupling;
rigidity;
stochastic completeness;
stochastic differential equation;
stochastic parallel transport;
Strato\-novich differential;
strong maximum principle;
Topogonov comparison theorem;
totally geodesic submanifold.}

\section{Introduction}\label{sec:rigidity}

Let $(\Omega_1, \mathcal{F}_1,\mu_1)$ and $(\Omega_2, \mathcal{F}_2,\mu_2)$ be two probability spaces. 
A (probabilistic) \emph{coupling} of  $\mu_1$ and $\mu_2$ is a measure $\mu$ on the product measurable space $(\Omega_1 \times \Omega_2, \mathcal{F}_1 \times \mathcal{F}_2)$ with marginals $\mu_1$ and $\mu_2$. 
This paper considers the question of coupling of (the laws of) two realizations $X$ and $Y$ of a Markov process on some state space $S$. 
We distinguish two important classes.
The first class (thematic for the foundational theory of probabilistic coupling) consists of couplings where, with positive probability, 
$X$ and $Y$ can stick together and move as a single process after some random time
\[
 \tau\quad=\quad
 \inf\{s>0: X_t=Y_t \text{ for all } t>s\}\,; 
\]
here \(\tau\) is called the \emph{coupling time}.  
The other class consists of couplings (\emph{Shy Couplings}) where the two processes $X$ and $Y$ remain separated by at least a fixed positive distance \(\eps\) for all time. 
Recent investigations of the second class of couplings can be found in \citet{BenjaminiBurdzyChen-2007} and \citet{BramsonBurdzyKendall-2013,BramsonBurdzyKendall-2014}; 
in this article, we concentrate on the first class. 

Probabilistic coupling is a central technique of modern probability theory \citep{Lindvall-1992,Thorisson-2000}. 
Attention naturally focusses on a fundamental question: how fast can we make $X$ and $Y$ meet? 
This has direct relevance, for example to the study of probabilistic algorithms and to gradient estimates for harmonic functions, and is also very valuable in eliciting the range of possibilities for coupling constructions.
Mathematically, this amounts to constructing couplings where $\Prob{\tau>t}$ is minimised for all time $t$. 
The Aldous inequality states that, for any $t>0$,
\begin{equation}\label{eq:Aldous}
\Prob{\tau>t} \quad\geq\quad \|\mu_{1,t}-\mu_{2,t}\|_{TV}\,, 
\end{equation}
where $\mu_{1,t}$ and $\mu_{2,t}$ are the distributions of $X_t$ and $Y_t$ respectively, while 
\[
\|\nu\|_{TV}\quad=\quad\sup\{|\nu(A)|\;:\;\text{measurable }A\} 
\]
denotes the total variation norm on signed measures \(\nu\). 
Thus a maximally efficient possible coupling (a \emph{Maximal Coupling}) would attain equality in the Aldous inequality \eqref{eq:Aldous} for all times $t>0$,
thus solving a multi-objective optimization problem. 
The remarkable construction of \citet{Griffeath-1974}, later simplified in a most elegant way by \citet{Pitman-1976}, shows that maximal couplings always exist for discrete Markov chains.
\citet{Goldstein-1979} generalized the construction to the case of non-Markovian processes; \citet{SverchkovSmirnov-1990} generalized it to continuous-time c\`adl\`ag processes. 
Here is a summary of the \citeauthor{Pitman-1976} approach, which is a model for the construction below (in Subsection \ref{sec:genprop}) of maximal couplings of diffusions.
A deterministic time-varying interface is constructed using the transition probabilities of the diffusions which are to be coupled.
The distribution of the coupling time is elicited using the deficits of the transition probability masses integrated on each side of the interface 
(at any particular time, these deficits are equal and correspond to the probability of one, equivalently both, of the coupled processes hitting the interface at this time).
Now, the coupling time is sampled from this distribution, and the coupling location corresponds to a point on the interface at this time.
Finally, the coupling is realized by constructing a single process forward in time and time-reversed time-inhomogeneous diffusions connecting starting locations to the location and moment of coupling,
conditioning to avoid hitting the interface prematurely.

The major drawback of all these constructions is they are typically very implicit; in most cases, it is extremely hard, if not impossible, to make detailed calculations for such couplings. This is a strong motivation for considering Markovian couplings, which we now describe.

 Let $X$ and $Y$ be Markov processes starting from $x_0$ and $y_0$ respectively. Let $\mathcal{F}_s=\sigma\{(X_{s'},Y_{s'}): s' \le s\}$ denote the joint filtration generated by \(X\) and \(Y\) together up to time $s$. A coupling of $X$ and $Y$ is called \emph{Markovian} if the joint process 
\[
 \{(X_{t+s},Y_{t+s}) : t \ge 0\} \text{ conditioned on } \mathcal{F}_s
 \]
is again a coupling of the laws of $X$ and $Y$, but now starting from $(X_s,Y_s)$. 
(An alternative martingale-based characterization makes a succinct connection to the theory of immersions of filtrations. 
For this reason Markovian couplings are also called \emph{immersion couplings}: \citealp{Kendall-2013a})
 
A natural and immediate question is, when can a maximal coupling of two diffusions be Markovian? The standard (and elegant) example in the literature is the \emph{reflection-coupling} of Euclidean Brownian motions starting from two different points: 
the second Brownian path is obtained from the first by reflecting the first path on the hyperplane bisecting the line joining the starting points 
until the first path (equivalently, the second, reflected, path) hits this hyperplane. 
Both paths then evolve together (``synchronously'') as a single Brownian path. 
Straightforward calculations, based on the reflection principle, show that this construction is in fact a Markovian maximal coupling (MMC). 
Furthermore, \citet{HsuSturm-2013} proved that this is the \emph{unique} such coupling for Euclidean Brownian motion. 
A few other examples are discussed in the literature:
Ornstein Uhlenbeck processes \citep{Connor-2007a}, also Brownian motion on manifolds which possess certain reflection symmetries. 
The reflection coupling idea manifests itself throughout the area of probabilistic coupling: 
for example it has a natural generalization to Brownian motion on Riemannian manifolds \citep{Kendall-1986b,Cranston-1991}, involving stochastic parallel transport and development, and not requiring any symmetries of the manifold.
However it seems unlikely that such generalizations will normally provide maximal couplings.
\citet{Kuwada-2009} investigated this question for Brownian motion on manifolds (and their generalisations to metric spaces). 
Under suitable mild regularity assumptions he showed that a reflection symmetry of the space is \emph{necessary} for the existence of a Markovian maximal coupling of two Brownian motions started from a specified pair of points.
Working under some further assumptions, he proved that the fixed point set of the symmetry (the ``mirror'', characterizing this isometry) does not change with time;
the maximal coupling is given simply by reflecting one process onto the other using the reflection symmetry defined by this mirror.

The aim of this paper is to develop the results of \citeauthor{Kuwada-2009} to the case of general regular elliptic diffusions with smooth coefficients.
It will be shown that Markovian maximal couplings are rare, in the sense that a stable local existence result enforces extreme global symmetry on the manifold: a kind of rigidity result.
Section \ref{sec:Euclidean} considers implications of existence of Markovian maximal couplings for \(d\)-dimensional Euclidean diffusions 
(``Euclidean'' here meaning that the diffusion matrix is the identity matrix),
under rather general regularity assumptions on the (possibly time-inhomogeneous) drift.
Extending Kuwada's argument, the existence of an MMC implies there is a mirror symmetry between the coupled processes at any given time.
However the influence of the non-zero drift now means that the mirror can vary deterministically with time, 
making the coupled dynamics considerably more complicated. 
We study the evolution of the mirror in time using stochastic calculus and we obtain a functional equation that the drift must satisfy for a Markovian maximal coupling to exist. 
This equation can be used to characterise all time-inhomogeneous diffusions which admit such couplings.

In the time-homogeneous case the characterization can be refined under the additional hypothesis that there is also a Markovian maximal coupling under local perturbation of the starting points,
which is to say, Markovian maximal couplings exist locally in a stable sense:
\begin{defn}[Local Perturbation Condition ({\LPC})]\label{def:LPC}
 There is $r>0$, and initial points \(\x_0\) and \(\y_0\),
 such that there exists a Markovian maximal coupling of the diffusion processes $X$ and $Y$ starting from $\x$ and $\y$ for every $\x \in \ball(\x_0,r)$ and $\y \in \ball(\y_0,r)$, 
 where \(\ball(\x_0,r)\) is the open metric ball centred at \(\x_0\) and of radius \(r\). 
\end{defn}
We will show that, for any dimension \(d\geq1\), {\LPC} holds for a suitably regular Euclidean diffusion with time-homogenous drift
if and only if the drift takes the form $\mathbf{b}(\x)=\lambda\x + T\x + \mathbf{c}$, where $\lambda$ is a scalar, $T$ is a skew-symmetric matrix and $\mathbf{c}$ is a fixed vector. 
This implies that \emph{Brownian motion with constant drift} and \emph{Ornstein-Uhlenbeck process} are the only \emph{one-dimensional} examples of time-homogeneous diffusions 
for which there are successful Markovian maximal couplings from arbitrary pairs of starting points. 
In higher dimensions, for regular Euclidean diffusions under {\LPC}, essentially the same is true except that the drift may also include a rotational component.
In one dimension, even without {\LPC}, it turns out that a Markovian maximal coupling exists between two copies of a regular diffusion started from $x_0$ and $y_0$ 
if and only if the drift is either affine or an odd function around the midpoint of the starting points. 

Section \ref{sec:manifold} considers Markovian maximal couplings of Brownian motion with time-homogeneous drift on a complete Riemannian manifold $M$ under {\LPC}. 
This is the natural generalization of the context of Section \ref{sec:Euclidean}, 
since a regular elliptic diffusion on Euclidean space furnishes the space with a Riemannian metric by means of inverting the diffusion matrix, 
and then the diffusion is converted into a Brownian motion with drift on the resulting Riemannian manifold, so that the Riemannian geometry serves to classify a variety of diffusions
(compare the rather similar r\^ole of Fisher information in theoretical statistics). 
We assume that the elliptic diffusion is stochastically complete, and also diffusion-geodesically complete, in the sense that the diffusion Riemannian geometry is geodesically complete.
Strikingly, {\LPC} then produces a geometric rigidity phenomenon, 
namely a complete classification of the space $M$ as one of the three model spaces $\Reals^d$ (Euclidean space), $\mathbb{S}^d$ (Sphere) and $\mathbb{H}^d$ (Hyperbolic space) 
depending upon the sign of the (necessarily constant) curvature $K$ (see Theorem \ref{thm:Mclass} in Section \ref{sec:manifold}). 
The Euclidean case is fully covered in Section \ref{sec:Euclidean}, 
and delivers the necessary ideas and techniques which we generalise to the manifold setup in Section \ref{sec:manifold} to study Markovian maximal couplings on the other two spaces. 
It turns out that the only drifts which can yield Markovian maximal couplings are given by the \emph{Killing vectorfields}, 
defined as infinitesimal generators for the rigid motion group (namely, generators of one-parameter subgroups of isometries).

In this paper we confine our considerations to the case of elliptic diffusions, where there is a strong connection to Riemannian geometry,
and path-continuity permits the formation of interfaces of co-dimension \(1\) separating pairs of initial points. 
Possible extensions to hypoelliptic diffusions or to general Markov chains are potentially of great interest, but we leave these questions as topics for future work.

\subsection{Markovian maximal couplings: general properties}\label{sec:genprop}
We complete this introduction by defining some general notation and by describing some basic general properties of Markovian maximal couplings for general Markov processes on a metric space $(M,\dist)$.
\citet{Kuwada-2009} derived results similar to Lemmas \ref{lem:disjsupp} and \ref{lem:flow} below.
For the sake of clearer exposition, and as we are primarily interested in diffusion processes, we will state the results for continuous-time Markov processes. 
Denote the Markov process under consideration by $X$.

We assume that the metric space supports a positive Borel measure $m$ with $0<m(B)<\infty$ for any metric ball $B$ of finite radius. 
Consequently, the closed support of $m$ is the whole of $M$. 
We further assume that for any $t>s \ge0 $, the conditional distribution law $\Law{X_t \mid X_s=x}$ is absolutely continuous with respect to $m$ and has a probability kernel density 
given by $p(s,\x;t,\z)$ for \(\x\), \(\z \in M\) and \(0\leq s<t\). 

Let $\mu$ denote the law of a Markovian maximal coupling $(X,Y)$ of two copies of our Markov process started from $(\x_0, \y_0)$, 
which can be thought of as a measure on the coupled path-space $C[0,\infty)^2$, and let 
\[
\tau \quad=\quad 
\inf\{s>0: X_t=Y_t \text{ for all } t>s\}
\]
denote the coupling time of $X$ and $Y$. 

Motivated by \citeauthor{Pitman-1976}'s construction for finite Markov chains, we write 
\[
\alpha(s,\x,\y,t,\z)\quad=\quad p(s,\x; t,\z)-p(s,\y;t,\z)\,, 
\]
and set $\alpha^+(s,\x,\y,t,\z)=\max(\alpha(s,\x,\y,t,\z),0)$ and $\alpha^-(s,\x,\y,t,\z)=\max(-\alpha(s,\x,\y,t,\z),0)$.
If $s=0$ (and thus $\x=\x_0$ and $\y=\y_0$), then we abbreviate $\alpha(t,\z)$ for $\alpha(s,\x_0,\y_0,t,\z)$ and similarly for other quantities.

We will be dealing with Markov processes which are possibly time-inhomogeneous, 
so we say a Markov process starts from $(t,\x)$ if we are looking at the distribution law $\Law{\theta_tX \mid X_t=\x}$, 
where $\theta$ denotes the time-shift operator given by $(\theta_tX)_s=X_{t+s}$. 

Define the \emph{interface} between $p(0,\x_0;\cdot,\cdot)$ and $p(0,\y_0;\cdot,\cdot)$ at time $t$ to be the region where the corresponding heat kernels agree:
\begin{equation}\label{eq:interfaceset}
I(\x_0,\y_0,t)\quad=\quad
\{\z \in M \;:\; p(0,\x_0;t,\z)=p(0,\y_0;t,\z)\}\,.
\end{equation}
Also write
\begin{eqnarray}\label{eqnarray:int}
I^{-}(\x_0,\y_0,t)\quad&=\quad&\{\z \in M\;:\; p(0,\x_0;t,\z)>p(0,\y_0;t,\z)\},\nonumber\\
I^{+}(\x_0,\y_0,t)\quad&=\quad&\{\z \in M\;:\; p(0,\x_0;t,\z)< p(0,\y_0;t,\z)\}.
\end{eqnarray}
Finally, define the perpendicularly bisecting set (or ``hyperplane'') and the associated ``half-spaces'' (note that these are indeed a hyperplane and half-spaces in the Euclidean case):
\begin{eqnarray}\label{eqnarray:H}
H(\x,\y)\quad&=\quad&\{\z \in M\;:\; \dist(\x,\z)=\dist(\y,\z)\},\nonumber\\ 
H^-(\x, \y)\quad&=\quad&\{\z \in M\;:\; \dist(\x,\z)<\dist(\y,\z)\} \,,\nonumber\\
H^+(\x,\y)\quad&=\quad&\{\z \in M\;:\; \dist(\x,\z)>\dist(\y,\z)\} \,.
\end{eqnarray} 

\begin{lem} \label{lem:disjsupp}
Any joint maximal coupling law can be related to differences of the transition probability kernel densities as follows: for any Borel subset \(A\) of $M$, and \(s>0\),
\begin{align*}
\mu(X_s \in A, \tau>s) \quad&=\quad \int_A \alpha^+(s,\x)m(\d\x)\,,\\
\mu(Y_s \in A, \tau>s) \quad&=\quad \int_A \alpha^-(s,\x)m(\d\x)\,.
\end{align*}
\end{lem}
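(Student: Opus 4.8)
The plan is to derive everything from the defining property of a maximal coupling---equality in the Aldous bound \eqref{eq:Aldous} for every time---together with the elementary fact that $X$ and $Y$ agree once they have coupled. First I would record two preliminaries. Since both heat kernels integrate to $1$ against $m$, the abbreviated density $\alpha(s,\x)=p(0,\x_0;s,\x)-p(0,\y_0;s,\x)$ satisfies $\int_M\alpha(s,\x)\,m(\d\x)=0$, so the signed measure $\mu_{1,s}-\mu_{2,s}$ has equal positive and negative parts; with the convention $\|\nu\|_{TV}=\sup_A|\nu(A)|$ the supremum is attained on $I^-(\x_0,\y_0,s)$ (and on $I^+(\x_0,\y_0,s)$), giving $\|\mu_{1,s}-\mu_{2,s}\|_{TV}=\int_M\alpha^+(s,\x)\,m(\d\x)=\int_M\alpha^-(s,\x)\,m(\d\x)$. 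Maximality then yields $\mu(\tau>s)=\int_M\alpha^+(s,\x)\,m(\d\x)$. Secondly, by right-continuity of paths (the diffusions of interest being path-continuous) and the definition of $\tau$, on the event $\{\tau\le s\}$ one has $X_s=Y_s$.

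Next I would establish a ``difference identity''. For any Borel $A\subseteq M$ the marginals of $\mu$ give $\mu(X_s\in A)=\int_A p(0,\x_0;s,\x)\,m(\d\x)$ and $\mu(Y_s\in A)=\int_A p(0,\y_0;s,\x)\,m(\d\x)$. Splitting each of these events according to $\{\tau>s\}$ and $\{\tau\le s\}$, using $X_s=Y_s$ on the latter event so that the two contributions there cancel on subtraction, I obtain
\[
\mu(X_s\in A,\tau>s)-\mu(Y_s\in A,\tau>s)\quad=\quad\int_A\alpha(s,\x)\,m(\d\x)\,.
\]
Discarding the non-negative term $\mu(Y_s\in A,\tau>s)$ gives $\mu(X_s\in A,\tau>s)\ge\int_A\alpha(s,\x)\,m(\d\x)$; applying this with $A$ replaced by $A\cap I^-(\x_0,\y_0,s)$ and then enlarging the event on the left back to $\{X_s\in A\}$ gives $\mu(X_s\in A,\tau>s)\ge\int_A\alpha^+(s,\x)\,m(\d\x)$ for every Borel $A$. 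Adding this inequality for $A$ and for $M\setminus A$, the left-hand sides sum to $\mu(\tau>s)$ while the right-hand sides sum to $\int_M\alpha^+(s,\x)\,m(\d\x)=\mu(\tau>s)$, so both inequalities must be equalities; this is the first assertion. The second follows by interchanging the roles of $X$ and $Y$, which swaps $\x_0$ with $\y_0$, hence $\alpha^+$ with $\alpha^-$, leaves $\tau$ unchanged, and preserves maximality since $\|\mu_{2,s}-\mu_{1,s}\|_{TV}=\|\mu_{1,s}-\mu_{2,s}\|_{TV}$.

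The argument is essentially soft, so there is no serious obstacle; the two points requiring a little care are the normalisation of the total-variation norm under the $\sup$-convention adopted here (so that no spurious factor of $\tfrac12$ creeps in and $\mu(\tau>s)$ really equals $\int_M\alpha^+(s,\x)\,m(\d\x)$), and the justification that $X_s=Y_s$ on the whole of $\{\tau\le s\}$---including the boundary case $s=\tau$---which is where path-regularity of the processes is used. I expect the latter to be the only genuine, if minor, difficulty.
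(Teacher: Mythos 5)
Your proof is correct and rests on the same two pillars as the paper's own argument: the Aldous equality for a maximal coupling, and the fact that $X_s=Y_s$ on $\{\tau\le s\}$ (path continuity covering the boundary case $\tau=s$), with maximality then upgrading a family of one-sided bounds, uniform over Borel sets $A$, into equalities. The only difference is bookkeeping: you lower-bound the uncoupled mass $\mu(X_s\in A,\tau>s)$ by $\int_A\alpha^+(s,\x)\,m(\d\x)$ via your difference identity and summation over $A$ and its complement, whereas the paper equivalently upper-bounds the coupled mass $\mu(X_s=Y_s\in A,\tau\le s)$ by $\int_A p(0,\x_0;s,\x)\wedge p(0,\y_0;s,\x)\,m(\d\x)$ — dual formulations of essentially the same argument.
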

\begin{proof}
It is immediate that \(\mu(X_s\in A, \tau \le s)\leq \mu(X_s\in A)\). If \(p(0,\x_0;s,\cdot)  \leq p(0,\y_0;s,\cdot)\) on \(A\) then
\begin{align*}\label{align:disupp0} 
\mu(X_s=Y_s\in A, \tau \le s) 
\quad&=\quad
\mu(X_s\in A, \tau \le s)
\quad \le \quad\mu(X_s\in A) \nonumber\\
\quad & =\quad \int_A p(0,\x_0;s,\x)  m(\d\x)\quad = \quad \int_A p(0,\x_0;s,\x)  \wedge p(0,\y_0;s,\x)m(\d\x)\,.
\end{align*}
Interchanging the r\^oles of \(X\) and \(Y\), a corresponding argument applies if \(p(0,\x_0;s,\cdot)  \geq p(0,\y_0;s,\cdot)\) on \(A\). 
Hence additivity shows that for all \(A\) the coupling must satisfy
\begin{equation}\label{align:disupp} 
\mu(X_s=Y_s\in A, \tau \le s) \quad\leq\quad \int_A p(0,\x_0;s,\x)  \wedge p(0,\y_0;s,\x)m(\d\x)\,.
\end{equation}
Finally,  Aldous' inequality \eqref{eq:Aldous} is by definition an equality for a maximal coupling, so
 \begin{equation}\label{eq:Aldous-equality}
 \mu(\tau \le s)\quad=\quad \int_{\Reals^d} p(0,\x_0;s,\x)  \wedge p(0,\y_0;s,\x)m(\d\x).
 \end{equation}
It follows that the inequality (\ref{align:disupp}) must in fact be an equality. This proves the lemma.
\end{proof}

Only maximality was required for Lemma \ref{lem:disjsupp}. 
If in addition $\mu$ is Markovian, then the conditional law $\Law{\theta_sX, \theta_sY \mid \mathcal{F}_s}$ describes a Markovian coupling of two copies of our Markov process starting from $((s,X_s), (s,Y_s))$. 
Such a coupling therefore satisfies the following \emph{flow property}:
 \begin{lem}\label{lem:flow}
If $\mu$ is a Markovian maximal coupling and $\mu_s=\Law{X_s, Y_s}$ then, for $\mu_s$-almost every $(\x,\y)$ with $\x \neq \y$
the conditional law $\Law{\theta_sX, \theta_sY \mid X_s=\x, Y_s=\y}$ gives a Markovian maximal coupling of $(X,Y)$ starting from $((s,\x),(s,\y))$.
 \end{lem}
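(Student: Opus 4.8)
The plan is to use the Markov property of $\mu$ to reduce the claim to a statement purely about \emph{maximality}. As already signalled in the discussion preceding the lemma, for $\mu_s$-almost every $(\x,\y)$ the regular conditional law $\nu_{\x,\y}:=\Law{\theta_sX,\theta_sY\mid X_s=\x,Y_s=\y}$ (well defined since the relevant path spaces are Polish) is a \emph{Markovian} coupling of two copies of the process started from $(s,\x)$ and $(s,\y)$, so the only thing left is to show that $\nu_{\x,\y}$ attains equality in the Aldous bound \eqref{eq:Aldous} at every time $t>0$. Let $\tilde\tau$ denote the coupling-time functional on the shifted path space and set
\[
g_t(\x,\y)\;=\;\nu_{\x,\y}(\tilde\tau>t)\,,\qquad
h_t(\x,\y)\;=\;1-\int_M p(s,\x;s+t,\z)\wedge p(s,\y;s+t,\z)\,m(\d\z)\,,
\]
so that $h_t(\x,\y)$ is exactly the Aldous total-variation lower bound for a coupling started from $((s,\x),(s,\y))$, and maximality of $\nu_{\x,\y}$ is the assertion $g_t=h_t$ for all $t>0$.

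Next I would relate the family $\{g_t\}$ back to the global coupling time $\tau$. For $t>0$ one has $\{\tau>s+t\}=\{(\tau-s)^+>t\}$, and $(\tau-s)^+$ is precisely $\tilde\tau$ evaluated on the shifted pair $(\theta_sX,\theta_sY)$; since $\mu$ is Markovian, $\Law{\theta_sX,\theta_sY\mid\mathcal F_s}$ depends only on $(X_s,Y_s)$, so $\mu(\tau>s+t\mid\mathcal F_s)=g_t(X_s,Y_s)$ and hence
\[
\mu(\tau>s+t)\;=\;\int_{M\times M}g_t(\x,\y)\,\mu_s(\d\x,\d\y)\,.
\]
Applying Aldous's inequality \eqref{eq:Aldous} to each $\nu_{\x,\y}$ gives the pointwise bound $g_t\ge h_t$, while maximality of the \emph{original} coupling $\mu$ at time $s+t$ — that is, \eqref{eq:Aldous-equality} with $s$ replaced by $s+t$ — identifies $\mu(\tau>s+t)$ with $1-\int_M p(0,\x_0;s+t,\z)\wedge p(0,\y_0;s+t,\z)\,m(\d\z)$.

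The heart of the argument is then the reverse averaged inequality $\int g_t\,\d\mu_s\le\int h_t\,\d\mu_s$, which, combined with $g_t\ge h_t$ pointwise, forces $g_t=h_t$ $\mu_s$-almost everywhere. For this I would use that, $\mu$ being a coupling, the $X$- and $Y$-marginals of $\mu_s$ have $m$-densities $p(0,\x_0;s,\cdot)$ and $p(0,\y_0;s,\cdot)$, so Chapman--Kolmogorov gives $p(0,\x_0;s+t,\z)=\int_{M\times M}p(s,\x;s+t,\z)\,\mu_s(\d\x,\d\y)$ and likewise for $\y_0$; then, writing $a\wedge b=\tfrac12(a+b-|a-b|)$, using $\int_M p(s,\x;s+t,\z)\,m(\d\z)=1$, and applying the triangle inequality $\bigl|\int(\cdot)\,\d\mu_s\bigr|\le\int|\cdot|\,\d\mu_s$ pointwise in $\z$ and then integrating against $m$, one obtains
\[
\int_{M\times M}\int_M p(s,\x;s+t,\z)\wedge p(s,\y;s+t,\z)\,m(\d\z)\,\mu_s(\d\x,\d\y)\;\le\;\int_M p(0,\x_0;s+t,\z)\wedge p(0,\y_0;s+t,\z)\,m(\d\z)\,,
\]
which is precisely $\int h_t\,\d\mu_s\ge\mu(\tau>s+t)=\int g_t\,\d\mu_s$. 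Hence $g_t=h_t$ $\mu_s$-a.e.\ for each fixed $t$. To pass to maximality — equality for \emph{all} $t$ simultaneously, for a common full-measure set of $(\x,\y)$ — I would run the fixed-$t$ conclusion along a countable dense set of times and then invoke right-continuity of the survival function $t\mapsto g_t(\x,\y)$ together with continuity of $t\mapsto h_t(\x,\y)$ (the latter following by Scheffé's lemma from time-continuity of the transition density, which is within the assumed regularity); this yields that $\nu_{\x,\y}$ is a maximal coupling for $\mu_s$-a.e.\ $(\x,\y)$, in particular for $\mu_s$-a.e.\ such pair with $\x\neq\y$.

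I expect the genuine obstacle to be the displayed averaged inequality: it is the only place where one must use more than Markov/maximality bookkeeping, resting essentially on the convexity of total variation under mixtures (the triangle-inequality step). The remaining ingredients — existence of the regular conditional distributions $\nu_{\x,\y}$ and the fact that $\mu$ Markovian makes $\Law{\theta_sX,\theta_sY\mid\mathcal F_s}$ a function of $(X_s,Y_s)$, the identity $\{\tau>s+t\}=\{(\tau-s)^+>t\}$, and the fixed-$t$-to-all-$t$ upgrade — are routine but need to be set up with a little care, the last of these being where the mild time-regularity of the transition density is actually used.
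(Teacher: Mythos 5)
Your proof is correct and is essentially the detailed version of the paper's argument, which the authors compress into the one-line assertion that the lemma "follows immediately from the maximality of $\mu$ and the fact that $\mu$ is Markovian": the Markov property makes the conditional laws couplings from $(X_s,Y_s)$, and your averaged Aldous-bound comparison (pointwise $g_t\ge h_t$ versus the mixture inequality forced by maximality of $\mu$ at time $s+t$) is exactly the mechanism that transfers maximality to the conditional couplings. The fixed-$t$-to-all-$t$ upgrade via countably many times plus right-continuity of $g$ and time-regularity of the densities is sound under the paper's standing assumptions, so no gap remains.
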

 \begin{proof}
 This follows immediately from the maximality of $\mu$ and the fact that $\mu$ is Markovian.
 \end{proof}

We now introduce notation to describe the set of pairs of initial points in the closed support of \(\mu_s\)
for which the forward processes $(\theta_s X, \theta_s Y)$ do indeed generate a maximal coupling:
\begin{align*}
\mathcal{M}(\mu_s)\quad&=\quad\{(\x,\y) \in \operatorname{Support}(\mu_s)\;:\; \x \neq \y
\text{ and }\Law{\theta_sX, \theta_sY \mid X_s=\x, Y_s=\y} \text{ yields } \\
&\qquad\qquad
\text{ a maximal coupling of }(X,Y) \text{ starting from }((s,\x),(s,\y))\}. 
\end{align*}


We conclude this introduction by noting an elementary observation about couplings of Markov processes.
\begin{lem}\label{lem:mapsto}
For each $t \ge 0$, let $F_t:(\Omega_1,\mathcal{F}_1) \rightarrow (\Omega_2,\mathcal{F}_2)$ be a bijective mapping between two measurable spaces such that $F_t, F_t^{-1}$ are measurable.
Then, for any Markov process $\{X_t: t \ge 0\}$ on $\Omega_1$, $\{F_t(X_t): t \ge 0\}$ defines a Markov process on $\Omega_2$. 
Furthermore $\{(X_t,Y_t): t \ge 0\}$ is a (Markovian) maximal coupling of Markov processes on $\Omega_1$ if and only if $\{(F_t(X_t),F_t(Y_t)): t \ge 0\}$ is a (Markovian) maximal coupling on $\Omega_2$.
\end{lem}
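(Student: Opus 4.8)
The plan is to reduce everything to two elementary observations: that each $F_t$ being a bi-measurable bijection forces the filtrations generated by $X$ and by $F_\bullet(X_\bullet)$ to coincide, and that such maps preserve total variation distance. First I would establish the Markov property of $\tilde X_t := F_t(X_t)$. Because both $F_s$ and $F_s^{-1}$ are measurable, the $\sigma$-algebra $\sigma(\tilde X_{s'} : s' \le s)$ equals $\mathcal{F}^X_s := \sigma(X_{s'} : s' \le s)$ for every $s$; combined with the Markov property of $X$, the conditional law $\Law{\tilde X_t \mid \mathcal{F}^X_s}$ depends on $X_s$ alone, hence (since $F_s$ is injective) on $\tilde X_s$ alone. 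This is precisely the Markov property for $\tilde X$, albeit now time-inhomogeneous in general even when $X$ is homogeneous.

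Next, for the coupling statement, I would note that $(\tilde X, \tilde Y) := (F_\bullet(X_\bullet), F_\bullet(Y_\bullet))$ has marginal laws equal to those of $F_\bullet$ applied to each copy, so it is automatically a coupling of two copies of the transformed process. For maximality, the key point is that injectivity of each individual $F_t$ gives the pathwise identity $\{X_t = Y_t\} = \{\tilde X_t = \tilde Y_t\}$ for every $t$, whence the coupling times agree exactly: $\tilde\tau = \tau$ almost surely. Meanwhile, writing $\mu_{i,t}$ for the time-$t$ marginals of the original copies, the corresponding marginals of the transformed copies are the pushforwards $(F_t)_\ast \mu_{i,t}$, and since $A \mapsto F_t^{-1}(A)$ is a bijection of $\mathcal{F}_2$ onto $\mathcal{F}_1$ we get $\|(F_t)_\ast \mu_{1,t} - (F_t)_\ast \mu_{2,t}\|_{TV} = \|\mu_{1,t} - \mu_{2,t}\|_{TV}$. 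Thus the Aldous bound \eqref{eq:Aldous} has the same right-hand side for both problems, and equality holds for $\tilde\tau$ at all $t$ if and only if it holds for $\tau$ at all $t$; that is, $(\tilde X, \tilde Y)$ is a maximal coupling if and only if $(X,Y)$ is.

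For the Markovian refinement I would apply the filtration-coincidence observation to the joint process: $(F_s, F_s)$ is a bi-measurable bijection of $\Omega_1 \times \Omega_1$ onto $\Omega_2 \times \Omega_2$, so $\sigma((\tilde X_{s'}, \tilde Y_{s'}) : s' \le s) = \mathcal{F}_s$. The definition of a Markovian coupling says that $\Law{\theta_s X, \theta_s Y \mid \mathcal{F}_s}$ is a coupling of the laws of the process started from $(s,X_s)$ and from $(s,Y_s)$; pushing this regular conditional law forward under the path-space map $\omega \mapsto (F_{s+u}(\omega_u))_{u \ge 0}$ on each coordinate yields exactly $\Law{\theta_s \tilde X, \theta_s \tilde Y \mid \mathcal{F}_s}$, whose marginals are the laws of the transformed process started from $(s, \tilde X_s)$ and from $(s, \tilde Y_s)$ by the first part. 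Since the conditioning $\sigma$-algebra is unchanged, $(\tilde X, \tilde Y)$ is a Markovian coupling whenever $(X,Y)$ is. The converse implications in all three statements then follow by symmetry, applying the forward implications to the bi-measurable bijections $F_t^{-1}$.

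I expect the only real care to be bookkeeping rather than a genuine obstacle: one must use measurability of $F_t^{-1}$ (not merely of $F_t$) to get equality of the filtrations, keep track of the time-inhomogeneity that the transformation introduces, and phrase the Markovian-coupling step in terms of regular conditional distributions so that ``pushing forward the conditional law'' is literally meaningful. None of these requires more than standard measure-theoretic machinery.
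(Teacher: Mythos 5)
Your proposal is correct and takes essentially the same route as the paper, whose own proof is only the two-sentence remark that the first assertion follows from the definition of conditional expectation and the second from the definition of maximality. Your filtration-equality, coupling-time-identity, and total-variation-preservation observations are precisely the details that remark leaves implicit, so nothing further is needed.
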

\begin{proof}
The first assertion is a direct consequence of the general definition of conditional expectation. The second assertion follows from the definition of maximality.
\end{proof}


\section{Markovian Maximal Couplings on Euclidean spaces}\label{sec:Euclidean}
We consider 
diffusions on Euclidean space \(\Reals^d\) with infinitesimal generator
\begin{equation}\label{eq:generator}
\generator\quad=\quad\frac{1}{2}\sum_{i=1}^d\partial_i^2+\sum_{i=1}^d b_i(t,\x)\partial_i\,,
\end{equation}
where $\displaystyle{\partial_i = \frac{\partial}{\partial x_i}}$.
In the following, $X$ will be used to denote a diffusion with the above generator.
We will refer below to such a diffusion as a \emph{Euclidean diffusion}, 
because diffusions with general diffusion coefficients are covered in Section \ref{sec:manifold} as instances of `Brownian motion plus drift on a manifold'. 
We make the following very general regularity assumptions (not necessary for all of our results, but imposed globally to streamline the exposition):
\begin{itemize}
\item[(A1)] The drift vectorfield $\mathbf{b}: [0,\infty) \times \Reals^d\rightarrow \Reals$ is continuously differentiable in the second (space) variable, 
moreover \(\mathbf{b}\) and all its first-order spatial partial derivatives \(\partial_i\mathbf{b}\) are bounded on compact subsets of $[0,\infty) \times \Reals^d$.
\item[(A2)] For every $t>s\ge0$, and $\x,\z \in \Reals^d$, the conditional distribution law $\Law{X_t \mid X_s=x}$ is 
the law of a diffusion with transition probability density kernel $p(s,\x;t,\z)$ (density with respect to Lebesgue measure), which is jointly continuous in all its arguments.
Moreover, $p(s,\cdot;\cdot, \cdot)$ is positive everywhere when \(s>0\). 
Finally, the density $p(s,\x;\cdot,\cdot): \Reals^+ \times\Reals^d\rightarrow \Reals$ is continuously differentiable in the time variable (first unspecified variable)
and twice continuously differentiable in the space variable (second unspecified variable).


\end{itemize}

\begin{rem}
Note that Assumption (A2) implies that the diffusion does not explode in finite time (otherwise $p(s,\x;t,\cdot)$ would determine a sub-probability density).
A sufficient condition for non-explosion is to require that $\mathbf{b}$ is locally Lipschitz in the space variable $\x$ (which follows from Assumption (A1)) 
and moreover that there exists a constant $C$ such that $|b(t,\x)| \le C(1+|t| +|\x|)$ for all $(t,\x) \in [0,\infty) \times \Reals^d$
\citep[Proposition 1.1.11]{Hsu-2002}. Furthermore, the fact that $\mathbf{b}$ is locally Lipschitz in $\x$ implies the existence of a unique strong solution to the SDE corresponding to (\ref{eq:generator}) for any given driving Brownian motion $B$ \citep[Theorem 1.1.8]{Hsu-2002}.
\end{rem}

We will sometimes say $\mathbf{b}$ satisfies Assumptions (A1) and (A2) if $\mathbf{b}$ satisfies (A1) and the corresponding diffusion (whose law is unique by the above remark) has transition probability densities satisfying (A2).\\\\
Recall that we say a diffusion starts from $(t,\x)$ if we are looking at the law 
$\Law{\theta_tX \mid X_t=\x}$, 
where $\theta$ denotes the time-shift operator given by $(\theta_tX)_s=X_{t+s}$. The resulting process is a diffusion with the identity diffusion matrix 
but using time-shifted drift $b(t+\cdot,\cdot)$ and starting from $x$ at time $0$.

Let $X$ and $Y$ be two copies of this diffusion starting from $\x_0$ and $\y_0$ respectively.
Recall
\begin{align*}
\mathcal{M}(\mu_s)\quad&=\quad\{(\x,\y) \in \operatorname{Support}(\mu_s)\;:\; \x \neq \y
\text{ and }\Law{\theta_sX, \theta_sY \mid X_s=\x, Y_s=\y} \text{ yields } \\
&\qquad\qquad
\text{ a maximal coupling of }(X,Y) \text{ starting from }((s,\x),(s,\y))\}. 
\end{align*}

\begin{rem}
The function $(s,\x) \mapsto p(0,\x_0;t-s,\x)$ satisfies a backward parabolic equation. 
Therefore uniqueness theory for such equations yields that there does not exist any $s>0$ such that $p(0,\x_0; s,\z)=p(0,\y_0; s,\z) \text{ for all } \z \in \Reals^d$. 
This, along with (\ref{eq:Aldous-equality}), implies that, for every $s>0$, $\mu(\tau>s)>0$ and thus $\mu(\mathcal{M}(\mu_s))>0$. 
In particular, $\mathcal{M}(\mu_s)$ is non-empty for each $s>0$.
\end{rem}

\subsection{Coupling and the interface}\label{subsection:interface}

Here, we show that the existence of a Markovian maximal coupling for $X$ and $Y$ implies that for each time $t$, the interface $I(\x_0,\y_0,t)$ will be a hyperplane bisecting the straight line joining $X_t$ and $Y_t$.

We begin with some preparatory lemmas.
Note that Brownian motion has fluctuations which are of order $O(\sqrt{t})$ while fluctuations resulting from the drift are of order $O(t)$. 
Thus, \emph{on small time scales}, the Brownian behaviour should dominate. 
The following lemma substantiates this intuition.

\begin{lem}\label{lem:LD}
Let $X$ be a diffusion given by 
\[
X_t\quad=\quad B_t+ \int_0^t \mathbf{b}(s,X_s)\d s\,,
\]
with $X_0=\x_0$ (so \(B_0=\x_0\)), and suppose the drift $\mathbf{b}$ satisfies Assumption (A1). Denote by $\mathbb{P}$ the underlying measure. Then, for any $\z \in \Reals^d$ and any $\delta>0$,
\begin{equation}\label{eq:LD}
\lim_{t \downarrow 0} \; t \log \frac{\Prob{X_t \in \ball(\z,\delta)}}{\Prob{B_t \in \ball(\z,\delta)}}\quad=\quad0\,.
\end{equation}
\end{lem}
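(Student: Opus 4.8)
The plan is to turn the heuristic preceding the statement into an estimate: on the event that $X$ has not yet left a large ball $\ball(\x_0,R)$ the drift contributes a displacement of size only $O(t)$, which is negligible on the exponential scale $e^{-\Theta(1/t)}$ that governs $\Prob{B_t\in\ball(\z,\delta)}$, whereas the event of leaving that ball before time $t$ is itself super-exponentially unlikely and so invisible on that scale. All the estimates are local in time, so the fact that Assumption (A1) alone does not preclude explosion plays no role.

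First I would fix $R>0$ (to be chosen large at the end), set $M_R=\sup\{|\mathbf{b}(s,\x)|:0\le s\le1,\ |\x-\x_0|\le R\}$, which is finite by Assumption (A1), and put $\tau_R=\inf\{s:|X_s-\x_0|\ge R\}$. From $X_s-B_s=\int_0^s\mathbf{b}(u,X_u)\,\d u$ one gets $|X_s-B_s|\le M_R s$ for $s\le\tau_R\wedge1$; since paths are continuous and $|X_{\tau_R}-\x_0|=R$ on $\{\tau_R<\infty\}$, this gives, for all small $t$, the pathwise relations
\[
\{B_t\in\ball(\z,\delta-M_R t)\}\cap\{\textstyle\sup_{s\le t}|B_s-\x_0|\le R/2\}\ \subseteq\ \{X_t\in\ball(\z,\delta)\}\ \subseteq\ \{B_t\in\ball(\z,\delta+M_R t)\}\cup\{\tau_R\le t\}
\]
together with $\{\tau_R\le t\}\subseteq\{\sup_{s\le t}|B_s-\x_0|\ge R/2\}$. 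A standard Gaussian tail estimate then gives $\Prob{\sup_{s\le t}|B_s-\x_0|\ge R/2}\le C e^{-cR^2/t}$ with $c,C$ depending only on $d$, so the exit contribution is super-exponentially small.

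Next I would invoke the elementary small-time asymptotics $\lim_{t\downarrow0}t\log\Prob{B_t\in\ball(\z,r)}=-\tfrac12\dist(\x_0,\ball(\z,r))^2$ (where $\dist(\x_0,\ball(\z,r))=(\dist(\x_0,\z)-r)_+$), read off directly from the Gaussian density, or obtained as a case of Schilder's theorem. Since $r\mapsto\dist(\x_0,\ball(\z,r))$ is continuous and $M_R t\to0$, this yields $t\log\Prob{B_t\in\ball(\z,\delta\pm M_R t)}\to-\tfrac12\dist(\x_0,\ball(\z,\delta))^2$, the same limit as for $\Prob{B_t\in\ball(\z,\delta)}$. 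Choosing $R$ large enough that $cR^2>\tfrac12\dist(\x_0,\ball(\z,\delta))^2$, the term $Ce^{-cR^2/t}$ is dominated by $\Prob{B_t\in\ball(\z,\delta\pm M_R t)}$ for small $t$, so the inclusions above sharpen to
\[
\tfrac12\,\Prob{B_t\in\ball(\z,\delta-M_R t)}\ \le\ \Prob{X_t\in\ball(\z,\delta)}\ \le\ 2\,\Prob{B_t\in\ball(\z,\delta+M_R t)}
\]
for all sufficiently small $t$. Applying $t\log(\cdot)$ and letting $t\downarrow0$, both bounding quantities share the exponential rate of $\Prob{B_t\in\ball(\z,\delta)}$, which is precisely \eqref{eq:LD}.

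The delicate point — the one I expect to require the most care to phrase correctly — is the interplay of the two length scales: $R$ must be taken large relative to $\dist(\x_0,\z)$ and $\delta$ so that the exit probability $e^{-\Theta(R^2/t)}$ is negligible beside $\Prob{B_t\in\ball(\z,\delta)}=e^{-\Theta(1/t)}$, while the resulting drift bound $M_R$, although it grows with $R$, enters only through the radius shift $\delta\pm M_R t$ and the validity threshold $t\le R/(2M_R)$, both harmless as $t\downarrow0$; hence there is no circularity. (An alternative route is to remove the drift on $[0,\tau_R]$ by a localized Girsanov change of measure, controlling the Radon--Nikodym density through $\int_0^{t\wedge\tau_R}|\mathbf{b}|^2\le M_R^2 t$; this again yields \eqref{eq:LD}, but then Hölder's inequality must be applied with an exponent $p(t)\downarrow1$ to avoid losing a multiplicative constant in the ratio.)
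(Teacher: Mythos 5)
Your proposal is correct and follows essentially the same route as the paper's proof: localize via the exit time from a large ball so the drift is bounded by some $M$, compare $X_t$ with $B_t$ through the radius shift $\delta\pm Mt$, dismiss the exit event by a Gaussian tail estimate that is super-exponentially small relative to $\Prob{B_t\in\ball(\z,\delta)}$, and conclude via the small-time large deviation asymptotics for the Brownian ball probabilities. The only cosmetic difference is that the paper fixes the localization radius explicitly at the outset (via $N>d\,I+1$) and compares the exit probability to $\Prob{B_t\in\ball(\z,\delta)}$ through a $\limsup\,t\log(\cdot)<0$ estimate, whereas you choose $R$ at the end and absorb the exit term into multiplicative constants.
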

\begin{proof}
Let $I=\sup\{|\y-\x_0|: \y \in \ball(\z,\delta)\}$ and choose $N>d\times I+1$. 
By continuity of $\mathbf{b}$, there is a finite $M$ for which $|\mathbf{b}(t,\y)| \le M$ for all $(t,\y) \in [0,1] \times \ball(\x_0, N)$.

Let $\tau_N= \inf\{t>0: X_t \not\in \ball(\x_0, N)\}$. 
Then, we can write
\begin{equation}
\Prob{X_t \in \ball(\z,\delta)}\quad=\quad
\Prob{X_t \in \ball(\z,\delta), \tau_N > t}+\Prob{X_t \in \ball(\z,\delta), \tau_N \le t}\,.
\end{equation}
Now  $|X_{t \wedge \tau_N}-B_{t \wedge \tau_N}|  \le Mt$. 
We pick ${t \le \min\{\tfrac{1}{M},\tfrac{\delta}{M}\}}$. Then
\begin{equation}\label{eq:UB}
\frac{\Prob{X_t \in \ball(\z,\delta)}}{\Prob{B_t \in \ball(\z,\delta)}} 
\quad\le\quad \frac{\Prob{B_t \in \ball(\z,\delta+Mt)}}{\Prob{B_t \in \ball(\z,\delta)}}+\frac{\Prob{\tau_N\le t}}{\Prob{B_t \in \ball(\z,\delta)}}
\end{equation}
and (using \(t<\delta/M\))
\begin{equation}\label{eq:LB}
\frac{\Prob{X_t \in \ball(\z,\delta)}}{\Prob{B_t \in \ball(\z,\delta)}} 
\quad\ge\quad 
\frac{\Prob{B_t \in \ball(\z,\delta-Mt)}}{\Prob{B_t \in \ball(\z,\delta)}}-\frac{\Prob{\tau_N\le t}}{\Prob{B_t \in \ball(\z,\delta)}}\,.
\end{equation}
Also (using \(t<1/M\) to control the difference between \(B\) and \(X\))
\begin{align*}
\Prob{\tau_N \le t} \quad& \le\quad \Prob{\sup_{s \le t}|B_s-\x_0| > N-1}\\
\quad& \le\quad
\frac{4d^2\sqrt{t}}{\sqrt{2 \pi}(N-1)}\exp\left(-\frac{(N-1)^2}{2td^2}\right)\,.
\end{align*}
Thus, there exists some constant $C$ such that,
\begin{align}\label{align:negligible}
\limsup_{t \downarrow 0} \; t\log\frac{\mathbb{P}(\tau_N\le t)}{\mathbb{P}(B_t \in \ball(\z,\delta))} 
\quad&\le\quad
\limsup_{t \downarrow 0} \; t\log\left(C\frac{\exp\left(-\frac{(N-1)^2}{2td^2}\right)}{\exp\left(-\frac{I^2}{2t}\right)}\right) 
\quad<\quad0\,.
\end{align}
By the Large Deviation principle for Brownian motion \citep{Varadhan-1984}, 
\begin{equation*}
\lim_{t \downarrow 0} \;t \log\frac{\Prob{B_t \in \ball(\z,\delta+Mt)}}{\Prob{B_t \in \ball(\z,\delta)}} 
\quad=\quad  \lim_{t \downarrow 0} \;t \log\frac{\Prob{B_t \in \ball(\z,\delta-Mt)}}{\Prob{B_t \in \ball(\z,\delta))}}
\quad=\quad0\,.
\end{equation*}
This, along with (\ref{eq:UB}), (\ref{eq:LB}) and (\ref{align:negligible}), yields the lemma.
\end{proof}
\begin{rem}
 The above lemma can be regarded as a weak form of a large deviation principle (LDP) for the diffusion $X$, specialized to a particular set $B(\z,\delta)$. 
 The general form of the LDP can be shown to hold under the additional assumption of linear growth of the drift vectorfield, 
 which is used to control the moments of the Radon-Nikodym derivative of the law of $X$ with respect to that of $B$ obtained by the Girsanov Theorem \citep{Varadhan-1984}.
\end{rem}

Note that for each fixed $(s,\x)$ the transition density $(t,\y) \mapsto p(s,\x; t,\y)$ satisfies the Kolmogorov forward equation
\begin{equation}\label{eq:Kolmog}
\partial_t p\quad=\quad\generator^*p
\end{equation}
where $\generator^*$ is the adjoint of the operator $\generator$. 
Under assumptions (A1) and (A2) the above equation can be rewritten as
$$
(\mathcal{A}+h)p\quad=\quad0\,,
$$
where $\mathcal{A}$ is a \emph{uniformly parabolic operator} \citep[p.~173]{ProtterWeinberger-1984} and $h$ is bounded on compact subsets of $[0,\infty) \times \Reals^d$. 
We now state the \emph{Strong Maximum Principle} for uniformly parabolic equations in the following form
(see Theorem 5, Theorem 7 and part (ii) of the remark following Theorem 7, pp.~173--175 of \citealp[]{ProtterWeinberger-1984}). 
\begin{lem}\label{lem:StrongMP}
Let $u$ be a solution of
$$
(\mathcal{A}+h)u \quad\ge\quad 0
$$
on a domain of the form $\Omega_T=(0,T] \times \Omega$, where $\Omega$ is a bounded and connected open set and the coefficients of $\mathcal{A}$,
and the function $h$ are bounded on closed subsets of $\Omega_T$. Suppose $u \le 0$ on $\Omega_T$ and $u(T,x')=0$ for some $x' \in \Omega$. Then $u \equiv 0$ on $\Omega_T$.
\end{lem}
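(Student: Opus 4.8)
The statement is the classical strong maximum principle for second-order uniformly parabolic operators, so the plan is to reduce it to a standard reference form (Theorems 5 and 7 of \citet{ProtterWeinberger-1984}) and invoke the Nirenberg--Hopf argument; the only point requiring genuine care is that the zeroth-order coefficient $h$ is not assumed to have a sign.

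First I would dispose of the sign of $h$. Writing $h = h^{+} - h^{-}$ with $h^{\pm} = \max(\pm h, 0) \ge 0$, and using the hypothesis $u \le 0$ on $\Omega_T$ so that $h^{+} u \le 0$, one obtains
\[
\mathcal{A} u - h^{-} u \;=\; (\mathcal{A} + h) u - h^{+} u \;\ge\; - h^{+} u \;\ge\; 0 .
\]
Thus $u$ is a subsolution of the operator $\mathcal{A} - h^{-}$, whose zeroth-order coefficient $-h^{-}$ is now nonpositive and still bounded on closed subsets of $\Omega_T$, while $\mathcal{A}$ remains uniformly parabolic with locally bounded coefficients by Assumptions (A1) and (A2). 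This is exactly the hypothesis set for the parabolic strong maximum principle in the form stated in \citet{ProtterWeinberger-1984}: a subsolution of a uniformly parabolic operator with nonpositive zeroth-order term which attains a nonnegative maximum at an interior-in-space point — here the value $0$, attained at $(T, x')$ with $x' \in \Omega$ — must equal that maximum on every point that can be joined to $(T, x')$ by a simple arc in $\overline{\Omega_T}$ along which the time coordinate is nondecreasing.

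Next I would verify the reachability geometry. Given $(T, x')$, take an arbitrary $(t_0, x_0) \in \Omega_T$; since $\Omega$ is open and connected it is path-connected, so traverse a path in $\Omega$ from $x_0$ to $x'$ at the fixed time level $t_0$, and then move vertically from $(t_0, x')$ up to $(T, x')$. Along this concatenation the time coordinate is nondecreasing and the arc stays inside $(0,T] \times \Omega = \Omega_T$; hence the strong maximum principle applies and forces $u \equiv 0$ on all of $\Omega_T$. (Note that nothing is claimed, or true, at the time slice $t=0$, consistent with $\Omega_T$ being $(0,T] \times \Omega$.)

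The only mildly delicate ingredient — and the reason one appeals to the cited reference rather than reproving it here — is the strong maximum principle itself, whose proof combines a parabolic Hopf boundary-point lemma (a barrier computation on a small parabolic ellipsoid touching the zero set from the past) with a connectedness/continuation argument. I do not expect any obstruction beyond correctly carrying out the reduction above and confirming that $\mathcal{A}+h$, as obtained from the Kolmogorov forward equation \eqref{eq:Kolmog}, indeed meets the regularity requirements of that theorem on the relevant compacta.
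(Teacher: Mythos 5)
Your proposal is correct and is essentially the paper's approach: the paper gives no independent proof but simply quotes this result from \citet{ProtterWeinberger-1984} (Theorem 5, Theorem 7, and part (ii) of the remark following Theorem 7), and your reduction via $h=h^{+}-h^{-}$ together with the time-nondecreasing path argument is precisely the content of that remark combined with Theorems 5 and 7.
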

It is now possible to state and prove the main result of this section, which can be seen as a stronger version of \citet[Proposition 3.9]{Kuwada-2009}, although our proof is quite different and slightly shorter.
\begin{thm}\label{thm:zeroset}
Take any $s>0$. For any $(\x,\y) \in \mathcal{M}(\mu_s)$, the following equalities hold:
\begin{align*}
I(\x_0,\y_0,s)\quad&=\quad H(\x,\y)\,,\\
I^{-}(\x_0,\y_0,s)\quad&=\quad H^-(\x,\y)\,,\\
I^{+}(\x_0,\y_0,s)\quad&=\quad H^+(\x,\y)\,.
\end{align*}
\end{thm}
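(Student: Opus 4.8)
The plan is to combine the structural information from Lemma~\ref{lem:disjsupp} (which identifies the subprobability densities of the not-yet-coupled processes with $\alpha^\pm$) together with the flow property from Lemma~\ref{lem:flow}, the short-time large-deviation estimate of Lemma~\ref{lem:LD}, and the Strong Maximum Principle (Lemma~\ref{lem:StrongMP}), to force the interface $I(\x_0,\y_0,s)$ to coincide with the metric bisector $H(\x,\y)$. Fix $s>0$ and $(\x,\y)\in\mathcal{M}(\mu_s)$. By Lemma~\ref{lem:disjsupp} applied at time $s$, the not-yet-coupled part of $X_s$ has density $\alpha^+(s,\cdot)$, which is supported on the closure of $I^{-}(\x_0,\y_0,s)$, and similarly $Y_s$'s not-yet-coupled part lives on the closure of $I^{+}(\x_0,\y_0,s)$. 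By the flow property, conditionally on $\{X_s=\x, Y_s=\y\}$ with $\x\neq\y$, the forward pair $(\theta_sX,\theta_sY)$ is again a maximal coupling; applying Lemma~\ref{lem:disjsupp} \emph{to this forward coupling} tells us that, for small $h>0$, the law of $X_{s+h}$ on the event $\{\tau>s+h\}$ conditioned on $\mathcal{F}_s$ has density $\alpha^+(s,\x,\y,s+h,\cdot)$ with respect to $m$, and this must be consistent with the unconditional description. The key deduction is: the support of the forward $X$-excursion (before coupling) must lie in $\overline{I^{-}(\x_0,\y_0,\cdot)}$, while it naturally lies in $\overline{H^-(\x,\y)}$ on short time scales.

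The core short-time argument runs as follows. Starting from $(\x,\y)$, for the forward coupling to be maximal the $X$-marginal-minus-$Y$-marginal difference $\alpha(s,\x,\y,s+h,\z)$ must be nonnegative exactly on the side of the interface "closer to $\x$". Using Lemma~\ref{lem:LD}, $t\log p(s,\x;s+t,\z)\to -\tfrac12 \dist(\x,\z)^2$ as $t\downarrow 0$ (the drift contributes nothing to the exponential rate), so
\[
\lim_{t\downarrow0} t\log\frac{p(s,\x;s+t,\z)}{p(s,\y;s+t,\z)} \quad=\quad \tfrac12\bigl(\dist(\y,\z)^2-\dist(\x,\z)^2\bigr)\,.
\]
Hence for $\z$ with $\dist(\x,\z)<\dist(\y,\z)$ (i.e.\ $\z\in H^-(\x,\y)$) we have $p(s,\x;s+t,\z)>p(s,\y;s+t,\z)$ for all sufficiently small $t>0$, and the reverse strict inequality on $H^+(\x,\y)$. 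This shows the \emph{forward} interface $I(\x, \y, s+t)$ sits, for small $t$, arbitrarily close to $H(\x,\y)$; more precisely $I^{-}(\x,\y,s+t)\to H^-(\x,\y)$ and $I^{+}(\x,\y,s+t)\to H^+(\x,\y)$ in the appropriate sense as $t\downarrow 0$. Combined with the flow-property identification of the supports of the $X$- and $Y$-excursions, this pins the boundary between $\overline{I^{-}(\x_0,\y_0,s)}$ and $\overline{I^{+}(\x_0,\y_0,s)}$ to be exactly $H(\x,\y)$: any point of $H^-(\x,\y)$ lies in $I^{-}(\x_0,\y_0,s)$ and vice versa.

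To upgrade "the supports agree" to the full set equalities $I=H$, $I^\pm=H^\pm$, I would invoke the Strong Maximum Principle. The function $(s',\z)\mapsto p(0,\x_0;s',\z)-p(0,\y_0;s',\z)$ solves a uniformly parabolic equation $(\mathcal{A}+h)u=0$, so on each bounded connected component of $I^{-}(\x_0,\y_0,s)$ the function $u=-\,(\text{that difference})$ is $\le 0$ and, by the short-time analysis above together with continuity, one shows $u$ cannot vanish in the interior without vanishing identically (which is impossible, as noted in the Remark preceding the theorem, since $p(0,\x_0;s,\cdot)\not\equiv p(0,\y_0;s,\cdot)$). The same argument on the other side then gives $I^{+}(\x_0,\y_0,s)=H^+(\x,\y)$ and, taking complements, $I(\x_0,\y_0,s)=H(\x,\y)$.

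The main obstacle is the careful bookkeeping of supports: translating the measure-theoretic statement "the conditional law of $X_{s+h}$ on $\{\tau>s+h\}$ is supported in a certain closed set" into the pointwise statement "$I^{-}(\x_0,\y_0,s)$ coincides with $H^-(\x,\y)$" requires controlling what happens on the boundary $H(\x,\y)$ and ruling out the interface $I(\x_0,\y_0,s)$ being a proper subset of $H(\x,\y)$ (or vice versa). Handling null sets, using joint continuity of $p$ from (A2) to pass from almost-everywhere to everywhere statements, and combining the large-deviation rate (which only gives the bisector up to a set of measure zero at the exponential scale) with the parabolic unique-continuation input to nail down the full topological sets — that is where the real work lies. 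The rest is a fairly mechanical assembly of the cited lemmas.
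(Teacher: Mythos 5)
Your first half (the weak inequalities) follows the paper's route: assuming $\alpha(s,\z^*)<0$ at some $\z^*\in H^-(\x,\y)$, Lemma \ref{lem:disjsupp} forbids uncoupled $X$-mass near $\z^*$ just after time $s$, while Lemma \ref{lem:flow} plus the short-time comparison of Lemma \ref{lem:LD} forces such mass to appear from starting pairs near $(\x,\y)$; the paper makes this rigorous exactly through the neighbourhood/support device ($(U_1\times U_2)\cap\mathcal{M}(\mu_s)$ has positive $\mu_s$-mass and the densities are jointly continuous), which is the bookkeeping you defer but correctly identify. One caution: your phrase ``$\alpha(s,\x,\y,s+h,\cdot)$ must be nonnegative exactly on the side closer to $\x$'' is the statement being proved and must not be used as an input; all you need, and all Lemma \ref{lem:LD} gives, is the ball-probability comparison $\Prob{(\theta_sX)_{s'}\in\ball(\z^*,\delta)\mid X_s=\x}>\Prob{(\theta_sY)_{s'}\in\ball(\z^*,\delta)\mid Y_s=\y}$ for small $s'$.

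The genuine gap is in your Strong Maximum Principle step. Lemma \ref{lem:StrongMP} needs the sign condition $u\le0$ on a space-time cylinder $(s-\eta,s]\times\ball(\z^*,\delta)$, but the first part of the argument only controls the sign of $\alpha$ at the single time $s$ (on $H^-(\x,\y)$); since the mirror moves with time, you have no sign information at earlier times on that spatial ball, so the SMP cannot be applied as you propose (and ``each bounded connected component of $I^-(\x_0,\y_0,s)$'' is in any case circular wording and not a parabolic domain). The paper resolves this with a dichotomy: either $\alpha\ge0$ throughout the cylinder, in which case the SMP applied to $-\alpha$ forces $\alpha\equiv0$ there, or $\alpha<0$ on some open space-time subset; in \emph{both} cases one gets an earlier time $s'<s$ and an open $U\subseteq\ball(\z^*,\delta)$ with $\alpha(t,\cdot)\le0$ on $U$ for $t\in[s',s'+\eps)$, hence no uncoupled $X$-mass there, and then the forward-coupling/large-deviation argument is rerun from a pair $(\x',\y')\in(U_1\times U_2)\cap\mathcal{M}(\mu_{s'})$ (nonempty because path continuity gives $\mu_{s'}(U_1\times U_2)>0$) to produce uncoupled mass in $U$ --- a contradiction. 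Your proposed contradiction for the identically-vanishing branch, namely that it conflicts with the Remark that $p(0,\x_0;s,\cdot)\not\equiv p(0,\y_0;s,\cdot)$, does not follow: the SMP only yields vanishing on the local cylinder, and local vanishing on a ball is not excluded by that Remark; to globalize you would need a spacelike unique-continuation theorem, which is a strictly stronger tool than Lemma \ref{lem:StrongMP} and is not invoked (or needed) in the paper's proof.
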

\begin{proof}
By continuity of $\alpha(s,\cdot)$, it suffices to prove that $H^-(\x,\y) \subseteq I^{-}(\x_0,\y_0,s)$ and $H^+(\x,\y) \subseteq I^{+}(\x_0,\y_0,s)$.

\medskip

We will first show that $\alpha(s,\z^*) \ge 0$ for all $\z^* \in H^-(\x,\y)$. Suppose, in contradiction, that $\alpha(s,\z^*)<0$ for some $\z^* \in H^-(\x,\y)$. 

Since $H^-(\x,\y)$ is open and $\alpha$ is continuous, 
we can choose $\delta>0$ such that $\ball(\z^*,\delta) \subseteq H^-(\x,\y)$ and $\alpha(s+s',\z)<0$ for all $\z \in \ball(\z^*,\delta)$ for sufficiently small $s'>0$. By Lemma \ref{lem:disjsupp} this implies that 
\[
\mu(X_{s+s'} \in \ball(\z^*,\delta), \tau>s+s')\quad=\quad0
\]
for all sufficiently small $s'>0$.
Let $B_1, B_2$ be Brownian motions starting from $\x$ and $\y$ respectively.
Since \(\z^* \in H^-(\x,\y)\), it follows that $\Prob{B_{1,t} \in \ball(\z^*,\delta)} > \Prob{B_{2,t} \in \ball(\z^*,\delta)}$ for all $t>0$.
By Lemma \ref{lem:LD}, 
if $s'>0$ is sufficiently small then it follows that
\begin{equation}\label{eq:probineq}
\mu\left((\theta_sX)_{s'} \in \ball(\z^*,\delta)\ \Big| \ X_s=\x\right) \quad>\quad \mu\left((\theta_sY)_{s'} \in \ball(\z^*,\delta)\ \Big| \ Y_s=\y\right)\,.
\end{equation}
By continuity of the transition densities, 
for all sufficiently small \(s'>0\) and for small enough open sets $U_1$ containing $\x$ and $U_2$ containing $\y$, for any $(\mathbf{u}_1 , \mathbf{u}_2) \in (U_1 \times U_2) \cap \mathcal{M}(\mu_s)$, 
 \begin{align}\label{align:smalltime}
 &\mu\left(X_{s+s'} \in \ball(\z^*,\delta) , \tau>s+s'\ \Big| \ X_s=\mathbf{u}_1, Y_s = \mathbf{u}_2\right)
 \quad=\quad \int_{\ball(\z^*,\delta)} \alpha^+(s,\mathbf{u}_1,\mathbf{u}_2,s+s',\z)\d\z \nonumber\\
 &\quad \ge \quad \int_{\ball(\z^*,\delta)} \alpha(s,\mathbf{u}_1,\mathbf{u}_2,s+s',\z)\d\z \nonumber\\
 &\qquad\quad =\quad \mu\left((\theta_sX)_{s'} \in \ball(\z^*,\delta)\ \Big| \ X_s=\mathbf{u}_1\right)
 - \mu\left((\theta_sY)_{s'} \in \ball(\z^*,\delta)\ \Big| \ Y_s=\mathbf{u}_2\right) \quad>\quad 0\,.
 \end{align}
(Here, the first equality follows from Lemma \ref{lem:disjsupp} and Lemma \ref{lem:flow}.)
Since $(\x,\y) \in \mathcal{M}(\mu_s)$, 
it follows that $\mu((X_s,Y_s) \in (U_1 \times U_2) \cap \mathcal{M}(\mu_s))>0$, yielding (for all sufficiently small \(s>0\))
\[
\mu(X_{s+s'} \in \ball(\z^*,\delta), \tau>s+s')\quad>\quad0\,,
\]
contradicting our assumption. Hence $\alpha(s,\z^*)\geq0$ for all $\z^* \in H^-(\x,\y)$.  Similarly, $\alpha(s,\z^*)\le0$ for all $\z^* \in H^+(\x,\y)$.

\medskip

We have thus shown that
\begin{align*}
H^-(\x,\y) \quad&\subseteq\quad I(\x_0,\y_0,s) \cup I^{-}(\x_0,\y_0,s)\,,\\
H^+(\x,\y) \quad&\subseteq\quad I(\x_0,\y_0,s) \cup I^{+}(\x_0,\y_0,s)\,.
\end{align*}
Suppose \(H^-(\x,\y) \cap I(\x_0,\y_0,s)\) is non-empty, and pick $\z^* \in  H^-(\x,\y) \cap I(\x_0,\y_0,s)$. 
Since $\alpha(s,\cdot)$ is nonnegative on the open set $H^-(\x,\y)$, there exists $\delta>0$ such that $\alpha(s,\z)\ge 0$ for all $\z \in \ball(\z^*,\delta)$. 
Choose open sets $U_1$ containing $\x$ and $U_2$ containing $\y$, and possibly smaller $\delta>0$, such that $|\x'-\z| < |\y' - \z|$ for all $\x' \in U_1, \y' \in U_2$ and $\z \in \ball(\z^*,\delta)$. 
It is given that $(\x,\y) \in \mathcal{M}(\mu_s)$; since the process $((X_t, Y_t): t \ge 0)$ has continuous paths there must be $\eta>0$ such that $\mu_t(U_1 \times U_2)>0$ for all $t \in [s-\eta, s]$.

The function $(t,\z) \mapsto \alpha(t,\z)$ solves the Kolmogorov forwards equation (\ref{eq:Kolmog}). 
Thus we can apply Lemma \ref{lem:StrongMP} to $-\alpha$ on $\Omega_{\eta}= (s-\eta, s] \times \ball(\z^*,\delta)$, and deduce that either $\alpha(t,\z)=0$ for all $s-\eta<t<s$ and all $\z \in \ball(\z^*,\delta)$, or there exists $s' \in (s-\eta,s)$, $0<\eps < s-s'$ and an open set $U \subseteq \ball(\z^*,\delta)$ such that $\alpha(t,\z) <0$ for all $\z \in U$ and all $t \in [s', s'+ \eps)$.
In either case (taking \(U=\ball(\z^*,\delta)\) and any $s' \in (s-\eta,s)$, $\epsilon \in (0,s-s')$ in the first case), for all $t \in [s', s'+ \eps)$
\begin{equation}\label{eq:MP}
\mu(X_t \in U,\tau>t)=0\,.
\end{equation}

Now choose $(\x', \y') \in (U_1 \times U_2) \cap \mathcal{M}(\mu_{s'})$ (non-empty, since $U_1$ and $U_2$ are disjoint and $\mu_{s'}(U_1 \times U_2)>0$) and apply the same argument as the one used in obtaining (\ref{align:smalltime}), but with $\x', \y'$ replacing $\x, \y$ and $s'$ replacing $s$.
We obtain 
\[
\mu\left(X_{s'+s''} \in U,\tau>s'+s''\right)\quad>\quad0
\]
for some $s'' \in [s', s'+ \eps)$, contradicting (\ref{eq:MP}). The lemma follows.
\end{proof}
\begin{rem}\label{rem:time-varying-mirror}
 The above theorem shows that for a Markovian maximal coupling, for any time $s$, the locus $I(\x_0, \y_0,s)$ can be viewed as a (possibly time-varying) \emph{mirror} which 
 realizes the coupling in a very explicit way, using a (possibly time-varying) reflection isometry.
 \end{rem}
The following corollary to the above lemma shows that the coupling time $\tau$ is, in fact, 
the hitting time of the deterministic space-time set $\{(s,I(\x_0,\y_0,s)) : s>0\}$ by the process $((s,X_s):s\geq0)$ (equivalently, $((s,Y_s):s>0)$). 
In particular, $X$ and $Y$ will couple at the first time they meet. 
Furthermore, the interface representation described in Theorem \ref{thm:zeroset} will hold almost surely for all time before coupling occurs. 
\begin{cor}\label{cor:alltime}
Consider a Markovian maximal coupling, with coupling time \(\tau\).
Set $\tau'=\inf\{s>0: X_s \in I(\x_0,\y_0,s)\}$.
Almost surely $\tau=\tau'$. Furthermore, $\mu$-almost surely, for all $t< \tau$,
\begin{align}\label{align:constalltime}
I(\x_0,\y_0,t)\quad&=\quad H(X_t,Y_t),\nonumber\\
I^{-}(\x_0,\y_0,t)\quad&=\quad H^-(X_t,Y_t),\nonumber\\
I^{+}(\x_0,\y_0,t)\quad&=\quad H^+(X_t,Y_t).
\end{align}
\end{cor}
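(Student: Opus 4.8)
The plan is to derive both assertions from Theorem \ref{thm:zeroset} together with the flow property (Lemma \ref{lem:flow}) and the fact that the coupled process has continuous paths. First I would observe that before $\tau$ the two processes never meet: indeed if $X_t = Y_t$ for some $t < \tau$ then, by path-continuity and the definition of $\tau$, the processes would have to separate again after $t$, which is impossible for a Markovian coupling of diffusions because once at the same point they can be run synchronously (and any Markovian coupling that is maximal must so run them — this is where one invokes that the conditional law after such a meeting time is a Markovian coupling started from a diagonal point, hence is concentrated on the diagonal by Lemma \ref{lem:disjsupp}, since $\alpha^+(s,\x,\x,\cdot,\cdot) \equiv 0$). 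Hence $X_t \neq Y_t$ for all $t < \tau$, and $X_\tau = Y_\tau$; so $\tau$ really is the first meeting time of $X$ and $Y$.

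Next I would prove the interface identities \eqref{align:constalltime}. Fix a countable dense set of times $s > 0$. For each such $s$, Lemma \ref{lem:flow} says that for $\mu_s$-a.e.\ $(\x,\y)$ with $\x \neq \y$ we have $(\x,\y) \in \mathcal{M}(\mu_s)$, so Theorem \ref{thm:zeroset} gives $I(\x_0,\y_0,s) = H(X_s, Y_s)$ on the event $\{s < \tau\}$ (and the corresponding identities for $I^\pm$, $H^\pm$). Taking the intersection over the countable dense set of times, this holds $\mu$-a.s.\ simultaneously for all rational $s < \tau$. Then I would upgrade to all real $t < \tau$ by continuity: the map $t \mapsto (X_t, Y_t)$ is continuous, $X_t \neq Y_t$ strictly for $t < \tau$, so $t \mapsto H(X_t,Y_t)$ varies continuously in an appropriate sense, while $t \mapsto I(\x_0,\y_0,t)$ is determined by the jointly continuous kernel $p$; matching them on a dense set and using continuity of $\alpha(t,\cdot)$ in $t$ forces the identity for every $t < \tau$. (Concretely: for $\z \notin H(X_t,Y_t)$, say $\dist(X_t,\z) < \dist(Y_t,\z)$, the same strict inequality holds for nearby rational $t'$, whence $\alpha(t',\z) \geq 0$, and letting $t' \to t$ gives $\alpha(t,\z) \geq 0$; the reverse inclusion is symmetric, and openness of $H^\pm$ closes the argument.)

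Finally I would identify $\tau$ with $\tau' = \inf\{s > 0 : X_s \in I(\x_0,\y_0,s)\}$. On $\{t < \tau\}$ we have just shown $I(\x_0,\y_0,t) = H(X_t, Y_t)$, and since $X_t \neq Y_t$ the point $X_t$ is strictly closer to itself than to $Y_t$, so $X_t \in H^-(X_t,Y_t) = I^-(\x_0,\y_0,t)$, hence $X_t \notin I(\x_0,\y_0,t)$; this shows $\tau' \geq \tau$. Conversely at time $\tau$ we have $X_\tau = Y_\tau$, so $\dist(X_\tau, X_\tau) = \dist(Y_\tau, X_\tau) = 0$, i.e.\ $X_\tau \in H(X_\tau,Y_\tau)$; and passing to the limit $t \uparrow \tau$ in the identity $I(\x_0,\y_0,t) = H(X_t,Y_t)$ (using continuity of $p$ and of the paths) gives $X_\tau \in I(\x_0,\y_0,\tau)$, so $\tau' \leq \tau$. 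Combining, $\tau = \tau'$ a.s.

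The main obstacle I anticipate is the first step — rigorously ruling out that $X$ and $Y$ meet and re-separate before $\tau$ — since this requires carefully exploiting both the Markovian property and maximality (via Lemmas \ref{lem:flow} and \ref{lem:disjsupp}) to conclude that the post-meeting conditional law lives on the diagonal; the remaining steps are routine given Theorem \ref{thm:zeroset} and continuity.
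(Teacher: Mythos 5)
There are two genuine gaps. First, your opening step---that $X_t\neq Y_t$ for \emph{all} $t<\tau$---is justified by conditioning ``after such a meeting time'', i.e.\ at a random time; but Lemma \ref{lem:flow} is a fixed-time, $\mu_s$-a.e.\ statement (and it explicitly excludes the diagonal $\x=\y$), so it only yields, for each fixed (hence each rational) $s$, that a.s.\ $X_s\neq Y_s$ on $\{\tau>s\}$. Path continuity does not bridge this to all real $t$: the processes could a priori touch only at non-rational times and separate, and no cited lemma lets you condition at that random meeting time. The paper avoids this by never assuming non-meeting up front: it works with the event $E_t=\{X_t=Y_t\text{ or the three identities hold}\}$, proves $E_t$ for all $t$, and only deduces $X_t\neq Y_t$ for $t<\tau$ at the very end, from $\tau=\tau'$ together with the fact that $\alpha(t,X_t)\geq 0\geq\alpha(t,Y_t)$ before $\tau$. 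Since your Steps 2--4 lean on Step 1 (you apply Theorem \ref{thm:zeroset} ``on $\{s<\tau\}$'' and later use $X_t\in H^-(X_t,Y_t)$), this gap propagates.

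Second, your upgrade from rational to all real $t$ only gives non-strict information: for $\z\in H^-(X_t,Y_t)$ the limit of strict inequalities along rational $t'\to t$ yields only $\alpha(t,\z)\geq 0$, i.e.\ $H^\pm(X_t,Y_t)\subseteq I\cup I^\pm$ and $H\subseteq I$, which is weaker than the claimed equalities; nothing in ``openness of $H^\pm$'' rules out $\alpha(t,\z)=0$ at some $\z\in H^-(X_t,Y_t)$. This is exactly where the paper invokes the strong maximum principle (Lemma \ref{lem:StrongMP}): since a small ball around such a $\z$ stays in $H^+(X_s,Y_s)$ (resp.\ $H^-$) for $s\in[t-\delta,t]$, one has $\alpha\leq 0$ there with $\alpha(q,\z)<0$ at rational $q$, and the maximum principle forces the strict inequality $\alpha(t,\z)<0$ at time $t$. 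Your final identification $\tau=\tau'$ (both inclusions obtained directly from the a.s.\ identities, rather than the paper's argument via the synchronous extension after $\tau'$ and stochastic minimality of the maximal coupling time) is a legitimate alternative, but it is only available once Steps 1 and 3 are repaired along the lines above.
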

\begin{proof}
Note that, by Lemma \ref{lem:disjsupp},
\begin{equation*}
\mu\left(Y_q \in I^{-}(\x_0,\y_0,q) \text{ for some rational } q < \tau\right)\quad=\quad0 \,.
\end{equation*}
Since the trajectories of \(Y\) are continuous, it follows that almost surely \(Y_t\) is contained in the complement of \(I^{-}(\x_0,\y_0,t)\) for all \(t<\tau\). 
This implies that before time $\tau'$, $X$ and $Y$ are supported on disjoint subsets of the state space and hence
 \begin{equation}\label{eq:stopineq}
\mu\left(\tau' \le \tau\right)\quad=\quad1\,.
\end{equation}
For any $t>0$, we define the event
\begin{align}\label{eq:event}
E_t\quad=&\quad \Big[ \text{ Either }X_t = Y_t\,, \text{ or } X_t \neq Y_t\, \text{ and all three equalities } I(\x_0,\y_0,t)=H(X_t,Y_t),\nonumber\\
&\qquad \quad I^{-}(\x_0,\y_0,t)=H^-(X_t,Y_t)\,, \;I^{+}(\x_0,\y_0,t)=H^+(X_t,Y_t) \text{ hold. }\Big]\,.
\end{align}
Theorem \ref{thm:zeroset} implies the assertion
\begin{equation}\label{eq:hyprat}
\mu\left(E_q \text{ is true for all rational } q \right)\quad=\quad1\,,
\end{equation}
hence almost surely $\displaystyle{E=\cap_{q \in \mathbb{Q}}E_q}$ holds. 
Take any $t > 0$ with $X_t \neq Y_t$ and let $\z \in H(X_t,Y_t)$. 
Then it follows from the definition of \(H(\x,\y)\) and the continuity of sample paths of \(X\) and \(Y\) 
that there is a rational sequence $t_n \downarrow t$ and $\z_n \in H(X_{t_n},Y_{t_n})$ such that $\z_n \rightarrow \z$. 
Thus, on the event \(E\), the continuity of $\alpha$ implies that $H(X_t,Y_t) \subseteq I(\x_0,\y_0,t)$.

\medskip

Now, take $\z \in H^+(X_t,Y_t)$ when \(X_t\neq Y_t\). 
The continuity of sample paths of $X$ and $Y$ implies that there exist $\eta, \delta>0$ with $\ball(\z,\eta) \subseteq H^+(X_s,Y_s)$ for all $s \in [t-\delta,t]$. 
On the event $E$, the continuity of $\alpha$ implies $\alpha(s,\z') \le 0$ for all $s \in [t-\delta,t]$ when $\z' \in \ball(\z,\eta)$. 
Thus, as $\alpha(q,\z)<0$ for all rational $q \in [t-\delta,t]$, Lemma \ref{lem:StrongMP} implies $\alpha(t,\z)<0$. 
Thus, $H^+(X_t,Y_t) \subseteq I^{+}(\x_0,\y_0,t)$. Similarly, $H^-(X_t,Y_t) \subseteq I^{-}(\x_0,\y_0,t)$. 
As $\mu(E)=1$, it follows that
\begin{equation}\label{eq:hypalltime}
\mu\left(E_t \text{ is true for all } t\right)\quad=\quad1\,.
\end{equation}
Note that, in particular, (\ref{eq:stopineq}) and (\ref{eq:hypalltime}) imply that if $\tau' < \infty$, then $X_{\tau'}=Y_{\tau'}$ almost surely. 
For \(\|X_t-Y_t\|=\tfrac12\dist(X_t,H(X_t,Y_t))=\tfrac12\dist(X_t,I(\x_0,\y_0,t))\) (when \(t<\tau'\)), by definition of \(H(X_t,Y_t)\). 
 
The corresponding argument for $Y$ implies that $\tau'$ also satisfies $\tau'=\inf\{s>0: Y_s \in I(\x_0,\y_0,s)\}$. 
Therefore, $\tau'$ is a stopping time for both $X$ and $Y$. Since $X_{\tau'}=Y_{\tau'}$, 
we can extend $X$ and $Y$ synchronously beyond time $\tau'$. Combined with (\ref{eq:stopineq}), 
this implies $\tau=\tau'$ almost surely, since the maximal coupling time \(\tau\) must be stochastically smaller than all other coupling times. 
Consequently 
$$
\mu\left(X_t\neq Y_t \text{ for all } t< \tau\right)\quad=\quad1\,.
$$
 
This, together with (\ref{eq:hypalltime}), yields (\ref{align:constalltime}) and thus the corollary is proved.
\end{proof}

\subsection{Time evolution of the mirror}\label{subsection:mirror}
We now analyze the time-evolution of the mirror.
From Theorem \ref{thm:zeroset}, it follows that the mirror $I(\x_0,\y_0,t)$ is a hyperplane for each $t>0$. 
We
parametrize 
this hyperplane by
its
signed distance from the origin, say $l(t)$,
together with 
the normal vector to the hyperplane, say $\mathbf{n}(t)$. 
There is an ambiguity of sign in the choice of \(\mathbf{n}(t)\); however the next lemma states that 
\(\mathbf{n}(t)\) 
can be chosen
to make this parametrization continuous up to the coupling time \(\tau\).
\begin{lem}\label{lem:contpar}
Suppose that a Markovian maximal coupling exists for $X$ and $Y$. Then there exists a continuous parametrization $\left((l(t), \mathbf{n}(t)): t\in [0,\tau)\right)$ of $I(\x_0,\y_0,\cdot)$.
\end{lem}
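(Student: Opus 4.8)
The plan is to build the parametrization directly from the coupled paths, using Corollary~\ref{cor:alltime}. That corollary provides a $\mu$-full event on which $X_t\neq Y_t$ for every $t<\tau$ and $I(\x_0,\y_0,t)=H(X_t,Y_t)$ for every such $t$. Since
\[
H(\x,\y)\quad=\quad\{\z\in\Reals^d\;:\;\z\cdot(\x-\y)=\tfrac{1}{2}(\x+\y)\cdot(\x-\y)\}\,,
\]
the obvious candidates are
\[
\mathbf{n}(t)\quad=\quad\frac{X_t-Y_t}{\|X_t-Y_t\|}\,,\qquad l(t)\quad=\quad\tfrac{1}{2}(X_t+Y_t)\cdot\mathbf{n}(t)\,,\qquad t\in[0,\tau)\,,
\]
where $X_0=\x_0$ and $Y_0=\y_0$ by path-continuity. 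A one-line computation (dividing the defining equation of $H(X_t,Y_t)$ by $\|X_t-Y_t\|$) shows that $\{\z:\z\cdot\mathbf{n}(t)=l(t)\}=H(X_t,Y_t)=I(\x_0,\y_0,t)$, so these do parametrize the mirror.

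I would then check continuity. On $[0,\tau)$ the map $t\mapsto(X_t,Y_t)$ is continuous and $\|X_t-Y_t\|>0$ by Corollary~\ref{cor:alltime}; hence $t\mapsto\mathbf{n}(t)$ is continuous, being a continuous $\Reals^d$-valued map divided by a continuous, strictly positive scalar, and $t\mapsto l(t)$ is continuous as a finite combination of sums and products of continuous functions. This already furnishes a continuous parametrization on $[0,\tau)$.

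The only point requiring a little care is that $I(\x_0,\y_0,\cdot)$ is deterministic, whereas the formulas above are constructed from the random paths $X$, $Y$; I would argue that the construction is in fact realization-independent. A hyperplane in $\Reals^d$ admits exactly two normalized representations $(l,\mathbf{n})$ and $(-l,-\mathbf{n})$, so the unordered pair $\{(l(t),\mathbf{n}(t)),(-l(t),-\mathbf{n}(t))\}$ equals the deterministic two-point set determined by $I(\x_0,\y_0,t)$. A continuous unit-vector path cannot jump between antipodal points, so, once the deterministic initial value $\mathbf{n}(0)=(\x_0-\y_0)/\|\x_0-\y_0\|$ is fixed, the continuous branch $t\mapsto(l(t),\mathbf{n}(t))$ is uniquely pinned down and does not depend on the chosen realization; it is therefore a genuine deterministic continuous parametrization of $I(\x_0,\y_0,\cdot)$ on $[0,\tau)$. (We assume throughout $\x_0\neq\y_0$, so $\tau>0$; otherwise the statement is vacuous, and at $t=0$ one reads $I(\x_0,\y_0,0)=H(\x_0,\y_0)$, the limit of the mirror as $t\downarrow0$.)

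Given Corollary~\ref{cor:alltime}, nothing here is deep: the substantive input is non-coalescence before $\tau$, which is precisely what keeps $\mathbf{n}(t)$ well defined and continuous on — but not beyond — $[0,\tau)$. The main subtlety, rather than any real obstacle, is the sign-of-normal bookkeeping described above; that aside, the lemma follows from path-continuity of the coupled diffusion.
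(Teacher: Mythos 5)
Your proposal is correct and follows essentially the same route as the paper: both define $\mathbf{n}(t)=(X_t-Y_t)/|X_t-Y_t|$ and $l(t)=\mathbf{n}(t)^\top(X_t+Y_t)/2$ along a coupled realization using Corollary~\ref{cor:alltime}, get continuity from path-continuity plus non-coalescence before $\tau$, and pin down the sign of the normal by continuity together with the deterministic initial value $(\x_0-\y_0)/|\x_0-\y_0|$. The only difference is bookkeeping: the paper glues parametrizations built on horizons $[0,N\wedge\tau)$ from paths in the full-measure sets $A_N$, while you argue realization-independence directly via the antipodal-sign argument, which amounts to the same consistency step.
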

\begin{proof}
Corollary \ref{cor:alltime}, together with the remark following Lemma \ref{lem:flow}, 
shows that the following subset of coupled path-space \(C[0,\infty)^2\) is non-empty for any $S>0$,
and indeed of full \(\mu\)-measure in the subset corresponding to \(\tau>S\):
\[
A_{S}\quad=\quad\{\omega \in C[0,\infty)^2: I(\x_0,\y_0,t)=H(X_{t}(\omega),Y_{t}(\omega)) \text{ for all } t\le S, \tau > S\}\,.
\] 
Consider any coupled pair of paths $\omega \in A_S$. Define $(l(t), \mathbf{n}(t))$ on $[0,S]$ by
\begin{align}\label{align:parametrization}
\mathbf{n}^{(S)}(t)\quad&=\quad\frac{X_t(\omega)-Y_t(\omega)}{|X_t(\omega)-Y_t(\omega)|},\nonumber\\
l^{(S)}(t)\quad&=\quad\mathbf{n}^\top(t)\left(\frac{X_t(\omega)+Y_t(\omega)}{2}\right).
\end{align}
This gives a continuous parametrization $(l^{(S)},\mathbf{n}^{(S)})$ on $[0,S\wedge\tau)$.

This recipe can be used to define $(l^{(N)},\mathbf{n}^{(N)})$ on $[0,N\wedge\tau)$ for each positive integer $N$.
By continuity of $\mathbf{n}^{(N)}$ and $\mathbf{n}^{(N+1)}$ on the (connected) interval $[0,N\wedge\tau)$,
we see that either $\mathbf{n}^{(N)}\equiv\mathbf{n}^{(N+1)}$ or $\mathbf{n}^{(N)}\equiv-\mathbf{n}^{(N+1)}$ on $[0,N\wedge\tau)$. But 
\[
\lim_{t \downarrow 0}\mathbf{n}^{(N)}(t)\quad=\quad\lim_{t \downarrow 0}\mathbf{n}^{(N+1)}(t)=\frac{\x_0-\y_0}{|\x_0-\y_0|}\,,
\]
implying $\mathbf{n}^{(N)}\equiv \mathbf{n}^{(N+1)}$ on $[0,N\wedge\tau)$. 
Consequently $l^{(N)}=l^{(N+1)}$ on $[0,N\wedge\tau)$. So we can consistently and continuously define the parametrization as $\left((l(t),\mathbf{n}(t)): t \in [0,\tau)\right)$, thus proving the lemma.
\end{proof}
In fact the parametrization is not simply continuous but is also continuously differentiable:
\begin{lem}\label{lem:IFT}
Suppose that a Markovian maximal coupling exists for $X$ and $Y$. Then the parametrization $(l(t), \mathbf{n}(t))$ of the mirror $I(\x_0,\y_0,t)$ (defined for  $t \in [0,\tau)$) is continuously differentiable in \(t\).
\end{lem}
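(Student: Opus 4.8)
The plan is to apply the implicit function theorem to the defining relation of the mirror, using the heat kernel as the implicit function and exploiting the regularity granted by Assumption (A2). From Theorem~\ref{thm:zeroset} and Corollary~\ref{cor:alltime}, for $t<\tau$ the mirror $I(\x_0,\y_0,t)$ is precisely the zero set of the function $(t,\z)\mapsto\alpha(t,\z)=p(0,\x_0;t,\z)-p(0,\y_0;t,\z)$, which is a hyperplane of the form $\{\z:\mathbf{n}(t)^\top\z=l(t)\}$. Equivalently, by Lemma~\ref{lem:contpar} together with Corollary~\ref{cor:alltime}, we already have the explicit formulas $\mathbf{n}(t)=(X_t-Y_t)/|X_t-Y_t|$ and $l(t)=\mathbf{n}(t)^\top\big((X_t+Y_t)/2\big)$ along almost every coupled path. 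So one route is simply to show that $t\mapsto X_t-Y_t$ is continuously differentiable away from the coupling time; but this is not obvious directly from the SDEs (the mirror is a deterministic object, whereas $X,Y$ are random), so I would instead work with $\alpha$.

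First I would fix $t_0\in(0,\tau)$ and a point $\z_0\in I(\x_0,\y_0,t_0)$ at which the spatial gradient $\nabla_\z\alpha(t_0,\z_0)\neq0$; such a point exists because $\alpha(t_0,\cdot)$ changes sign across the hyperplane $I(\x_0,\y_0,t_0)$ (it is positive on $I^-$ and negative on $I^+$ by definition), so by continuity and the fact that the zero set is a genuine hyperplane rather than all of $\Reals^d$, the gradient cannot vanish identically on it, and indeed by the strong maximum principle argument (Lemma~\ref{lem:StrongMP}) used in the proof of Corollary~\ref{cor:alltime} the normal derivative is nonzero everywhere on the mirror. By Assumption (A2), $\alpha$ is $C^1$ in $t$ and $C^2$ in $\z$; in particular $\nabla_\z\alpha$ is continuous in $(t,\z)$, so the implicit function theorem applies to the equation $\alpha(t,\z)=0$ near $(t_0,\z_0)$, yielding a $C^1$ local graph representation of the mirror. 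Since the mirror is globally a hyperplane at each time, knowing it is a $C^1$ graph locally near one point for $t$ near $t_0$ forces its normal direction $\mathbf{n}(t)$ and offset $l(t)$ to be $C^1$ functions of $t$ near $t_0$: concretely, pick $d$ affinely independent points on the mirror at time $t_0$, solve for each one as a $C^1$ function of $t$ via the implicit function theorem, and then $(\mathbf{n}(t),l(t))$ is obtained from these by the (smooth) Gram–Schmidt / affine-hyperplane-fitting map, hence is $C^1$. Because $t_0\in(0,\tau)$ was arbitrary and $\mathbf{n},l$ are already continuous on $[0,\tau)$ by Lemma~\ref{lem:contpar}, this gives continuous differentiability on the whole interval.

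The main obstacle is the non-degeneracy of the spatial gradient $\nabla_\z\alpha$ on the mirror, which is what makes the implicit function theorem usable; I would discharge it using the strong maximum principle exactly as in Corollary~\ref{cor:alltime} --- if $\nabla_\z\alpha(t_0,\z_0)=0$ at a point where $\alpha(t_0,\z_0)=0$, then $\z_0$ is an interior extremum of $\alpha(t_0,\cdot)$, and applying Lemma~\ref{lem:StrongMP} to $\pm\alpha$ on a small parabolic cylinder $(t_0-\eta,t_0]\times\ball(\z_0,\delta)$ would force $\alpha\equiv0$ on that cylinder, contradicting that the zero set is a hyperplane (equivalently, contradicting the backward-uniqueness remark that $\alpha(s,\cdot)\not\equiv0$). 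A secondary, more bookkeeping-type point is to confirm that the affine-fitting map from $d$ moving points on the hyperplane to $(\mathbf{n}(t),l(t))$ is smooth and that continuity from Lemma~\ref{lem:contpar} pins down the sign of $\mathbf{n}(t)$ consistently with the local IFT solution; this is routine once the points are chosen affinely independent, which persists for $t$ in a neighbourhood of $t_0$ by continuity.
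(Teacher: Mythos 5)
Your route (implicit function theorem applied to $\alpha(t,\z)=0$) is genuinely different from the paper's, which never touches $\nabla_\z\alpha$: instead it differentiates the reflection map $F(t,\x)$ in $t$ by comparing Nelson forward mean derivatives of the stopped coupled processes $X^U$ and $Y^U=F(\cdot,X^U)$, obtaining existence and continuity of $\lim_{s\downarrow t}\tfrac{F(s,\x)-F(t,\x)}{s-t}$ and then extracting $C^1$-regularity of $\mathbf{n}$ and $l$. Your plan could in principle work, but as written it has a genuine gap at its crucial step: the non-degeneracy $\nabla_\z\alpha(t_0,\z_0)\neq0$ for $\z_0$ on the mirror. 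Your justification is invalid on two counts. First, $\alpha(t_0,\z_0)=0$ together with $\nabla_\z\alpha(t_0,\z_0)=0$ does not make $\z_0$ an interior extremum of $\alpha(t_0,\cdot)$, and in any case Lemma \ref{lem:StrongMP} cannot be applied to $\pm\alpha$ on a cylinder $(t_0-\eta,t_0]\times\ball(\z_0,\delta)$: that lemma requires a one-sign hypothesis ($u\le0$) on the whole cylinder, whereas on any ball centred at a mirror point $\alpha(t_0,\cdot)$ takes both signs, so neither $\alpha$ nor $-\alpha$ qualifies. Nothing in the proof of Corollary \ref{cor:alltime} asserts or yields a nonzero normal derivative on the mirror; it only gives strict signs off the mirror.

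What your step really needs is a parabolic Hopf boundary-point lemma applied to the space-time region $\{(s,\z):s\le t_0,\ \z\in H^+(X_s,Y_s)\}$ on one side of the \emph{moving} interface, and that requires an interior-ball (or comparable) condition on this region at $(t_0,\z_0)$. At this stage you only know from Lemma \ref{lem:contpar} that $(l,\mathbf{n})$ is continuous; if the interface were to move with modulus of continuity like $\sqrt{t_0-s}$, the interior tangency condition would fail, so you cannot rule out $\nabla_\z\alpha=0$ on the mirror without already knowing the time-regularity of the interface — which is exactly what the lemma is meant to establish. This circularity is the core problem; it is precisely what the paper's probabilistic argument avoids by never needing spatial non-degeneracy of $\alpha$. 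A secondary issue: your IFT argument works only at $t_0\in(0,\tau)$, while the lemma asserts continuous differentiability on $[0,\tau)$; continuity on $[0,\tau)$ plus $C^1$ on the open interval does not give differentiability at $t=0$ (the paper gets $t=0$ because the forward-derivative formula and the Bruckner criterion apply there too), so you would need a separate argument at the left endpoint, where $\alpha(0,\cdot)$ is not even a function.
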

\begin{proof}
We use the fact that the map given by reflection in the hyperplane parametrized by $(l(t), \mathbf{n}(t))$,
\[
F(t,\x)\quad=\quad
({\Identity}-2\mathbf{n}(t)\mathbf{n}^\top(t))\x+2l(t)\mathbf{n}(t)\,,
\]
takes $X_t$ to $Y_t$  for  $t \in [0,\tau)$ (this follows from \(I(\x_0,\y_0,t)=H(X_t,Y_t)\)). 
Take any $\x \in I^{-}(\x_0,\y_0,t)$. 
Let $U$ be an open ball containing $\x$ and contained in $I^{-}(\x_0,\y_0,t)$. Let $\tau_U= \inf\{s > t: X_s \notin U\}$. 
Consider the corresponding stopped processes $X^U_s= X_{s \wedge \tau_U}$ and $Y^U_s= Y_{s \wedge \tau_U}$ for $s \ge t$. 
We write expectation with respect to $\mu$ using $\mathbb{E}$.

By general properties of diffusions \citep[Chapter 11]{Nelson-1967},
\begin{eqnarray}\label{eqnarray:nelder}
\mathbf{b}(t,\x)\quad&=\quad&\lim_{s \downarrow t}\; \Expect{\frac{X^U_s-\x}{s-t} \; \Big|\; \ X^U_t=\x}\,,\nonumber\\
\mathbf{b}(t,F(t,\x))\quad&=\quad&\lim_{s \downarrow t}\; \Expect{\frac{Y^U_s-F(t,\x)}{s-t} \; \Big|\; \ Y^U_t=F(t,\x)}\,.
\end{eqnarray}
Note that under the coupling $\mu$ we may use Corollary \ref{cor:alltime} to see that $Y^U_s=F(s, X^U_s)$ for all $s \ge t$ with probability one. 
Thus, we can write the last expression above as
\begin{eqnarray*}
\mathbf{b}(t,F(t,\x))\quad&=\quad&\lim_{s \downarrow t}\; \Expect{\frac{F(s,X^U_s)-F(t,\x)}{s-t} \;\Big|\; \ X^U_t=\x}\\
\quad&=\quad&\lim_{s \downarrow t}\; \Expect{\frac{F(s,X^U_s)-F(s,\x)}{s-t} \;\Big|\; \ X^U_t=\x} + \lim_{s \downarrow t}\frac{F(s,\x)-F(t,\x)}{s-t}\,,
\end{eqnarray*}
in the sense that if the limit of \(\Expect{\tfrac{F(s,X^U_s)-F(s,\x)}{s-t} \Big| \ X^U_t=\x}\) exists then also the limit of \(\tfrac{F(s,\x)-F(t,\x)}{s-t}\) exists and is defined by the above.
By linearity of $F$ in $\x$, we see that the first summand becomes
\begin{eqnarray*}
\lim_{s \downarrow t}\; \Expect{\frac{F(s,X^U_s)-F(s,\x)}{s-t} \ \Big| \ X^U_t=\x}\quad&=\quad& ({\Identity}-2\mathbf{n}(t)\mathbf{n}^\top(t))\lim_{s \downarrow t} \; \Expect{\frac{X^U_s-\x}{s-t} \ \Big| \ X^U_t=\x}\\
\quad&=\quad& ({\Identity}-2\mathbf{n}(t)\mathbf{n}^\top(t))\mathbf{b}(t,\x)\,.
\end{eqnarray*}
This shows that ${\lim_{s \downarrow t}\tfrac{F(s,\x)-F(t,\x)}{s-t}}$ exists for each $\x$ and for all $t \in [0,\tau)$ and indeed is continuous in $t$. 
This is enough to show that $t \mapsto F(t, \x)$ is continuously differentiable for each $\x$ \citep[Theorem 1.3]{Bruckner-1978}. 
This follows from the facts that ${t \mapsto ({\Identity}-2\mathbf{n}(t)\mathbf{n}^\top(t))}$ and ${t \mapsto l(t)\mathbf{n}(t)}$ are continuously differentiable,
and actually requires these facts to be true:
consider \(F(t,\x)\) for \(\x\) varying over an orthonormal basis and also for \(\x=0\).

Now, take any $t_0 \in[0,\tau)$. 
Let $n_i$ denote the \(i^\text{th}\) component of $\mathbf{n}$. 
As $|\mathbf{n}(t_0)|=1$, there is an $i$ such that $n_i(t) \neq 0$ in a neighbourhood $V$ of $t_0$. The continuous differentiability of $\displaystyle{t \mapsto ({\Identity}-2\mathbf{n}(t)\mathbf{n}^\top(t))}$ implies $n_in_j$ is continuously differentiable in $V$ for all $1 \le j \le d$. This implies $n_j$ is continuously differentiable in $V$ for all $j$. Differentiability of $\displaystyle{t \mapsto l(t)\mathbf{n}(t)}$ then shows that $l$ is continuously differentiable on $V$. This proves the lemma.
\end{proof}

\subsection{Structure of the coupling}
All the tools having been assembled, 
it is now possible to present a rather explicit description of drifts $\mathbf{b}$ which permit the existence of a Markovian maximal coupling of two copies \(X\) and \(Y\) of a Euclidean diffusion 
with the required regularity conditions.

We begin with a notational remark. For any $\x \in \Reals^d$ and any hyperplane $\underline{h}$, we denote by $\underline{h}\x$ the reflection of $\x$ in $\underline{h}$. 
We write $\underline{h}_k$ for the hyperplane $\{x_k=0\}$.

\medskip

The first lemma of this subsection concerns an observation concerning rotations and shifts of these Euclidean diffusions.
\begin{lem}\label{lem:rotdiff}
Let $X$ be an Euclidean diffusion satisfying assumptions (A1), (A2). 
Let $Q: [0,\infty) \to \textbf{O}(d)$ be a continuously differentiable function taking values in the space of orthogonal \((d\times d)\) matrices,
and let $l: [0,\infty) \rightarrow \Reals$ be a continuously differentiable real-valued function. 
Then the new process given by
\begin{equation}\label{eq:rotdiff}
\widetilde{X}_t=Q(t)X_t -l(t)\mathbf{e}_1
\end{equation}
satisfies the stochastic differential equation
\begin{equation}\label{eq:rotSDE}
\d \widetilde{X}_t=\widetilde{\mathbf{b}}(t,\widetilde{X}_t)\d t + \d \widetilde{B}_t
\end{equation}
where
\begin{equation}\label{eq:tildeb}
\widetilde{\mathbf{b}}(t,x)=\dot{Q}(t)Q^{T}(t)(x+l(t)\mathbf{e}_1)+Q(t)\mathbf{b}(t,Q^{T}(t)(x+l(t)\mathbf{e}_1))-\dot{l}(t)\mathbf{e}_1
\end{equation}
and
\begin{equation}\label{eq:rotbrow}
\d\widetilde{B}_t=Q(t)\d B_t\,.
\end{equation}
Here, $\dot{Q}$ and $\dot{l}$ denote the respective time-derivatives and $Q^\top$ denotes the matrix transpose.
\end{lem}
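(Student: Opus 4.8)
The plan is to apply the Itô (equivalently, here, ordinary chain-rule for the deterministic coefficients) formula to the process $\widetilde{X}_t = Q(t)X_t - l(t)\mathbf{e}_1$, treating $X_t = B_t + \int_0^t \mathbf{b}(s,X_s)\,\d s$ as a continuous semimartingale whose quadratic variation matrix is $\Identity\,\d t$. First I would write $\d X_t = \mathbf{b}(t,X_t)\,\d t + \d B_t$ and note that $Q$ and $l$ are continuously differentiable, hence of bounded variation with zero quadratic variation. Applying the product rule to $Q(t)X_t$ gives $\d(Q(t)X_t) = \dot Q(t)X_t\,\d t + Q(t)\,\d X_t$ with no Itô correction term (since $Q$ has no martingale part, the cross-variation $\d[Q,X]$ vanishes). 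Subtracting $\d(l(t)\mathbf{e}_1) = \dot l(t)\mathbf{e}_1\,\d t$ then yields
\[
\d\widetilde{X}_t \;=\; \left(\dot Q(t)X_t + Q(t)\mathbf{b}(t,X_t) - \dot l(t)\mathbf{e}_1\right)\d t + Q(t)\,\d B_t\,.
\]

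**Rewriting in terms of $\widetilde X$.** The next step is purely algebraic: invert the defining relation to express $X_t$ in terms of $\widetilde X_t$. From $\widetilde{X}_t = Q(t)X_t - l(t)\mathbf{e}_1$ and orthogonality of $Q(t)$ we get $X_t = Q^\top(t)\bigl(\widetilde X_t + l(t)\mathbf{e}_1\bigr)$. Substituting this into the drift term above, the coefficient of $\d t$ becomes
\[
\dot Q(t)Q^\top(t)\bigl(\widetilde X_t + l(t)\mathbf{e}_1\bigr) + Q(t)\,\mathbf{b}\!\left(t, Q^\top(t)\bigl(\widetilde X_t + l(t)\mathbf{e}_1\bigr)\right) - \dot l(t)\mathbf{e}_1\,,
\]
which is exactly $\widetilde{\mathbf{b}}(t,\widetilde X_t)$ as defined in \eqref{eq:tildeb}. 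It remains only to check that $\widetilde B_t := \int_0^t Q(s)\,\d B_s$ is a standard Brownian motion: it is a continuous local martingale with $\widetilde B_0 = 0$ and quadratic covariation matrix $\int_0^t Q(s)Q^\top(s)\,\d s = t\,\Identity$, so by Lévy's characterization $\widetilde B$ is a $d$-dimensional Brownian motion. This establishes \eqref{eq:rotSDE}, \eqref{eq:tildeb} and \eqref{eq:rotbrow}.

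**Main obstacle.** There is no deep obstacle here; the content is a careful bookkeeping of a time-dependent orthogonal change of frame combined with a deterministic translation. The one point requiring a little care is the justification that the product rule produces no Itô correction — this rests on $Q$ being continuously differentiable (hence a finite-variation, purely non-martingale process), which is exactly the hypothesis imposed on $Q$ and $l$. One should also note, for completeness, that $\widetilde X$ inherits from $X$ the property of being non-explosive and of having a unique strong solution, since the transformation $\x \mapsto Q(t)\x - l(t)\mathbf{e}_1$ is, for each fixed $t$, a bijective isometry of $\Reals^d$ depending smoothly on $t$; indeed Lemma~\ref{lem:mapsto} applies with $F_t(\x) = Q(t)\x - l(t)\mathbf{e}_1$. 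This last remark will be what makes the lemma useful downstream: it lets one move the mirror to the fixed hyperplane $\underline{h}_1 = \{x_1 = 0\}$ while preserving the Markovian-maximal-coupling structure.
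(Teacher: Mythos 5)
Your proof is correct and is exactly the ``direct calculation using It\^o calculus'' that the paper invokes (the paper gives no further detail); the product rule with no correction term, the algebraic inversion $X_t=Q^\top(t)(\widetilde X_t+l(t)\mathbf{e}_1)$, and the L\'evy characterization of $\widetilde B$ are precisely the intended steps. Nothing further is needed.
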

\begin{proof}
The result follows by direct calculation using It\^o calculus.
\end{proof}
\begin{rem}
Note that the transformed drift given by (\ref{eq:tildeb}) satisfies the regularity Assumptions (A1) and (A2). (A1) follows via the explicit form of (\ref{eq:tildeb}) from the fact that $\mathbf{b}$ satisfies (A1) and $Q$ and $l$ are continuously differentiable. (A2) for the new process $\widetilde{X}$ follows from (\ref{eq:rotdiff}) and the fact that $X$ satisfies (A2).
\end{rem}
The following theorem describes Markovian maximal couplings for the class of time-nonhomogeneous Euclidean diffusions satisfying suitable regularity conditions. 
The intuitive content of the theorem is, given an MMC $(X,Y)$,
\emph{applying deterministic time-varying rotations and translations to the ambient Euclidean space reduces this MMC to a reflection coupling in a fixed hyperplane}.
Thus, in a certain sense, reflection coupling is the only type of Markovian coupling that can possibly preserve maximality.
\begin{thm}\label{thm:MMCchar}
Let $X$ be an Euclidean diffusion starting from $\x_0$ and satisfying assumptions (A1), (A2). 
\begin{itemize}
\item[(i)] Suppose the following holds for every $x \in \Reals^d$, for the fixed hyperplane \(\underline{h}_1=\{x_1=0\}\). 
\begin{equation}\label{eq:bsymmetry}
\mathbf{b}(t,\underline{h}_1\x)=\underline{h}_1\mathbf{b}(t,\x)
\end{equation}
Then, for $\tau_0=\inf\{t\ge 0: X_t \in \underline{h}_1\}$, the reflection-coupling
\begin{eqnarray}\label{eq:reflcoup}
Y_t
\quad&=\quad&\begin{cases}
\underline{h}_1X_t & \mbox{ if } t< \tau_0 \\
X_t & \mbox{ if } t \ge \tau_0
\end{cases}
\end{eqnarray}
gives a Markovian maximal coupling between two copies of the diffusion starting from $\x_0$ and $\underline{h}_1\x_0$ respectively.
\item[(ii)] 
Let \(Y\) be a coupled copy of \(X\).
Then $(X,Y)$ is a Markovian maximal coupling up to the maximal coupling time \(\tau\) if and only if there exist $C^1$ curves $Q:[0,\tau) \rightarrow \mathbf{O}(d)$ and $l:[0,\tau) \rightarrow \Reals$ (compare Lemma \ref{lem:rotdiff}) with ${Q(0)\tfrac{\x_0-\y_0}{|\x_0-\y_0|}=\mathbf{e}_1}$ and ${l(0)=\tfrac{|\x_0|^2-|\y_0|^2}{2|\x_0-\y_0|}}$, such that $(\widetilde{X}, \widetilde{Y})$ obtained from $(X,Y)$ using the transformation \eqref{eq:rotdiff} are reflection-coupled according to the recipe \eqref{eq:reflcoup}. In particular, the transformed time-varying drift $\widetilde{\mathbf{b}}$ given by (\ref{eq:tildeb}) must satisfy 
\begin{equation}\label{eq:tildebsymmetry}
\widetilde{\mathbf{b}}(t,\underline{h}_1\x)=\underline{h}_1\widetilde{\mathbf{b}}(t,\x).
\end{equation}
\end{itemize}
\end{thm}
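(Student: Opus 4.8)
The plan is to prove part (i) directly via a reflection‑principle argument, and then to obtain part (ii) by reducing it to (i) through the coordinate change of Lemma \ref{lem:rotdiff} together with Lemma \ref{lem:mapsto}.

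\emph{Part (i).} First I would check that $Y$ in \eqref{eq:reflcoup} really is a copy of the diffusion from $\underline{h}_1\x_0$: on $[0,\tau_0)$, It\^o's formula and the symmetry \eqref{eq:bsymmetry} show that $\underline{h}_1X$ solves the same SDE driven by the reflected Brownian motion $\underline{h}_1B$, while $X_{\tau_0}\in\underline{h}_1$ means the two prescriptions agree at $\tau_0$, so the strong Markov property (using \eqref{eq:bsymmetry} again) identifies $\Law{Y}$ with the law of the diffusion from $\underline{h}_1\x_0$. The coupling is Markovian because the joint filtration is that of $X$ alone, $\tau_0$ being an $X$-stopping time: conditioned on $\mathcal{F}_s$ the future is the synchronous coupling if $\tau_0\le s$ and the reflection coupling restarted from $(X_s,\underline{h}_1X_s)$ if $\tau_0>s$, both genuine couplings of the appropriate laws since \eqref{eq:bsymmetry} holds at every time. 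Maximality is the substantive point. Taking (without loss) $\x_0$ in the open half-space $\underline{h}_1^-$ on whose side it lies, for any Borel $A\subseteq\underline{h}_1^-$ the strong Markov property at $\tau_0$ and the kernel symmetry $p(s,w;t,\z)=p(s,w;t,\underline{h}_1\z)$ for $w\in\underline{h}_1$ give the reflection identity $\mu(X_t\in A,\tau_0\le t)=\mu(X_t\in\underline{h}_1A,\tau_0\le t)=\mu(X_t\in\underline{h}_1A)=\int_A p(0,\underline{h}_1\x_0;t,\z)\,m(\d\z)$; subtracting from $\mu(X_t\in A)=\int_A p(0,\x_0;t,\z)\,m(\d\z)$ yields $\mu(X_t\in A,\tau_0>t)=\int_A\alpha(t,\z)\,m(\d\z)$. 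Since the left side is non-negative for every such $A$, continuity of $\alpha(t,\cdot)$ forces $\alpha(t,\cdot)\ge0$ on $\underline{h}_1^-$; and since $X_t\in\underline{h}_1^-$ whenever $\tau_0>t$, while $\alpha(t,\cdot)$ is odd under $\underline{h}_1$ (again by \eqref{eq:bsymmetry}), we obtain $\mu(\tau_0>t)=\int_{\underline{h}_1^-}\alpha(t,\z)\,m(\d\z)=\int_{\Reals^d}\alpha^+(t,\z)\,m(\d\z)=\|\mu_{1,t}-\mu_{2,t}\|_{TV}$, i.e.\ equality in \eqref{eq:Aldous} for all $t$ (with $\tau=\tau_0$ the coupling time). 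Hence the coupling is maximal.

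\emph{Part (ii), the ``if'' direction.} Given $C^1$ curves $Q,l$ with the stated initial data making $(\widetilde X,\widetilde Y)$ reflection-coupled in $\underline{h}_1$, Lemma \ref{lem:rotdiff} shows $\widetilde X$ and $\widetilde Y$ are copies of the $\widetilde{\mathbf b}$-diffusion from $\widetilde\x_0$ and $\widetilde\y_0$, and the constraints $Q(0)\tfrac{\x_0-\y_0}{|\x_0-\y_0|}=\mathbf{e}_1$, $l(0)=\tfrac{|\x_0|^2-|\y_0|^2}{2|\x_0-\y_0|}$ are precisely what make $\widetilde\y_0=\underline{h}_1\widetilde\x_0$. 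On $[0,\tau)$ we have $\widetilde Y=\underline{h}_1\widetilde X$; applying It\^o to this identity and matching finite-variation parts via uniqueness of the semimartingale decomposition gives $\widetilde{\mathbf b}(t,\widetilde Y_t)=\underline{h}_1\widetilde{\mathbf b}(t,\underline{h}_1\widetilde Y_t)$ for all $t<\tau$ almost surely, and since $\widetilde Y_t$ has a positive continuous density throughout the open half-space it occupies (the $\widetilde{\mathbf b}$-diffusion killed on $\underline{h}_1$), continuity of $\widetilde{\mathbf b}$ upgrades this to the functional identity \eqref{eq:tildebsymmetry} on all of $\Reals^d$. Part (i) then shows $(\widetilde X,\widetilde Y)$ is a Markovian maximal coupling, and Lemma \ref{lem:mapsto}, applied with the affine bijections $F_t(\x)=Q(t)\x-l(t)\mathbf{e}_1$, transfers maximality and the Markov property back to $(X,Y)$.

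\emph{Part (ii), the ``only if'' direction and the ``in particular'' clause.} Suppose $(X,Y)$ is a Markovian maximal coupling up to $\tau$. By Corollary \ref{cor:alltime}, $I(\x_0,\y_0,t)=H(X_t,Y_t)$ for $t<\tau$, and Lemmas \ref{lem:contpar}--\ref{lem:IFT} furnish the $C^1$ parametrization $\mathbf{n}(t)=\tfrac{X_t-Y_t}{|X_t-Y_t|}$, $l(t)=\mathbf{n}(t)^\top\tfrac{X_t+Y_t}{2}$ of \eqref{align:parametrization}, with $\mathbf{n}(0)=\tfrac{\x_0-\y_0}{|\x_0-\y_0|}$ and $l(0)=\tfrac{|\x_0|^2-|\y_0|^2}{2|\x_0-\y_0|}$. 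I would then build $Q$ by solving the linear ODE $\dot R=A(t)R$, $R(0)=\Identity$, with the skew-symmetric (hence orthogonality-preserving) coefficient $A(t)=\dot{\mathbf{n}}(t)\mathbf{n}(t)^\top-\mathbf{n}(t)\dot{\mathbf{n}}(t)^\top$, continuous because $\mathbf{n}\in C^1$ and satisfying $A(t)\mathbf{n}(t)=\dot{\mathbf{n}}(t)$ (using $|\mathbf{n}|\equiv1$), so that $R(t)\mathbf{n}(0)=\mathbf{n}(t)$; then $Q(t)=Q_0R(t)^\top$ for any fixed $Q_0\in\mathbf{O}(d)$ with $Q_0\mathbf{n}(0)=\mathbf{e}_1$ is $C^1$, has $Q(t)\mathbf{n}(t)=\mathbf{e}_1$, and satisfies the required initial conditions. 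With $\widetilde X_t=Q(t)X_t-l(t)\mathbf{e}_1$ and $\widetilde Y_t=Q(t)Y_t-l(t)\mathbf{e}_1$, the rigid motion $\x\mapsto Q(t)\x-l(t)\mathbf{e}_1$ sends $H(X_t,Y_t)$ to $\underline{h}_1$: indeed $\widetilde X_t-\widetilde Y_t=|X_t-Y_t|\,\mathbf{e}_1$, and the image of the midpoint $\tfrac{X_t+Y_t}{2}$ has vanishing first coordinate since $\mathbf{e}_1^\top Q(t)\tfrac{X_t+Y_t}{2}=\mathbf{n}(t)^\top\tfrac{X_t+Y_t}{2}=l(t)$. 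Hence $\widetilde Y_t=\underline{h}_1\widetilde X_t$ for $t<\tau$ and $\widetilde Y_t=\widetilde X_t$ afterwards, while Corollary \ref{cor:alltime} identifies $\tau=\inf\{t:X_t\in I(\x_0,\y_0,t)\}$ with $\inf\{t:\widetilde X_t\in\underline{h}_1\}$, so $(\widetilde X,\widetilde Y)$ is reflection-coupled exactly as in \eqref{eq:reflcoup}. Finally, \eqref{eq:tildebsymmetry} follows from this reflection-coupled structure by the It\^o‑plus‑uniqueness argument used in the ``if'' direction. I expect the main obstacles to be, first, the maximality computation in (i), which genuinely relies on \eqref{eq:bsymmetry} to license reflecting paths after $\tau_0$, and, second, the passage in (ii) from the pathwise identity $\widetilde Y=\underline{h}_1\widetilde X$ to the \emph{functional} symmetry \eqref{eq:tildebsymmetry}, which requires positivity of the killed diffusion's density throughout the relevant open half-space so that an identity holding almost surely along the paths propagates to all points by continuity.
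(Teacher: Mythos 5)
Your proof is correct, and its overall architecture matches the paper's: prove (i) for the fixed hyperplane, then obtain (ii) by straightening the $C^1$ mirror $(l(t),\mathbf{n}(t))$ of Lemmas \ref{lem:contpar}--\ref{lem:IFT} with a rigid motion (Lemma \ref{lem:rotdiff}) and transferring maximality back and forth with Lemma \ref{lem:mapsto}. The differences are in how two sub-steps are discharged. For maximality in (i) the paper simply observes that \eqref{eq:bsymmetry} gives a reflection structure in the sense of \citet{Kuwada-2007} and cites his Proposition 2.2, whereas you prove it from scratch via the strong Markov property at $\tau_0$ and the kernel symmetry $p(s,\mathbf{w};t,\z)=p(s,\mathbf{w};t,\underline{h}_1\z)$ for $\mathbf{w}\in\underline{h}_1$, arriving at $\mu(\tau_0>t)=\int\alpha^+(t,\z)\,\d\z$; this is a correct, self-contained reflection-principle computation that buys independence from the external reference at the cost of a little length. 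In the ``only if'' half of (ii) the paper constructs $Q$ by extending $\mathbf{n}(0)$ to an orthonormal frame and parallel-transporting it along the curve $\mathbf{n}(\cdot)$ on $\mathbb{S}^{d-1}$ (citing \citealp{GallotHulin-2004}), whereas you solve the linear ODE $\dot R=(\dot{\mathbf{n}}\mathbf{n}^\top-\mathbf{n}\dot{\mathbf{n}}^\top)R$ and set $Q=Q_0R^\top$; the two constructions are essentially the same rotation flow, yours being the more elementary, citation-free formulation, and your verification that the rigid motion sends $H(X_t,Y_t)$ to $\underline{h}_1$ (difference along $\mathbf{e}_1$, midpoint with vanishing first coordinate) is exactly the needed computation. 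Finally, where the paper compresses the derivation of \eqref{eq:tildebsymmetry} into ``analysis of generators of $\underline{h}_1\widetilde{X}_t$ and $\widetilde{Y}_t$'', you make the step explicit (It\^o plus uniqueness of the semimartingale decomposition, then full support of the killed diffusion in the open half-space plus the involutive form of the identity to propagate it to all of $\Reals^d$); this is a welcome sharpening of a point the paper leaves terse, and the support claim you invoke is the standard one for nondegenerate diffusions with locally Lipschitz drift, consistent with (A1)--(A2).
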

\begin{proof}
\begin{itemize}
\item[(i)] Equation \eqref{eq:bsymmetry} implies that the process $(\underline{h}_1X_t\;:\;t \ge 0)$ has the same law as the diffusion starting from $\underline{h}_1\x_0$ and thus, the reflection-coupling (\ref{eq:reflcoup}) gives a valid coupling. Reflection in the hyperplane $\underline{h}_1$ thus gives a \textit{reflection structure} in the sense of \citet[Definition 2.1]{Kuwada-2007}. Maximality follows from \citet[Proposition 2.2]{Kuwada-2007}.
\item[(ii)] First, note that if $\widetilde{X}$ and $\widetilde{Y}$ are reflection-coupled according to (\ref{eq:reflcoup}), then analysis of generators of $\underline{h}_1\widetilde{X}_t$ and $\widetilde{Y}_t$ yields \eqref{eq:tildebsymmetry}. Now, applying part (i) of the theorem, we deduce that $(\widetilde{X},\widetilde{Y})$ is a Markovian maximal coupling.
Furthermore, as
$$
(t,x) \mapsto (t, Q^\top(t)(x+l(t)\mathbf{e}_1))
$$
 is a bijective, bimeasurable function, so application of Lemma \ref{lem:mapsto} to $(t,\widetilde{X}_t) \rightarrow (t,X_t)$ and $(t,\widetilde{Y}_t) \rightarrow (t,Y_t)$ shows that $(X,Y)$ is a Markovian maximal coupling.
 
Conversely, let $(X,Y)$ be a Markovian maximal coupling of two copies of the diffusion starting from $\x_0$ and $\y_0$. 
Then the results of subsections \ref{subsection:interface} and \ref{subsection:mirror} show that 
there exist continuously differentiable functions $l: [0,\infty) \rightarrow \Reals$ and $\mathbf{n}: [0,\infty) \rightarrow \mathbb{S}^{d-1}$ parametrising the mirror $I(\x_0,\y_0,t)$. Moreover, these functions should satisfy $\mathbf{n}(0)=\frac{\x_0-\y_0}{|\x_0-\y_0|}$ and $l(0)=\frac{|\x_0|^2-|\y_0|^2}{|\x_0-\y_0|}$. To see this, take $t \downarrow 0$ in \eqref{align:parametrization}.
Furthermore, Theorem \ref{thm:zeroset} and the corollary following it show that $X$ and $Y$ are coupled on $t<\tau$ according to the relationship
\begin{equation}\label{eq:YfuncX}
Y_t\quad=\quad
({\Identity}-2\mathbf{n}(t)\mathbf{n}^\top(t))X_t+2l(t)\mathbf{n}(t)\,.
\end{equation}

The construction of $Q$ follows by applying Gram-Schmidt orthogonalization to extend $\mathbf{n}(0)$ to an orthonormal basis $(\mathbf{n}(0),\mathbf{v}_1, \dots, \mathbf{v}_{d-1})$ of $\Reals^d$. 
Note that the vectors $\mathbf{v}_i$ lie in the tangent space of $\mathbb{S}^{d-1}$ based at $\mathbf{n}(0)$. 
The vector function $(\mathbf{n}(t):t \ge 0)$ traces out a $C^1$ curve \(\gamma\) on the sphere $\mathbb{S}^{d-1}$. 
Parallel transport \citep[p.~75]{GallotHulin-2004} can be applied along $\gamma$ to each vector $\mathbf{v}_i$;
this produces $C^1$ vectorfields $\mathbf{X}_i: [0,\infty) \rightarrow \Reals^d$ along $\gamma$. 
\citet[Proposition 2.74]{GallotHulin-2004} shows that $(\mathbf{n},\mathbf{X}_1,\dots,\mathbf{X}_{d-1})$ produces a $C^1$ orthonormal frame along $\gamma$, so set 
$$
Q^\top(t)\quad=\quad
(\mathbf{n}(t),\mathbf{X}_1(t),\dots,\mathbf{X}_{d-1}(t)).
$$

We now produce a new pair of diffusions with time-varying drifts, $(\widetilde{X}, \widetilde{Y})$, by applying the transformation \eqref{eq:rotdiff} to $(X,Y)$ with drift $\widetilde{\mathbf{b}}$ and driving Brownian motion $\widetilde{B}$ as described in Lemma \ref{lem:rotdiff}.
This new pair is also a Markovian maximal coupling (use Lemma \ref{lem:mapsto}), and from equation \eqref{eq:YfuncX}
it follows that the coupled pair $(\widetilde{X},\widetilde{Y})$ is described by the transformation \eqref{eq:reflcoup}.
As discussed in part (i) of this proof, the relationship \eqref{eq:tildebsymmetry} follows as a direct consequence.
\end{itemize}
\end{proof}

Inverting the relationship \eqref{eq:tildeb}, and using the relationship \eqref{eq:tildebsymmetry}, the above theorem yields the following characterisation of drifts which permit MMC:
\begin{cor}\label{cor:MMCchar}
Under assumptions (A1) and (A2), 
the Markovian coupling of \(d\)-dimensional Euclidean diffusions $(X,Y)$ is a Markovian maximal coupling if and only if there exist function $Q:[0,\tau)\to\textbf{O}(d)$ and $l:[0,\tau)\to\Reals$, 
as prescribed in Theorem \ref{thm:MMCchar}, such that
\begin{equation}\label{eq:driftchar}
\mathbf{b}(t,\x)=Q^\top(t)\widetilde{\mathbf{b}}(t,Q(t)\x-l(t)\mathbf{e}_1)-Q^\top(t)\dot{Q}(t)\x+\dot{l}(t)\mathbf{n}(t)
\end{equation}
for some $\widetilde{\mathbf{b}}$ satisfying Assumptions (A1) and (A2) and fulfilling the relationship \eqref{eq:tildebsymmetry}.
\end{cor}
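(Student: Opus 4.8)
The plan is to derive Corollary \ref{cor:MMCchar} as a direct algebraic consequence of Theorem \ref{thm:MMCchar}, with essentially all the probabilistic content already packaged into that theorem. First I would observe that the reverse implication of Theorem \ref{thm:MMCchar}(ii) gives us, for any MMC $(X,Y)$, the existence of $C^1$ curves $Q:[0,\tau)\to\mathbf{O}(d)$ and $l:[0,\tau)\to\Reals$ satisfying the stated boundary conditions, such that $(\widetilde X,\widetilde Y)$ defined by $\widetilde X_t=Q(t)X_t-l(t)\mathbf{e}_1$ (and similarly for $\widetilde Y$) is reflection-coupled in the fixed hyperplane $\underline h_1$, and such that the transformed drift $\widetilde{\mathbf b}$ given by \eqref{eq:tildeb} satisfies the symmetry \eqref{eq:tildebsymmetry}. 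Conversely, Theorem \ref{thm:MMCchar}(i), combined with the bimeasurable change of variables of Lemma \ref{lem:mapsto} (applied to $(t,x)\mapsto(t,Q^\top(t)(x+l(t)\mathbf{e}_1))$), shows that whenever such $Q$, $l$ and a $\widetilde{\mathbf b}$ satisfying (A1), (A2) and \eqref{eq:tildebsymmetry} exist, the coupling $(X,Y)$ of the diffusion with the corresponding drift is an MMC. So the corollary amounts to nothing more than rewriting the relation \eqref{eq:tildeb} --- which expresses $\widetilde{\mathbf b}$ in terms of $\mathbf b$, $Q$ and $l$ --- as an explicit formula for $\mathbf b$ in terms of $\widetilde{\mathbf b}$, $Q$ and $l$.

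Next I would carry out the inversion of \eqref{eq:tildeb} by elementary linear algebra. Writing $y=Q(t)x-l(t)\mathbf{e}_1$, so that $x=Q^\top(t)(y+l(t)\mathbf{e}_1)$, equation \eqref{eq:tildeb} reads
\[
\widetilde{\mathbf b}(t,y)\quad=\quad\dot Q(t)Q^\top(t)\bigl(y+l(t)\mathbf{e}_1\bigr)+Q(t)\,\mathbf b\bigl(t,Q^\top(t)(y+l(t)\mathbf{e}_1)\bigr)-\dot l(t)\mathbf{e}_1\,.
\]
Solving for $\mathbf b(t,Q^\top(t)(y+l(t)\mathbf{e}_1))$ and then substituting $x=Q^\top(t)(y+l(t)\mathbf{e}_1)$ (equivalently $y=Q(t)x-l(t)\mathbf{e}_1$) throughout gives
\[
\mathbf b(t,\x)\quad=\quad Q^\top(t)\,\widetilde{\mathbf b}\bigl(t,Q(t)\x-l(t)\mathbf{e}_1\bigr)-Q^\top(t)\dot Q(t)\,\x+\dot l(t)\,Q^\top(t)\mathbf{e}_1\,,
\]
which is \eqref{eq:driftchar} once one notes that $Q^\top(t)\mathbf{e}_1=\mathbf{n}(t)$ by the construction of $Q$ in the proof of Theorem \ref{thm:MMCchar}(ii) (there $Q^\top(t)$ has first column $\mathbf{n}(t)$). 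The only minor care needed is the identity $Q^\top(t)\dot Q(t)Q^\top(t)=Q^\top(t)\dot Q(t)$ after the cancellation $Q^\top Q=\Identity$, and the bookkeeping of which argument of $\mathbf b$ is being substituted; these are routine.

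Finally I would close the logical loop: Theorem \ref{thm:MMCchar}(ii) shows the ``only if'' direction (an MMC forces the existence of $Q$, $l$ with $\widetilde{\mathbf b}$ symmetric, hence $\mathbf b$ has the form \eqref{eq:driftchar}), while the ``if'' direction follows because, given \eqref{eq:driftchar} with $\widetilde{\mathbf b}$ satisfying \eqref{eq:tildebsymmetry}, the transformed process $\widetilde X$ has drift exactly $\widetilde{\mathbf b}$ (by Lemma \ref{lem:rotdiff}, running the substitution in the forward direction), so part (i) of Theorem \ref{thm:MMCchar} makes $(\widetilde X,\widetilde Y)$ a reflection MMC and Lemma \ref{lem:mapsto} transfers this back to $(X,Y)$. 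There is essentially no obstacle here: the content is entirely in Theorem \ref{thm:MMCchar}, and the corollary is a transcription. If anything, the one place to be slightly careful is consistency of the normalisation of $l(0)$ (the statement of Theorem \ref{thm:MMCchar}(ii) has $l(0)=\tfrac{|\x_0|^2-|\y_0|^2}{2|\x_0-\y_0|}$ while one point in its proof writes $\tfrac{|\x_0|^2-|\y_0|^2}{|\x_0-\y_0|}$); I would simply cite the normalisation ``as prescribed in Theorem \ref{thm:MMCchar}'' and not re-derive it, since it plays no role in the inversion itself.
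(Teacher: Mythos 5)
Your proposal is correct and follows essentially the same route as the paper, which derives the corollary simply by inverting \eqref{eq:tildeb} (using $Q^\top(t)\mathbf{e}_1=\mathbf{n}(t)$) and appealing to both directions of Theorem \ref{thm:MMCchar} together with Lemma \ref{lem:mapsto}. The only blemish is the parenthetical remark about the cancellation, which as literally written is false; what the computation actually uses is $Q^\top\dot{Q}Q^\top(y+l\mathbf{e}_1)=Q^\top\dot{Q}Q^\top Q\x=Q^\top\dot{Q}\x$, and your final formula reflects this correctly.
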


\subsection{Rigidity theorems for time-homogeneous diffusions}
The previous subsection established an implicit classification of all {time-nonhomogeneous} diffusions that can be coupled by a Markovian maximal coupling. 
But, as noted in the literature, not many examples of such couplings are known for \emph{time-homogeneous} diffusions. 
It is a matter of general belief that the class of such time-homogeneous diffusions is very small, but little rigorous work appears to have been done to specify this class.

In this subsection we obtain a constraint equation on the drift, leading to certain general conditions on the drift and the starting points which are necessary for the existence of Markovian maximal couplings. 
In the case of affine drifts the constraint equations are explicit enough to classify all affine drifts leading to Markovian maximal couplings. 
We then state and prove the main theorem of this subsection: if there are two balls $\ball(\x_0,r)$ and $\ball(\y_0,r)$ in $\Reals^d$, such that a Markovian maximal coupling exists from all pairs of points $(\x,\y) \in \ball(\x_0,r) \times \ball(\y_0,r)$, then the drift has to be of a very simple affine form, verifying the popular belief that Markovian maximal couplings are indeed very rare.

We conclude by showing a stronger result for one-dimensional diffusions, which states that for such a coupling to exist for a specific pair of starting points, 
either the drift must be an odd function centred at a point, or it must be affine.

The following lemma supplies the constraint equation on the drift. Recall that 
\begin{equation}\label{eq:reflection}
F(t,\x)\quad=\quad
({\Identity}-2\mathbf{n}(t)\mathbf{n}^\top(t))\x+2l(t)\mathbf{n}(t)
\end{equation}
is a linear tranformation sending \(\x\in \Reals^d\) to its reflection in the mirror $I(\x_0,\y_0,t)$. 
For the sake of concise exposition, in the following two lemmas and their proofs we suppress the argument $t$ when writing $l$ and $\mathbf{n}$.
\begin{lem}\label{lem:driftconstraint}
Assume (A1), (A2) hold. A Markovian maximal coupling $(X,Y)$ exists from starting points $\x_0$ and $\y_0$ if and only if there exist continuously differentiable functions $l: [0,\infty) \rightarrow \Reals$ and $\mathbf{n}: [0,\infty) \rightarrow \mathbb{S}^{d-1}$, with $\mathbf{n}(0)=\frac{\x_0-\y_0}{|\x_0-\y_0|}$ and $l(0)=\frac{|\x_0|^2-|\y_0|^2}{|\x_0-\y_0|}$, for which the drift vectorfield $\mathbf{b}$ satisfies the following equation:
\begin{equation}\label{eq:driftconstraint}
\mathbf{b}(\x)\quad=\quad
2(\dot{\mathbf{n}}\mathbf{n}^\top-\mathbf{n}\dot{\mathbf{n}}^\top)\x + 2(\dot{l}\mathbf{n}-l\dot{\mathbf{n}}) + ({\Identity}-2\mathbf{n}\mathbf{n}^\top)\mathbf{b}(F(t,\x)).
\end{equation}
\end{lem}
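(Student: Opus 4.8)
The plan is to exploit Theorem~\ref{thm:MMCchar}(ii) together with the explicit inversion of the transformation in Lemma~\ref{lem:rotdiff}. First I would recall that, by Theorem~\ref{thm:zeroset} and Corollary~\ref{cor:alltime}, the existence of a Markovian maximal coupling from $\x_0$ and $\y_0$ is equivalent to the existence of a $C^1$ parametrization $(l(t),\mathbf{n}(t))$ of the mirror $I(\x_0,\y_0,t)$ (existence and regularity coming from Lemmas~\ref{lem:contpar} and~\ref{lem:IFT}), satisfying the stated initial conditions at $t=0$ obtained by letting $t\downarrow0$ in~\eqref{align:parametrization}, and such that $X$ and $Y$ are coupled via the reflection relation $Y_t=F(t,X_t)$ from~\eqref{eq:reflection} for all $t<\tau$. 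So it remains to show that $Y_t=F(t,X_t)$ being a valid coupling of two copies of the same diffusion is equivalent to the functional equation~\eqref{eq:driftconstraint}.

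Next I would compute, via It\^o's formula, the stochastic differential of $Y_t=F(t,X_t)=({\Identity}-2\mathbf{n}\mathbf{n}^\top)X_t+2l\mathbf{n}$, using that $X_t$ solves $\d X_t=\mathbf{b}(t,X_t)\d t+\d B_t$. Since $F$ is affine in $\x$ with $C^1$ time-dependent coefficients, there is no second-order correction in the spatial variable, and the drift term of $\d Y_t$ reads
\[
\partial_t F(t,X_t)+({\Identity}-2\mathbf{n}\mathbf{n}^\top)\mathbf{b}(t,X_t)\,,
\]
while the martingale part is $({\Identity}-2\mathbf{n}\mathbf{n}^\top)\d B_t$, which is again a standard Brownian motion because ${\Identity}-2\mathbf{n}\mathbf{n}^\top$ is orthogonal. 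Expanding $\partial_t F$ using $\tfrac{\d}{\d t}(\mathbf{n}\mathbf{n}^\top)=\dot{\mathbf{n}}\mathbf{n}^\top+\mathbf{n}\dot{\mathbf{n}}^\top$ gives the drift of $Y_t$ as
\[
-2(\dot{\mathbf{n}}\mathbf{n}^\top+\mathbf{n}\dot{\mathbf{n}}^\top)X_t+2(\dot{l}\mathbf{n}+l\dot{\mathbf{n}})+({\Identity}-2\mathbf{n}\mathbf{n}^\top)\mathbf{b}(t,X_t)\,.
\]
For $(Y_t)$ to be a copy of the diffusion with generator~\eqref{eq:generator} we need this to equal $\mathbf{b}(t,Y_t)=\mathbf{b}(t,F(t,X_t))$ on $\{t<\tau\}$; evaluating along $X_t=\x$ (which ranges over $I^-(\x_0,\y_0,t)$, an open set, as $X_t$ has a positive density there by (A2)) yields
\[
\mathbf{b}(t,F(t,\x))\quad=\quad-2(\dot{\mathbf{n}}\mathbf{n}^\top+\mathbf{n}\dot{\mathbf{n}}^\top)\x+2(\dot{l}\mathbf{n}+l\dot{\mathbf{n}})+({\Identity}-2\mathbf{n}\mathbf{n}^\top)\mathbf{b}(t,\x)\,.
\]
Solving this for $\mathbf{b}(t,\x)$ — which is legitimate since ${\Identity}-2\mathbf{n}\mathbf{n}^\top$ is its own inverse, so applying it to both sides and rearranging — and using $({\Identity}-2\mathbf{n}\mathbf{n}^\top)\dot{\mathbf{n}}=\dot{\mathbf{n}}-2(\mathbf{n}^\top\dot{\mathbf{n}})\mathbf{n}=\dot{\mathbf{n}}$ together with $({\Identity}-2\mathbf{n}\mathbf{n}^\top)\mathbf{n}=-\mathbf{n}$ (because $|\mathbf{n}|\equiv1$ forces $\mathbf{n}^\top\dot{\mathbf{n}}=0$), one recovers precisely~\eqref{eq:driftconstraint} after collecting terms; I would verify the sign bookkeeping so that the coefficient of $\x$ becomes $2(\dot{\mathbf{n}}\mathbf{n}^\top-\mathbf{n}\dot{\mathbf{n}}^\top)$ and the constant term $2(\dot{l}\mathbf{n}-l\dot{\mathbf{n}})$. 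The converse direction is simply the reverse of this computation: if $\mathbf{b}$ satisfies~\eqref{eq:driftconstraint} then the process $F(t,X_t)$ has the right drift and diffusion coefficient, hence is a copy of the diffusion, and the resulting coupling is Markovian and maximal by the argument of Theorem~\ref{thm:MMCchar}.

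The main obstacle I anticipate is not conceptual but careful: ensuring that the reflection relation $Y_t=F(t,X_t)$ holds almost surely for all $t<\tau$ (supplied by Corollary~\ref{cor:alltime}), that the It\^o computation is valid only up to the stopping time $\tau$ and that the functional identity obtained holds on the relevant open region of $\x$ (so that it propagates to all of $\Reals^d$ by continuity of $\mathbf{b}$ and of $F$, using that $F(t,\cdot)$ is a bijection), and — in the time-homogeneous case to which the lemma is notationally specialised (writing $\mathbf{b}(\x)$) — keeping the $t$-dependence of $l,\mathbf{n}$ explicit inside $F(t,\x)$ while $\mathbf{b}$ itself is $t$-independent. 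The algebraic simplifications using $\mathbf{n}^\top\dot{\mathbf{n}}=0$ are the only place a sign error could creep in, so I would present that step explicitly.
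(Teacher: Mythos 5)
Your proposal is correct and follows essentially the same route as the paper: Itô's formula applied to the reflection relation $Y_t=F(t,X_t)$ (valid up to $\tau$ by Corollary \ref{cor:alltime} and Lemmas \ref{lem:contpar}, \ref{lem:IFT}), identification of the drift of $Y$ with $\mathbf{b}(t,Y_t)$ to extract the functional constraint (your version solves for $\mathbf{b}(\x)$ by applying the involution ${\Identity}-2\mathbf{n}\mathbf{n}^\top$, the paper substitutes $X_t=({\Identity}-2\mathbf{n}\mathbf{n}^\top)(Y_t-2l\mathbf{n})$ first — the same algebra), and the converse via the reduction to a fixed-mirror reflection coupling through Theorem \ref{thm:MMCchar}(ii). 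The sign bookkeeping and the extension of the identity from the half-space to all of $\Reals^d$ via the involutivity of $F(t,\cdot)$ both check out, so there is no gap.
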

\begin{proof}
First, assume that a Markovian maximal coupling $(X,Y)$ exists. Note from equation \eqref{eq:YfuncX} that
\[
Y_t\quad=\quad F(t,X_t)
\]
for $t \in[0, \tau)$, with $\{(l(t),\mathbf{n}(t)): t \in [0,\tau)\}$ obtained from Lemmas \ref{lem:contpar} and \ref{lem:IFT}. 
Applying stochastic calculus to the function $F$ for $t \in [0, \tau)$, substituting in 
\[
X_t\quad=\quad
({\Identity}-2\mathbf{n}\mathbf{n}^\top)(Y_t-2l\mathbf{n})\,,
\]
and simplifying, we obtain
\begin{equation}\label{eq:Itoconstraint}
\d Y_t\quad=\quad
\left(2(\dot{\mathbf{n}}\mathbf{n}^\top-\mathbf{n}\dot{\mathbf{n}}^\top)Y_t + 2(\dot{l}\mathbf{n}-l\dot{\mathbf{n}}) 
 + ({\Identity}-2\mathbf{n}\mathbf{n}^\top)\mathbf{b}(F(t,Y_t))\right)\d t
 + ({\Identity}-2\mathbf{n}\mathbf{n}^\top)\d B_t\,.
\end{equation}
The diffusion term is clearly a Brownian motion, as can be verified by the L\'evy criterion. 
On the other hand, the drift term in the semimartingale decomposition of $Y$ is given by $\mathbf{b}(t,Y_t) \d t$. 
Equating the two drifts yields the necessity of the drift constraint condition \eqref{eq:driftconstraint}.

Now, suppose $\mathbf{b}$ satisfies (\ref{eq:driftconstraint}) for $l$ and $\mathbf{n}$ as given in the lemma. Let $\tau=\inf\{t>0: X_t \in I(\x_0, \y_0,t)\}$. 
Then (\ref{eq:Itoconstraint}) shows that $Y_t=F(t,X_t){\Identity}(t < \tau) + X_t {\Identity}(t \ge \tau)$ gives a valid coupling $\mu$ of the two copies $(X,Y)$ with coupling time $\tau$. To see that this is indeed the maximal coupling, obtain the $C^1$ curve $Q:[0,\tau) \rightarrow \mathbf{O}(d)$ from $\mathbf{n}$ by the procedure given in the proof of Theorem \ref{thm:MMCchar} (ii). Now, $(\widetilde{X}, \widetilde{Y})$ obtained from $(X,Y)$ by \eqref{eq:rotdiff} is reflection-coupled according to the recipe in \eqref{eq:reflcoup}. Theorem \ref{thm:MMCchar} (ii) then implies that $(X,Y)$ is a Markovian maximal coupling.
\end{proof}
Equation (\ref{eq:driftconstraint}) provides the constraint only in implicit form, and the main task is to extract as much information from it as possible. 
In what follows, we decompose the gradient matrix $\nabla \mathbf{b}$ into symmetric and skew-symmetric parts \emph{via}
\begin{equation}\label{eq:graddec}
\nabla \mathbf{b}(\x)\quad=\quad S(\x)+T(\x)\,,
\end{equation}
where ${S(\x)=\tfrac{\nabla \mathbf{b}(\x)+(\nabla \mathbf{b})^\top(\x)}{2}}$ and ${T(\x)=\tfrac{\nabla \mathbf{b}(\x)-(\nabla \mathbf{b})^\top(\x)}{2}}$. 
The next lemma records relations for \(S(\x)\) and \(T(\x)\) which are direct consequences of \eqref{eq:driftconstraint}.
\begin{lem}\label{lem:consteqns}
Under the hypotheses of Lemma \ref{lem:driftconstraint} and \eqref{eq:driftconstraint}, the following hold for all $\x \in \Reals^d$ and $t>0$:
\begin{enumerate}
\item[(i)]\begin{equation}\label{eq:eigenset}
S(\x)\quad=\quad
({\Identity}-2\mathbf{n}\mathbf{n}^\top)S(F(t,\x))({\Identity}-2\mathbf{n}\mathbf{n}^\top)\,,
\end{equation}
and
\begin{equation}\label{eq:skewform}
T(\x)=2(\dot{\mathbf{n}}\mathbf{n}^\top-\mathbf{n}\dot{\mathbf{n}}^\top)+({\Identity}-2\mathbf{n}\mathbf{n}^\top)T(F(t,\x))({\Identity}-2\mathbf{n}\mathbf{n}^\top)\,.
\end{equation}
In particular, $S(\x)$ and $S(F(t,\x))$ have the same set of eigenvalues.
\item[(ii)] There exists a continuous function $\lambda(\cdot,\cdot): [0,\infty) \times \Reals^d \rightarrow \Reals$ such that
\begin{equation}\label{eq:eigen}
\left(\frac{S(\x)+S(F(t,\x))}{2}\right)\mathbf{n}=\lambda(t,\x)\mathbf{n}.
\end{equation}
\item[(iii)]\begin{equation}\label{eq:skew}
\left(\frac{T(\x)+T(F(t,\x))}{2}\right)\mathbf{n}=\dot{\mathbf{n}}.
\end{equation}
\end{enumerate}
\end{lem}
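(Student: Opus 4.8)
The plan is to differentiate the constraint \eqref{eq:driftconstraint} with respect to the space variable and then split the resulting matrix identity into its symmetric and skew-symmetric parts. Write $R = R(t) = {\Identity} - 2\mathbf{n}\mathbf{n}^\top$ for the reflection matrix appearing in \eqref{eq:reflection}, and $A = A(t) = 2(\dot{\mathbf{n}}\mathbf{n}^\top - \mathbf{n}\dot{\mathbf{n}}^\top)$. I begin by recording the elementary facts I shall use repeatedly: $R$ is symmetric with $R^2 = {\Identity}$ and $R\mathbf{n} = -\mathbf{n}$; the matrix $A$ is skew-symmetric; and $\dot{\mathbf{n}}^\top\mathbf{n} = 0$ (differentiate $|\mathbf{n}|^2 = 1$). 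Since $\mathbf{b}$ is $C^1$ by Assumption (A1) and $\nabla_\x F(t,\x) = R$, the chain rule applied to $\x \mapsto \mathbf{b}(F(t,\x))$ turns \eqref{eq:driftconstraint} into
\[
\nabla\mathbf{b}(\x) \;=\; A + R\,(\nabla\mathbf{b})(F(t,\x))\,R\,.
\]

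Because $R^\top = R$, the conjugation $M \mapsto RMR$ sends symmetric matrices to symmetric matrices and skew-symmetric matrices to skew-symmetric matrices, so comparing the symmetric and skew parts of the previous display with the decomposition \eqref{eq:graddec} yields \eqref{eq:eigenset} and \eqref{eq:skewform} at once. Since $R$ is orthogonal, $S(\x) = R\,S(F(t,\x))\,R^{-1}$ is conjugate to $S(F(t,\x))$, hence the two share the same spectrum; this proves (i). For (ii) and (iii) the idea is to evaluate \eqref{eq:eigenset} and \eqref{eq:skewform} on the vector $\mathbf{n}$ and use $R\mathbf{n} = -\mathbf{n}$. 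From \eqref{eq:eigenset}, $S(\x)\mathbf{n} = -R\,S(F(t,\x))\mathbf{n}$, and expanding $R$ gives $S(\x)\mathbf{n} + S(F(t,\x))\mathbf{n} = 2\bigl(\mathbf{n}^\top S(F(t,\x))\mathbf{n}\bigr)\mathbf{n}$; hence \eqref{eq:eigen} holds with
\[
\lambda(t,\x) \;=\; \mathbf{n}(t)^\top S(F(t,\x))\,\mathbf{n}(t)\,,
\]
which is continuous because $\mathbf{n}$ is continuous, $F$ is continuous (being built from the $C^1$ data $l,\mathbf{n}$), and $S$ is continuous (as $\mathbf{b}\in C^1$). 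Likewise, from \eqref{eq:skewform}, together with $A\mathbf{n} = 2\dot{\mathbf{n}}$ and $\mathbf{n}^\top T(F(t,\x))\mathbf{n} = 0$ (skew-symmetry), one gets $T(\x)\mathbf{n} = 2\dot{\mathbf{n}} - T(F(t,\x))\mathbf{n}$, which rearranges to \eqref{eq:skew}.

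There is no serious obstacle here; the whole statement is a short computation once the constraint has been differentiated. The only points needing a little care are bookkeeping ones — keeping the Jacobian factor $\nabla_\x F = R$ on the correct side of $(\nabla\mathbf{b})(F(t,\x))$, verifying that $A$ is genuinely skew-symmetric and $R$ genuinely involutive, and reading off continuity of $\lambda$ from the regularity of $\mathbf{b}$, $l$ and $\mathbf{n}$. The one structural remark worth isolating is that, because a reflection is simultaneously symmetric and orthogonal, conjugation by $R$ both respects the symmetric/skew splitting \eqref{eq:graddec} and preserves eigenvalues — which is exactly what makes the clean statement of part (i) available — and then evaluating on the fixed normal direction $\mathbf{n}$ immediately extracts parts (ii) and (iii).
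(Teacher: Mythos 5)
Your proof is correct and follows essentially the same route as the paper: differentiate the constraint \eqref{eq:driftconstraint} to obtain \eqref{eq:grad}, split into symmetric and skew parts using that conjugation by the symmetric orthogonal reflection preserves this splitting (and the spectrum), then post-multiply by $\mathbf{n}$ using $({\Identity}-2\mathbf{n}\mathbf{n}^\top)\mathbf{n}=-\mathbf{n}$ and $\dot{\mathbf{n}}\perp\mathbf{n}$. The only (harmless) addition is your explicit formula $\lambda(t,\x)=\mathbf{n}(t)^\top S(F(t,\x))\mathbf{n}(t)$, which the paper leaves implicit.
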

\begin{proof}
Differentiating both sides of (\ref{eq:driftconstraint}), while recalling the reflection form of \(F(t,\x)\) as given in \eqref{eq:reflection}, we obtain
\begin{equation}\label{eq:grad}
\nabla \mathbf{b}(\x)=2(\dot{\mathbf{n}}\mathbf{n}^\top-\mathbf{n}\dot{\mathbf{n}}^\top)+({\Identity}-2\mathbf{n}\mathbf{n}^\top)\nabla\mathbf{b}(F(t,\x))({\Identity}-2\mathbf{n}\mathbf{n}^\top).
\end{equation}
This immediately yields part (i). The equality of the set of eigenvalues follows from the fact that the reflection matrix $({\Identity}-2\mathbf{n}\mathbf{n}^\top)$ is symmetric and orthogonal.

Parts (ii) and (iii) follow by post-multiplying the equations of part (i) by $\mathbf{n}$, 
bearing in mind that as \(\mathbf{n}\) is a unit vector therefore \(\mathbf{n}\) and \(\dot{\mathbf{n}}\) must be orthogonal.
\end{proof}
Because ${\mathbf{n}(0)=\tfrac{\x_0-\y_0}{|\x_0-\y_0|}}$ and ${l(0)=\mathbf{n}(0).\tfrac{\x_0+\y_0}{2}}$, we know $F(0,\cdot)$ explicitly. 
Even in the generality of the hypotheses of  Lemma \ref{lem:driftconstraint}, one can obtain the following necessary condition on the drift of a Euclidean diffusion for existence of a Markovian maximal coupling:
use (ii) of the above lemma and take $t \downarrow 0$.
\begin{cor}
Under the hypotheses of Lemma \ref{lem:driftconstraint} and \eqref{eq:driftconstraint}, $\mathbf{n}(0)$ must be an eigenvector of ${\tfrac{S(\x)+S(F(0,\x))}{2}}$ corresponding to some eigenvalue $\lambda(\x)$, for every $\x \in \Reals^d$.
\end{cor}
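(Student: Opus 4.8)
The plan is to derive this corollary directly as a limiting case of part (ii) of Lemma \ref{lem:consteqns}. Since a Markovian maximal coupling exists, Lemma \ref{lem:driftconstraint} supplies continuously differentiable functions \(l\) and \(\mathbf{n}\) with \(\mathbf{n}(0)=\tfrac{\x_0-\y_0}{|\x_0-\y_0|}\), and Lemma \ref{lem:consteqns}(ii) gives a continuous function \(\lambda(\cdot,\cdot)\) on \([0,\infty)\times\Reals^d\) such that
\[
\left(\frac{S(\x)+S(F(t,\x))}{2}\right)\mathbf{n}(t)\quad=\quad\lambda(t,\x)\,\mathbf{n}(t)
\]
for all \(\x\in\Reals^d\) and all \(t>0\).

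First I would let \(t\downarrow 0\). The map \(t\mapsto\mathbf{n}(t)\) is continuous at \(0\) with limit \(\mathbf{n}(0)\), and the reflection map \(F(t,\x)=({\Identity}-2\mathbf{n}(t)\mathbf{n}^\top(t))\x+2l(t)\mathbf{n}(t)\) depends continuously on \((l(t),\mathbf{n}(t))\), so \(F(t,\x)\to F(0,\x)\) for every fixed \(\x\). Since \(\mathbf{b}\) is continuously differentiable (Assumption (A1)), \(S(\cdot)=\tfrac12(\nabla\mathbf{b}+(\nabla\mathbf{b})^\top)\) is continuous, so \(S(F(t,\x))\to S(F(0,\x))\). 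Hence the left-hand side converges to \(\tfrac12\bigl(S(\x)+S(F(0,\x))\bigr)\mathbf{n}(0)\). It remains to handle the right-hand side: \(\lambda(t,\x)\mathbf{n}(t)\) converges because \(\lambda\) is continuous on \([0,\infty)\times\Reals^d\) (in particular at \(t=0\)) and \(\mathbf{n}(t)\to\mathbf{n}(0)\), so the limit is \(\lambda(0,\x)\mathbf{n}(0)\). Setting \(\lambda(\x):=\lambda(0,\x)\), we conclude
\[
\left(\frac{S(\x)+S(F(0,\x))}{2}\right)\mathbf{n}(0)\quad=\quad\lambda(\x)\,\mathbf{n}(0)
\]
for every \(\x\in\Reals^d\), which is exactly the claim that \(\mathbf{n}(0)\) is an eigenvector of \(\tfrac12\bigl(S(\x)+S(F(0,\x))\bigr)\) with eigenvalue \(\lambda(\x)\). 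The continuity of \(\x\mapsto\lambda(\x)\) is inherited from that of \(\lambda(\cdot,\cdot)\).

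The only point requiring mild care — and what I expect to be the main (small) obstacle — is confirming that the function \(\lambda(\cdot,\cdot)\) produced in Lemma \ref{lem:consteqns}(ii) is genuinely continuous up to and including \(t=0\), rather than merely on \((0,\infty)\times\Reals^d\). Since \(\mathbf{n}\) is unit-length, one can recover \(\lambda(t,\x)=\mathbf{n}^\top(t)\bigl(\tfrac12(S(\x)+S(F(t,\x)))\bigr)\mathbf{n}(t)\) by taking an inner product with \(\mathbf{n}(t)\) in \eqref{eq:eigen}; the right-hand side of this expression is manifestly continuous in \((t,\x)\) on all of \([0,\infty)\times\Reals^d\), using continuity of \(\mathbf{n}\), \(l\), \(F\) and \(S\). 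This makes the passage to the limit rigorous and completes the proof.
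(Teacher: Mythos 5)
Your proposal is correct and follows exactly the route the paper intends: the paper's own justification is the one-line instruction to ``use (ii) of Lemma \ref{lem:consteqns} and take $t \downarrow 0$'', and your argument simply fills in the routine continuity details (of $\mathbf{n}$, $F$, $S$, and the recovery of $\lambda$ via $\lambda(t,\x)=\mathbf{n}^\top(t)\tfrac12\bigl(S(\x)+S(F(t,\x))\bigr)\mathbf{n}(t)$) that make that limit rigorous. No gap; same approach.
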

Briefly restrict attention to the case where $\mathbf{b}(\x)$ is affine in \(\x\). 
The following theorem completely classifies the set of such drifts which ensure Markovian maximal coupling.
\begin{thm}\label{thm:affine-drift}
Assume (A1), (A2). 
Let $\mathbf{b}(\x)=A\x +\mathbf{c}$ for some $(d \times d)$ matrix $A$ and some $d$-dimensional vector $\mathbf{c}$. Denote ${S=\tfrac{A+A^\top}{2}}$ and ${T=\tfrac{A-A^\top}{2}}$. 
Then a Markovian maximal coupling $(X,Y)$ exists from starting points $\x_0$ and $\y_0$ if and only if 
there exists an eigenvalue $\lambda_0$ of $S$ such that the vectors $T^k(\x_0-\y_0)$ (for $0 \le k \le d-1$) all lie in the eigenspace of $S$ corresponding to $\lambda_0$. 
In this case (using matrix exponentials \(\operatorname{\mathtt{exp}}\)),
\begin{equation}\label{eq:nform}
\mathbf{n}(t)\quad=\quad \operatorname{\mathtt{exp}}\left({Tt}\right)\frac{\x_0-\y_0}{|\x_0-\y_0|}\,, \text{ and }
\end{equation}
\begin{equation}\label{eq:lform}
l(t)\quad=\quad e^{\lambda_0 t}\frac{|\x_0|^2-|\y_0|^2}{2|\x_0-\y_0|} + e ^{\lambda_0 t} \int_0^t \frac{(\x_0-\y_0)^\top}{|\x_0-\y_0|}\operatorname{\mathtt{exp}}\left({-(T+\lambda_0 {\Identity})s}\right)\mathbf{c}\d s\,.
\end{equation}
\end{thm}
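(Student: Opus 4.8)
The plan is to feed the affine ansatz $\mathbf{b}(\x)=A\x+\mathbf{c}$ into the constraint equation \eqref{eq:driftconstraint} of Lemma \ref{lem:driftconstraint}, and extract closed-form ODEs for $\mathbf{n}(t)$ and $l(t)$. First I would specialise Lemma \ref{lem:consteqns} to the affine case: now $S(\x)\equiv S$ and $T(\x)\equiv T$ are constant, so \eqref{eq:eigenset} reads $S=({\Identity}-2\mathbf{n}\mathbf{n}^\top)S({\Identity}-2\mathbf{n}\mathbf{n}^\top)$, equivalently $S$ commutes with the reflection ${\Identity}-2\mathbf{n}(t)\mathbf{n}^\top(t)$ for every $t$, which forces $\mathbf{n}(t)$ to lie in an eigenspace of $S$; call the eigenvalue $\lambda_0$, noting it must be constant in $t$ by continuity and connectedness of $[0,\tau)$, and $\lambda_0=\lambda(t,\x)$ in the notation of part (ii). Then \eqref{eq:skew} becomes $\dot{\mathbf{n}}=T\mathbf{n}$ (the $S$-part drops out since $S\mathbf{n}=\lambda_0\mathbf{n}$ is parallel to $\mathbf{n}$ while $\dot{\mathbf{n}}\perp\mathbf{n}$, and symmetrising $T$ over the reflection is vacuous), which with $\mathbf{n}(0)=\tfrac{\x_0-\y_0}{|\x_0-\y_0|}$ integrates to \eqref{eq:nform}. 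Since $T$ is skew-symmetric, $\operatorname{\mathtt{exp}}(Tt)$ is orthogonal, so $|\mathbf{n}(t)|=1$ is automatically maintained, as it must be.

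Next I would recover $l(t)$. Post-multiplying the $\x$-independent part of \eqref{eq:driftconstraint} — or more directly, reading off the constant term in \eqref{eq:driftconstraint} after substituting $\mathbf{b}(\x)=A\x+\mathbf{c}$ and $F(t,\x)=({\Identity}-2\mathbf{n}\mathbf{n}^\top)\x+2l\mathbf{n}$ — one finds, after the $A\x$ terms cancel against $2(\dot{\mathbf{n}}\mathbf{n}^\top-\mathbf{n}\dot{\mathbf{n}}^\top)\x$ plus the reflected-$A$ term (using $\dot{\mathbf{n}}=T\mathbf{n}$ and $S\mathbf{n}=\lambda_0\mathbf{n}$), a scalar-vector identity of the form $2(\dot l\mathbf{n}-l\dot{\mathbf{n}})+2l({\Identity}-2\mathbf{n}\mathbf{n}^\top)A\mathbf{n}+({\Identity}-2\mathbf{n}\mathbf{n}^\top)\mathbf{c}=\mathbf{c}$. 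Projecting this onto $\mathbf{n}(t)$ (and using $\mathbf{n}^\top\dot{\mathbf{n}}=0$, $\mathbf{n}^\top A\mathbf{n}=\mathbf{n}^\top S\mathbf{n}=\lambda_0$) collapses it to the linear scalar ODE $\dot l=\lambda_0 l+\mathbf{n}(t)^\top\mathbf{c}$, whose solution with $l(0)=\tfrac{|\x_0|^2-|\y_0|^2}{2|\x_0-\y_0|}$ and $\mathbf{n}(t)=\operatorname{\mathtt{exp}}(Tt)\mathbf{n}(0)$, hence $\mathbf{n}(t)^\top=\mathbf{n}(0)^\top\operatorname{\mathtt{exp}}(-Tt)$, is precisely \eqref{eq:lform}. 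I would also check that the components of the identity orthogonal to $\mathbf{n}(t)$ are satisfied identically given the eigenspace condition, so nothing further is imposed beyond $\lambda_0$ being an eigenvalue and $\mathbf{n}(t)$ staying in its eigenspace.

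For the \emph{characterisation} it remains to pin down exactly when such $l,\mathbf{n}$ exist, i.e.\ when the trajectory $t\mapsto\operatorname{\mathtt{exp}}(Tt)\mathbf{n}(0)$ stays inside an eigenspace $V_{\lambda_0}$ of $S$. The key observation is that $\operatorname{\mathtt{exp}}(Tt)\mathbf{n}(0)\in V_{\lambda_0}$ for all $t$ near $0$ if and only if all derivatives at $0$ lie in $V_{\lambda_0}$, i.e.\ $T^k\mathbf{n}(0)\in V_{\lambda_0}$ for all $k\ge0$; and by Cayley–Hamilton this is equivalent to $T^k(\x_0-\y_0)\in V_{\lambda_0}$ for $0\le k\le d-1$ (the normalisation of $\mathbf{n}(0)$ is immaterial). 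Conversely, if this eigenspace condition holds then $V_{\lambda_0}$ is $T$-invariant, so $\operatorname{\mathtt{exp}}(Tt)\mathbf{n}(0)\in V_{\lambda_0}$ for all $t$, and defining $\mathbf{n},l$ by \eqref{eq:nform}--\eqref{eq:lform} gives the required continuously differentiable data; Lemma \ref{lem:driftconstraint} then yields the Markovian maximal coupling. I would double-check that the initial conditions $\mathbf{n}(0)=\tfrac{\x_0-\y_0}{|\x_0-\y_0|}$ and $l(0)=\tfrac{|\x_0|^2-|\y_0|^2}{|\x_0-\y_0|}$ demanded by Lemma \ref{lem:driftconstraint} are compatible (up to the factor-of-two bookkeeping in $l(0)$ that appears between the statements, which should be reconciled).

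\textbf{Main obstacle.} The routine part is the It\^o/algebra bookkeeping; the genuinely delicate step is the converse direction of the eigenspace characterisation — verifying that the constant-term identity imposes \emph{no} constraint beyond "$\mathbf{n}(t)$ lives in a single $S$-eigenspace," i.e.\ that once $\mathbf{n}(t)=\operatorname{\mathtt{exp}}(Tt)\mathbf{n}(0)$ and $l(t)$ solves its scalar ODE, equation \eqref{eq:driftconstraint} holds identically as a vector equation in $\x$ and $t$. This requires carefully expanding $({\Identity}-2\mathbf{n}\mathbf{n}^\top)A({\Identity}-2\mathbf{n}\mathbf{n}^\top)$, separating the $S$ and $T$ pieces, and using both $S\mathbf{n}=\lambda_0\mathbf{n}$ and $\dot{\mathbf{n}}=T\mathbf{n}$ to see the cancellations — together with confirming that $\operatorname{\mathtt{exp}}(Tt)$ being orthogonal is what makes $\mathbf{n}(t)$ a legitimate unit-normal parametrisation. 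I expect the equality-of-eigenvalues remark in Lemma \ref{lem:consteqns}(i) and the constancy of $\lambda_0$ in $t$ to be the conceptual linchpin that makes everything close up.
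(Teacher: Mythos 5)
Your proposal is correct and takes essentially the same route as the paper: deriving $S\mathbf{n}=\lambda_0\mathbf{n}$ and $\dot{\mathbf{n}}=T\mathbf{n}$ from Lemma \ref{lem:consteqns} with $\lambda_0$ constant by discreteness of the spectrum, solving for \eqref{eq:nform} and the scalar ODE $\dot l=\lambda_0 l+\mathbf{n}^\top\mathbf{c}$ to get \eqref{eq:lform}, extracting the $T^k(\x_0-\y_0)$ condition by differentiating at $t=0$ plus Cayley--Hamilton, and verifying \eqref{eq:driftconstraint} identically to invoke the sufficiency half of Lemma \ref{lem:driftconstraint}. The only quibbles are minor: $V_{\lambda_0}$ need not be $T$-invariant (only the $T$-cyclic subspace generated by $\x_0-\y_0$ is, which is all your series argument needs), and the factor-of-two discrepancy in $l(0)$ that you flagged is indeed an inconsistency in the statement of Lemma \ref{lem:driftconstraint}, with \eqref{eq:lform} carrying the correct value.
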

\begin{proof}
Suppose there exists a Markovian maximal coupling $(X,Y)$ starting from $\x_0$ and $\y_0$. From (ii) and (iii) of Lemma \ref{lem:consteqns} we get the following:
\begin{equation}\label{eq:symmpart}
S\mathbf{n}(t)\quad=\quad \lambda(t)\mathbf{n}(t)
\end{equation}
(where we note that $\lambda$ is a function of $t$ only) and
\begin{equation}\label{eq:skewpart}
T\mathbf{n}(t)\quad=\quad \dot{\mathbf{n}}(t)\,.
\end{equation}

Solving (\ref{eq:skewpart}), we get \eqref{eq:nform}. 
Since $T$ is skew-symmetric, the above formula implies $|\mathbf{n}(t)|=1$ for all $t$.

The finite symmetric matrix \(S\) has discrete spectrum;
by this, and the continuity of $\mathbf{n}(\cdot)$ and $\lambda(\cdot)$, it follows immediately from (\ref{eq:symmpart}) that $\lambda(\cdot) \equiv \lambda_0$ for some constant $\lambda_0$.
Thus $\mathbf{n}(t)$, as given by \eqref{eq:nform}, must lie in the eigenspace of $S$ corresponding to $\lambda_0$, for all time $t$. 
Substituting this formula for $\mathbf{n}(t)$ in equation \eqref{eq:symmpart} and differentiating \eqref{eq:nform} $k$ times with respect to $t$ (for $k=0,1,\dots, d-1$), then setting $t=0$, we obtain that the vectors $T^k(\x_0-\y_0)$ for $0 \le k \le d-1$ must all lie in the eigenspace of $S$ corresponding to $\lambda_0$. 
As $T$ solves its characteristic equation, it is clear that all the higher powers $T^k(\x_0-\y_0)$ for $k\ge d$ must also lie in this eigenspace. 
Using the series representation of $\operatorname{\mathtt{exp}}\left({Tt}\right)$, this means that $\mathbf{n}(t)$ must also lie in this eigenspace for all $t$.

To solve for $l$, note that computation with \eqref{eq:reflection}, \eqref{eq:driftconstraint}, \eqref{eq:grad} yields the following expression for \(\mathbf{n}=\mathbf{n}(t)\) and $l=l(t)$:
\begin{equation}\label{eq:leqn}
2(\dot{l}\mathbf{n}-l\dot{\mathbf{n}})+2l({\Identity}-2\mathbf{n}\mathbf{n}^\top)A\mathbf{n}-2\mathbf{n}\mathbf{n}^\top\mathbf{c}\quad=\quad0\,.
\end{equation}
On the other hand, \eqref{eq:symmpart} and \eqref{eq:skewpart} yield 
\[
A\mathbf{n}\quad=\quad
\lambda_0\mathbf{n}+\dot{\mathbf{n}}\,.
\]
Substituting into \eqref{eq:leqn} and simplifying, 
\begin{equation}\label{eq:ldiffeq}
\dot{l}\quad=\quad
\lambda_0l+ \mathbf{n}^\top \mathbf{c}\,.
\end{equation}
Solving this equation, using the solution for $\mathbf{n}=\mathbf{n}(t)$ obtained from \eqref{eq:nform}, we get \eqref{eq:lform}.

Conversely, suppose there exists an eigenvalue $\lambda_0$ of $S$ such that the vectors $T^k(\x_0-\y_0)$ (for $0 \le k \le d-1$) all lie in the eigenspace of $S$ corresponding to $\lambda_0$. To prove the existence of a Markovian maximal coupling $(X,Y)$ starting from $\x_0$ and $\y_0$, we will show that \eqref{eq:driftconstraint} holds with $\mathbf{n}$ and $l$ as given in the theorem.

Clearly, for this choice of $\mathbf{n}$ and $l$, \eqref{eq:skewpart} and \eqref{eq:ldiffeq} hold. Using these, we obtain
$$
\dot{\mathbf{n}}\mathbf{n}^\top - \mathbf{n}\dot{\mathbf{n}}^\top\quad=\quad T\mathbf{n}\mathbf{n}^\top + \mathbf{n}\mathbf{n}^\top T
$$
and
$$
\dot{l}\mathbf{n}-l\dot{\mathbf{n}}\quad=\quad \lambda_0l\mathbf{n} + \mathbf{n}\mathbf{n}^\top\mathbf{c}-lT\mathbf{n}.
$$
Now, observe that $S\mathbf{n}=\lambda_0\mathbf{n}$ and
$$
\mathbf{n}^\top A \mathbf{n}\quad=\quad \mathbf{n}^\top S \mathbf{n} \quad=\quad \lambda_0.
$$
Using these, we can write
\begin{align*}
({\Identity}-2\mathbf{n}\mathbf{n}^\top)\mathbf{b}(F(t,\x))\quad&=\quad ({\Identity}-2\mathbf{n}\mathbf{n}^\top)(A({\Identity}-2\mathbf{n}\mathbf{n}^\top)\x+2lA\mathbf{n}+ \mathbf{c})\\
&=\quad ({\Identity}-2\mathbf{n}\mathbf{n}^\top)A({\Identity}-2\mathbf{n}\mathbf{n}^\top)\x - 2\lambda_0l\mathbf{n} + 2lT\mathbf{n} + ({\Identity}-2\mathbf{n}\mathbf{n}^\top)\mathbf{c}.
\end{align*}
Applying the above relations, the right hand side of \eqref{eq:driftconstraint} becomes
\begin{align*}
[2(T\mathbf{n}\mathbf{n}^\top + \mathbf{n}\mathbf{n}^\top T) + ({\Identity}-2\mathbf{n}\mathbf{n}^\top)A({\Identity}-2\mathbf{n}\mathbf{n}^\top)]\x + \mathbf{c} \quad&=\quad (A\x+\mathbf{c})\\
&\quad + [-2S\mathbf{n}\mathbf{n}^\top - 2\mathbf{n}\mathbf{n}^\top S + 4(\mathbf{n}^\top A \mathbf{n})\mathbf{n}\mathbf{n}^\top]\x,
\end{align*}
where we used $A=S+T$. Now, using $S\mathbf{n}=\lambda_0\mathbf{n}$ and $\mathbf{n}^\top A \mathbf{n}=\lambda_0$ again, we get
$$
-2S\mathbf{n}\mathbf{n}^\top - 2\mathbf{n}\mathbf{n}^\top S + 4(\mathbf{n}^\top A \mathbf{n})\mathbf{n}\mathbf{n}^\top\quad=\quad 0,
$$
and thus, \eqref{eq:driftconstraint} holds, proving the theorem.
\end{proof}
The following corollary is immediate from the above theorem.
\begin{cor}\label{cor:twodimlin}
If $d=2$, then under the hypotheses of Theorem \ref{thm:affine-drift}, $A$ is either a symmetric matrix or of the form $\lambda_0{\Identity}+T$ for some real scalar $\lambda_0$ and a skew-symmetric matrix $T$.
\end{cor}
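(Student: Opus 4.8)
The plan is to specialise Theorem \ref{thm:affine-drift} to $d=2$ and argue according to whether the skew-symmetric part $T$ is zero. If $T=0$ then $A=S$ is symmetric and we are in the first case, so assume $T\neq0$. In two dimensions a non-zero skew-symmetric matrix has the form $T=\omega J$ where $\omega\neq0$ and $J=\left(\begin{smallmatrix}0&-1\\1&0\end{smallmatrix}\right)$, and its only invariant subspaces of $\Reals^2$ are $\{0\}$ and the whole space (since $J$ has no real eigenvectors). The hypothesis of Theorem \ref{thm:affine-drift}, which is necessary for the existence of an MMC, requires that there be an eigenvalue $\lambda_0$ of $S$ whose eigenspace $E_{\lambda_0}$ contains all the vectors $T^k(\x_0-\y_0)$ for $0\le k\le d-1$, i.e.\ contains $\x_0-\y_0$ and $T(\x_0-\y_0)$.

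First I would note that $\x_0-\y_0\neq0$ and $T(\x_0-\y_0)\neq0$ (the latter because $T\neq0$ and $T$ is invertible in the non-zero $2\times2$ skew case). Next I would observe that $\x_0-\y_0$ and $T(\x_0-\y_0)$ are linearly independent: if $T(\x_0-\y_0)=\mu(\x_0-\y_0)$ for some scalar $\mu$, then $\x_0-\y_0$ would be a real eigenvector of $T$, which is impossible for a non-zero skew-symmetric $2\times2$ matrix. Hence these two vectors span $\Reals^2$, so the eigenspace $E_{\lambda_0}$, which contains both, must be all of $\Reals^2$. Therefore $S=\lambda_0{\Identity}$, and consequently $A=S+T=\lambda_0{\Identity}+T$, which is the second case of the corollary.

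The step I expect to need the most care is the elementary linear-algebra fact that a non-zero skew-symmetric $2\times2$ matrix is invertible and has no real eigenvectors; this is what forces $\x_0-\y_0$ and $T(\x_0-\y_0)$ to be independent and hence forces the eigenspace to be everything. Everything else is a direct read-off from Theorem \ref{thm:affine-drift}. In fact the argument shows slightly more: when $T\neq0$ in dimension $2$, an MMC from $\x_0,\y_0$ exists only if $A-\lambda_0{\Identity}$ is skew-symmetric, and then $\mathbf{n}(t)=\operatorname{\mathtt{exp}}(Tt)\tfrac{\x_0-\y_0}{|\x_0-\y_0|}$ rotates at constant angular speed, so the mirror simply spins about a moving point — but for the statement of the corollary it suffices to record the dichotomy on the form of $A$.
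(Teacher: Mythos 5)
Your proposal is correct and follows essentially the same route as the paper: both deduce from the eigenvalue condition of Theorem \ref{thm:affine-drift} that, when $T\neq0$, the vectors $\x_0-\y_0$ and $T(\x_0-\y_0)$ are nonzero and linearly independent, so the $\lambda_0$-eigenspace of $S$ is all of $\Reals^2$ and $S=\lambda_0\Identity$. The only cosmetic difference is that the paper gets independence by noting the two vectors are mutually orthogonal (immediate from skew-symmetry, $v^\top Tv=0$), whereas you argue via the absence of real eigenvectors of a nonzero $2\times2$ skew-symmetric matrix.
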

\begin{proof}
If the skew-symmetric part $T$ of $A$ is non-zero, then $\x_0-\y_0$ and $T(\x_0-\y_0)$ are non-zero, mutually orthogonal vectors which lie in the eigenspace of $S$ corresponding to $\lambda_0$. Thus, this eigenspace is the whole of $\Reals^2$ and $S=\lambda_0{\Identity}$.
\end{proof}

Now, we state and prove the main theorem of this section. 
Recall the Local Perturbation condition {\LPC} described in the introduction.
\begin{thm}\label{thm:rigidity}
Assume (A1) and (A2) hold for a time-homogeneous Euclidean diffusion. 
Then {\LPC} holds if and only if there exist a real scalar $\lambda_0$, a skew-symmetric matrix $T$ and a vector $\mathbf{c} \in \Reals^d$ such that the diffusion drift is given by
\[
 \mathbf{b}(\x)\quad=\quad\lambda_0\x +T\x + \mathbf{c}
\]
for all $\x \in \Reals^d$.
\end{thm}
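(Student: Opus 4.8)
The plan is to prove the two implications separately. Sufficiency is immediate from Theorem~\ref{thm:affine-drift}: if $\mathbf{b}(\x)=\lambda_0\x+T\x+\mathbf{c}$ with $T$ skew-symmetric, then $A=\lambda_0\Identity+T$ has symmetric part $S=\lambda_0\Identity$, so every vector of $\Reals^d$ lies in the eigenspace of $S$ for $\lambda_0$; in particular the vectors $T^k(\x-\y)$, $0\le k\le d-1$, lie in it for every $\x,\y$, so Theorem~\ref{thm:affine-drift} yields a Markovian maximal coupling from $\x$ and $\y$. As this holds for every pair of starting points, {\LPC} holds a fortiori.

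For necessity, assume {\LPC}, with witnessing $r>0$ and $\x_0,\y_0$. For each $\x\in\ball(\x_0,r)$, $\y\in\ball(\y_0,r)$ with $\x\neq\y$ a Markovian maximal coupling exists, so Lemmas~\ref{lem:driftconstraint}, \ref{lem:consteqns} and \ref{lem:contpar} apply; letting $t\downarrow0$ in \eqref{eq:eigenset} (so that the mirror degenerates to the perpendicular bisector $H(\x,\y)$) gives, for every $\z\in\Reals^d$,
\[
S(\z)\quad=\quad\rho\,S(\sigma(\z))\,\rho\,,
\]
where $\sigma=\sigma_{\x,\y}$ is the affine reflection of $\Reals^d$ in $H(\x,\y)$ and $\rho=\Identity-2\mathbf{u}\mathbf{u}^\top$ is its linear part, $\mathbf{u}=(\x-\y)/|\x-\y|$. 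The leverage comes from the freedom in $(\x,\y)$: as $(\x,\y)$ ranges over $\ball(\x_0,r)\times\ball(\y_0,r)$, the unit vector $\mathbf{u}$ sweeps out a neighbourhood of $\mathbf{u}_0=(\x_0-\y_0)/|\x_0-\y_0|$ in $\mathbb{S}^{d-1}$ (all of $\mathbb{S}^{d-1}$ if $\x_0=\y_0$), and for each attained $\mathbf{u}$ the reflection $\sigma$ may be taken in any member of a one-parameter family, of positive width, of hyperplanes normal to $\mathbf{u}$, obtained by translating $\x$ and $\y$ together along $\mathbf{u}$.

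I would then deduce that $S\equiv\lambda_0\Identity$ for a constant $\lambda_0$. Fix an attained direction and coordinates with $\mathbf{u}=\mathbf{e}_1$, so the attained reflections are $\sigma_c\colon\z\mapsto(2c-z_1,z_2,\dots,z_d)$ for $c$ in an interval $J$ of positive length. Write $S(\z)\mathbf{e}_1=\mu(\z)\mathbf{e}_1+\mathbf{w}(\z)$ with $\mathbf{w}(\z)\perp\mathbf{e}_1$; since $\rho$ fixes vectors orthogonal to $\mathbf{e}_1$ and negates $\mathbf{e}_1$, applying $S(\z)=\rho S(\sigma_c\z)\rho$ to $\mathbf{e}_1$ and separating the $\mathbf{e}_1$-parallel and perpendicular parts gives $\mu(\z)=\mu(\sigma_c\z)$ and $\mathbf{w}(\z)+\mathbf{w}(\sigma_c\z)=0$. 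From the second relation, comparing two values of $c$ shows $\mathbf{w}(\cdot,z')$ is locally constant in its first argument, hence constant by connectedness, hence identically $0$; thus $\mathbf{u}$ is an eigenvector of $S(\z)$ for every $\z$. Letting $\mathbf{u}$ vary over the open neighbourhood of $\mathbf{u}_0$ and using that a symmetric matrix with an open set of eigenvectors is a scalar multiple of the identity, $S(\z)=\mu(\z)\Identity$. The first relation then reads $\mu(\z)=\mu(\sigma_c\z)$, so $\mu$ is constant along every line in an attained direction; as these directions span $\Reals^d$, $\mu$ is a constant $\lambda_0$.

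To conclude, put $\mathbf{g}(\z)=\mathbf{b}(\z)-\lambda_0\z$, so that $\nabla\mathbf{g}=\nabla\mathbf{b}-\lambda_0\Identity$ has symmetric part $S-\lambda_0\Identity=0$: $\nabla\mathbf{g}(\z)$ is skew-symmetric for every $\z$, i.e.\ $\mathbf{g}$ is an infinitesimal Euclidean isometry (a Killing vectorfield). By the classical rigidity of Killing fields on $\Reals^d$ — the local flow $\Phi_t$ of $\mathbf{g}$ satisfies $\tfrac{\d}{\d t}\bigl(D\Phi_t^\top D\Phi_t\bigr)=0$, so each $\Phi_t$ is a local isometry, hence affine, hence $\mathbf{g}=\partial_t\Phi_t\big|_{t=0}$ is affine — there are a constant skew-symmetric $T$ and a vector $\mathbf{c}$ with $\mathbf{g}(\z)=T\z+\mathbf{c}$, so $\mathbf{b}(\z)=\lambda_0\z+T\z+\mathbf{c}$. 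The main obstacle is the local-to-global step of the two middle paragraphs: turning the existence of Markovian maximal couplings for merely a small open set of starting pairs into enough instances of \eqref{eq:eigenset} — over a neighbourhood of normal directions, with a band of parallel mirrors for each — to pin down $S$ on all of $\Reals^d$; the closing Killing-rigidity step additionally calls for mild care at the stated $C^1$ regularity, which the flow computation supplies.
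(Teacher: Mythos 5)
Your proof is correct, and while it runs on the same engine as the paper's — the $t\downarrow0$ limit of the conjugation identity \eqref{eq:eigenset} applied to the family of perpendicular-bisector mirrors supplied by {\LPC} — it is organised along a genuinely different route in both halves. For the symmetric part, the paper (Lemma \ref{lem:symmrig}) first pins down $S(\x^*)=\lambda_0\Identity$ at the single midpoint $\x^*$, using $d$ pairwise non-orthogonal admissible normals there (via part (ii) of Lemma \ref{lem:consteqns}), and then propagates the scalar form to all of $\Reals^d$ through the orbit of $\x^*$ under iterated reflections in the admissible mirrors; you instead argue pointwise at every $\z$, using the band of parallel mirrors in each attained direction $\mathbf{u}$ to show the component of $S(\z)\mathbf{u}$ orthogonal to $\mathbf{u}$ is both constant along lines parallel to $\mathbf{u}$ and odd under the reflections, hence zero, so that an open set of directions consists of eigenvectors of $S(\z)$, making $S(\z)$ scalar, and then translation-invariance along a spanning set of directions makes the scalar constant — no orbit-under-iterated-reflections argument is needed. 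The larger divergence is the skew part: the paper devotes Lemma \ref{lem:skewrig} (three steps, using \eqref{eq:skewform}, part (iii) of Lemma \ref{lem:consteqns} and the parity Lemma \ref{lem:Darboux}) to proving $T(\x)\equiv T$ directly from the coupling, whereas you observe that once $S\equiv\lambda_0\Identity$ the field $\mathbf{b}(\z)-\lambda_0\z$ has everywhere-skew Jacobian and is therefore an affine Euclidean Killing field by classical rigidity; your flow computation (or, just as well, mollifying $\nabla\mathbf{b}$ and using the standard $C^2$ Killing identity before passing to the limit) covers the merely $C^1$ regularity granted by (A1). This buys a shorter, more self-contained finish at the price of importing the Killing-rigidity fact, while the paper's route stays entirely inside the reflection machinery it has built — which is what it then generalises to the manifold setting of Section \ref{sec:manifold}. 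Your sufficiency direction via Theorem \ref{thm:affine-drift} coincides with the paper's (implicit) one.
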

\begin{proof}
We need to show that the set of eigenvalues of $S(\x)$ for any $\x \in \Reals^d$ is the singleton $\{\lambda_0\}$ and the skew-symmetric part $T(\x)$ is a constant matrix $T$. Write 
\[
\Mirrors_0\quad=\quad\{H(\x,\y)\;:\;  \x \in \ball(\x_0,r), \y \in \ball(\y_0,r)\}\,.
\] 
Our approach is to choose an appropriate set of \emph{mirrors} $\Mirrors \subseteq  \Mirrors_0$ and then to consider the \emph{orbit} of a point $\z \in \Reals^d$ under repeated reflections in this set of mirrors, defined as 
\[
\mathcal{O}(\z)\quad=\quad
\left\{ \mathbf{w} \in \Reals^d\;:\; \text{ there exist } h_1,\dots, h_k \in \Mirrors \text{ such that } \mathbf{w} =h_k\dots h_1\z\right\}\,.
\] 
We then use the constraint relations between a point and its reflection obtained in Lemma \ref{lem:consteqns}.

This idea is made more precise in the following internal lemmas. 
\begin{lem}\label{lem:symmrig}
Under the hypotheses of Theorem \ref{thm:rigidity}, there exists $\lambda_0 \in \Reals$ such that $S(\x)=\lambda_0{\Identity}$ for all $\x \in \Reals^d$.
\end{lem}
\begin{proof}
Suppse that \(X\) and \(Y\) start at $\x \in \ball(\x_0,r)$ and $\y \in \ball(\y_0,r)$ respectively.
It follows from letting $t \downarrow 0$ in part (i) of Lemma \ref{lem:consteqns} that, for all $\z \in \Reals^d$, $S(\z)$ and $S(H(\x,\y)\z)$ have the same set of eigenvalues.
(Recall that $H(\x,\y)\z$ represents reflection of $\z$ in the hyperplane $H(\x,\y)$.)

Denote $\x^*=(\x_0+\y_0)/2$ and let $\mathbf{v}_1=\x_0-\x^*$. 
Extend $\mathbf{v}_1$ to a basis $\{\mathbf{v}_1,\dots, \mathbf{v}_d\}$. 
If \(\eps\) is sufficiently small then the linearly independent vectors $\mathbf{n}_i=\mathbf{v}_1+ \eps \mathbf{v}_i, \ i=1,\dots d$ are such that $\{\x^*+\mathbf{n}_i: i=1,\dots d\} \subset \ball(\x_0,r)$ and $\{\x^*-\mathbf{n}_i: i=1,\dots d\} \subset \ball(\y_0,r)$. 
Defining $\x_i=\x^*+\mathbf{n}_i$ and $\y_i=\x^*-\mathbf{n}_i$, it follows that $\x^* \in H(\x_i,\y_i)$ for all $i$. 
For each \(i\), consider maximally coupled diffusions begun at $(\x_i,\y_i)$: applying part (ii) of Lemma \ref{lem:consteqns} and letting $t \downarrow 0$, it follows that $\mathbf{n}_i$ is an eigenvector of $S(\x^*)$.
By construction, no $\mathbf{n}_i$ is orthogonal to any other $\mathbf{n}_j$.
Since $S(\x^*)$ is symmetric, it follows that $\{\mathbf{n}_i: i=1,\dots, d\}$ correspond to the same eigenvalue, say $\lambda_0$ and thus, $S(\x^*)=\lambda_0\Identity$.

Choosing the set of mirrors $\Mirrors = \Mirrors_0$, consider the {orbit} $\mathcal{O}(\x^*)$ of $\x^*$ in $\Mirrors$.
If  $\mathcal{O}(\x^*)=\Reals^d$, then the lemma follows from the previous observation that for any $\z \in \mathcal{O}(\x^*)$, the set of eigenvalues of $S(\z)$ agrees with that of $S(\x^*)$.

To see this, let $L$ be the line that passes through $\x_0$ and $\y_0$. 
Let ${\mathbf{v}_0= \tfrac{\x_0- \y_0}{|\x_0- \y_0|}}$. 
Write $\x_{\delta}=\x_0 + \delta\mathbf{v}_0$ and $\y_{\delta}=\y + \delta\mathbf{v}_0$ for all $\delta \in (-r,r)$. Thus the mirrors $h_{\delta}=H(\x_{\delta},\y_{\delta}) \in \Mirrors$ for all such $\delta$, and the orbit of $\x^*$ under reflection in $\{h_{\delta}: \delta \in (-r,r)\}$ is the whole of $L$.
Thus $L \subseteq \mathcal{O}(\x^*)$.

Now, for any $\z \in \Reals^d$, let $H$ be a plane (dimension of $H$ is two) containing the line $L$ and the point $\z$. 
For sufficiently small $\eps>0$, for all $\delta \in (-\eps,\eps)$ the mirror $h_{\delta}'$ containing $\x^*$ and having normal vector $\mathbf{v}_{\delta} \in H$ and making an angle $\delta$ with $\mathbf{v}_0$ lies in $\Mirrors$. 
Denote by $C$ the circle centred at $\x^*$, lying in $H$ and passing through $\z$. Let $\mathbf{\hat{z}} \in L \cap C$. 
Then the orbit of $\mathbf{\hat{z}}$ under reflection in $\{h_{\delta}': \delta \in (-\eps,\eps)\}$ is the whole of $C$. 
In particular, $\z \in \mathcal{O}(\x^*)$. This shows that $\mathcal{O}(\x^*)=\Reals^d$ and the lemma follows.
\end{proof}

Before proceeding further with the proof of Theorem \ref{thm:rigidity}, we record a general fact about real skew-symmetric matrices which follows by spectral decomposition  \citep{Gallier-2011}.
\begin{lem}\label{lem:Darboux}
If $\mathcal{N}$ is the null space of a $(d \times d)$ real skew-symmetric matrix $T$, then $d-\dim(\mathcal{N})$ is even.
\end{lem}

We now show that the skew-symmetric part $T(\x)$ is a constant matrix $T$.
\begin{lem}\label{lem:skewrig}
Under the hypotheses of Theorem \ref{thm:rigidity}, $T(\x)\equiv T$ for all $\x \in \Reals^d$.
\end{lem}
\begin{proof}
The proof breaks into three steps.
\begin{enumerate}
\item[\textbf{Step 1.}] \emph{If $\x \in \ball(\x_0,r)$ and $\y \in \ball(\y_0,r)$, then for all $\z,\z' \in H(\x,\y)$, $T(\z)=T(\z')$.}

Set ${\z^*=\tfrac{\z + \z'}{2}}$, ${\mathbf{v}_1=\tfrac{\z-\z'}{|\z-\z'|}}$ and ${\mathbf{v}_2=\tfrac{\x-\y}{|\x-\y|}}$. 
Extend $\mathbf{v}_1, \mathbf{v}_2$ to an orthonormal basis $\mathbf{v}_1,\dots, \mathbf{v}_d$ of $\Reals^d$. 
Using the method of the proof of Lemma \ref{lem:symmrig}, construct independent vectors $\mathbf{n}_i=\mathbf{v}_2 + \eps \mathbf{v}_i, \ i=2,\dots d$, 
choosing $\eps>0$ small enough so that 
\[
H(\z^*+\mathbf{n}_i, \z^*-\mathbf{n}_i)\x \quad\in\quad
\ball(\y_0,r)
\]
for all $i=2,\dots,d$. 
Writing $\x_i=\z^*+\mathbf{n}_i$ and $\y_i=\z^*-\mathbf{n}_i$, and with a possibly smaller choice of $\eps>0$, the hyperplane $H(\x_i,\y_i)$ lies in $\Mirrors_0$ and the line joining $\z$ and $\z'$ is contained in $H(\x_i,\y_i)$ for all $i=2,\dots,d$. 
Thus, $H(\x_i,\y_i)\z=\z$ and $H(\x_i,\y_i)\z'=\z'$ for all $i=2,\dots,d$. 
Taking $t \downarrow 0$ in part (iii) of Lemma \ref{lem:consteqns}, it follows that 
$$
(T(\z)-T(\z'))\mathbf{n}_i=0
$$
 for all $i=2,\dots,d$, implying $d-\mathcal{N}(T(\z)-T(\z')) \le 1$. Together with Lemma \ref{lem:Darboux}, this establishes Step 1.
\item[\textbf{Step 2.}] \emph{There is $\eps>0$ such that $T(\z)=T(\z')$ for all $\z,\z' \in \{\mathbf{w} \in \Reals^d\;:\; \dist(\mathbf{w}, H(\x_0,\y_0)) < \eps\}$, where $\dist(\mathbf{w},A)$ denotes the distance of $\mathbf{w}$ from the set $A$.}

Choose $\x \in \ball(\x_0,r)$ such that the vector $\x-\y_0$ is not parallel to $\x_0-\y_0$. 
It follows from Step 1 that $T(\z)=T(\z')$ for all $\z,\z' \in H(\x,\y)$. 
Choose $\eps>0$ such that ${\y_{\delta}=\y_0 +\delta\tfrac{\x_0- \y_0}{|\x_0- \y_0|} \in \ball(\y_0,r)}$
for all $\delta \in (-2 \eps, 2\eps)$. 
Note that 
the vector $\x-\y_\delta$ is not parallel to $\x_0-\y_0$ for any $\delta \in (-2 \eps, 2\eps)$.
Using Step 1 again, $T(\z)=T(\z')$ for all $\z,\z' \in H(\x_0,\y_{\delta})$. 
The assertion now follows from Step 1 and the fact that $H(\x_0,\y_{\delta}) \cap H(\x,\y)$ is non-empty for each $\delta \in (-2 \eps, 2\eps)$.
\item[\textbf{Step 3.}] Now we work with the set of {mirrors} 
$$
\Mirrors \quad=\quad \left\lbrace H(\x_0,\y_{\delta}): \delta \in (-2\eps, 2\eps)\right\rbrace\,,
$$ 
where $\eps$ is chosen as in Step 2. For notational convenience, we write $h_{\delta}=H(\x_0,\y_{\delta})$. 
The \(\y_\delta=\y_0 +\delta\tfrac{\x_0- \y_0}{|\x_0- \y_0|}\) all lie on the same line through \(\x_0\), and therefore
all these mirrors have a common normal vector, which we write $\mathbf{n}^*$. 
Let $(l_{\delta},\mathbf{n}_{\delta})$ parametrize the interface $I(\x_0, \y_{\delta}, \cdot)$ corresponding to the starting points $\x_0$ and $\y_{\delta}$ of the diffusions $X$ and $Y$ respectively. 
For each $\delta$, $\mathbf{n}_{\delta}(0)=\mathbf{n}^*$. 
Furthermore, by letting $t \downarrow 0$ in part (iii) of Lemma \ref{lem:consteqns}, 
\[
\dot{\mathbf{n}}_{\delta}(0)\quad=\quad T\left(\tfrac{\x_0+\y_{\delta}}{2}\right)\mathbf{n}^*\,.
\] 
Given $\delta \in (-2\eps, 2\eps)$, the distance of the point ${\tfrac{\x_0+\y_{\delta}}{2}}$ from the hyperplane $H(\x_0, \y_0)$ is less than $\eps$.
Consequently Step 2 implies that $\dot{\mathbf{n}}_{\delta}(0)=\dot{\mathbf{n}}_0(0)=\mathbf{n}'$ (say) for all $\delta \in (-2\eps, 2\eps)$.

Choose any $\z, \z' \in \Reals^d$ such that ${\z'=\z +\delta\tfrac{\x_0- \y_0}{|\x_0- \y_0|}}$ for some $\delta \in (-2 \eps, 2\eps)$. 
Set $\z^*=h_0\z$ so that $\z=h_0\z^*$. Noting that \(\z\), \(\z^*\), \(\z'\) lie on the same line perpendicular to $H(\x_0, \y_0)$, it follows from an argument about one-dimensional reflections that $\z'=h_{\delta}\z^*$. 

Then, by part (i) of Lemma \ref{lem:consteqns}, we get
\begin{eqnarray}\label{eqnarray:genskewrig}
T(\z^*)\quad&=\quad&2(\mathbf{n}'\mathbf{n}^{*\top}-\mathbf{n}^*\mathbf{n}'^\top)+({\Identity}-2\mathbf{n}^*\mathbf{n}^{*\top})T(\z)({\Identity}-2\mathbf{n}^*\mathbf{n}^{*\top})\nonumber\\
\quad \quad&=\quad&2(\mathbf{n}'\mathbf{n}^{*\top}-\mathbf{n}^*\mathbf{n}'^\top)+({\Identity}-2\mathbf{n}^*\mathbf{n}^{*\top})T(\z')({\Identity}-2\mathbf{n}^*\mathbf{n}^{*\top})
\end{eqnarray}
from which we get 
$$
({\Identity}-2\mathbf{n}^*\mathbf{n}^{*\top})(T(\z)-T(\z'))({\Identity}-2\mathbf{n}^*\mathbf{n}^{*\top})=0
$$

which gives $T(\z)=T(\z')$. Hence the lemma follows.
\end{enumerate}
\end{proof}

Lemmas \ref{lem:symmrig} and \ref{lem:skewrig} together are sufficient to prove Theorem \ref{thm:rigidity}.
\end{proof}

Theorem \ref{thm:rigidity} can be strengthened if $\dot{\mathbf{n}}(t)=0$ for all $t$, i.e., the interface \emph{translates but does not rotate in time}.
We state this in the following theorem.
Since there is no rotation, the driving Brownian motions in the stochastic differential equation for $X$ and $Y$ are constant reflections of each other.
So we can assume without loss of generality that $l(0)=0$ and $\mathbf{n}(t) \equiv \mathbf{e}_1$.
\begin{thm}\label{thm:parallelmirr}
Assume (A1) and (A2) hold for a time-homogeneous Euclidean diffusion. 
Suppose there exists a Markovian maximal coupling of $X$ and $Y$ starting from $\x_0$ and $\y_0$ respectively, 
such that the interface $I(\x_0,\y_0,t)$ is parametrized by $((l(t),\mathbf{e}_1)\;:\; t \ge 0)$ with $l(0)=0$. Then there are only two possibilities:
\begin{itemize}
\item[(i)] $l(t) =0$ for all $t\ge0$, in which case the drift vectorfield $\mathbf{b}$ must satisfy 
\[
\mathbf{b}(h_1\x)\quad=\quad h_1\mathbf{b}(\x)
\] for all $\x \in \Reals^d$.
\item[(ii)] $l(t) \neq 0$ for some $t>0$, in which case the drift vectorfield $\mathbf{b}$ must satisfy 
\[
\mathbf{b}(x_1, \x^{(1)})\quad=\quad
\left(c_1x_1 + c_2, \mathbf{f}(\x^{(1)})\right)^\top
\]
for all $\x=(x_1,\x^{(1)}) \in \Reals^d$, where $c_1, c_2$ are constants and $\mathbf{f}: \Reals^{d-1} \rightarrow \Reals^{d-1}$ is continuously differentiable.
\end{itemize}
\end{thm}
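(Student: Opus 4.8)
\noindent\emph{Proof strategy.}
By Lemma~\ref{lem:driftconstraint}, the existence of the Markovian maximal coupling with interface parametrised by $\big((l(t),\mathbf{e}_1):t\ge0\big)$ means the drift constraint~\eqref{eq:driftconstraint} holds with $\mathbf{n}(t)\equiv\mathbf{e}_1$ (so $\dot{\mathbf{n}}\equiv0$) and with the given $l$. The plan is to split this identity into its first coordinate and its last $d-1$ coordinates, and then to exploit the rigidity of continuous functions invariant under a one-parameter family of reflections of a single coordinate. Writing $\x=(x_1,\x^{(1)})$ with $\x^{(1)}\in\Reals^{d-1}$ and $\mathbf{b}=(b_1,\mathbf{b}^{(1)})$, and letting $h_1=\Identity-2\mathbf{e}_1\mathbf{e}_1^\top$ denote reflection in $\{x_1=0\}$, the map $F$ of~\eqref{eq:reflection} becomes $F(t,\x)=h_1\x+2l(t)\mathbf{e}_1=(2l(t)-x_1,\,\x^{(1)})$, so \eqref{eq:driftconstraint} reads $\mathbf{b}(\x)=2\dot l(t)\mathbf{e}_1+h_1\mathbf{b}(F(t,\x))$; in components this says that, for all $\x\in\Reals^d$ and all $t\ge0$,
\[
b_1(x_1,\x^{(1)})+b_1(2l(t)-x_1,\x^{(1)})=2\dot l(t)\,,\qquad
\mathbf{b}^{(1)}(x_1,\x^{(1)})=\mathbf{b}^{(1)}(2l(t)-x_1,\x^{(1)})\,.
\]
If $l\equiv0$ (case~(i)), then $\dot l\equiv0$ and these give $b_1(-x_1,\x^{(1)})=-b_1(x_1,\x^{(1)})$ together with $\mathbf{b}^{(1)}(-x_1,\x^{(1)})=\mathbf{b}^{(1)}(x_1,\x^{(1)})$, which is exactly $\mathbf{b}(h_1\x)=h_1\mathbf{b}(\x)$.

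Suppose instead that $l(t_0)\neq0$ for some $t_0>0$ (case~(ii)). Then $l$ is non-constant, so, being continuous with $l(0)=0$, its range is a non-degenerate interval containing $0$; hence $S:=\{2l(t):t\ge0\}$ contains a non-degenerate interval $J$. I would then invoke the elementary fact that a continuous $g:\Reals\to\Reals^{k}$ with $g(u)=g(c-u)$ for all $u\in\Reals$ and all $c\in S$ is necessarily constant: fixing $c_{0}$ in the interior of $J$, for $c\in S$ near $c_{0}$ one has $g(u)=g(c-u)=g\big(c_{0}-(c-u)\big)=g\big(u+(c_{0}-c)\big)$, so $g$ is invariant under all translations by $c_{0}-c$, and these fill a neighbourhood of $0$, forcing $g$ to be locally constant, hence constant. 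Applying this to $u\mapsto\mathbf{b}^{(1)}(u,\x^{(1)})$ for each fixed $\x^{(1)}$ shows $\mathbf{b}^{(1)}$ is independent of $x_1$, so $\mathbf{b}^{(1)}(\x)=\mathbf{f}(\x^{(1)})$ for some $\mathbf{f}:\Reals^{d-1}\to\Reals^{d-1}$, which is $C^{1}$ by~(A1) because $\mathbf{b}$ is. Differentiating the first-coordinate identity in $x_1$ (permissible since $\mathbf{b}$ is $C^{1}$ in space by~(A1)) gives $\partial_{1}b_{1}(x_1,\x^{(1)})=\partial_{1}b_{1}(2l(t)-x_1,\x^{(1)})$, and the same fact forces $\partial_{1}b_{1}$ to be independent of $x_1$; hence $b_{1}(x_1,\x^{(1)})=\psi(\x^{(1)})\,x_1+\phi(\x^{(1)})$ for some functions $\psi,\phi$.

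Substituting this affine-in-$x_1$ form back into the first-coordinate identity yields $\psi(\x^{(1)})\,l(t)+\phi(\x^{(1)})=\dot l(t)$ for all $\x^{(1)}$ and all $t\ge0$. Comparing two values $\x^{(1)}_{a}\neq\x^{(1)}_{b}$ shows $\big(\psi(\x^{(1)}_{a})-\psi(\x^{(1)}_{b})\big)l(t)$ is independent of $t$; since $l$ is non-constant this forces $\psi(\x^{(1)}_{a})=\psi(\x^{(1)}_{b})$, and then $\phi(\x^{(1)}_{a})=\phi(\x^{(1)}_{b})$ as well. Thus $\psi\equiv c_{1}$ and $\phi\equiv c_{2}$ are constants and $b_{1}(x_1,\x^{(1)})=c_{1}x_1+c_{2}$, which is the asserted form in case~(ii). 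The one delicate point is ensuring the constraint~\eqref{eq:driftconstraint} is genuinely available over a $t$-interval, so that when $l\not\equiv0$ the set $\{2l(t)\}$ really does contain an interval and the rigidity fact applies; this is exactly where the continuity of $l$ and the normalisation $l(0)=0$ supplied by the hypotheses are used. Everything else is routine functional-equation manipulation.
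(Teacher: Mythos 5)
Your proof is correct, and it reaches the conclusion by a somewhat different route than the paper. The paper proves (i) by simply comparing the generators of $Y$ and $h_1X$, and proves (ii) by first invoking the differentiated constraint (part (i) of Lemma \ref{lem:consteqns}) to obtain the block-diagonal structure \eqref{eq:bformfixed} of $\nabla\mathbf{b}$, which yields the splitting $b_1=f_1(x_1)$, $\mathbf{b}^{(1)}=\mathbf{f}(\x^{(1)})$, and then uses iterated reflections in the family of mirrors $\{x_1=l(t)\}$ (whose positions sweep an interval), as in the proof of Theorem \ref{thm:rigidity}, to force $f_1'$ to be translation-invariant and hence constant. You instead work directly with the undifferentiated constraint \eqref{eq:driftconstraint} with $\mathbf{n}\equiv\mathbf{e}_1$, split it into its first and remaining components, and use the elementary observation that invariance under reflections $u\mapsto c-u$ for $c$ ranging over a nondegenerate interval generates invariance under a neighbourhood of translations; applied to $\mathbf{b}^{(1)}(\cdot,\x^{(1)})$ and to $\partial_1 b_1(\cdot,\x^{(1)})$ this gives independence of $x_1$, and then the scalar relation $\psi(\x^{(1)})\,l(t)+\phi(\x^{(1)})=\dot l(t)$, together with the non-constancy of $l$, forces $\psi\equiv c_1$, $\phi\equiv c_2$. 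This last step is where your argument differs most: the paper deduces that $b_1$ does not depend on $\x^{(1)}$ from the gradient identity \eqref{eq:bformfixed}, whereas you extract it from the functional equation involving $\dot l$ — which also makes explicit use of the drift-translation term $2\dot l\,\mathbf{n}$ that the paper's displayed argument never needs. The trade-off: your version is more self-contained (it avoids both the justification of \eqref{eq:bformfixed} for all $\x$ and the iterated-reflection machinery imported from Theorem \ref{thm:rigidity}), while the paper's version exposes the structural splitting of $\mathbf{b}$ at the outset and recycles lemmas already established. Both arguments hinge on the same key point you flag, namely that \eqref{eq:driftconstraint} holds for every $t\ge0$, so that in case (ii) the mirror positions $\{2l(t)\}$ fill out a nondegenerate interval.
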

\begin{proof}
Part (i) follows from the fact that the generators of $Y$ and $h_1X$ are the same.

To prove part (ii), note that by part (i) of Lemma \ref{lem:consteqns}:
\begin{equation}\label{eq:bformfixed}
\nabla\mathbf{b}(x_1, \x^{(1)})\quad=\quad
\begin{bmatrix}
\partial_1b_1(x_1,\x^{(1)}) & \mathbf{0}\\
\mathbf{0} & \nabla^{(1)}\mathbf{b}^{(1)}(x_1, \x^{(1)})
\end{bmatrix}.
\end{equation}
for all $\x=(x_1,\x^{(1)}) \in \Reals^d$, where $\mathbf{b}^{(1)}=(b_2,\dots,b_d)^\top$ and $\nabla^{(1)}$ denotes partial derivatives with respect to the variables of $\x^{(1)}$. From \eqref{eq:bformfixed}, we deduce that $b_1(x_1, \x^{(1)})=f_1(x_1)$ and $\mathbf{b}^{(1)}(x_1, \x^{(1)})=\mathbf{f}(\x^{(1)})$ for continuously differentiable functions $f_1: \Reals^{d-1} \rightarrow \Reals$ and $\mathbf{f}: \Reals^{d-1} \rightarrow \Reals^{d-1}$. 

We may assume that (without loss of generality) $(0,\eps) \subset \operatorname{Range}(l)$ for some $\eps>0$. Choose the set of mirrors 
\[
\Mirrors\quad=\quad
\{H(\x_0, \y_{\delta}): \delta \in (0,\eps)\}
\]
where, as before, ${\y_{\delta}=\x_0 +\delta\tfrac{\x_0- \y_0}{|\x_0- \y_0|}}$.
Now, iterated reflections in $\Mirrors$ as in the proof of Theorem \ref{thm:rigidity} yield $f_1'(x_1+a)=f_1'(x_1)$ for all $x_1, a \in \Reals$. Hence, $f_1'(x_1)=c_1$ for all $x_1 \in \Reals$, for some constant $c_1$.
Thus, $\mathbf{b}$ has to be of the required form.
\end{proof}
The case of one-dimensional diffusions is a trivial consequence of the above theorem, as noted in the next corollary.
\begin{cor}\label{cor:onedim}
Assume (A1) and (A2) hold for a one-dimensional time-homogeneous Euclidean diffusion. 
Then there exists a Markovian maximal coupling of $X$ and $Y$ starting from $x_0$ and $y_0$ respectively
if and only if either the drift vectorfield $b$ is affine or it obeys the reflection symmetry $b(x)=-b(x_0+y_0-x)$ for all $x \in \Reals$.
\end{cor}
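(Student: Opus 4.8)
The plan is to read this off Theorem~\ref{thm:parallelmirr}, using the observation that in dimension one the mirror cannot rotate. Assume $x_0\neq y_0$, the case of equal starting points being trivial. When $d=1$ the set $H(x,y)$ is a single point and the normal $\mathbf{n}(t)$ takes values in the two-point set $\mathbb{S}^0=\{-1,+1\}$; by Lemma~\ref{lem:contpar} the map $t\mapsto\mathbf{n}(t)$ is continuous on the connected interval $[0,\tau)$, hence constant, so $\dot{\mathbf{n}}(t)\equiv 0$ and $\mathbf{n}(t)\equiv\tfrac{x_0-y_0}{|x_0-y_0|}$. Composing with the translation $x\mapsto x-\tfrac{x_0+y_0}{2}$ (and, if the sign requires it, the reflection $x\mapsto -x$), each of which is bijective and bimeasurable and hence preserves the Markovian maximal coupling property by Lemma~\ref{lem:mapsto}, we may assume $\mathbf{n}(t)\equiv\mathbf{e}_1$ and $l(0)=0$, which is exactly the setting of Theorem~\ref{thm:parallelmirr}. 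This coordinate change carries affine drifts to affine drifts and carries the symmetry $b(x)=-b(x_0+y_0-x)$ to $b(x)=-b(-x)$, so it suffices to establish the corollary in the normalised coordinate.

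\emph{Necessity.} Suppose a Markovian maximal coupling exists from $x_0$ and $y_0$. Theorem~\ref{thm:parallelmirr} leaves two possibilities. If $l\equiv 0$, then part~(i) gives $\mathbf{b}(\underline{h}_1 x)=\underline{h}_1\mathbf{b}(x)$, which in dimension one says $b(-x)=-b(x)$ in the normalised coordinate, that is, $b(x)=-b(x_0+y_0-x)$ back in the original coordinate. If instead $l(t)\neq 0$ for some $t$, then part~(ii) gives $\mathbf{b}(x_1,\x^{(1)})=\bigl(c_1 x_1+c_2, \mathbf{f}(\x^{(1)})\bigr)^\top$; since $d=1$ there is no $\x^{(1)}$-component, so $b(x)=c_1 x+c_2$ is affine, and it remains affine after undoing the translation.

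\emph{Sufficiency.} If $b$ is affine, apply Theorem~\ref{thm:affine-drift} with $d=1$: here $S$ and $T$ are scalars with $T=0$, the eigenspace of $S$ is all of $\Reals$, and the requirement that $T^k(x_0-y_0)$ lie in that eigenspace for $0\le k\le d-1$ reduces to the single, automatically satisfied case $k=0$; hence a Markovian maximal coupling exists. If instead $b(x)=-b(x_0+y_0-x)$, translate so that the midpoint $\tfrac{x_0+y_0}{2}$ is the origin; then $b(-x)=-b(x)$ is precisely the symmetry condition~\eqref{eq:bsymmetry} for the hyperplane $\underline{h}_1=\{x_1=0\}$. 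Theorem~\ref{thm:MMCchar}(i) then produces the reflection coupling, which is a Markovian maximal coupling of two copies of the diffusion started from the translated $x_0$ and from $\underline{h}_1 x_0$; since the translation sends $y_0$ precisely to $\underline{h}_1 x_0$, transferring back by Lemma~\ref{lem:mapsto} yields the required coupling from $x_0$ and $y_0$.

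There is no genuine obstacle here: the statement really is a corollary, and the work is entirely bookkeeping --- confirming that the translation (and possible reflection) is compatible with the normalisations $\mathbf{n}(t)\equiv\mathbf{e}_1$, $l(0)=0$ built into Theorem~\ref{thm:parallelmirr} and transforms both branches of the conclusion correctly, and noting that the eigenspace hypothesis of Theorem~\ref{thm:affine-drift} is vacuous when $d=1$.
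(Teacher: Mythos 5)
Your proof is correct and follows essentially the same route the paper intends: the corollary is presented there as a direct consequence of Theorem~\ref{thm:parallelmirr} (the mirror cannot rotate in dimension one), with sufficiency supplied exactly as you do via Theorem~\ref{thm:MMCchar}(i) and Theorem~\ref{thm:affine-drift}. Your normalisation by translation/reflection and Lemma~\ref{lem:mapsto} is the bookkeeping the paper leaves implicit, and it is carried out correctly.
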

\begin{rem}
 Corollary \ref{cor:onedim} completely characterises all one-dimensional time-homogeneous diffusions subject to the regularity conditions (A1) and (A2) and permitting Markovian maximal couplings, 
 even with a varying twice-continuously-differentiable diffusion coefficient $\sigma(\cdot):\Reals \rightarrow [c,\infty)$ for some $c>0$. 
 Let $X$ be given by
\begin{equation}\label{eq:onedimstochastic differential equation}
\d X_t\quad=\quad
b(X_t) \d t+\sigma(X_t)\d B_t
\end{equation}
and similarly for $Y$. Define the function 
\[
F(x)\quad=\quad
\int_0^x\frac{1}{\sigma(z)}\d z\,,
\]
and set $U_t=F(X_t)$. Then, it follows from It\^o calculus that
\begin{equation}\label{eq:sigmaone}
\d U_t\quad=\quad \d B_t + \left(\frac{b \circ F^{-1}(U_t)}{\sigma\circ F^{-1}(U_t)}-\frac{\sigma'\circ F^{-1}(U_t)}{2}\right)\d t\,.
\end{equation}
Thus, the conditions on $b$ derived in the case $\sigma \equiv 1$ readily carry over to conditions on the drift term of (\ref{eq:sigmaone}) for general $\sigma$.
\end{rem}

\section{Markovian Maximal Couplings for manifolds}\label{sec:manifold}
In this section, we analyse rigidity phenomena for Markovian maximal couplings (MMC) for smooth elliptic diffusions, 
and demonstrate that there are powerful geometric consequences arising from a natural connection to the theory of diffusion processes on manifolds (specifically, the notion of Riemannian Brownian motion with drift). 
The main task of this section is to understand how the Euclidean arguments of section \ref{sec:Euclidean} carry over to the manifold case. 
In particular, the existence of Markovian maximal couplings (together with {\LPC}) has profound rigidity consequences for the geometry of the manifold.

We commence by summarizing the Riemannian geometry required to establish these consequences.
Let $M$ be a connected smooth manifold of dimension $d$ (the results which follow are actually significant even in the case when \(M=\Reals^d\)). 
Following \cite{Dynkin-1963}, 
a strong Markov process $X$ on $M$ is said to be a diffusion process if each $C^2$ function $f$ belongs to the domain of definition of the \emph{characteristic operator} $\generator$ given by
\begin{equation}
\generator f(\x)\quad=\quad 
\lim_{N \downarrow \x}\frac{\mathbb{E}_{\x}\left[f(X_{\tau_N})\right]-f(\x)}{\mathbb{E}_{\x}[\tau_N]}
\end{equation}
where $N$ denotes a system of neighbourhoods shrinking to $\x$, $\tau_N$ denotes the first exit time from $N$ and $\mathbb{E}$ denotes expectation with respect to the measure induced by the Markov process. In any local system of coordinates $(x^1,\dots,x^d)$, the operator $\generator$ takes the form
\begin{equation}\label{eq:generatorloc}
\generator f(\x)\quad=\quad
\sum_{i,j=1}^da_{ij}(\x)\frac{\partial^2f}{\partial x^i \partial x^j} + \sum_{i=1}^dv_i(\x)\frac{\partial f}{\partial x^i}
\end{equation}
where the \emph{diffusion matrix} $A=\{a_{ij}\}$ is non-negative definite and $\{v_i\}$ denotes the \emph{drift vectorfield}. We will assume $a_{ij}$ and $v_i$ are smooth functions. 
Note that the general form of the operator does not depend on the specific choice of coordinates. 
We call $X$ an \emph{elliptic} diffusion if $\generator$ is an elliptic operator (in other words, if \(A\) is positive-definite). As in the previous section, we deal only with elliptic diffusions.\label{page:hypoelliptic-issue}

Following \cite{Molchanov-1975}, if we furnish $M$ with the Riemannian metric $g$ which is given in local coordinates by $g_{ij}=(A^{-1})_{ij}$ then the operator $\generator$ can be rewritten in the form 
\begin{equation}
\generator\quad=\quad \frac{1}{2}\Delta_M + \mathbf{b}
\end{equation}
where $\Delta_M$ is the \emph{Laplace-Beltrami operator} for the Riemannian metric, and $\mathbf{b}$ is the (intrinsic) drift vectorfield. 
When $\mathbf{b}=0$, the corresponding Markov process is called \emph{Brownian motion} on $M$. 
Thus, we see that any diffusion process on $M$ can be written as `Brownian motion plus drift' if $M$ is given a suitable metric. 
Henceforth, we will assume that $M$ is endowed with this metric $g$, so that we can view \(M\) as a smooth Riemannian manifold $(M,g)$.

\textbf{Note: }Throughout this section, we will make the following assumptions:
\begin{itemize}
\item[(i)] The Riemannian manifold $(M,g)$ obtained above is \textbf{complete}
(we say that the diffusion \(X\) is \emph{diffusion-geodesic complete}). 
This is a purely technical assumption and the completeness is usually not too hard to check as we know the diffusion coefficients explicitly.
In particular, diffusion-geodesic completeness trivially holds on compact manifolds. 
Diffusion-geodesic completeness is not a necessary condition for the existence of Markovian maximal couplings,
as can be seen for dimension \(d \ge 2\) by considering reflection couplings of Brownian motions on the $d$-dimensional punctured sphere $\mathbb{S}^d - \{P\}$ obtained by deleting a point $P$ from the sphere $\mathbb{S}^d$ (and the corresponding couplings of diffusions obtained on the plane by stereographic projection).
In this example, the existence of a rich supply of MMC follows from the fact that this space has a completion $\mathbb{S}^d$ 
on which we can construct MMC of Brownian motions started from any two points (see \cite{Kuwada-2009}), 
and from the fact that if \(d \ge 2\) then the Brownian motion started in $\mathbb{S}^d - \{P\}$ almost surely does not hit $P$.
It is an interesting question whether this is the `generic' example for instances where diffusion-geodesic completeness fails but Markovian maximal couplings exist,
raising issues which seem somewhat reminiscent of the topic of resolution of singularities in algebraic geometry.
We
hope to address this in a future article.
\item[(ii)] Our diffusion process $X$ is \textbf{defined for all time}.
This is to ensure that we are dealing with probability densities which is essential for the arguments in subsection \ref{sec:genprop} to go through. 
For Brownian motion on $M$, this can be resolved by ensuring that $M$ is \emph{stochastically complete}. 
There are a number of intrinsic geometric properties of $M$ that ensure stochastic completeness,
such as the existence of a constant lower bound on the Ricci curvature. See \cite{Hsu-2002}, for example, for more details.
\end{itemize}

Let $\mathcal{G}=\operatorname{Iso}(M)$ denote the \emph{group of (global) isometries} of $M$. 
This can be shown to be a Lie group \citep{MyersSteenrod-1939}, and it plays an important r\^ole in the following arguments. 
As $M$ is complete and connected, any pair of points in $M$ are connected by a geodesic. 
Furthermore, there are no branching geodesics in Riemannian manifolds. 
(More details on these geometric notions can be found in \citealp{BuragoBuragoIvanov-2001,Chavel-1995}.)


\subsection{Brownian motion with drift on the manifold}
Not only can any smooth elliptic diffusion on $M$ be written as Brownian motion with drift on $(M,g)$, 
but also this permits a rather explicit geometric construction of the diffusion which facilitates the discussion of probabilistic coupling techniques, 
namely the \emph{Eells-Elworthy-Malliavin} construction \citep{Elworthy-1982}. 

Using terminology expounded (for example) in \cite{Hsu-2002},
let $\mathcal{O}_x(M)$ denote the set of orthonormal frames of the tangent space $T_xM$. The \emph{orthonormal frame bundle} 
\[
\mathcal{O}(M)\quad=\quad\bigcup_{x \in M}\mathcal{O}_x(M)
\]
possesses a natural smooth manifold structure of dimension $\frac{d(d+1)}{2}$. 
Denote the canonical projection map by $\pi: \mathcal{O}(M) \rightarrow M$.

A curve $u$ in $\mathcal{O}(M)$ is said to be \emph{horizontal} if $u_t$ is the parallel transport (associated with the \textit{Levi-Civita} connection)
of the frame $u_0$ along the curve $\pi u_t$. 
For each $u \in \mathcal{O}(M)$, the tangent space $T_u\mathcal{O}(M)$ can be expressed as a direct sum 
\[
T_u\mathcal{O}(M)\quad=\quad V_u\mathcal{O}(M) \bigoplus H_u\mathcal{O}(M)\,,
\]
where $V_u\mathcal{O}(M)$ is a $\frac{d(d-1)}{2}$-dimensional vector space corresponding to the isotropy group (frame rotations) at $\pi u$,
and the $d$-dimensional vector space $H_u\mathcal{O}(M)$ is the space of tangent vectors of horizontal curves passing through $u$.

For each $u \in \mathcal{O}(M)$, let $H_i(u)$ denote the unique horizontal vector lying in  $H_u \mathcal{O}(M)$ such that 
\[
\pi_*H_i(u)\quad=\quad ue_i\,,
\]
where $ue_i$ denotes the $i$-th unit vector of the orthonormal frame $u$.

This framework provides an expressive way to define smooth elliptic diffusions (and other semimartingale processes) on $M$, as follows. 

Let $\mathbf{b}$ be a smooth vectorfield on $M$. This yields a natural vectorfield $\mathbf{B}$ on $\mathcal{O}(M)$ given by 
\begin{equation}\label{eq:stratdrift}
\mathbf{B}(u)\quad=\quad \sum_i b_i(u)H_i(u)\,,
\end{equation}
 where $b_i(u)= \langle\mathbf{b}(\pi u), ue_i\rangle_{\pi u}$ (here $\langle \cdot, \cdot  \rangle$ denotes the Riemannian inner product). We will call this the \emph{lifted drift}. Consider the following Stratonovich differential equation on $\mathcal{O}(M)$:
\begin{equation}\label{eq:framestochastic differential equation}
\d U_t\quad=\quad \sum_i H_i(U_t) \circ \d W^i_t + \mathbf{B}(U_t)\d t\,.
\end{equation}
where $W$ is a \(d\)-dimensional Euclidean Brownian motion. The diffusion on $M$ with drift $\mathbf{b}$ is obtained simply as the projection $X_t= \pi U_t$. The pivotal fact justifying this construction is that we can define a second order operator on $\mathcal{O}(M)$ (Bochner's horizontal Laplacian) given by 
\[
\Delta_{\mathcal{O}(M)}\quad=\quad \sum_{i=1}^dH_i^2
\]
such that the Laplace-Beltrami operator $\Delta_M$ on $M$ satisfies 
\[
\Delta_Mf(\x)\quad=\quad\Delta_{\mathcal{O}(M)}f\circ \pi(u)
\]
 for any $u \in \mathcal{O}(M)$ such that $\pi u=\x$. 
 The generator $\generator$ of the diffusion $X$ defined at the start of section \ref{sec:manifold} satisfies
\begin{equation}\label{eq:generatorman}
\generator f(\x)\quad=\quad\frac{1}{2}\Delta_Mf(\x) + \mathbf{b}f(\x)
\end{equation}
for any $u \in \mathcal{O}(M)$ such that $\pi u=\x$, and any $C^2$ test function $f$ on $M$.

Note that, when $\mathbf{b}=0$, the above construction reduces to the classical Eells-Elworthy-Malliavin construction of Brownian motion on $M$. 

\subsection{Couplings of diffusions on manifolds}
Once we have the above construction, a natural question to ask is: when is there a Markovian maximal coupling (MMC) for two copies of the diffusion starting from $\x_0$ and $\y_0$? 
In the Euclidean case there is a complete characterization of the class of time-homogeneous diffusions under {\LPC}, which is to say, 
when two copies of the diffusion can be maximally coupled whenever they start from $\x \in \ball(\x_0, r)$ and $\y \in \ball(\y_0, r)$ 
(for $\ball(\x_0, r)$ and $\ball(\x_0, r)$ chosen to be two arbitrary disjoint open balls in $\Reals^d$).
Theorem \ref{thm:rigidity} shows that the class of such diffusions is actually very small. 

The proof of Theorem \ref{thm:rigidity} depends strongly on a wealth of isometries of Euclidean space arising \emph{via} iterated reflections. 
Very few other $d$-dimensional Riemannian manifolds have many isometries, 
and so we may expect an even stronger rigidity phenomenon to hold for the geometry of (non-Euclidean) manifolds on which there is a good supply of MMC. 
The work of this section substantiates this expectation.

We begin by recalling briefly some notions from the Euclidean case (section \ref{sec:Euclidean}).
We have noted that the Local Perturbation Condition {\LPC} (Definition \ref{def:LPC}) makes sense for any metric space, including the Riemannian manifold case.
Let $X$ and $Y$ be two copies of the elliptic diffusion derived from the stochastic differential equation (\ref{eq:framestochastic differential equation}), and starting from $\x_0$ and $\y_0$ respectively. 
Note that the assumptions of ellipticity and smoothness of the coefficients of $L$ together 
ensure that the law of $X$ (equivalently $Y$) has a smooth positive density with respect to the Riemannian volume measure $m$ for every positive time $t>0$, which we write 
as $p(\x_0;t,\z)$, $p(\y_0;t,\z)$ for $t > 0$, $\z \in M$.

We suppose that the standing assumptions of diffusion-geodesic completeness and stochastic completeness both hold for the regular elliptic diffusion \(X\),
so that the resulting Riemannian manifold \(M\) is geodesically complete and so that \(X\) stays on \(M\) for all time. 
Thus from here on we are considering the case of Brownian motion with non-explosive drift on a complete Riemannian manifold.

We note here that all the results in subsection \ref{sec:genprop} carry over to the manifold setting with $(M,\dist)$ 
being the Riemannian manifold (with the distance $\dist$ induced by the Riemannian metric) and $m$ taken to be the volume measure.

\subsection{The interface}\label{sec:interfaceman}
Varadhan small-time asymptotics and Lemma \ref{lem:flow} can be used to show the following:
that the existence of an MMC implies that, for each time $t$, there is a \emph{deterministic involutive isometry} $F_t$ which exchanges $X_t$ with $Y_t$ and fixes the set of points equidistant from both $X_t$ and $Y_t$. 
This generalizes the time-varying reflection isometry of Euclidean space which is mentioned in Remark \ref{rem:time-varying-mirror}; 
the fixed-point set of \(F_t\) corresponds to the `evolving mirror' of the Euclidean case.

The r\^ole of Varadhan's small-time asymptotics in the following is analogous to the r\^ole of Lemma \ref{lem:LD} in the Euclidean case. 
This powerful technique gives the logarithmic asymptotics of the density of $X_t$ when $t \downarrow 0$, as stated in the following lemma.
\begin{lem}\label{lem:varadhan}
Suppose that \(X\) satisfies the assumptions of both diffusion-geodesic completeness and stochastic completeness.
Let $M_1$ and $M_2$ be compact subsets of $M$. Then the density $p$ of $X_t$ satisfies the following:
\begin{equation}\label{eq:varadhan}
\lim_{t \downarrow 0}\;2t\, \log p(\x;t,\y)\quad=\quad-\dist^2(\x,\y)
\end{equation}
uniformly for all $\x, \y \in M_1 \times M_2$, where $\dist(\x,\y)$ is the Riemannian distance between \(\x\) and \(\y\).
\end{lem}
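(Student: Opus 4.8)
The plan is to derive the statement from \citeauthor{Varadhan-1984}'s classical small-time heat-kernel asymptotics for Brownian motion on a complete Riemannian manifold (see \citealp{Varadhan-1984,Molchanov-1975,Hsu-2002}) by a localisation argument that absorbs the drift into a Girsanov change of measure. The guiding observation is that a smooth drift is bounded, together with its derivatives, on any compact set, so that it perturbs the relevant large-deviation action only at order $O(1)$, which is negligible against the order $O(1/t)$ governing the exponential decay of $p(\x;t,\y)$; hence the drifted and driftless heat kernels share the same logarithmic asymptotics on compacts.

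In more detail, I would fix a relatively compact open set $D\subseteq M$ whose closure contains open neighbourhoods of $M_1$ and $M_2$, large enough that $\dist(\partial D,M_1\cup M_2)$ strictly exceeds $\sup\{\dist(\x,\y):\x\in M_1,\y\in M_2\}$; on $\overline{D}$ the vectorfield $\mathbf{b}$ is bounded. Write $X^0$ for Brownian motion on $M$, with density $p^0$. Using the Eells--Elworthy--Malliavin picture described above, Girsanov's theorem realises the law of $X$ stopped at the first exit time $\tau_D$ as an absolutely continuous change of the law of $X^0$ stopped at $\tau_D$, with Radon--Nikodym density $Z_{t\wedge\tau_D}=\exp\!\big(\int_0^{t\wedge\tau_D}\langle\mathbf{b}(X^0_s),\d X^0_s\rangle-\tfrac12\int_0^{t\wedge\tau_D}|\mathbf{b}(X^0_s)|^2\,\d s\big)$, the first integral being interpreted through stochastic parallel transport. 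Since $Z$ and $Z^{-1}$ have all moments bounded uniformly for $t\le1$ in terms of $\sup_{\overline{D}}|\mathbf{b}|$, a standard Hölder-interpolation argument shows that the contribution to $p(\x;t,\y)$ from paths confined to $D$ has the same logarithmic small-time behaviour as the corresponding contribution to $p^0(\x;t,\y)$. It then remains to verify that paths leaving $D$ are negligible: the standing assumptions of geodesic and stochastic completeness ensure non-explosion and the usual small-time Gaussian upper bound for the heat kernel of a complete manifold, whence $\Prob{\tau_D\le t}\le C\,t^{-d/2}\exp\!\big(-\dist(\x,\partial D)^2/(2t)\big)$ uniformly in $\x\in M_1$; by the choice of $D$ this decays strictly faster than $\exp(-\dist^2(\x,\y)/(2t))$, hence does not affect $\lim_{t\downarrow0}2t\log(\cdot)$. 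Applying \citeauthor{Varadhan-1984}'s formula $\lim_{t\downarrow0}2t\log p^0(\x;t,\y)=-\dist^2(\x,\y)$ — whose lower bound uses a minimizing geodesic from $\x$ to $\y$ (which exists by Hopf--Rinow and lies in a fixed compact set), chained through finitely many coordinate charts via Chapman--Kolmogorov, and whose upper bound is the Gaussian estimate just quoted — then gives the pointwise limit.

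Uniformity over $M_1\times M_2$ follows by a routine compactness argument, since every constant appearing above depends on $(\x,\y)$ only through $\dist(\x,\y)$, through the geometry of a fixed compact neighbourhood containing all the minimizing geodesics in play, and through $\sup_{\overline{D}}|\mathbf{b}|$: if the convergence failed to be uniform one could extract sequences $\x_n\to\x_\infty$, $\y_n\to\y_\infty$ and $t_n\downarrow0$ violating \eqref{eq:varadhan}, contradicting the locally uniform pointwise statement near $(\x_\infty,\y_\infty)$ together with the continuity of $\dist$. I expect the principal technical burden to be the localisation-and-Girsanov step — carrying out the change of measure and the exit-time estimate rigorously on the manifold, which is precisely where both completeness hypotheses are consumed — whereas the driftless asymptotics of \citeauthor{Varadhan-1984} and \citeauthor{Molchanov-1975} may simply be quoted.
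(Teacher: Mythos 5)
The paper itself does not re-prove this lemma: it is quoted from \citet{Varadhan-1967} and \citet{Molchanov-1975}, and the key point is that Molchanov's result already covers generators of the form $\tfrac12\Delta_M+\mathbf{b}$ (drift included) on closed manifolds, with the passage to general complete manifolds done by his reflection/doubling-and-smoothing localisation. Your Girsanov reduction of the drifted kernel to the driftless one is therefore extra work that the paper avoids; as a strategy it is legitimate and standard, and your localisation/exit-time structure essentially re-creates Molchanov's, but the two steps you describe as routine are exactly where the content lies, and as written they have gaps.

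First, Girsanov plus H\"older compares \emph{probabilities of events}: it gives $\Prob{X_t\in A,\,\tau_D>t}\le \|Z_{t\wedge\tau_D}\|_p\,\Prob{X^0_t\in A,\,\tau_D>t}^{1/q}$, i.e.\ a comparison of integrals of Dirichlet kernels. The lemma is a statement about pointwise densities, and shrinking $A$ to a point does not transfer the bound (the normalising factor $m(A)^{1/q-1}$ blows up), so "the same logarithmic behaviour of the contribution to $p(\x;t,\y)$" does not follow from H\"older interpolation alone. One needs additional input — e.g.\ a Chapman--Kolmogorov splitting of $t$ into $(1-\eps)t+\eps t$ together with uniform near-diagonal two-sided kernel estimates on compacts — which is precisely the kind of analysis Molchanov's cited argument supplies; note that the Euclidean analogue in this paper, Lemma \ref{lem:LD}, is deliberately formulated for ball probabilities to sidestep this very issue. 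Second, the exit term: the bound $\Prob{\tau_D\le t}\le C t^{-d/2}\exp\left(-\dist(\x,\partial D)^2/(2t)\right)$ with that sharp exponent and a uniform constant is not available on a general complete manifold without curvature control (only a compact-localised, weaker-constant version is, which does suffice); more importantly, bounding the exit \emph{probability} does not by itself bound the exit \emph{contribution to the density}. One needs the strong Markov decomposition $p(\x;t,\y)=p_D(\x;t,\y)+\Expect{\mathbb{I}(\tau_D<t)\,p(X_{\tau_D};t-\tau_D,\y)}$ and then an off-diagonal upper bound on $p(\z;s,\y)$ for $\z\in\partial D$, $\y\in M_2$, $s\le t$ — an estimate of the same type as the one being proven, so it must be supplied independently (local parabolic estimates on compacts will do). All of this can be patched, but the proposal as it stands treats as routine exactly the steps that require the cited machinery, whereas the paper's proof is simply the citation of Varadhan and Molchanov, the latter already handling the drift without any change of measure.
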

This theorem was proven by \citet{Varadhan-1967} for diffusion processes on Euclidean space. 
Later \citet{Molchanov-1975} noticed that \citeauthor{Varadhan-1967}'s arguments carry over to diffusions on closed manifolds whose generators are of the form $\generator=\frac{1}{2}\Delta_M + \mathbf{b}$. 
\citeauthor{Molchanov-1975} also showed that this result could be extended to general smooth complete manifolds by introducing a reflected diffusion in a suitably large domain $U \subset M$ containing $\x$ and $\y$, 
with the same generator $\generator$ inside, and using this process to define a natural diffusion on the `double' $U$. 
He then showed that smoothing techniques allowed the approximation of the `double' $U$ by a smooth closed manifold, 
such that the diffusion thus defined has a density that is sufficiently close to that of the original one \citep[p.~18 and further references]{Molchanov-1975}.

We can now restate the pivotal Theorem \ref{thm:zeroset} from subsection \ref{subsection:interface} in the new context of manifolds. 
The proof of the manifold case follows that of the Euclidean case, but uses Lemma \ref{lem:varadhan} in place of Lemma \ref{lem:LD}, and uses the strong maximum principle (Lemma \ref{lem:StrongMP}) in local coordinates; 
we omit details.
\begin{thm}\label{thm:zerosetman}
Suppose that the standing assumptions of diffusion-geodesic completeness and stochastic completeness both hold.
For any $(\x,\y) \in \mathcal{M}(\mu_s)$, and any \(s>0\), the following equalities hold:
\begin{align*}
I(\x_0,\y_0,s)\quad&=\quad H(\x,\y),\\
I^{-}(\x_0,\y_0,s)\quad&=\quad H^-(\x,\y),\\
I^{+}(\x_0,\y_0,s)\quad&=\quad H^+(\x,\y).
\end{align*}
\end{thm}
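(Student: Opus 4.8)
The plan is to follow the proof of Theorem~\ref{thm:zeroset} essentially line by line, making only two substitutions: Lemma~\ref{lem:varadhan} (Varadhan--Molchanov small-time asymptotics for the diffusion itself) takes over the role played in the Euclidean argument jointly by the Brownian large-deviation estimate and Lemma~\ref{lem:LD}, and the strong maximum principle Lemma~\ref{lem:StrongMP} is invoked after passing to a local coordinate chart. Fix $s>0$ and $(\x,\y)\in\mathcal{M}(\mu_s)$. Since the Riemannian distance is continuous the half-spaces $H^\pm(\x,\y)$ are open, and since $\alpha(s,\cdot)=p(\x_0;s,\cdot)-p(\y_0;s,\cdot)$ is continuous (smoothness of the heat kernel under ellipticity) it suffices to prove $H^-(\x,\y)\subseteq I^-(\x_0,\y_0,s)$ and $H^+(\x,\y)\subseteq I^+(\x_0,\y_0,s)$; the equality $I(\x_0,\y_0,s)=H(\x,\y)$ then follows by taking complements.

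The first step is to show $\alpha(s,\z^*)\ge0$ for every $\z^*\in H^-(\x,\y)$, the bound $\alpha(s,\cdot)\le0$ on $H^+(\x,\y)$ being symmetric. Suppose not. By continuity choose $\delta>0$ with $\overline{\ball(\z^*,\delta)}\subseteq H^-(\x,\y)$ and $\alpha(s+s',\cdot)<0$ on $\ball(\z^*,\delta)$ for all sufficiently small $s'>0$; Lemma~\ref{lem:disjsupp} then forces $\mu(X_{s+s'}\in\ball(\z^*,\delta),\tau>s+s')=0$. On the other hand $\overline{\ball(\z^*,\delta)}$ is compact and $\inf_{\z\in\overline{\ball(\z^*,\delta)}}(\dist(\y,\z)-\dist(\x,\z))>0$, so applying Lemma~\ref{lem:varadhan} twice (once with $M_1=\{\x\}$, once with $M_1=\{\y\}$, and $M_2=\overline{\ball(\z^*,\delta)}$) gives $p(\x;s',\z)>p(\y;s',\z)$ uniformly for $\z\in\ball(\z^*,\delta)$ once $s'$ is small, the two exponents in Varadhan's asymptotics being then separated by a positive multiple of $1/s'$; hence $\int_{\ball(\z^*,\delta)}\bigl(p(\x;s',\z)-p(\y;s',\z)\bigr)\,m(\d\z)>0$. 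Now invoke Lemma~\ref{lem:flow}: for $(\mathbf{u}_1,\mathbf{u}_2)$ in suitably small neighbourhoods $U_1\times U_2$ of $(\x,\y)$ meeting $\mathcal{M}(\mu_s)$, Lemma~\ref{lem:disjsupp} applied to the forward maximal coupling expresses $\mu(X_{s+s'}\in\ball(\z^*,\delta),\tau>s+s'\mid X_s=\mathbf{u}_1,Y_s=\mathbf{u}_2)$ as $\int_{\ball(\z^*,\delta)}\alpha^+(s,\mathbf{u}_1,\mathbf{u}_2,s+s',\z)\,m(\d\z)$, which dominates $\int_{\ball(\z^*,\delta)}\bigl(p(\mathbf{u}_1;s',\z)-p(\mathbf{u}_2;s',\z)\bigr)\,m(\d\z)>0$ by continuity of the densities. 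Since $(\x,\y)\in\mathcal{M}(\mu_s)$ and the coupled process has continuous paths, $\mu((X_s,Y_s)\in(U_1\times U_2)\cap\mathcal{M}(\mu_s))>0$, whence $\mu(X_{s+s'}\in\ball(\z^*,\delta),\tau>s+s')>0$ --- a contradiction. This yields $H^-(\x,\y)\subseteq I(\x_0,\y_0,s)\cup I^-(\x_0,\y_0,s)$ and the analogous inclusion for $H^+$.

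The second step removes the intersection with $I(\x_0,\y_0,s)$. Suppose $\z^*\in H^-(\x,\y)\cap I(\x_0,\y_0,s)$, so $\alpha(s,\z^*)=0$. Shrink $\delta$ so that $\ball(\z^*,\delta)$ is connected and relatively compact inside a single coordinate chart and $\alpha(s,\cdot)\ge0$ on it, and pick $U_1\ni\x$, $U_2\ni\y$ with $\dist(\x',\cdot)<\dist(\y',\cdot)$ on $\ball(\z^*,\delta)$ for all $(\x',\y')\in U_1\times U_2$; by path continuity and $(\x,\y)\in\mathcal{M}(\mu_s)$ there is $\eta>0$ with $\mu_t(U_1\times U_2)>0$ for $t\in[s-\eta,s]$. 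In that chart the Kolmogorov forward equation for $(t,\z)\mapsto\alpha(t,\z)$ becomes a uniformly parabolic equation whose coefficients are bounded on the relatively compact region $\Omega_\eta=(s-\eta,s]\times\ball(\z^*,\delta)$, using smoothness and positive-definiteness of the metric and smoothness of the drift. Applying Lemma~\ref{lem:StrongMP} to $-\alpha$ --- and splitting on whether its negativity hypothesis holds on all of $\Omega_\eta$ --- one obtains the dichotomy of the Euclidean proof: either $\alpha\equiv0$ on $\Omega_\eta$, or there are $s'\in(s-\eta,s)$, $\eps\in(0,s-s')$ and an open $U\subseteq\ball(\z^*,\delta)$ with $\alpha<0$ on $U$ for all $t\in[s',s'+\eps)$. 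In either case (taking $U=\ball(\z^*,\delta)$ in the first) Lemma~\ref{lem:disjsupp} gives $\mu(X_t\in U,\tau>t)=0$ for $t\in[s',s'+\eps)$, while choosing $(\x',\y')\in(U_1\times U_2)\cap\mathcal{M}(\mu_{s'})$ and repeating the first-step argument with $\x',\y',s'$ in place of $\x,\y,s$ yields $\mu(X_{s'+s''}\in U,\tau>s'+s'')>0$ for some $s''\in[0,\eps)$, a contradiction. Hence $H^-(\x,\y)\subseteq I^-(\x_0,\y_0,s)$; the $+$ case is symmetric, and the theorem follows.

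The only genuinely new ingredients beyond Section~\ref{sec:Euclidean} are that Lemma~\ref{lem:varadhan} (available thanks to the standing assumptions of diffusion-geodesic and stochastic completeness) simultaneously replaces the Brownian large-deviation bound and Lemma~\ref{lem:LD}, so that one argues directly with the diffusion densities and no Brownian intermediate is needed; and that Lemma~\ref{lem:StrongMP} must be applied in a coordinate chart. I expect this local-coordinate reduction for the strong maximum principle to be the step that demands most care: one must check that the metric ball $\ball(\z^*,\delta)$ can be taken small enough to be a connected, relatively compact subset of a single chart whose coordinate image is a bounded connected open set $\Omega$ of the form required by Lemma~\ref{lem:StrongMP}, and that the transformed forward operator is uniformly parabolic with coefficients bounded on closed subsets of $\Omega_\eta$. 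All of this follows routinely from smoothness and positivity of the Riemannian structure, so no serious obstacle arises --- which is why the paper is content to omit the details.
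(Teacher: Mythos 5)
Your proposal is correct and is essentially the paper's own proof: the paper proves Theorem \ref{thm:zerosetman} by declaring that the argument of Theorem \ref{thm:zeroset} carries over verbatim, with Lemma \ref{lem:varadhan} replacing Lemma \ref{lem:LD} and the strong maximum principle applied in local coordinates, which is exactly the substitution you make (and you supply the omitted details, e.g.\ uniformity of the Varadhan exponents over the compact closed ball and the chart-level verification of uniform parabolicity). The only point stated more tersely than it deserves is the opening reduction ``$H^\pm\subseteq I^\pm$ suffices, then take complements'': to get $H(\x,\y)\subseteq I(\x_0,\y_0,s)$ one should note that, by completeness and the absence of branching geodesics, every point of $H(\x,\y)$ is a limit of points of $H^-(\x,\y)$ and of $H^+(\x,\y)$ (points on the minimal geodesics to $\x$ and to $\y$), so continuity of $\alpha$ forces $\alpha=0$ there — the same observation the paper makes explicitly in the proof of Corollary \ref{cor:alltimeman}.
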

Let $\tau'=\inf\{s>0: X_s \in I(\x_0,\y_0,s)\}$ be the first time that \(X\) hits the interface. 
Then the following holds.
\begin{cor}\label{cor:alltimeman}
Almost surely $\tau'=\tau$, so coupling occurs when \(X\) first hits the interface. 
Furthermore, $\mu$-almost surely, for all \(t < \tau\),
\begin{equation}\label{eq:constman}
I(\x_0,\y_0,t)=H(X_t,Y_t)\,, \;
I^{-}(\x_0,\y_0,t)=H^-(X_t,Y_t)\,, \;
I^{+}(\x_0,\y_0,t)=H^+(X_t,Y_t).
\end{equation}
\end{cor}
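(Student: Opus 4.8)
The plan is to transcribe the proof of the Euclidean Corollary~\ref{cor:alltime}, replacing Theorem~\ref{thm:zeroset} by its manifold analogue Theorem~\ref{thm:zerosetman}, retaining Lemmas~\ref{lem:disjsupp} and~\ref{lem:flow} (which transfer verbatim to the manifold setting, with $m$ the Riemannian volume measure and $p(\x_0;t,\cdot)$ the smooth positive density of $X_t$), and applying the strong maximum principle Lemma~\ref{lem:StrongMP} in local coordinates. The first step is to show $\mu(\tau'\le\tau)=1$. By Lemma~\ref{lem:disjsupp}, for each rational $q$ the event $\{Y_q\in I^{-}(\x_0,\y_0,q),\ q<\tau\}$ is $\mu$-null; path-continuity of $Y$ then gives that almost surely $Y_t\notin I^{-}(\x_0,\y_0,t)$ for all $t<\tau$, and symmetrically $X_t\notin I^{+}(\x_0,\y_0,t)$. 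Hence before time $\tau'$ the two processes occupy disjoint regions of $M$ and in particular have not met, so $\tau'\le\tau$.

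Next I would introduce, for each $t>0$, the event $E_t$ defined exactly as in~\eqref{eq:event}: either $X_t=Y_t$, or $X_t\neq Y_t$ together with the three equalities $I(\x_0,\y_0,t)=H(X_t,Y_t)$ and $I^{\pm}(\x_0,\y_0,t)=H^{\pm}(X_t,Y_t)$. Theorem~\ref{thm:zerosetman} combined with Lemma~\ref{lem:flow} gives $\mu(E_q\text{ for all }q\in\mathbb{Q})=1$, so the event $E=\bigcap_{q\in\mathbb{Q}}E_q$ has full measure, and it remains to upgrade from rationals to all times, precisely as in the Euclidean argument. Fix $t$ with $X_t\neq Y_t$. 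For $H(X_t,Y_t)\subseteq I(\x_0,\y_0,t)$, given $\z\in H(X_t,Y_t)$ one produces rationals $t_n\downarrow t$ and points $\z_n\in H(X_{t_n},Y_{t_n})=I(\x_0,\y_0,t_n)$ with $\z_n\to\z$ (using continuity of $X$, $Y$ and of the Riemannian distance), then applies continuity of $\alpha$. For $H^{+}(X_t,Y_t)\subseteq I^{+}(\x_0,\y_0,t)$, path-continuity supplies $\eta,\delta>0$ with the geodesic ball $\ball(\z,\eta)\subseteq H^{+}(X_s,Y_s)$ for all $s\in[t-\delta,t]$; on $E$ this forces $\alpha(s,\cdot)\le0$ there, and since $\alpha(q,\z)<0$ for rational $q$ in this interval, the strong maximum principle applied to $-\alpha$ in a coordinate chart around $\z$ yields $\alpha(t,\z)<0$. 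The symmetric argument handles $H^{-}$, and one concludes $\mu(E_t\text{ for all }t)=1$.

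Finally I would identify $\tau$ with $\tau'$. On $\{\tau'<\infty\}$ we have $X_{\tau'}\in I(\x_0,\y_0,\tau')$; since $\mu(E_t\text{ for all }t)=1$, either $X_{\tau'}=Y_{\tau'}$ outright, or $I(\x_0,\y_0,\tau')=H(X_{\tau'},Y_{\tau'})$, in which case $X_{\tau'}\in H(X_{\tau'},Y_{\tau'})$ means $\dist(X_{\tau'},X_{\tau'})=\dist(Y_{\tau'},X_{\tau'})$, i.e. $\dist(X_{\tau'},Y_{\tau'})=0$; either way $X_{\tau'}=Y_{\tau'}$ $\mu$-a.s. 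The same reasoning with the roles of $X$ and $Y$ exchanged shows $\tau'=\inf\{s>0:Y_s\in I(\x_0,\y_0,s)\}$, so $\tau'$ is a stopping time for both processes; since they agree at $\tau'$ we may run $X$ and $Y$ synchronously thereafter, exhibiting a coupling of the two laws with coupling time $\tau'$. Because the maximal coupling time $\tau$ is stochastically dominated by every coupling time, $\tau\le\tau'$, and together with $\tau'\le\tau$ from the first step this gives $\tau=\tau'$ almost surely, hence also $\mu(X_t\neq Y_t\text{ for all }t<\tau)=1$. Combined with $\mu(E_t\text{ for all }t)=1$, this yields~\eqref{eq:constman}.

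The step I expect to be the main obstacle is making the strong maximum principle rigorous on $M$. In the Euclidean case Lemma~\ref{lem:StrongMP} is applied directly on cylinders $(t-\delta,t]\times\ball(\z,\eta)$; on a manifold one must instead work inside a coordinate chart, verifying that for small $\eta$ the geodesic ball $\ball(\z,\eta)$ is a connected bounded open subset of such a chart, that there the Kolmogorov forward operator reads $\partial_t-(\mathcal{A}+h)$ with $\mathcal{A}$ uniformly parabolic and $h$ bounded on the closure, and that $\alpha(t,\cdot)=p(\x_0;t,\cdot)-p(\y_0;t,\cdot)$ solves this equation (smoothness and strict positivity of the densities being ensured by ellipticity and smoothness of the coefficients of $\generator$). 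The auxiliary geometric facts used above --- that $H^{\pm}(\x,\y)$ are open and that $H(\x,\y)$ arises as a sign-change zero set of the continuous function $\mathbf{w}\mapsto\dist(\x,\mathbf{w})-\dist(\y,\mathbf{w})$, so that the rational-time approximations are available --- follow routinely from continuity of the Riemannian distance and the triangle inequality.
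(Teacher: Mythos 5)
Your overall plan is exactly the paper's: the paper itself states that the proof of Corollary \ref{cor:alltimeman} ``follows the lines of'' Corollary \ref{cor:alltime}, with Theorem \ref{thm:zerosetman} in place of Theorem \ref{thm:zeroset} and the maximum principle applied in charts, and your treatment of the maximum-principle step and of the identification $\tau=\tau'$ (via $X_{\tau'}\in H(X_{\tau'},Y_{\tau'})$ forcing $\dist(X_{\tau'},Y_{\tau'})=0$) is sound. However, there is a genuine gap at precisely the point the paper singles out as the \emph{only} new content of the manifold case: the claim that for $\z\in H(X_t,Y_t)$ and rationals $t_n\downarrow t$ one can find $\z_n\in H(X_{t_n},Y_{t_n})$ with $\z_n\to\z$. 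You dismiss this as following ``routinely from continuity of the Riemannian distance and the triangle inequality,'' but those tools only give the non-strict inequality $\dist(Y_t,\mathbf{w})\ge\dist(X_t,\mathbf{w})$ along a minimal geodesic from $X_t$ towards $\z$, and they cannot rule out that the equidistant set has nonempty interior near $\z$ (which indeed happens in general metric or length spaces, e.g.\ trees or sup-norm geometries). Without knowing that $\z$ is a two-sided limit of points of $H^{-}(X_t,Y_t)$ and of $H^{+}(X_t,Y_t)$, the sign-change argument that produces $\z_n$ as a zero of $\alpha(t_n,\cdot)$ simply does not get off the ground, and then the inclusion $H(X_t,Y_t)\subseteq I(\x_0,\y_0,t)$ at irrational times is unproved.

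The missing ingredient is Riemannian, not metric: the paper takes the broken curve $\gamma$ formed by minimal geodesics from $X_t$ to $\z$ and from $\z$ to $Y_t$ (existence by completeness), and uses the absence of branching geodesics to upgrade the triangle inequality to the \emph{strict} inequality $\dist(X_t,\gamma(s))<\dist(Y_t,\gamma(s))$ for $s<\dist(X_t,\z)$, with the reverse strict inequality beyond $\z$. Compactness of the sub-arcs then keeps these inequalities strict when $X_t,Y_t$ are replaced by $X_{t_n},Y_{t_n}$ for large $n$, so that on the event $E$ one has $\alpha(t_n,\cdot)>0$ before $\z$ and $<0$ after $\z$ along $\gamma$, and continuity of $\alpha(t_n,\cdot)$ yields $\z_n\in\gamma\cap H(X_{t_n},Y_{t_n})$ converging to $\z$. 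Your proposal needs this non-branching argument (or an equivalent) inserted; once it is, the remainder does carry over verbatim as you describe.
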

\begin{proof}
The proof follows the lines of the proof of Corollary \ref{cor:alltime}. 
The only additional detail that we have to check here (which was immediate in the Euclidean case) is that, 
for any $t > 0$ with $X_t \neq Y_t$, any $\z \in H(X_t,Y_t)$ and any rational sequence $t_n \downarrow t$, there is $\z_n \in H(X_{t_n},Y_{t_n})$ such that $\z_n \rightarrow \z$. 
This was used in Corollary \ref{cor:alltime} to show $H(X_t,Y_t) \subseteq I(\x_0,\y_0,t))$.

Recall the event $E=\cap_{q \in Q}E_q$, where $E_q$ was defined in (\ref{eq:event}). 
Assume $E$ holds.
For notational convenience, denote $H(X_t,Y_t), X_t, Y_t$ by $H,\x,\y$ and  $H(X_{t_n},Y_{t_n}), X_{t_n}, Y_{t_n}$ by $H_n,\x_n,\y_n$ respectively. 
Let $\gamma:[0,2\dist(\x,\z)] \rightarrow M$ denote the continuous curve such that $\gamma \mid_{[0,\dist(\x,\z)]}$ is a minimal geodesic joining $\x$ and $\z$ and $\gamma \mid_{[\dist(\x,\z),2\dist(\x,\z)]}$ is a minimal geodesic joining $\z$ and $\y$. 
As $M$ has no branching geodesics, it follows that $\dist(\x,\gamma(s)) < \dist(\y,\gamma(s))$ for any $s \in [0,\dist(\x,\z))$.
Consequently for any $\delta>0$, by the compactness of $\{\gamma(s) : s \in [0,\dist(\x,\z)-\delta]\}$, $\min_{s \in [0,\dist(\x,\z)-\delta]}(\dist(\y, \gamma(s))-\dist(\x, \gamma(s))) > 0$ and hence, $\min_{s \in [0,\dist(\x,\z)-\delta]}(\dist(\y_n, \gamma(s))-\dist(\x_n, \gamma(s))) > 0$ for sufficiently large $n$. Thus, for sufficiently large $n$, $\gamma(s) \in H^-(\x_n,\y_n)=I^-(\x_0,\y_0,t_n)$ for all $s \in [0,\dist(\x,\z)-\delta]$ and consequently, $\min_{s \in [0,\dist(\x,\z)-\delta]}\alpha(t_n,\gamma(s))>0$. 
Similarly, $\min_{s \in [\dist(\x,\z)+\delta,2\dist(\x,\z)]}\alpha(t_n,\gamma(s))<0$ for sufficiently large $n$. 
Thus, as $E$ holds, the continuity of $\alpha(t_n, \cdot)$, implies that for sufficiently large $n$, there is $\z_n \in \gamma \cap H_n$ such that $\z_n \rightarrow \z$. As $\mu(E)=1$, this implies $H(X_t,Y_t) \subseteq I(\x_0,\y_0,t))$ almost surely.

The rest of the proof carries over verbatim from that of Corollary \ref{cor:alltime}.
\end{proof}

The striking fact that emerges from the above is that, almost surely under the coupling $\mu$, for each $s>0$, $H(X_t,Y_t)$ is a \emph{non-random set which depends only on $s$} and not on the specific location of $(X_t,Y_t)$. 
We will call this set $H_t$ henceforth.
Similarly, denote $H^+_t=H^+(X_t,Y_t)$ and $H^-_t=H^-(X_t, Y_t)$. 
The family $\{H_t: t \ge 0\}$ corresponds to the family of \emph{moving mirrors} from section \ref{sec:Euclidean}.

We now follow \citet{Kuwada-2009}'s construction to define a deterministic global involutive isometry $F_s$ which fixes $H_s$ and maps $X_s$ to $Y_s$ under the coupling. 
The argument of \citet[Lemma 4.6]{Kuwada-2009} applies directly to our case: we therefore omit proof.
\begin{lem}\label{lem:Isom1}
Suppose that the standing assumptions of diffusion-geodesic completeness and stochastic completeness both hold.
Take $s \ge 0$. If $\x, \y \in M$, with $\x \neq \y$, satisfies
\begin{equation}\label{eq:Isoeqn}
\dist(\x,\z)\quad=\quad \dist(\y,\z)
\end{equation}
for all $\z \in H_s$, then $(\x,\y) \in H^+_s \times H^-_s \cup H^-_s \times H^+_s$ (so \(\x\) and \(\y\) lie in opposite ``half-manifolds''). 
Furthermore, for any $\x \in M$, a point $\y \in M \backslash\{\x\}$ satisfying (\ref{eq:Isoeqn}) is unique if it exists.
\end{lem}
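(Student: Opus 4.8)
The statement is purely Riemannian-geometric once the standing completeness assumptions are in force; the only input from the coupling is that, by definition and because $X_s \neq Y_s$, one has $X_s \in H^-_s$ (since $\dist(X_s,X_s)=0<\dist(Y_s,X_s)$) and $Y_s \in H^+_s$, so that $H_s$ is the perpendicular bisector of a genuine pair of points lying in opposite half-manifolds. The plan is first to record the trivial remark that if $\x \neq \y$ satisfy \eqref{eq:Isoeqn} then neither $\x$ nor $\y$ can lie in $H_s$: taking $\z=\x$ gives $\dist(\y,\x)=\dist(\x,\x)=0$, forcing $\y=\x$. So $\x,\y \in H^+_s \cup H^-_s$, and it remains only to rule out their lying in the same half-manifold.

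For this I would argue by contradiction. Suppose $\x$ and $\y$ both lie in $H^+_s$ (the case $H^-_s$ being symmetric, with $Y_s$ in place of $X_s$). Since $M$ is complete, pick a minimal unit-speed geodesic $\eta\colon[0,D]\to M$ from $X_s$ to $\x$, where $D=\dist(X_s,\x)$. As $\eta(0)=X_s\in H^-_s$ and $\eta(D)=\x\in H^+_s$, the intermediate value theorem applied to $t\mapsto\dist(X_s,\eta(t))-\dist(Y_s,\eta(t))$ yields an interior crossing point $\mathbf{p}=\eta(a)\in H_s$ with $0<a<D$. Minimality of $\eta$, the relation $\dist(\mathbf{p},\x)=\dist(\mathbf{p},\y)$ (which holds since $\mathbf{p}\in H_s$), and the triangle inequality then give $D=a+\dist(\mathbf{p},\y)\geq\dist(X_s,\y)$; running the symmetric argument along a minimal geodesic from $X_s$ to $\y$ gives $\dist(X_s,\y)\geq\dist(X_s,\x)$, whence $\dist(X_s,\y)=\dist(X_s,\x)=D$. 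Equality in the triangle inequality now forces $\mathbf{p}$ onto a minimal geodesic $\zeta\colon[0,D]\to M$ from $X_s$ to $\y$, necessarily with $\zeta(a)=\mathbf{p}$. The decisive observation is that $\mathbf{p}$, being an \emph{interior} point of a minimal geodesic, is not in the cut locus of $X_s$, so the minimal geodesic from $X_s$ to $\mathbf{p}$ is unique; hence $\zeta$ and $\eta$ agree on $[0,a]$, and since Riemannian geodesics do not branch they agree on all of $[0,D]$, giving $\y=\zeta(D)=\eta(D)=\x$ --- a contradiction. This proves $(\x,\y)\in H^+_s\times H^-_s \cup H^-_s\times H^+_s$.

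The uniqueness assertion I would then deduce directly from the first part. If $\y_1,\y_2\in M\setminus\{\x\}$ both satisfy \eqref{eq:Isoeqn} with respect to $\x$, then $\dist(\y_1,\z)=\dist(\x,\z)=\dist(\y_2,\z)$ for every $\z\in H_s$, so $\y_1$ and $\y_2$ are themselves equidistant from every point of $H_s$. Were $\y_1\neq\y_2$, the first part would place them in opposite half-manifolds; but the first part applied to the pairs $(\x,\y_1)$ and $(\x,\y_2)$ places both $\y_1$ and $\y_2$ in the half-manifold opposite to the one containing $\x$, hence in the \emph{same} half-manifold --- contradiction. So $\y_1=\y_2$.

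The step I expect to be the real obstacle is the no-branching argument: a priori the two minimal geodesics $\eta$ and $\zeta$ could reach the crossing point $\mathbf{p}$ from $X_s$ along different routes (exactly the situation when $\mathbf{p}$ is a cut point of $X_s$), and without the remark that an interior point of a minimal geodesic is never such a cut point the argument does not close. Everything else is bookkeeping with the triangle inequality, together with completeness of $(M,g)$, which is used only to guarantee the existence of the minimal geodesics invoked above.
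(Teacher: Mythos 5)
Your proof is correct. There is nothing in the paper to compare it against line by line, because the paper deliberately omits the argument, stating only that the proof of \citet[Lemma 4.6]{Kuwada-2009} applies directly; your write-up supplies a self-contained metric-geometry proof of exactly the kind being alluded to, and all of its ingredients are available in the paper's framework: a reference pair $X_s\in H^-_s$, $Y_s\in H^+_s$ with $X_s\neq Y_s$ (legitimate, since $H_s$, $H^\pm_s$ are only defined through such a pair, i.e.\ on the event that coupling has not yet occurred), continuity of the distance functions for the intermediate-value crossing of $H_s$ along a minimizing geodesic, the equidistance relation \eqref{eq:Isoeqn} at the crossing point, equality in the triangle inequality, completeness of $(M,g)$ for existence of minimizers, and the paper's own remark that Riemannian geodesics do not branch. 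One simplification worth noting: the cut-locus fact you flag as the delicate step can be bypassed entirely. Once you have $\dist(X_s,\mathbf{p})+\dist(\mathbf{p},\y)=\dist(X_s,\y)$, define $\zeta$ directly as the concatenation of $\eta|_{[0,a]}$ with a minimizing geodesic from $\mathbf{p}$ to $\y$; this concatenation has length $\dist(X_s,\y)$, hence is minimizing, hence is a smooth geodesic, and it agrees with $\eta$ on $[0,a]$ by construction, so non-branching alone forces $\zeta\equiv\eta$ on $[0,D]$ and $\y=\x$. (Your route through the statement that an interior point of a minimizing segment lies strictly before the cut point, where the minimizer from $X_s$ is unique, is also valid.) The deduction of uniqueness from the first part, by placing two putative mirror images of $\x$ in the same half-manifold while the first part would force them into opposite ones, is clean and correct.
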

Whenever such a $\y$ exists, we will call $\y$ the \emph{mirror image} of $\x$ at time $s$.
With the aid of the above lemma, the isometry $F_s$ is constructed using a procedure which is similar to \citet[Theorem 4.5]{Kuwada-2009}, 
but is subject to some modification as described in the following lemma and its proof. 
\begin{lem}\label{lem:globiso}
Suppose that the standing assumptions of diffusion-geodesic completeness and stochastic completeness both hold.
Assume $(X,Y)$ is a Markovian maximal coupling with starting points $\x_0$ and $\y_0$.
Then, for each $s \in [0,\tau)$, there is a deterministic involutive isometry $F_s$ with fixed point set $H_s$ such that $Y_s=F_s(X_s)$, 
furthermore $F_s(H^-_s)=H^+_s$.
\end{lem}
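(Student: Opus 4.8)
The plan is to realise $F_s$ as the \emph{mirror-image map} associated with the non-random hyperplane $H_s$, and then to upgrade this partially-defined map to a global involutive isometry by adapting \citet[Theorem~4.5]{Kuwada-2009} to the drift setting.

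First I would assemble the data. Fix $s\in[0,\tau)$. By Corollary \ref{cor:alltimeman}, $\mu$-almost surely $X_s\neq Y_s$ and $H_s=H(X_s,Y_s)$ is non-random, with one of $X_s,Y_s$ in $H^-_s$ and the other in $H^+_s$. By Lemma \ref{lem:flow}, for $\mu_s$-almost every $(\x,\y)$ with $\x\neq\y$ one has $(\x,\y)\in\mathcal{M}(\mu_s)$, and then Theorem \ref{thm:zerosetman} gives $H(\x,\y)=I(\x_0,\y_0,s)=H_s$, i.e.\ $\dist(\x,\z)=\dist(\y,\z)$ for every $\z\in H_s$. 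By Lemma \ref{lem:Isom1} the point $\y$ is uniquely determined by $\x$ (and lies in the half-manifold opposite to $\x$), so there is a Borel involution $\phi$, defined on a dense subset $D_0$ of $\overline{H^-_s}\cup\overline{H^+_s}$ of full $\mu_s|_{\{X_s\neq Y_s\}}$-mass, with $\phi(\x)$ the mirror image of $\x$; note also that $\phi$ fixes $H_s\cap D_0$ pointwise, since a point of $H_s$ is its own (and, by Lemma \ref{lem:Isom1}, only) mirror image.

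Next I would extend $\phi$ to all of $M$. Let $D$ be the set of $\x\in M$ possessing a mirror image; this image is unique by Lemma \ref{lem:Isom1}, so $\phi$ is unambiguously defined on $D\supseteq D_0\cup H_s$, and $D$ is non-empty ($\mathcal{M}(\mu_s)$ being non-empty for $s<\tau$). Using diffusion-geodesic completeness and hence Hopf--Rinow, $D$ is closed: if $\x_n\to\x$ with $\y_n=\phi(\x_n)$, then for a fixed $\z_0\in H_s$ the distances $\dist(\y_n,\z_0)=\dist(\x_n,\z_0)$ stay bounded, so $(\y_n)$ is relatively compact and any subsequential limit is a mirror image of $\x$. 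It remains to show that $\phi$ is distance-preserving on $D$ and that $D$ is open; here I would follow the construction of \citet[Theorem~4.5]{Kuwada-2009}, modified for the drift: the driftless small-time heat-kernel estimates used there are replaced in our setting by the Varadhan asymptotics of Lemma \ref{lem:varadhan}, and the propagation of the mirror-image relation to a neighbourhood of a given point is carried out by running the forward Markovian maximal coupling furnished by Lemma \ref{lem:flow} from pairs in $\mathcal{M}(\mu_s)$. Since $M$ is connected and $D$ is then both open and closed, $D=M$, and the distance-preserving involution $\phi=:F_s$ is a global isometry with $F_s\circ F_s=\mathrm{id}$.

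Finally, $F_s$ fixes $H_s$ pointwise and moves every other point off itself (opposite half-manifolds), so its fixed-point set is exactly $H_s$; being a bijection fixing $H_s$ it permutes $M\setminus H_s=H^-_s\sqcup H^+_s$, and since mirror images swap half-manifolds (Lemma \ref{lem:Isom1}) we get $F_s(H^-_s)=H^+_s$. In particular $H(X_s,Y_s)=H_s$ forces $Y_s$ to be the mirror image of $X_s$, so $Y_s=F_s(X_s)$. The hard part will be the middle step: promoting the measure-theoretically-defined mirror map to a genuine global isometry — establishing both its distance-preservation and the fact that its domain exhausts $M$ — which is precisely where Kuwada's geometric argument, adapted to accommodate the drift via Lemmas \ref{lem:varadhan} and \ref{lem:flow}, has to do the real work.
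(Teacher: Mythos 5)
Your overall strategy is the same as the paper's: define the mirror map through Lemma \ref{lem:Isom1}, obtain closedness of its domain from properness (Hopf--Rinow, via diffusion-geodesic completeness), use the fact that the law of $X_s$ on $\{\tau>s\}$ has full support in $H^-_s$ to exhaust $M$, and adapt Kuwada's construction with the Varadhan asymptotics of Lemma \ref{lem:varadhan} in place of driftless heat-kernel estimates. However, two concrete points are missing. First, your set-up fails at $s=0$: there $\mu_0$ is a point mass at $(\x_0,\y_0)$, so there is no ``dense subset $D_0$ of full $\mu_0$-mass'' and $\mathcal{M}(\mu_0)$ supplies only the single pair $(\x_0,\y_0)$. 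The paper has to treat $s=0$ by a separate limiting argument: take $t_n\downarrow0$ and mirror-image pairs $(\x_n,\y_n)$ at times $t_n$ with $\x_n\to\x$, use the convergence of points of the mirrors $H_{t_n}$ to points of $H_0$ (from the proof of Corollary \ref{cor:alltimeman}) together with properness to extract a limit $\y$ satisfying \eqref{eq:Isoeqn}, and then invoke uniqueness (Lemma \ref{lem:Isom1}) to define $F_0(\x)$; the same device is needed again to show $F_0$ is an isometry and to get right-continuity of the mirror family at time $0$.

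Second, the step you defer --- distance preservation --- is not obtained by simply ``following'' \citet[Theorem 4.5]{Kuwada-2009}; the drift breaks the symmetry his comparison of transition densities exploits, and the paper's replacement is where the real content lies. One needs: exact identities such as \eqref{eq:large1} and \eqref{align:large2} for $\mu(X_{s+t}\in U_2,\,X_s\in V_1,\,\tau>s+t)$ and its $Y$-counterpart, coming from the strong Markov property, Corollary \ref{cor:alltimeman} and Lemma \ref{lem:disjsupp}; the strict inequality $\dist(\z,\mathbf{w})<\dist(F_s(\z),\mathbf{w})$ for $\z,\mathbf{w}$ on the same side of $H_s$ (using non-branching of geodesics); and the containment $F_{s+t}(U_2)\subseteq F_s(V_2)$ for all small $t$, itself proved by another properness/limit-of-mirrors argument. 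Only after these are in place can Lemma \ref{lem:varadhan} be applied to the two identities to yield $\dist(\x,\y)\ge\dist(F_s(\x),F_s(\y))$, with equality by involutivity. Your proposal names the right tools (Varadhan plus the forward coupling from Lemma \ref{lem:flow}), but the bridge from those tools to the isometry inequality is exactly the part that has to be built, and it is absent.
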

\begin{proof}
Define the set 
\[
A_s\quad=\quad
\left\{\x \in M \;:\; \text{ there exists } \y \in M \backslash \{\x\} \text{ such that } (\ref{eq:Isoeqn}) \text{ holds}\right\}\,.
\]
For $\x \in A_s$, define $F_s(\x)$ to be the unique $\y$ for which (\ref{eq:Isoeqn}) holds. 
For $\x \in H_s$, define $F_s(\x)=\x$. 
Following the proof of \citet[Theorem 4.5]{Kuwada-2009}, the set $\hat{A_s}=A_s \cup H_s$ is closed. 
Furthermore, by Theorem \ref{thm:zerosetman} and Lemma \ref{lem:disjsupp}, on the event $[0<s < \tau]$ the support of $X_s$ (equivalently $Y_s$) is the whole of $H^-_s$ (respectively $H^+_s$). 
This, by Lemma \ref{lem:flow} and Theorem \ref{thm:zerosetman}, implies $\hat{A}_s=M$ for all $s>0$. 

A little more argument is required for $s=0$. 
By Theorem \ref{thm:zerosetman}, Lemma \ref{lem:varadhan} and Lemma \ref{lem:disjsupp}, for any $\x \in H^-_0$, there is a sequence $t_n \downarrow 0$ and $\x_n \rightarrow \x$ such that $\x_n \in A_{t_n}$ with $\y_n \in M$ being its \emph{mirror image} at time $t_n$, 
for all $n$. 
Take any $\z_0 \in H_0$. 
Following the proof of Corollary \ref{cor:alltimeman}, for sufficiently large $n$, there is $\z_n \in H_{t_n}$ such that $\z_n \rightarrow \z_0$. 
As $\dist(\x_n,\z_n)=\dist(\y_n,\z_n)$, it follows that the set of distances $\{\dist(\z_0, \y_n)\}_{n \ge 1}$ is bounded.
Consequently the properness of $M$ implies that there is a subsequence $\{n_k\}$ such that $\y_{n_k} \rightarrow \y$ for some $\y \in M$. 
Now, for any $\z \in H_0$, take $\z'_n \in H_{t_n}$ such that $\z'_n \rightarrow \z$. 
Thus,
\begin{eqnarray*}
\dist(\y,\z) \quad&=\quad& \lim_{k \rightarrow \infty}\dist(\y_{n_k},\z'_{n_k})\\
\quad&=\quad& \lim_{k \rightarrow \infty}\dist(\x_{n_k},\z'_{n_k})=\dist(\x,\z)\,.
\end{eqnarray*}
This implies $\hat{A}_0=M$. Note that, by Lemma \ref{lem:Isom1}, the limit $\y$ is uniquely determined by $\x$ and $H_0$, and thus, does not depend on the subsequence chosen. This implies $\y_n \rightarrow \y$. Define $F_0(\x)=\y$.

Thus $F_s$ is defined on the whole of $M$ for every $s \ge 0$. Continuity of $F_s$ for $s \ge 0$ follows exactly along the lines of the proof of continuity of the map $R$ in \citet[Theorem 4.5]{Kuwada-2009}. Further, by definition, $F_s$ is involutive. Thus, in particular, $F_s$ is an open map. 

To prove that $F_s$ is, in fact, an isometry, we have to modify the proof of \citet[Lemma 5.3]{Kuwada-2009} appropriately, as we outline in the following. 

First, consider $s>0$. If $\x, \y \in H_s$ or $\x \in H^-_s, \y \in H^+_s$, then $\dist(\x, \y)= \dist(F_s(\x), F_s(\y))$ follows from the definition of $F_s$. So, assume $\x , \y \in H^-_s$. Take $\delta>0$ small enough such that
$$
\overline{\ball(\x, \delta)} \subset H^-_s, \, \overline{\ball(\y, \delta)} \subset H^-_s, \, \overline{\ball(F_s(\x), \delta)} \subset H^+_s, \, \overline{\ball(F_s(\y), \delta)} \subset H^+_s.
$$
Let
$$
V_1= \ball(\x, \delta) \cap F_s(\ball(F_s(\x), \delta)), \, V_2= \ball(\y, \delta) \cap F_s(\ball(F_s(\y), \delta)), \, U_2= \ball(\y, \delta/2) \cap F_s(\ball(F_s(\y), \delta/2)).
$$
For $t >0$, by the strong Markov property, Corollary \ref{cor:alltimeman} and Lemma \ref{lem:disjsupp}, we have
\begin{equation}\label{eq:large1}
\mu(X_{s+t} \in U_2, X_s \in V_1, \tau> s+t)= \int_{V_1}\alpha^+(s, \z) \left\lbrace \int_{U_2}\left(p(\z;t,\mathbf{w})-p(F_s(\z);t,\mathbf{w}))m(d\mathbf{w}\right)\right\rbrace m(d\z).
\end{equation}
Similarly,
\begin{align}\label{align:large2}
\mu(Y_{s+t} \in F_{s+t}(U_2), & Y_s \in F_s(V_1), \tau> s+t)\nonumber\\
& = \int_{F_s(V_1)}\alpha^-(s, \z) \left\lbrace \int_{F_{s+t}(U_2)}\left(p(\z;t,\mathbf{w})-p(F_s(\z);t,\mathbf{w}))m(d\mathbf{w}\right)\right\rbrace m(d\z).
\end{align}
Observe that if $\z, \mathbf{w} \in H^-_s$ or $\z, \mathbf{w} \in H^+_s$, then $\dist(\z, \mathbf{w}) < \dist(F_s(\z), \mathbf{w})$. To see this, let $\gamma$ be the minimal geodesic joining $\mathbf{w}$ and $F_s(\z)$ and let $\z_0 \in \gamma \cap H_s$. Then
$$
\dist(\z, \mathbf{w}) \le \dist(\z, \z_0) + \dist(\z_0, \mathbf{w}) = \dist(F_s(\z), \z_0) + \dist(\z_0, \mathbf{w}) = \dist(F_s(\z), \mathbf{w}).
$$
If equality holds in the first inequality above, then we can take a minimal geodesic joining $\z$ and $\mathbf{w}$ that branches from $\gamma$ at $\z_0$ which gives a contradiction.

Next, we claim that there is $\epsilon>0$ such that for $t \in [0,\epsilon]$, $F_{t+s}(U_2) \subseteq F_s(V_2)$. Suppose not. Then there is a sequence $t_n \downarrow 0$ and $\x_n \in U_2$ such that $\y_n = F_{s+t_n}(\x_n) \in F_s(V_2^c)$. As $U_2$ is bounded, we obtain a subsequence $n_k$ such that $\x_{n_k} \rightarrow \x^o \in \overline{U_2}$ as $k \rightarrow \infty$. Take any $\z^o \in H_s$. Following the proof of Corollary \ref{cor:alltimeman}, for sufficiently large $n$, there is $\z^o_n \in H_{s+t_n}$ such that $\z^o_n \rightarrow \z^o$. As $\dist(\x_{n_k}, \z^o_{n_k})= \dist(\y_{n_k}, \z^o_{n_k})$,
\begin{align*}
\dist(\y_{n_k}, \z^o) & \le \dist(\y_{n_k}, \z^o_{n_k}) + \dist(\z^o_{n_k},\z^o) = \dist(\x_{n_k}, \z^o_{n_k}) + \dist(\z^o_{n_k},\z^o)\\
& \le \dist(\x_{n_k}, \x^o) + \dist(\x^o, \z^o) + 2\dist(\z^o_{n_k},\z^o).
\end{align*}
Thus, $\y_{n_k}$ is bounded and we can extract a further subsequence $n_{k_l}$ such that $\y_{n_{k_l}} \rightarrow \y^o$ as $l \rightarrow \infty$. As $F_s$ is a bijective open map, $F_s(V_2^c)$ is closed and hence, $\y^o \in F_s(V_2^c)$. Now, take any $\z \in H_s$. Taking a sequence $\z_{n_{k_l}} \in H_{s + t_{n_{k_l}}}$ such that $\z_{n_{k_l}} \rightarrow \z$, we observe
$$
\dist(\x^o, \z) = \lim_{l \rightarrow \infty} \dist(\x_{n_{k_l}},\z_{n_{k_l}}) = \lim_{l \rightarrow \infty}\dist(\y_{n_{k_l}},\z_{n_{k_l}}) = \dist(\y^o, \z).
$$
By Lemma \ref{lem:Isom1}, $\y^o=F_s(\x^o)$, which gives a contradiction as $\x^o \in \overline{U_2} \subseteq V_2$ but $\y^o \in F_s(V_2^c)$. The claim follows from this.

The above two observations along with Lemma \ref{lem:varadhan} applied to \eqref{eq:large1} and \eqref{align:large2} yield
\begin{align*}
\lim_{t \downarrow 0} \ 2t \log\left[\mu(X_{s+t} \in U_2, X_s \in V_1, \tau> s+t)\right] \quad&=\quad -\inf_{\z \in V_1, \mathbf{w} \in U_2} \dist^2(\z, \mathbf{w}),\\
\limsup_{t \downarrow 0} \ 2t \log\left[\mu(Y_{s+t} \in F_{s+t}(U_2), X_s \in F_s(V_1), \tau> s+t)\right] \quad&\le \quad -\inf_{\z \in V_1, \mathbf{w} \in V_2} \dist^2(F_s(\z), F_s(\mathbf{w})).
\end{align*}
Since the left hand side of \eqref{eq:large1} is the same as that of \eqref{align:large2}, we take $\delta \downarrow 0$ above to get
$$
\dist(\x,\y) \ge \dist(F_s(\x), F_s(\y)).
$$
As $F_s$ is involutive, applying a symmetric argument with $\x, \y$ replaced by $F_s(\x), F_s(\y)$ yield the opposite inequality. Hence, $\dist(\x,\y) = \dist(F_s(\x), F_s(\y))$ for all $\x, \y \in M$. Thus, $F_s$ is an isometry for every $s>0$.

Finally, consider the case $s=0$. Again, for $\x,\y \in H_0$ or $\x \in H^-_0, \y \in H^+_0$, $\dist(\x,\y)=\dist(F_0(\x), F_0(\y))$ follows from the definition of $F_0$. For $\x, \y \in H^-_0$, by the same procedure used to define $F_0$ earlier in the proof, we obtain sequences $t_n \downarrow 0$ and $\x_n \in H^-_{t_n}$ and $\y_n \in H^+_{t_n}$ such that $\x_n \rightarrow \x$, $\y_n \rightarrow \y$, $F_{t_n}(\x_n) \rightarrow F_0(\x)$ and $F_{t_n}(\y_n) \rightarrow F_0(\y)$. Thus,
$$
\dist(F_0(\x), F_0(\y))= \lim_{n \rightarrow \infty}\dist(F_{t_n}(\x_n), F_{t_n}(\y_n))= \lim_{n \rightarrow \infty}\dist(\x_n, \y_n) = \dist(\x,\y),
$$
which proves that $F_0$ is an isometry.

Now, $F_s(H^-_s)=H^+_s$ follows from Lemma \ref{lem:Isom1}. This completes the proof of the lemma.
\end{proof}

Following \citet[Chapter 10, Proposition 24]{Petersen-2006}, as $H_s$ is the fixed point set of an isometry therefore each connected component of $H_s$ is a totally geodesic submanifold (in particular, a smooth submanifold). 
Furthermore, as $H_s$ partitions $M$ into two disjoint open subsets, it can be verified (for example by referring to normal coordinates based around a point in $H_s$) that $H_s$ must be of codimension $1$. 
Furthermore, this discussion also implies that for any $\x, \y \in M$ there is \emph{at most one isometry} whose set of fixed points is the set $H(\x,\y)$. 
We will refer to this isometry, if it exists, as $f_{\x,\y}$. 
In fact 
Lemmas \ref{lem:Isom1} and \ref{lem:globiso}
together imply that for any \(s\geq0\) there does indeed exist such a $f_{\x,\y}$ for each $(\x,\y) \in \mathcal{M}(\mu_s)$, given by 
\[
f_{\x,\y}\quad=\quad F_s\,.
\]
To get an intuitive picture of how $F_s$ acts locally around a point $\x^* \in H_s$ (hence, fixed by \(F_s\)), recall that 
\[
\d F_s: T_{\x^*}M \rightarrow T_{\x^*}M
\]
is a linear isometry. 
We can form an orthonormal basis $e_1, \dots e_d$ of $T_{\x^*}M$ such that $e_1,\dots,e_{d-1}$ form a basis of the tangent space $T_{\x^*}H_s$ viewed as a subspace of $T_{\x^*}M$. 
Because \(H_s\) is totally geodesic, these vectors correspond to geodesics through $\x^*$ that stay in $H_s$.
As $H_s$ is the fixed point set of $F_s$, the basis vectors $e_1,\dots,e_{d-1}$ must be fixed by $\d F_s$, while $e_d$ is mapped by \(\d F_s\) to $-e_d$. 
Thus, locally, one geodesic passing through $\x^*$ is inverted by \(F_s\), while geodesics starting in directions orthogonal to the inverted geodesic are fixed by \(F_s\).

\subsection{Structure of the manifold $M$}
In this section, we will use the isometries $f_{\x,\y}$ constructed above for every pair of points $\x \in \ball(\x_0,r)$ and $\y \in \ball(\y_0, r)$ 
to show that the underlying complete Riemannian manifold $M$ is \emph{homogeneous} 
(i.e.~the isometry group acts transitively) and \emph{isotropic} about a chosen point $\x^*$ (i.e.~there are $\tfrac{d(d-1)}{2}$ independent rotations about $\x^*$). 
This will imply that $M$ is a \emph{maximally symmetric space}, i.e.~the isometry group $\mathcal{G}$ of $M$ has the maximal dimension possible (namely, $\tfrac{d(d+1)}{2}$) for any $d$-dimensional manifold. 
It is an almost immediate consequence that the space $M$ can be classified (up to scaling) as one of the three model space forms of constant curvatures respectively \(-1\), \(0\), and \(+1\).
\begin{lem}\label{lem:homogeneous}
Suppose that the standing assumptions of diffusion-geodesic completeness and stochastic completeness both hold.
Under {\LPC}, $(M,g)$ is a homogeneous space.
\end{lem}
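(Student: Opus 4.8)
The plan is to produce a single nonempty open subset of $M$ lying inside one orbit of the isometry group $\mathcal{G}=\operatorname{Iso}(M)$; homogeneity of $M$ then follows quickly. Shrinking $r$ if necessary, I would assume $\ball(\x_0,r)$ and $\ball(\y_0,r)$ are disjoint. The key observation is that {\LPC} supplies, for every admissible pair of starting points, a \emph{global} isometry that interchanges them: given $(\x,\y)\in\ball(\x_0,r)\times\ball(\y_0,r)$, {\LPC} yields an MMC of the diffusion started from $(\x,\y)$, and applying Lemma \ref{lem:globiso} to this coupling (with $(\x,\y)$ in the r\^ole of $(\x_0,\y_0)$) at time $s=0$ produces an involutive isometry $f_{\x,\y}\in\mathcal{G}$ with fixed point set $H(\x,\y)$ and, crucially, $f_{\x,\y}(\x)=\y$ --- this last being exactly the relation $Y_s=F_s(X_s)$ of Lemma \ref{lem:globiso} read off at $s=0$. (This is the isometry $f_{\x,\y}$ already named in the discussion following Lemma \ref{lem:globiso}.)

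Next I would fill an open set with the orbit $\mathcal{O}=\mathcal{G}\cdot\x_0$. Keeping $\x=\x_0$ fixed and letting $\y$ vary over $\ball(\y_0,r)$, the identity $f_{\x_0,\y}(\x_0)=\y$ shows $\ball(\y_0,r)\subseteq\mathcal{O}$, and $\y_0\in\mathcal{O}$ too (via $f_{\x_0,\y_0}$). Because $\mathcal{G}$ acts by isometries, for any $p\in\mathcal{O}$ and any $g\in\mathcal{G}$ with $g(\y_0)=p$ we have $\ball(p,r)=g(\ball(\y_0,r))\subseteq g(\mathcal{O})=\mathcal{O}$; hence $\mathcal{O}$ is open. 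To finish, recall that $\mathcal{G}$ is a Lie group (Myers--Steenrod) which acts properly on the complete Riemannian manifold $M$, so all $\mathcal{G}$-orbits are closed. Thus $\mathcal{O}$ is both open and closed in the connected manifold $M$, forcing $\mathcal{O}=M$; that is, $\mathcal{G}$ acts transitively and $(M,g)$ is homogeneous.

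The only step that is not routine is the identification $f_{\x,\y}(\x)=\y$: one must be sure that the time-$0$ isometry delivered by Lemma \ref{lem:globiso} genuinely carries the first starting point to the second, and is not merely \emph{some} involutive isometry fixing $H(\x,\y)$. This is immediate from that lemma (evaluate $Y_s=F_s(X_s)$ at $s=0$), but it is the hinge on which the whole argument turns, since it is what converts the supply of mirrors guaranteed by {\LPC} into a supply of points in a single orbit. Everything else --- that an orbit containing a metric ball is open, and that the isometry group of a complete Riemannian manifold acts properly (so that orbits are closed) --- consists of standard facts about Lie transformation groups.
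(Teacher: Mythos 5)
Your proof is correct and follows essentially the same route as the paper: {\LPC} together with the time-zero mirror isometries gives $f_{\x,\y}(\x)=\y$, the orbit of a base point is shown to contain a metric ball (hence is open), and closedness plus connectedness of $M$ yields transitivity. The only cosmetic difference is that the paper works with the closed subgroup generated by the compositions $f_{\y_0,\x}\circ f_{\x_0,\y_0}$ and obtains closedness of the orbit from the Myers--Steenrod subsequence-compactness property, whereas you use the full isometry group and properness of its action on the complete manifold; both are standard and equivalent in effect.
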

\begin{proof}
We want to show that $\mathcal{G}$ acts transitively on $M$. 
Together with {\LPC}, the work of the previous subsection shows that for each $\x \in \ball(\x_0, r)$ and $\y \in \ball(\y_0, r)$, there exists an involutive isometry $f_{\x, \y}$. 
This implies that, for any $\x \in \ball(\x_0, r)$, there is an isometry $G_{\x_0, \x}= f_{\y_0,\x} \circ f_{\x_0,\y_0}$ which takes $\x_0$ to $\x$. 
Consider the set of isometries 
\[
\mathcal{I}\quad=\quad\{G_{\x_0, \x}: \x \in \ball(\x_0, r)\}\,.
\]
Let $\Mirrors$ be the closure of the subgroup generated by $\mathcal{I}$, so $\Mirrors$ is a closed subgroup of $\mathcal{G}$. 
Denote by $\mathcal{O}(\x_0)$, the \emph{orbit} or \emph{set of equivalent points} of $\x_0$ under $\Mirrors$. 
By construction, $\ball(\x_0,r) \subseteq \mathcal{O}(\x_0)$. 
In order to prove that $M$ is homogeneous, we need to prove $\mathcal{O}(\x_0)=M$, which we will show by proving that $\mathcal{O}(\x_0)$ is both open and closed in $M$. 
Let $\z$ be a limit point of $\mathcal{O}(\x_0)$. 
Then, there is a sequence of isometries $G_n \in \Mirrors$ such that $G_n(\x_0) \rightarrow \z$. 
By \citet[p.~7]{MyersSteenrod-1939}, there exists an isometry $G \in \Mirrors$ and a subsequence $G_{n_k} \in \Mirrors$ such that $G_{n_k} \rightarrow G$ in the topology of isometries (i.e.~$G_{n_k}(\x) \rightarrow G(\x)$ for all $\x \in M$), and consequently, $G(\x_0)=\z$. 
This shows that $\mathcal{O}(\x_0)$ is closed. 
On the other hand, if $\y \in \mathcal{O}(\x_0)$, then there is an isometry $G \in \Mirrors$ such that $\y = G(\x_0)$. 
Therefore, $\ball(\y,r)=G\left(\ball(\x_0,r)\right) \subseteq \mathcal{O}(\x_0)$ (as $\ball(\x_0,r) \subseteq \mathcal{O}(\x_0)$) implying $\mathcal{O}(\x_0)$ is open. 
Thus, $\mathcal{O}(\x_0)= M$, proving the lemma.
\end{proof}

In the following lemma, we will write $\x^*$ for the midpoint of a minimal geodesic $\gamma_{\x_0,\y_0}$ connecting $\x_0$ and $\y_0$. 
If two vectors $u,v$ belong to the same tangent space then we denote the angle between them by $\angle(u,v)$.
\begin{lem}\label{lem:isotropic}
Suppose that the standing assumptions of diffusion-geodesic completeness and stochastic completeness both hold.
Under {\LPC}, $M$ is \emph{isotropic} at $\x^*$.
\end{lem}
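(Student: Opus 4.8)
**Plan for proving Lemma \ref{lem:isotropic} (isotropy at $\x^*$).**

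The plan is to construct, for an arbitrary pair of unit tangent vectors $u, v \in T_{\x^*}M$ of equal length, an isometry of $M$ fixing $\x^*$ whose differential at $\x^*$ sends $u$ to $v$; since the space of such rotations has dimension $\tfrac{d(d-1)}{2}$, producing enough of them establishes isotropy. First I would exploit the family of involutive isometries $f_{\x,\y}$ available under {\LPC}: for $\x \in \ball(\x_0,r)$ and $\y \in \ball(\y_0,r)$ close to being reflections of each other through $\x^*$, the hyperplane $H(\x,\y)$ passes through (or near) $\x^*$, and the corresponding isometry $f_{\x,\y}$ fixes $\x^*$ and acts on $T_{\x^*}M$ as the linear reflection in the hyperplane orthogonal to $\tfrac{\x-\y}{|\x-\y|}$ (this is exactly the local picture described after Lemma \ref{lem:globiso}: $e_1,\dots,e_{d-1}$ fixed, $e_d \mapsto -e_d$). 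So I get a supply of linear reflections of $T_{\x^*}M$ whose mirror normals range over an open set of directions — namely those of the form $\tfrac{\x-\y}{|\x-\y|}$ for $(\x,\y)$ ranging over $\ball(\x_0,r)\times\ball(\y_0,r)$ with $\x^* \in H(\x,\y)$.

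The key step is then purely group-theoretic/linear-algebraic: the group generated by reflections in a family of hyperplanes whose normal directions fill an open subset of the unit sphere $\mathbb{S}^{d-1}$ is the full orthogonal group $\mathbf{O}(d)$ (indeed, compositions of two such reflections give all rotations in the corresponding $2$-planes, and these generate $\mathbf{SO}(d)$; any single reflection then gives the remaining component). I would carry this out by fixing the geodesic $\gamma_{\x_0,\y_0}$ through $\x^*$ and perturbing the endpoints within the two balls along directions transverse to $\gamma_{\x_0,\y_0}$, exactly as in the orbit arguments of Lemma \ref{lem:symmrig} and Lemma \ref{lem:skewrig}, to see that the achievable normal directions $\mathbf{n} = \tfrac{\x-\y}{|\x-\y|}$ (subject to $\x^* \in H(\x,\y)$, i.e.\ $|\x-\x^*| = |\y-\x^*|$) do indeed sweep out an open cone of directions in $T_{\x^*}M$. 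Composing the resulting reflections of $T_{\x^*}M$ then yields rotations $\d F$ realising arbitrary small rotations, hence — by the open-subgroup-equals-whole-group argument for the connected compact group $\mathbf{SO}(d)$, combined with the fact (\citealp{MyersSteenrod-1939}) that the isotropy group at $\x^*$ is a closed subgroup of $\mathbf{O}(d)$ embedding via $F \mapsto \d F_{\x^*}$ — that the isotropy group at $\x^*$ has dimension $\tfrac{d(d-1)}{2}$, which is the definition of isotropy.

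The main obstacle I anticipate is the bookkeeping needed to guarantee that the normals $\mathbf{n} = \tfrac{\x-\y}{|\x-\y|}$ arising from \emph{admissible} pairs — those with $\x\in\ball(\x_0,r)$, $\y\in\ball(\y_0,r)$, \emph{and} $\x^*\in H(\x,\y)$, so that $f_{\x,\y}$ genuinely fixes $\x^*$ — still range over a set with nonempty interior in $\mathbb{S}^{d-1}$; the constraint $|\x-\x^*|=|\y-\x^*|$ cuts down the parameter space, so one must check it does not collapse the normal directions to a lower-dimensional set. This is handled by noting that $\x^*$ is the midpoint of $\gamma_{\x_0,\y_0}$, so the pair $(\x_0,\y_0)$ itself is admissible, and then varying $\x$ and $\y$ symmetrically about $\x^*$ (replacing $\y$ by the point at distance $|\x-\x^*|$ from $\x^*$ along the appropriate direction) keeps admissibility while moving $\mathbf{n}$ freely over a neighbourhood of $\tfrac{\x_0-\y_0}{|\x_0-\y_0|}$ in $\mathbb{S}^{d-1}$. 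A secondary, more technical point is transferring the Euclidean-tangent-space reflection picture to $M$ rigorously: one must invoke that $H_s$ (here the fixed-point set of $f_{\x,\y}$) is totally geodesic of codimension $1$ and that $\d f_{\x,\y}$ at the fixed point $\x^*$ is precisely the linear reflection in $T_{\x^*}H(\x,\y)$, which is exactly the content of the discussion following Lemma \ref{lem:globiso}, so this can be cited rather than reproved.
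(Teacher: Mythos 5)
Your proposal is correct and its first half coincides with the paper's own argument: under {\LPC} you harvest the involutive isometries $f_{\x,\y}$ for admissible pairs with $\x^* \in H(\x,\y)$, check (by varying the endpoints symmetrically along minimal geodesics through $\x^*$, exactly as in the paper's Step 1) that the inverted directions sweep out a neighbourhood of $v_0=\tfrac{\x_0-\y_0}{|\x_0-\y_0|}$ in the unit sphere of $T_{\x^*}M$, and note via the discussion after Lemma \ref{lem:globiso} that each such isometry acts on $T_{\x^*}M$ as the linear reflection fixing $T_{\x^*}H(\x,\y)$. Where you diverge is the finish: you invoke the general fact that reflections whose normals fill an open subset of $\mathbb{S}^{d-1}$ generate $\mathbf{O}(d)$, combined with closedness of the isotropy group under $F \mapsto \d F_{\x^*}$ (\citealp{MyersSteenrod-1939}) and a dimension count, whereas the paper proves the generation statement by hand: Step 1 composes two reflections to rotate $v_0$ to any nearby direction, Step 2 runs an open--closed orbit argument in each circle $\mathbb{S}(v_0,w)$ to obtain \emph{all} rotations in $2$-planes containing $v_0$, and Step 3 aligns orthonormal frames inductively. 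Your route is shorter and buys a cleaner separation between the geometric input and a purely linear-algebraic generation lemma; the paper's route avoids appealing to that lemma by constructing the needed rotations explicitly, which is also what makes its Steps 1--2 reusable later (in Lemma \ref{lem:driftclass}). One caution: your parenthetical justification is compressed — composing two available reflections only yields rotations by \emph{small} angles, and only in $2$-planes spanned by normals near $v_0$, so concluding $\mathbf{SO}(d)$ requires first iterating to get full circles of rotations in those planes and then conjugating (or a Lie-bracket computation) to reach rotations in arbitrary $2$-planes; this is exactly the content the paper's Steps 2--3 make explicit. Since that extra step is standard and correct, I would not call it a gap, but it should be spelled out rather than asserted.
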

\begin{proof}
Let $\gamma(v)$ denote the geodesic issuing from $\x^*$ in direction $v$. Suppose $\gamma(v_0)=\gamma_{\x_0,\y_0}$, thus defining a unit vector \(v_0\). 
The proof proceeds in three steps as follows.

\textbf{Step 1. }First, we want to show that there is $\eps>0$ such that, for any $v \in T_{\x^*}M$ with $\angle(v,v_0) < \eps$, there is an isometry $g_v$ leaving $\x^*$ fixed and $\d g_v(v_0)=v$.

By continuity of geodesics in the starting direction, we can choose \(\eps>0\) sufficiently small so that $\gamma(v')$ intersects $\ball(\x_0, r)$ and $\gamma(-v')$ intersects $\ball(\y_0, r)$ whenever $\angle(v',v_0) < \eps$. 
By \citet[Proposition 20, p.~141]{Petersen-2006}, with a possibly smaller choice of $\eps>0$, we can take $\x_{v'} \in \gamma(v') \cap \ball(\x_0,r)$ and $\y_{v'} \in \gamma(-v') \cap \ball(\y_0,r)$ such that 
$\gamma(v')$ realises the distance $\dist(\x^*,\x_{v'})$ and $\gamma(-v')$ realises the distance $\dist(\x^*,\y_{v'})$. 
Furthermore, by continuity of the metric, when $\eps>0$ is small enough, we can take such $\x_{v'}$, $\y_{v'}$ satisfying $\dist(\x_{v'},\x^*)=\dist(\y_{v'},\x^*)$ whenever $\angle(v',v_0) < \eps$. 
Thus, from the developments of the previous subsection, there is an involutive isometry $f_{\x_{v'},\y_{v'}}$ which fixes $\x^*$, 
inverts the geodesic passing through $\x^*$ in direction $v'$, and fixes all the geodesics which pass through $\x^*$ in directions orthogonal to $v'$.

Now, take any unit vector $v \in T_{\x^*}M$ with $\angle(v,v_0)<2\eps$. Let ${v'=\tfrac{v+v_0}{|v+v_0|}}$. 
By the properties of rhombuses, $\angle(v',v_0)=\tfrac{1}{2}\angle(v,v_0) < \eps$, and thus $f_{\x_{v'},\y_{v'}}$ exists as specified in the preceding paragraph. 
Now, consider the isometry $g_v= f_{\x_{v'},\y_{v'}}\circ f_{\x_0,\y_0}$. 
Note that $g_v$ fixes $\x^*$ and a straightforward calculation reveals $\d g_v(v_0)=v$. 
This $g_v$ is our required isometry.

\textbf{Step 2. }Take any unit vector $w \in T_{\x^*}M$ such that $w$ and $v_0$ are linearly independent. 
Let $\Pi$ be the two-dimensional subspace of $T_{\x^*}M$ generated by $v_0$ and $w$ and denote by $\mathbb{S}(v_0,w)$ the circle in $T_{\x^*}M$ centred at the origin of $T_{\x^*}M$ and running through $v_0$ and $w$. 
Let  $U$ be a normal neighbourhood around $\x^*$. 
Let $S_{\Pi}=\exp_{\x^*}(\Pi) \cap U$ denote the two-dimensional fragment of $M$ corresponding to $\Pi$ and lying in \(U\). 

Denote by $\Mirrors(v_0,w)$ the closed subgroup of isometries generated by $\{g_v: v \in \mathbb{S}(v_0, w), \\ \angle(v,v_0) < \eps\},$ where $g_v$ are the isometries constructed in Step 1. 
Note that the set $\{g_v: v \in \mathbb{S}(v_0, w), \\ \angle(v,v_0) < \eps\}$, and hence $\Mirrors(v_0,w)$, fixes $\x^*$ and keeps vectors orthogonal to $\{v_0, w\}$ fixed. 
Let 
\[
O(v_0)\quad=\quad\{dg(v_0): g \in \Mirrors(v_0,w)\}\,.
\]
We want to show that $O(v_0)=\mathbb{S}(v_0, w)$. 

Note that, if $v_n= \d g_n(v_0)$ such that $v_n \rightarrow v$, then, by the fact that $g_n(\x^*)=\x^*$ for all $n$, we can choose a subsequence $g_{n_k}$ and a $g \in \Mirrors(v_0,w)$ such that $g_{n_k} \rightarrow g$ in the topology of isometries \citep[p.~7]{MyersSteenrod-1939}. 
Thus, by \citet[Lemma 4]{MyersSteenrod-1939}, $dg_{n_k}(v_0) \rightarrow dg(v_0)$ implying $O(v_0)$ is closed. 
Furthermore, if $g \in \Mirrors(v_0,w)$ then $dg$ is a linear isometry on $T_{\x^*}M$.
So the same argument as in the previous lemma shows that $O(v_0)$ is open. 
Thus, $O(v_0)=\mathbb{S}(v_0, w)$. 

Thus, in particular, the subgroup of isometries $\mathcal{G}_{\x^*}$ which fix $\x^*$ (the \emph{isotropy group} at $\x^*$) generates all the rotations of $T_{\x^*}M$ based at $\x^*$ in 2-planes containing $v_0$.
We describe the isometries in $\Mirrors(v_0,w)$ as \emph{rotations in $\mathbb{S}(v_0, w)$}.

\textbf{Step 3. }
We will now show that, given two ordered orthonormal frames based at $T_{\x^*}M$, there is a sequence of isometries in $\mathcal{G}_{\x^*}$ that take one to the other. 
In particular this implies that $M$ is isotropic at $\x^*$. 
Let $(e_1, \dots, e_d)$ and $(e'_1, \dots, e'_d)$ be ordered orthonormal frames in $T_{\x^*}M$. 
We can apply rotations in $\mathbb{S}(v_0, e_1)$ (respectively $\mathbb{S}(v_0, e'_d)$) to align $e_1$ with $v_0$ (respectively $e'_d$ with $v_0$). 
Thus, without loss of generality, we consider frames of the form $(v_0, e_2, \dots, e_d)$ and $(e'_1, \dots, e'_{d-1},v_0)$.

Now, apply a rotation in $\mathbb{S}(v_0,e'_1)$ to transform $(v_0, e_2, \dots, e_d)$ to $(e'_1,e^{(1)}_2 \dots, e^{(1)}_d)$ for some unit vectors $e^{(1)}_2, \dots, e^{(1)}_d$ in $T_{\x^*}M$. 
If $v_0$ and $e^{(1)}_2$ are linearly independent, then apply a rotation in $\mathbb{S}(v_0,e^{(1)}_2)$, to bring $(e'_1,e^{(1)}_2, \dots, e^{(1)}_d)$ to $(e'_1,v_0, e^{(2)}_3, \dots, e^{(2)}_d)$. 
If $e^{(1)}_2=-v_0$, then achieve the same result using the reflection $f_{\x_0,\y_0}$. 
Note that these operations both keep $e'_1$ fixed as it is orthogonal to $\{v_0, e^{(1)}_2\}$.

The same procedure is applied inductively to $(e'_1,v_0, e^{(2)}_3, \dots, e^{(2)}_d)$ to obtain $(e'_1, e'_2, v_0, e^{(4)}_4, \dots, e^{(4)}_d)$ 
(note that these operations leave $e'_1$ fixed), and so on. Finally we obtain $(e'_1, \dots, e'_{d-1}, v_0)$, which proves the lemma.
\end{proof}
The above two lemmas imply the following rigidity theorem which completely classifies the space $M$.
\begin{thm}\label{thm:Mclass}
Suppose that the complete, connected Riemannian manifold \(M\) supports Brownian motion with drift for which there is a Markovian maximal coupling and moreover {\LPC} holds.
Then $M$ has constant sectional curvature. 
Moreover $M$ must be simply connected and therefore (up to scaling) $M$ must be one of the three model spaces $\Reals^d$, $\mathbb{S}^d$ and $\mathbb{H}^d$.
\end{thm}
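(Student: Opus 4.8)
The plan is to run Lemmas~\ref{lem:homogeneous} and~\ref{lem:isotropic} into the classical description of maximally symmetric spaces, and then to use the separating involutive isometry of Lemma~\ref{lem:globiso} to kill off the one non-simply-connected space form that would otherwise survive. First I would spread the isotropy: by homogeneity (Lemma~\ref{lem:homogeneous}) any point $\x$ is the image of $\x^*$ under an isometry, so conjugating the isotropy group at $\x^*$ (which has dimension $\tfrac{d(d-1)}{2}$, by Lemma~\ref{lem:isotropic}) gives a full isotropy group at $\x$. Hence $\dim\operatorname{Iso}(M)=\tfrac{d(d+1)}{2}$, i.e.\ $M$ is maximally symmetric. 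Pointwise isotropy forces the sectional curvature $K_\sigma$ to be independent of the tangent $2$-plane $\sigma$, so $K$ is a well-defined function on $M$; Schur's lemma (for $d\ge3$), together with homogeneity (for $d=2$) or vacuously (for $d=1$), then makes $K$ a global constant. This already yields the first assertion of the theorem.

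Next I would invoke the Killing--Hopf theorem: the universal Riemannian cover $(\widetilde M,\widetilde g)$ is, after rescaling, exactly $\Reals^d$, $\mathbb{S}^d$, or $\mathbb{H}^d$ according to the sign of $K$, and $M=\widetilde M/\Gamma$ with $\Gamma=\pi_1(M)$ acting freely and isometrically. Since $M$ is maximally symmetric, $\operatorname{Iso}(M)=N(\Gamma)/\Gamma$ has the same dimension as $\operatorname{Iso}(\widetilde M)$, so $N(\Gamma)$ is open in $\operatorname{Iso}(\widetilde M)$; thus $\Gamma$ is a discrete subgroup normalised by the identity component of $\operatorname{Iso}(\widetilde M)$, hence centralised by it. The centraliser of that identity component is trivial for $\widetilde M\in\{\Reals^d,\mathbb{H}^d\}$ (when $d\ge2$; the cases $d=1$ being trivial, as $M$ is then $\Reals$ or $\mathbb{S}^1$) and equals $\{\pm\Identity\}$ for $\widetilde M=\mathbb{S}^d$. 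Consequently $\Gamma=\{1\}$ when $K\le0$, and when $K>0$ the only options are $\Gamma=\{1\}$ or $\Gamma=\{\pm\Identity\}$, i.e.\ $M\cong\mathbb{S}^d$ or $M\cong\mathbb{RP}^d$.

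It remains only to exclude $\mathbb{RP}^d$ for $d\ge2$ (when $d=1$, $\mathbb{RP}^1\cong\mathbb{S}^1=\mathbb{S}^d$, so there is nothing to exclude). Here the fine structure of the coupling re-enters: the existence of an MMC from $(\x_0,\y_0)$ supplies, via Lemma~\ref{lem:globiso} and the discussion immediately following it, an involutive isometry $F_0$ of $M$ whose fixed-point set $H_0=H(\x_0,\y_0)$ is a codimension-$1$ totally geodesic submanifold with $M\setminus H_0$ disconnected into the two half-manifolds containing $\x_0$ and $\y_0=F_0(\x_0)$ respectively. But every involutive isometry of $\mathbb{RP}^d$ is induced by some $A\in\mathbf{O}(d+1)$ with $A^2=\pm\Identity$, and a direct computation shows that its fixed-point set is either empty or of the form $\mathbb{RP}^{p-1}\sqcup\mathbb{RP}^{q-1}$ with $p+q=d+1$ — never a pure codimension-$1$ submanifold that separates $\mathbb{RP}^d$ when $d\ge2$ (indeed a totally geodesic $\mathbb{RP}^{d-1}$ has connected complement $\mathbb{RP}^d\setminus\mathbb{RP}^{d-1}\cong\Reals^d$). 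This contradiction eliminates $\mathbb{RP}^d$, so $M$ is simply connected; combined with constant curvature and Killing--Hopf, $M$ is $\Reals^d$, $\mathbb{S}^d$, or $\mathbb{H}^d$ up to scaling, which is the claim.

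I expect the passage from ``maximally symmetric'' to ``simply connected'' to be the main obstacle, and in particular the exclusion of $\mathbb{RP}^d$: this is precisely the step where symmetry considerations alone fail (since $\mathbb{RP}^d$ is itself maximally symmetric), and one must feed back in the existence and separating property of the involutive isometry from Lemma~\ref{lem:globiso}. A secondary technical point is to make the normal-subgroup/centraliser analysis of the second paragraph fully rigorous and uniform across the three curvature signs, handling the low-dimensional cases $d=1,2$ by hand.
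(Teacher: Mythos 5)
Your proposal is correct and follows essentially the same route as the paper: Lemmas~\ref{lem:homogeneous} and~\ref{lem:isotropic} give maximal symmetry, hence constant curvature and the classification as one of the model spaces or $\mathbb{RP}^d$, and $\mathbb{RP}^d$ is then ruled out because no involutive isometry has the separating codimension-one fixed-point set required by Lemma~\ref{lem:globiso}. The only difference is that you prove in-house (via the Killing--Hopf covering argument and the explicit $\mathbf{O}(d+1)$ fixed-point computation) what the paper delegates to \citet[p.~190]{Petersen-2006} and \citet[Example 6.4]{Kuwada-2009}.
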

\begin{proof}
By Lemmas \ref{lem:homogeneous} and \ref{lem:isotropic}, we see that $M$ is a \emph{maximally symmetric space}, i.e., the dimension of $\operatorname{Iso}(M)$ is $\frac{d(d+1)}{2}$ \citep[p.~195]{Sharan-2009}.
In particular, this implies that \(M\) has constant sectional curvature \citep[p.~190]{Petersen-2006}.
For the second part of the corollary, the argument of \citet[p.~190]{Petersen-2006} shows that a complete, connected maximally symmetric Riemannian manifold must be one of the three model spaces above, or $\mathbb{RP}^d$. 
But, as observed in \citet[Example 6.4]{Kuwada-2009}, there is no involutive isometry of $\mathbb{RP}^d$ of the form described in Lemma \ref{lem:globiso}. This proves the theorem.
\end{proof}
\begin{rem}\label{rem:mirrorshape}
For the three model spaces described above, for every $\x, \y \in M$, the reflection isometry $f_{\x,\y}$, and hence the set of its fixed points $H(\x,\y)$, can be explicitly described (see, for example, \citep[Example 4.6]{Kuwada-2007}). It follows from this explicit description that the submanifold $H(\x,\y)$ with the induced metric is again one of the three model spaces with the same curvature as the ambient manifold $M$ and having codimension one.
\end{rem}
\subsection{Evolution of the mirror isometries}\label{sec:evolution-mirror-isometries}
Having classified the space \(M\), we must now classify the set of drift vectorfields $\mathbf{b}$ which permit \textbf{MMC} with {\LPC}. 
This necessitates analysis of the evolution of the isometries $F_s$ as $s$ varies. 
As noted above, \citet{MyersSteenrod-1939} proved that the set of isometries $\mathcal{G}$ has the structure of a Lie group. 
The first objective is to prove that the curve of isometries $(F_s:s \ge 0)$ is a $C^1$ curve in this Lie group.
\begin{lem}\label{lem:Isocont}
Suppose that the standing assumptions of diffusion-geodesic completeness and stochastic completeness both hold.
The curve $s \mapsto F_s$ is a $C^1$ curve in the Lie group $\mathcal{G}$.
\end{lem}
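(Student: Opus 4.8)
The plan is to use the classification of \(M\) from Theorem \ref{thm:Mclass} --- by which \(M\) is, up to scaling, \(\Reals^d\), \(\mathbb{S}^d\) or \(\mathbb{H}^d\) --- to reduce to an ambient \emph{linear} model, and then to reproduce the infinitesimal-mean argument of Lemma \ref{lem:IFT}. For \(M=\Reals^d\) the assertion is exactly Lemma \ref{lem:IFT} (continuous differentiability of \((l,\mathbf{n})\), equivalently of the curve of reflections in the Euclidean isometry group), so assume \(M=\mathbb{S}^d\) or \(M=\mathbb{H}^d\). Realise \(\mathbb{S}^d\) as the unit sphere in \(\Reals^{d+1}\) and \(\mathbb{H}^d\) as a sheet of the hyperboloid in Minkowski space \(\Reals^{d,1}\); in both cases \(\mathcal{G}=\operatorname{Iso}(M)\) acts linearly, is a closed (hence embedded) Lie subgroup of \(\mathbf{GL}(d+1)\), each isometry is the restriction of a linear map \(\z\mapsto M\z\), the set \(M\subset\Reals^{d+1}\) linearly spans \(\Reals^{d+1}\), and, by Remark \ref{rem:mirrorshape}, the involutive isometry \(f_{\x,\y}\) is the explicit linear reflection \(\z\mapsto\z-2\tfrac{\langle\x-\y,\z\rangle}{\langle\x-\y,\x-\y\rangle}(\x-\y)\) (with respect to the relevant bilinear form, for which \(\langle\x-\y,\x-\y\rangle>0\) when \(\x\neq\y\)), so \(f_{\x,\y}\) depends continuously on \((\x,\y)\) for \(\x\neq\y\). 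Write \(F_s(\z)=M_s\z\).

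I would first record the preliminary fact that \(s\mapsto M_s\) is continuous on \([0,\tau)\). By Corollary \ref{cor:alltimeman} and Lemma \ref{lem:flow}, for \(\mu\)-almost every path and every \(s<\tau\) one has \((X_s,Y_s)\in\mathcal{M}(\mu_s)\), hence \(F_s=f_{X_s,Y_s}\); since \(F_s\) is deterministic, \(X\) and \(Y\) have continuous sample paths, and \(X_s\neq Y_s\) for \(s<\tau\), the explicit continuous dependence of \(f_{\x,\y}\) on \((\x,\y)\) yields continuity of \(s\mapsto M_s\).

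The core step mirrors Lemma \ref{lem:IFT}. Let \(\beta\) be the ambient drift of the diffusion: if \(\phi^a\) denotes the restriction to \(M\) of the \(a\)-th ambient linear coordinate then \(\beta^a=\generator\phi^a\), and since each \(\phi^a\) is a Laplace--Beltrami eigenfunction on the model space (so \(\Delta_M\phi^a\) is a constant multiple of \(\phi^a\)) and \(\mathbf{b}\) is smooth, \(\beta\colon M\to\Reals^{d+1}\) is smooth and is the same for \(X\) and \(Y\). Fix \(s_0<\tau\) and \(\x\in M\); choose a neighbourhood \(U\ni\x\) whose closure stays inside one open half-manifold \(H^{\pm}_s\) for \(s\) near \(s_0\), let \(\tau_U\) be the first exit of \(X\) from \(U\) after \(s_0\) (so \(\tau_U<\tau\)), and pass to the stopped processes \(X^U,Y^U\). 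The infinitesimal-mean characterisation of the drift of a diffusion (the tool used for \eqref{eqnarray:nelder}) gives \(\beta(M_{s_0}\x)=\lim_{s\downarrow s_0}(s-s_0)^{-1}\Expect{Y^U_s-M_{s_0}\x\mid Y^U_{s_0}=M_{s_0}\x}\). Substituting \(Y^U_s=M_{s\wedge\tau_U}X^U_s\) (valid by Corollary \ref{cor:alltimeman}), writing \(M_sX^U_s-M_{s_0}\x=M_s(X^U_s-\x)+(M_s-M_{s_0})\x\), discarding the contribution of \(\{\tau_U\le s\}\) as in Lemma \ref{lem:IFT}, and using the continuity of \(s\mapsto M_s\) together with \((s-s_0)^{-1}\Expect{X^U_s-\x\mid X^U_{s_0}=\x}\to\beta(\x)\), one obtains that
\[
\lim_{s\downarrow s_0}\frac{(M_s-M_{s_0})\x}{s-s_0}\;=\;\beta(M_{s_0}\x)-M_{s_0}\beta(\x)
\]
exists, is linear in \(\x\), and is continuous in \(s_0\); call this map \(D(s_0)\). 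Thus \(s\mapsto M_s\x\) has the continuous right derivative \(D(s)\x\) for each \(\x\in M\), hence is \(C^1\) (by the real-variable lemma \citep[Theorem 1.3]{Bruckner-1978}, exactly as in Lemma \ref{lem:IFT}); evaluating at \(d+1\) points of \(M\) that linearly span \(\Reals^{d+1}\) shows \(s\mapsto M_s\) is a \(C^1\) matrix curve, and since \(\mathcal{G}\) is an embedded submanifold of \(\mathbf{GL}(d+1)\) containing each \(F_s\), the curve \(s\mapsto F_s\) is \(C^1\) in \(\mathcal{G}\).

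The main obstacle I expect is the rigorous justification of this last computation in the embedded picture: controlling the error terms in the increment \(M_sX^U_s-M_{s_0}\x\) (the event \(\{\tau_U\le s\}\), and the replacement of \(M_{s\wedge\tau_U}\) by \(M_s\)) and verifying \((s-s_0)^{-1}\Expect{X^U_s-\x\mid X^U_{s_0}=\x}\to\beta(\x)\) for the stopped ambient semimartingale. These are precisely the estimates already carried out in Lemma \ref{lem:IFT}, now with the smooth field \(\beta\) in place of \(\mathbf{b}\) and with the half-manifold structure supplying \(\tau_U<\tau\); the continuity recorded in the second paragraph is what removes the apparent circularity, since \(M_s\) is never differentiated before it has been shown to be differentiable.
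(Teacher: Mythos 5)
There is a genuine gap, and it lies in the very first step: your reduction to the three model spaces invokes Theorem \ref{thm:Mclass}, which is proved under the extra hypothesis {\LPC}, whereas {\LPC} is not among the hypotheses of Lemma \ref{lem:Isocont}. The lemma is stated, and is needed, for a \emph{single} Markovian maximal coupling on an arbitrary complete Riemannian manifold satisfying the two standing completeness assumptions: it is the input to Theorem \ref{thm:couplingfundthm}, which carries no {\LPC} hypothesis, and via Remark \ref{rem:bzero} it recovers Kuwada's fixed-mirror result for driftless Brownian motion on a general manifold. In that generality there is no linear ambient model, no explicit formula for $f_{\x,\y}$, and no constant curvature to exploit, so your argument only establishes the special case in which $M$ is already known to be $\Reals^d$, $\mathbb{S}^d$ or $\mathbb{H}^d$ --- a strictly weaker statement than the one claimed. (Within that special case the route is legitimate and not circular, since Theorem \ref{thm:Mclass} rests only on the isometries of Lemma \ref{lem:globiso}; and the linearity of $F_s$ in ambient coordinates is a genuine simplification, as it kills the second-order Taylor term and lets you use a single ambient drift field $\beta$ for both $X$ and $Y$.)

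The paper's proof is intrinsic and needs no classification of $M$. Continuity of $s\mapsto F_s$ is obtained from $Y_s=F_s(X_s)$, path continuity and dominated convergence (with a bounded metric), extraction of an almost-everywhere convergent subsequence, the full support of $\Law{X_s}$ on $H^-_s$, and the Myers--Steenrod criterion that convergence at $d+1$ suitably placed points forces convergence in $\mathcal{G}$; right-continuity at $s=0$ is handled separately as in Lemma \ref{lem:globiso}. Differentiability is then proved in normal coordinates around $\x$ and $F_t(\x)$, applying Nelson's infinitesimal-mean characterisation to the stopped processes and a Taylor expansion of $F_s$ in which the diffusion coefficients $\sigma^{j,k}$ enter through the Hessian of $F_s$ (the term your linear picture avoids), followed by \citet[Theorem 1.3]{Bruckner-1978}. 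To repair your proposal without assuming {\LPC} you would have to replace the ambient linear coordinates by local coordinates and redo precisely this computation, i.e.\ revert to the paper's argument; alternatively you would need to restate the lemma with {\LPC} as a hypothesis, which would break its later applications.
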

\begin{proof}
Recall that any point in $M$ has a neighbourhood, called a $\sigma$-neighbourhood, such that any point in this neighbourhood is in a normal coordinate ball of any other point in the same neighbourhood. 
We study continuity and continuous differentiability of $(F_s:s \ge 0)$ at \(s=t\).
As we are investigating a local property, we work in two separate sets of normal coordinates; 
one set describing a $\sigma$-neighbourhood $U$ around $\x$ and the other set describing another $\sigma$-neighbourhood $V$ around $F_t(\x)$ such that $F_t(\overline{U}) \subset V$.

The first step is to prove that $s \mapsto F_s$ is continuous in $\mathcal{G}$ at \(s=t<\tau\). 
To show this, it suffices to show that any set of $d+1$ points $\x_i \in M$, all of which lie in a $\sigma$-neighbourhood and are linearly independent 
(i.e.~do not belong in the same $(d-1)$-dimensional geodesic hypersurface), 
produces continuous curves $s \mapsto F_s(\x_i)$ in $M$ \citep{MyersSteenrod-1939}. We note here that we can obtain such a set of $d+1$ points in any dense subset of any open set in $M$.
To show the continuity of these curves, we will use the continuity of the diffusion paths and the fact that, by Corollary \ref{cor:alltimeman}, $Y_s=F_s(X_s)$ when $s < \tau$. 

Define the new distance $$\overline{\dist}(\x,\y) \quad=\quad \frac{\dist(\x,\y)}{1+\dist(\x,\y)}$$ for $\x, \y \in M$. Note that $\overline{\dist}(\cdot,\cdot)$ is bounded and it defines a distance that produces the same topology on $M$ as $\dist(\cdot,\cdot)$ does. Now, take any sequence $\{s_n\}_{n \ge 1}$ with $\lim_{n \rightarrow \infty} s_n = t$. Then,
\begin{align*}
\limsup_{n \rightarrow \infty} & \Expect{\overline{\dist}(F_{s_n}(X_t), F_t(X_t)) \mathbb{I}(\tau>t)}\\
& \le \limsup_{n \rightarrow \infty} \Expect{\overline{\dist}(F_{s_n}(X_t), F_{s_n}(X_{s_n})) \mathbb{I}(\tau>t)} + \limsup_{n \rightarrow \infty} \Expect{\overline{\dist}(F_{s_n}(X_{s_n}), F_t(X_t)) \mathbb{I}(\tau>t)}\\
&= \limsup_{n \rightarrow \infty} \Expect{\overline{\dist}(X_t, X_{s_n})\mathbb{I}(\tau>t)} + \limsup_{n \rightarrow \infty} \Expect{\overline{\dist}(Y_{s_n}, Y_t)\mathbb{I}(\tau>t)}\\
&=0.
\end{align*}
Here, the equality in the second step follows from the fact that $F_{s_n}$ is an isometry, $Y_s=F_s(X_s)$ when $s<\tau$, and the dominated convergence theorem. The last equality follows from the path continuity of $X$ and $Y$ and another application of the dominated convergence theorem. Thus, $\overline{\dist}(F_{s_n}(x), F_t(x))$ converges to zero in $L^1$ with respect to the law of $X_t$ restricted on $\{\tau >t\}$. Hence, we can extract a subsequence $n_k$ such that $F_{s_{n_k}}$ converges to $F_t$ almost everywhere with respect to the same measure. As the law of $X_t$ restricted on $\{\tau>t\}$ has full support on $H^-_t$, therefore the set of $\x \in H^-_t$ for which $F_{s_{n_k}}(\x) \rightarrow F_t(\x)$ is a dense subset of $H^-_t$. Hence, by the previous discussion, $F_{s_{n_k}} \rightarrow F_t$ in $\mathcal{G}$. As the limit does not depend on the chosen subsequence $n_k$, we conclude that $F_{s_n} \rightarrow F_t$ in $\mathcal{G}$, proving continuity of $s \mapsto F_s$.

It is necessary to address the question of right-continuity at $t=0$. 
Take $\x \in H^-_0$ and consider the case when $t_n \downarrow 0$. 
Take a sequence $\x_n \rightarrow \x$ such that $\x_n \in H^-_{t_n}$.
An argument following the treatment of the case $s=0$ in the proof of Lemma \ref{lem:globiso} shows that $F_{t_n}(\x_n) \rightarrow F_0(\x)$. 
As $F_{t_n}$ is an isometry for each $n$, we can deduce that $F_{t_n}(\x) \rightarrow F_0(\x)$, thus proving right-continuity.

The next step is to prove differentiability at $t>0$. With $\sigma$-neighbourhoods $U$, $V$ of \(\x\), \(F_t(\x)\) as described above, let $\tau_U=\inf\{s \ge t: X_s \notin U\}$. 
Because the coupling is Markovian, $\tau_U$ is a stopping time with respect to the filtration generated by the coupling process $(X,Y)$. 
Consider the stopped processes $X_s^U=X_{s \wedge \tau_U}$ and $Y_s^U=Y_{s \wedge \tau_U}$. 
In a slight abuse of notation, we use the same notation $X_s^U$ for the coordinate representation for this stopped process in $U$, and similarly for $Y_s^U$. 
Also we continue to write $F_s$ for the coordinate representation of $F_s: U \rightarrow V$. 
 
By Lemma 8 of \cite{MyersSteenrod-1939} it suffices to prove differentiability at $t$ of the continuous curve $s \mapsto F_s(\x)$ for $\x \in H^-_t$ such that $(\x,F_t(\x)) \in \mathcal{M}(\mu_t)$.
Take $U$, $V$ and normal coordinate systems for $\x$ and $F_t(\x)$ as above. 
Using these coordinates, we may write the stochastic differential equation for $X^U$ as
\[
\d X_s^{U,i}\quad=\quad
\mathbf{b}^i(X^U_s)\d s+ \sum_{j=1}^d\sigma^{i,j}(X^U_s)\d W^j_s
\]
for some Brownian motion $W$ in $U$. 
A similar expression holds for $Y^U$ with $\mathbf{b}^i_F$ and $\sigma^{i,j}_F$ representing the corresponding quantities. 
General properties of diffusions \citep[Chapter 11]{Nelson-1967} yield the following expressions in coordinate form:
\begin{eqnarray}\label{eqnarray:Nelson}
\mathbf{b}^i(\x)\quad&=\quad&\lim_{s \downarrow t}\;\Expect{\frac{X^{U,i}_s-x^i}{s-t}\ \Bigg| \  X^U_t=\x}\,,\nonumber\\
\sigma^{i,j}(\x)\quad&=\quad&\lim_{s \downarrow t}\;\Expect{\frac{(X^{U,i}_s-x^i)(X^{U,j}_s-x^j)}{s-t}\ \Bigg| \ X^U_t=\x}\,,\nonumber\\
\mathbf{b}^i_F(F_t(\x))\quad&=\quad&\lim_{s \downarrow t}\;\Expect{\frac{Y^{U,i}_s-F^i_t(\x)}{s-t}\ \Bigg| \ Y^U_t=F_t(\x)}\,.
\end{eqnarray}
By Corollary \ref{cor:alltimeman}, $Y_s=F_s(X_s)$ when $s< \tau$. Thus, we can write
\begin{equation}\label{equation:dereq}
\Expect{\frac{F^i_s(X_s^U)-F^i_t(\x)}{s-t} \ \Bigg| \ Y^U_t=F_t(\x)}\quad=\quad \Expect{\frac{F^i_s(X_s^U)-F^i_s(\x)}{s-t}\ \Bigg| \  X^U_t=\x}\ + \ \frac{F^i_s(\x)-F^i_t(\x)}{s-t}\,.
\end{equation}
The third expression in \eqref{eqnarray:Nelson} gives
\begin{equation*}
\lim_{s \downarrow t}\;\Expect{\frac{F^i_s(X_s^U)-F^i_t(\x)}{s-t} \ \Bigg| \ Y^U_t=F_t(\x)}\quad=\quad \mathbf{b}^i_F(F_t(\x))\,.
\end{equation*}
As $s \mapsto F_s$ is a continuous curve in $\mathcal{G}$, we may deduce by \citet[Lemma 7]{MyersSteenrod-1939} that the (space) derivatives of $F_s$ are continuous in $s$. By a Taylor expansion of $F_s$ in $U$ based at $\x$ and \eqref{eqnarray:Nelson},
\begin{align*}
\lim_{s \downarrow t}\; & \Expect{\frac{F^i_s(X_s^U)-F^i_s(\x)}{s-t}\ \Bigg| \  X^U_t=\x}\\
\quad&=\quad \lim_{s \downarrow t}\;\left( \sum_{j=1}^d\partial_jF^i_s(\x)\Expect{\frac{X^{U,j}_s-x^j}{s-t}\ \Bigg| \ X^U_t=\x}\right.\\
&\quad+\left.\frac{1}{2}\sum_{j=1}^d\sum_{k=1}^d\partial_{j,k}F^i_s(\x)\Expect{\frac{(X^{U,j}_s-x^j)(X^{U,k}_s-x^k)}{s-t}\ \Bigg| \  X^U_t=\x} + o(1)\right)\\
\quad&= \quad \sum_{j=1}^d\partial_jF^i_t(\x)\mathbf{b}^j(\x) + \frac{1}{2}\sum_{j=1}^d\sum_{k=1}^d\partial_{j,k}F^i_t(\x)\sigma^{j,k}(\x)\,.
\end{align*}
Thus, from \eqref{equation:dereq}, we deduce that the curve $s \mapsto F_s(\x)$ has a continuous right-derivative given by
\begin{equation}\label{eq:derivativelocal}
\lim_{s \downarrow t}\;\frac{F^i_s(\x)-F^i_t(\x)}{s-t}\quad=\quad
\mathbf{b}^i_F(F_t(\x))-\sum_{j=1}^d\partial_jF^i_t(\x)\mathbf{b}^j(\x)-\frac{1}{2}\sum_{j=1}^d\sum_{k=1}^d\partial_{j,k}F^i_t(\x)\sigma^{j,k}(\x)\,.
\end{equation}
This, together with \citet[Theorem 1.3]{Bruckner-1978}, implies uniformly continuous differentiability of $s \mapsto F_s(\x)$ at $t>0$. 
Note that the Mean Value Theorem and right-continuity of the right hand side of (\ref{eq:derivativelocal}) now gives us right-differentiability at $t=0$. This proves the lemma.
\end{proof}
\begin{cor}\label{cor:derivtime}
All the partial derivatives with respect to \(\x\) of $(s,\x) \mapsto F_s(\x)$ are continuously differentiable in $s$. 
Furthermore, $\left.\tfrac{\d}{\d s}\right|_{s=t}F_s(\x)$ is smooth in $\x$.
\end{cor}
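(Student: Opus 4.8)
The plan is to leverage the Lie group structure of $\mathcal{G}=\operatorname{Iso}(M)$, established by \citet{MyersSteenrod-1939}, together with the $C^1$ statement of Lemma~\ref{lem:Isocont}. The guiding principle is that isometries of the (smooth) Riemannian manifold $M$ are themselves smooth diffeomorphisms, and that the action map $\mathcal{G}\times M\to M$, $(g,\x)\mapsto g(\x)$, is jointly smooth. Hence for each fixed $s$ the map $\x\mapsto F_s(\x)$ is automatically smooth, so all its partial derivatives in $\x$ exist, and the whole question reduces to tracking the $s$-dependence through the $C^1$ curve $s\mapsto F_s$ in $\mathcal{G}$.

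Concretely, I would first fix $t\in[0,\tau)$ and a point $\x$, introduce normal coordinate charts $U\ni\x$ and $V\ni F_t(\x)$ with $F_t(\overline U)\subset V$ (exactly as in the proof of Lemma~\ref{lem:Isocont}), together with a chart of $\mathcal{G}$ around $F_t$ of dimension $k=\tfrac{d(d+1)}{2}$. By Lemma~\ref{lem:Isocont}, in this chart the curve is represented by $s\mapsto c(s)=(c_1(s),\dots,c_k(s))$ with $C^1$ components near $t$, and the action map becomes a smooth function $\Psi(c,\x)$, so that $F_s(\x)=\Psi(c(s),\x)$ locally. Then for any multi-index $\alpha$ in the $\x$-variables the chain rule gives
\[
\frac{\d}{\d s}\,\partial_{\x}^{\alpha}F_s(\x)\quad=\quad\sum_{j=1}^{k}\bigl(\partial_{c_j}\partial_{\x}^{\alpha}\Psi\bigr)\bigl(c(s),\x\bigr)\,c_j'(s)\,,
\]
which is continuous in $(s,\x)$ because $\Psi$ is smooth and each $c_j$ is $C^1$; this establishes the first assertion. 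Evaluating at $s=t$,
\[
\left.\frac{\d}{\d s}\right|_{s=t}F_s(\x)\quad=\quad\sum_{j=1}^{k}\bigl(\partial_{c_j}\Psi\bigr)\bigl(c(t),\x\bigr)\,c_j'(t)\,,
\]
and for fixed $t$ the coefficients $c_j'(t)$ are constants while each $\x\mapsto(\partial_{c_j}\Psi)(c(t),\x)$ is smooth (a partial derivative of $\Psi$ with the group-coordinate slot frozen at $c(t)$), so the right-derivative is smooth in $\x$. Equivalently, one may recognise $\left.\tfrac{\d}{\d s}\right|_{s=t}F_s$ as the right translate by $F_t$ of an element of the Lie algebra of $\mathcal{G}$, that is, a Killing vectorfield on $M$, which is automatically smooth.

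The only step requiring genuine care is the passage from the intrinsic conclusion of Lemma~\ref{lem:Isocont} — that $s\mapsto F_s$ is $C^1$ as a curve in the abstract Lie group $\mathcal{G}$ — to the coordinate statements that $c(\cdot)$ has $C^1$ components and that $\Psi$ is smooth. This needs the fact that the smooth structure on $\mathcal{G}$ supplied by \citet{MyersSteenrod-1939} is precisely the one making the evaluation map smooth, so that the differentiability-at-$(d+1)$-linearly-independent-points argument used inside Lemma~\ref{lem:Isocont} genuinely delivers a $C^1$ curve in the manifold $\mathcal{G}$. Once this compatibility is recorded, the corollary follows by the computation above, and I do not anticipate any serious obstacle beyond this bookkeeping.
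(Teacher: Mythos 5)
Your proposal is correct and follows essentially the same route as the paper: both rest on the Myers--Steenrod smooth structure on $\mathcal{G}$, the $C^1$ curve from Lemma \ref{lem:Isocont}, and the chain rule applied to a smooth action map. The paper disposes of the compatibility point you flag by invoking the explicit representation from Section 8 of Myers--Steenrod, namely $F_t(\x)=\Psi(\x, F_t(\x_0),\dots,F_t(\x_d))$ with $\Psi$ smooth and $\x_0,\dots,\x_d$ fixed points, so that the image curves $s\mapsto F_s(\x_i)$ (which Lemma \ref{lem:Isocont} shows are $C^1$) play exactly the r\^ole of your chart coordinates $c(s)$.
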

\begin{proof}
Using the argument of \citet[Section 8]{MyersSteenrod-1939}, we can deduce the following representation in local coordinates $(x^i)$: 
\[
F_t(x^1,\dots,x^d)\quad=\quad\Psi(x^1,\dots, x^d, F_t(\x_0), \dots, F_t(\x_d))\,,
\]
where $\Psi$ is a smooth function and $\x_0,\dots, \x_d$ are fixed points in $M$. The corollary follows from this representation and the previous lemma.
\end{proof}
The derivative vectorfield $\kappa$ defined on $M$ by 
\[
\kappa(\x)\quad=\quad\left.\frac{\d}{\d s}\right|_{s=0}F_s(F_0(\x)) 
\]
possesses a special significance.
This is the \emph{Killing vectorfield} corresponding to the $C^1$ curve $s \mapsto G_s$ in $\mathcal{G}$ given by $G_s(\x)=F_s(F_0(\x))$ for $\x \in M$.
Vectorfields of this form correspond to the natural action of elements in the Lie algebra of $\mathcal{G}$ on the manifold $M$ 
(recall that $F_0 \circ F_0$ is the identity map, and the Lie algebra of $\mathcal{G}$ corresponds to the tangent space of $\mathcal{G}$ at the identity).
Killing vectorfields will play a crucial r\^ole in the following subsections.

\subsection{Structure of the coupling}
The processes $X$ and $Y$ can be constructed as projections $X_t=\pi U_t$ and $Y_t=\pi \widetilde{U}_t$, 
where $U$ and $\widetilde{U}$ are solutions to Stratonovich stochastic differential equations which are defined on the orthonormal frame bundle $\mathcal{O}(M)$ by
\begin{eqnarray}\label{eq:stochdev}
\d U_t\quad&=\quad&\sum_i H_i(U_t) \circ \d W^i_t + \mathbf{B}(U_t)\d t\,,\nonumber\\
\d\widetilde{U}_t\quad&=\quad&\sum_i H_i(\widetilde{U}_t) \circ \d\widetilde{W}^i_t + \mathbf{B}(\widetilde{U}_t)\d t\,,
\end{eqnarray}
for \(d\)-dimensional Euclidean Brownian motions $W$ and $\widetilde{W}$ and the vectorfield and the lifted drift vectorfield \(\mathbf{B}\) given by (\ref{eq:stratdrift}).

Any isometry $F$ on $M$ has a natural lift to a smooth mapping $\hat{F}:\mathcal{O}(M)\to\mathcal{O}(M)$, given by
\begin{equation}\label{eq:isolift}
\hat{F}(\pi u, ue_1, \dots, ue_d)\quad=\quad (F(\pi u), \d F(ue_1), \dots , \d F(ue_d))\,.
\end{equation}
The following lemma shows that \(\hat{F}\) respects the structure of horizontal vectorfields on \(\mathcal{O}(M)\).
\begin{lem}\label{lem:Stratlem}
Let $F$ be an isometry on $M$ and let $\hat{F}$ be the lift to $\mathcal{O}(M)$ as defined above. 
For $1 \le i \le d$ and $u \in \mathcal{O}(M)$, 
\[
\d\hat{F}(H_i(u))\quad=\quad H_i(\hat{F}(u))\,.
\]
\end{lem}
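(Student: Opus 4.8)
The plan is to exploit two facts: that $\hat{F}$ commutes with the bundle projection, and that isometries preserve the Levi-Civita connection and hence parallel transport. First I would record the projection identity. From the definition \eqref{eq:isolift} of $\hat{F}$ one reads off immediately that $\pi\circ\hat{F}=F\circ\pi$, so differentiating gives $\pi_*\circ\d\hat{F}=\d F\circ\pi_*$ on $T\mathcal{O}(M)$. Applying this to $H_i(u)$ and using $\pi_*H_i(u)=ue_i$ yields
\[
\pi_*\bigl(\d\hat{F}(H_i(u))\bigr)\quad=\quad \d F(ue_i)\,.
\]
On the other hand, by \eqref{eq:isolift} the $i$-th frame vector of $\hat{F}(u)$ is precisely $\hat{F}(u)e_i=\d F(ue_i)$, so by the defining property of the fundamental horizontal vectorfields, $\pi_*\bigl(H_i(\hat{F}(u))\bigr)=\hat{F}(u)e_i=\d F(ue_i)$ as well. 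Thus $\d\hat{F}(H_i(u))$ and $H_i(\hat{F}(u))$, both living in $T_{\hat{F}(u)}\mathcal{O}(M)$, have the same image under $\pi_*$.

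Second I would show that $\d\hat{F}$ carries the horizontal subspace $H_u\mathcal{O}(M)$ into $H_{\hat{F}(u)}\mathcal{O}(M)$. Take a horizontal curve $t\mapsto u_t$ through $u$, so $u_t$ is the parallel transport of $u_0$ along $c_t=\pi u_t$. Because $F$ is an isometry it preserves the Riemannian metric, and since the Levi-Civita connection is the unique torsion-free metric connection, $F$ preserves it too; equivalently, $F$ intertwines covariant differentiation along curves, so parallel transport along $F\circ c$ is conjugate by $\d F$ to parallel transport along $c$. This says exactly that $t\mapsto\hat{F}(u_t)$ is the horizontal lift of $F\circ c$ starting at $\hat{F}(u)$, hence is horizontal; differentiating at $t=0$ gives $\d\hat{F}(H_u\mathcal{O}(M))\subseteq H_{\hat{F}(u)}\mathcal{O}(M)$. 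In particular $\d\hat{F}(H_i(u))$ is horizontal, and $H_i(\hat{F}(u))$ is horizontal by definition. Finally, $\pi_*$ restricted to $H_{\hat{F}(u)}\mathcal{O}(M)$ is a linear isomorphism onto $T_{F(\pi u)}M$ (this is built into the definition of the horizontal distribution and is what makes the $H_i$ well-defined), so two horizontal vectors with the same $\pi_*$-image coincide, giving $\d\hat{F}(H_i(u))=H_i(\hat{F}(u))$.

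The only step needing genuine geometric input, rather than unwinding definitions, is the claim that an isometry preserves parallel transport — i.e.\ preserves the Levi-Civita connection — which I expect to be the main (and essentially the sole) obstacle; it is classical and follows from uniqueness of the Levi-Civita connection. Everything else is formal bookkeeping with $\pi$, $\hat{F}$, and the fundamental horizontal vectorfields, and reduces to matching projections on the horizontal subspace.
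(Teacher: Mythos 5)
Your proposal is correct and takes essentially the same route as the paper: the paper likewise reduces everything to the fact that isometries commute with parallel transport (citing Lee, Prop.~5.6), realising $H_i(u)$ as the tangent at $t=0$ of the horizontal lift $\gamma^u$ of the geodesic in direction $ue_i$ and noting $\hat{F}\circ\gamma^u=\gamma^{\hat{F}(u)}$. Your repackaging via the intertwining relation $\pi_*\circ \d\hat{F}=\d F\circ\pi_*$, preservation of the horizontal distribution, and injectivity of $\pi_*$ on horizontal vectors is just a slightly more abstract way of invoking the same uniqueness of the horizontal vector with prescribed projection that the paper uses implicitly.
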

\begin{proof}
Let $\gamma$ be the unit speed geodesic in $M$ starting from $\pi u$ in direction $ue_i$, defined on some interval $[0,\eps]$ for some $\eps>0$.
For each $1 \le j \le d$, let $u^j_t$ denote the parallel transport of $ue_j$ along $\gamma$. 
Define the curve $\gamma^u$ in $\mathcal{O}(M)$ given by 
$$
\gamma^u(t)\quad=\quad
(\gamma_t, u^1_t,\dots,u^d_t)
$$
 for $t \in [0,\eps]$. 
As the covariant derivative commutes with the push-forward of vector fields by isometries \citep[Proposition 5.6]{Lee-1997}, for each $1 \le j \le d$, $\d F(u^j_t)$ provides a parallel transport of $\d F(ue_j)$ along $F\circ\gamma_t$. Hence, 
$$
\gamma^{\hat{F}(u)}(t)\quad=\quad\hat{F}\circ\gamma^u(t)=(F\circ\gamma_t, \d F(u^1_t),\dots, \d F(u^d_t))\,.
$$
 Now $(\gamma^u)'(0)=H_i(u)$.
 Thus
\begin{eqnarray*}
\d\hat{F}(H_i(u))\quad&=\quad&\d\hat{F}((\gamma^u)'(0))
\quad=\quad
(\hat{F}\circ\gamma^u)'(0)=(\gamma^{\hat{F}(u)})'(0)\\
\quad&=\quad& H_i(\hat{F}(u))\,,
\end{eqnarray*}
proving the lemma.
\end{proof}
The stochastic differential equation \eqref{eq:stochdev} for $U$ delivers a diffusion $V$ on $\mathcal{O}(M)$ given by 
\[
V_t\quad=\quad\hat{F_t}(U_t)\,,
\]
where $F_t$ is the time-varying deterministic involutive isometry constructed in previous subsections. 
Note that this automatically implies $Y_t=F_t(X_t)=\pi V_t$ on $t < \tau$. 
Thus, $V$ lifts $Y$ up to the orthonormal frame bundle $\mathcal{O}(M)$. 
We now derive the stochastic differential equation for $V$.

From \citet[Equation (2.3)]{Kendall-1987} it follows that
\begin{equation}\label{eq:Strat1}
\d V_t\quad=\quad \sum_i (\d\hat{F_t}(H_i(U_t))) \circ \d W^i_t + \d\hat{F_t}(\mathbf{B}(U_t))\d t + \hat{\chi}_t(U_t)\d t\,,
\end{equation}
where 
\[
\hat{\chi}_t(u)\quad=\quad\left.\frac{\d}{\d s}\right|_{s=t}\hat{F_s}(u)
\]
exists by Lemma \ref{lem:Isocont} and Corollary \ref{cor:derivtime}.

Lemma \ref{lem:Stratlem} implies that
\[
\d\hat{F_t}(H_i(U_t))\quad=\quad H_i(\hat{F_t}(U_t))\quad=\quad H_i(V_t)\,,
\]
 and
\[
\d\hat{F_t}(\mathbf{B}(U_t))\quad=\quad \sum_i b_i(U_t)\d\hat{F_t}(H_i(U_t))
\quad=\quad\sum_i  b_i(\hat{F_t}(V_t))H_i(V_t)
\]
where we have used Lemma \ref{lem:Stratlem} and the fact that $\hat{F_t}^2=\operatorname{Id}$ in the last step.

Thus, the stochastic differential equation for $V$ takes the form
\begin{equation}\label{eq:stochastic differential equationV}
\d V_t\quad=\quad \sum_i H_i(V_t) \circ \d W^i_t + \sum_i b_i(\hat{F_t}(V_t))H_i(V_t)\d t + \hat{\chi}_t(\hat{F_t}(V_t))\d t\,.
\end{equation}
Considering differentiation along the curve $\gamma^u$ introduced in the proof of Lemma \ref{lem:Stratlem}, it can be seen that 
$$
d\pi (H_i(u))=ue_i.
$$
 Also, as $F_t$ is an involutive isometry,
\begin{eqnarray*}
b_i(\hat{F_t}(V_t))\quad&=\quad& \left\langle \mathbf{b}(F_t(Y_t)),\d F_t(V_te_i) \right\rangle_{F_t(Y_t)}
\quad=\quad\langle \d F_t(\mathbf{b}(F_t(Y_t)), V_te_i \rangle_{Y_t}\\
\quad&=\quad&\langle F_{t*}\mathbf{b}(Y_t),V_te_i\rangle_{Y_t}\,,
\end{eqnarray*}
where $F_{t*}\mathbf{b}$ is the pushforward of the vectorfield $\mathbf{b}$ on $M$ by the isometry $F_t$.\\\\
Finally, writing 
$$
\chi_t(\x)\quad=\quad
\left.\frac{\d}{\d s}\right|_{s=t}F_s(\x)
$$
 for $\x \in M$, note that, for $u \in \mathcal{O}(M)$ and a smooth function $f: M \rightarrow \Reals$,
\[
\d\pi(\hat{\chi}_t(u))(f)\quad=\quad
\left.\frac{\d}{\d s}\right|_{s=t}(f \circ \pi \circ \hat{F_s})(u)
\quad=\quad \left.\frac{\d}{\d s}\right|_{s=t}f(F_s(\pi(u)))
\quad=\quad \chi_t(\pi u)(f)\,.
\]
Thus, writing
\begin{equation}\label{eq:killingt}
\kappa_t(\x)\quad=\quad \chi_t(F_t(\x))
\end{equation}
for $\x \in M$, we obtain
\[
\d\pi(\hat{\chi}_t(\hat{F_t}(u)))\quad=\quad\kappa_t(\pi u)\,.
\]
 Note that $\kappa_t$ is the Killing vectorfield corresponding to the $C^1$ curve of isometries $(F_s\circ F_t: s \ge t)$,
as introduced at the end of subsection \ref{sec:evolution-mirror-isometries}.

Using the above relations, we can project down the stochastic differential equation \eqref{eq:stochastic differential equationV} for $V$ onto $M$ as follows.
\begin{eqnarray*}
\d Y_t\quad&=\quad&\sum_i \d\pi(H_i(V_t)) \circ \d W^i_t + \sum_i b_i(\hat{F_t}(V_t))\d\pi(H_i(V_t))\d t + \d\pi(\hat{\chi}_t(\hat{F_t}(V_t)))\d t\\
\quad&=\quad&\sum_i V_te_i \circ \d W^i_t + \sum_i \langle F_{t*}\mathbf{b}(Y_t),V_te_i\rangle_{Y_t}V_te_i \d t + \kappa_t(Y_t)\d t\\
\quad&=\quad& \sum_i V_te_i \circ \d W^i_t + F_{t*}\mathbf{b}(Y_t)\d t + \kappa_t(Y_t)\d t\,.
\end{eqnarray*}
From the above expression, we see that the generator of $Y$ at $(t,\x)$ is 
$$
\generator\quad=\quad\frac{1}{2}\Delta_M + F_{t*}b(\x)+\kappa_t(\x)\,.
$$
 Comparing this with (\ref{eq:generatorman}), we deduce the following important relation:
\begin{thm}\label{thm:couplingfundthm}
Suppose that the standing assumptions of diffusion-geodesic completeness and stochastic completeness both hold.
For a Markovian maximal coupling $(X,Y)$ to exist from starting points $(\x_0,\y_0)$, the following relation must hold:
\begin{equation}\label{eq:couplingfundeqn}
\mathbf{b}(\x)\quad=\quad F_{t*}\mathbf{b}(\x)+\kappa_t(\x)
\end{equation}
for all $\x \in M$ and $t \ge 0$, where $(F_s:s \ge 0)$ is the $C^1$ curve of isometries introduced in Lemma \ref{lem:globiso}.
\end{thm}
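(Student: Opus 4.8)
The plan is to reproduce, on the manifold, the mechanism that was used for the Euclidean case in Lemma~\ref{lem:driftconstraint}: we take the known coupling relation $Y_t = F_t(X_t)$ valid on $\{t<\tau\}$, apply the Eells--Elworthy--Malliavin machinery to write a Stratonovich stochastic differential equation for the lifted process $V_t = \hat{F_t}(U_t)$, project it back down to $M$, and then compare the resulting generator of $Y$ with the generator of the diffusion that $Y$ is supposed to be (namely $\tfrac12\Delta_M + \mathbf{b}$). Matching the drift parts yields the claimed identity. Essentially all of this has already been assembled in the body of the subsection preceding the theorem; the proof is therefore a matter of organising the relevant displays into the comparison argument.

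Concretely, the steps, in order, are as follows. First I would invoke Corollary~\ref{cor:alltimeman} to record that $Y_t = F_t(X_t)$ for all $t<\tau$, and Lemma~\ref{lem:Isocont} together with Corollary~\ref{cor:derivtime} to guarantee that $s\mapsto F_s$ is a $C^1$ curve in $\mathcal{G}$, so that the time-derivative objects $\hat{\chi}_t$ and $\chi_t$ are well defined and the It\^o--Stratonovich formula of \citet[Equation (2.3)]{Kendall-1987} may be applied to $V_t = \hat{F_t}(U_t)$. Second, I would apply Lemma~\ref{lem:Stratlem} (which says $\d\hat{F_t}(H_i(u)) = H_i(\hat{F_t}(u))$) to rewrite the diffusion coefficients and the lifted drift term of the stochastic differential equation \eqref{eq:Strat1} for $V$, using also $\hat{F_t}^2 = \operatorname{Id}$, arriving at \eqref{eq:stochastic differential equationV}. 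Third, I would project \eqref{eq:stochastic differential equationV} down onto $M$ via $\pi$: using $\d\pi(H_i(u)) = u e_i$, the identity $b_i(\hat{F_t}(V_t)) = \langle F_{t*}\mathbf{b}(Y_t), V_t e_i\rangle_{Y_t}$ (which follows from $F_t$ being an involutive isometry), and the relation $\d\pi(\hat{\chi}_t(\hat{F_t}(u))) = \kappa_t(\pi u)$ with $\kappa_t(\x) = \chi_t(F_t(\x))$ the Killing vectorfield of the curve $(F_s \circ F_t : s\ge t)$. This yields
\[
\d Y_t \quad=\quad \sum_i V_t e_i \circ \d W^i_t + F_{t*}\mathbf{b}(Y_t)\,\d t + \kappa_t(Y_t)\,\d t\,,
\]
so that the generator of $Y$ at $(t,\x)$ equals $\tfrac12\Delta_M + F_{t*}\mathbf{b}(\x) + \kappa_t(\x)$. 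Fourth, since $Y$ is by hypothesis a copy of the original diffusion, its generator must also equal $\tfrac12\Delta_M + \mathbf{b}(\x)$ as in \eqref{eq:generatorman}; equating the first-order parts gives $\mathbf{b}(\x) = F_{t*}\mathbf{b}(\x) + \kappa_t(\x)$. Finally, I would note that although the computation was carried out on the stochastic event $\{t<\tau\}$, the identity \eqref{eq:couplingfundeqn} is an identity between smooth vectorfields on all of $M$ and, by Corollary~\ref{cor:alltimeman} together with the remark (in the Euclidean discussion, Remark after \eqref{eq:Aldous-equality}) that $\mu(\tau>t)>0$ for every $t$, holds for every $t\ge0$ and every $\x\in M$.

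The step I expect to require the most care is the passage from the Stratonovich stochastic differential equation on the frame bundle to the projected equation on $M$, and in particular making precise the sense in which $\sum_i V_t e_i \circ \d W^i_t$ is again (the anti-development of) a Brownian motion on $M$ — one must check, as in the Euclidean Lemma~\ref{lem:driftconstraint}, that the martingale part has the correct quadratic variation (here the L\'evy criterion is replaced by the fact that $V$ is a horizontal lift so that $(V_t e_i)$ is an orthonormal frame for each $t$), and that the drift terms $F_{t*}\mathbf{b}$ and $\kappa_t$ are genuinely the drift in the semimartingale decomposition of $Y$. The other subtlety is notational bookkeeping: $\kappa_t$ here is the Killing field of $(F_s\circ F_t : s\ge t)$ rather than of $(F_s\circ F_0)$, and one should be careful that this is consistent with the definition $\kappa_t(\x) = \chi_t(F_t(\x))$ and with the localization of the argument to $\{t<\tau\}$. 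Neither difficulty is serious given the lemmas already proved; the theorem is really the natural endpoint of the construction.
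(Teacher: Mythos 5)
Your proposal is correct and follows essentially the same route as the paper: the paper's proof of Theorem \ref{thm:couplingfundthm} is precisely the derivation assembled in the preceding subsection (lifting via $V_t=\hat{F_t}(U_t)$, the Stratonovich computation using Lemma \ref{lem:Stratlem} and $\hat{F_t}^2=\operatorname{Id}$, projection by $\pi$, and comparison of the resulting generator $\tfrac12\Delta_M+F_{t*}\mathbf{b}+\kappa_t$ with \eqref{eq:generatorman}). Your added care about the event $\{t<\tau\}$ having positive probability and the identification of $\kappa_t$ as the Killing field of $(F_s\circ F_t: s\ge t)$ matches the paper's treatment.
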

\begin{rem}\label{rem:bzero}
If $\mathbf{b}=0$ in the above theorem, we get $\kappa_t(\x)=0$ for all $\x \in M$ and all $t \ge 0$. In particular, $\kappa_t(F_t(\x))=0$, which by \eqref{eq:killingt} gives
$$
\left.\frac{\d}{\d s}\right|_{s=t}F_s(\x)=0
$$
for all $\x \in M$ and all $t \ge 0$. Thus, $F_t \equiv F_0$ for all $t \ge 0$. As $H_t$ is precisely the set of fixed points of $F_t$, we deduce that the mirror $H_t$ does not depend on time $t$. This was also proved in \citet[Proposition 4.2]{Kuwada-2009}.
\end{rem}
\subsection{Classification of the drift}
  Finally it is possible to produce a complete characterization of the drift $\mathbf{b}$ under {\LPC}. 
Recall that $M$ can only be a scaled  version of one of the model spaces $\mathbb{S}^d$, $\mathbb{H}^d$ or $\Reals^d$ corresponding to the curvature $K$ being constant and equal to \(+1\), \(-1\), or \(0\).

For this section, special attention  is paid to the equation \eqref{eq:couplingfundeqn} at time $0$. 
When the context makes it plain there is no ambiguity, we will write $F$ for $F_0$ and $\kappa$ for $\kappa_0$.

Let $\nabla$ represent the covariant derivative with respect to the Riemannian connection compatible with the metric $g$. 
We will need the following useful fact about Killing vectorfields \citep[Prop.~27]{Petersen-2006}.
\begin{lem}\label{lem:Killing}
If $\kappa$ is a Killing vectorfield, then for any $\x \in M$ and any $u\in T_{\x}M$,
\begin{equation}
\langle\nabla_u\kappa(\x),u\rangle \quad=\quad 0
\end{equation}
\end{lem}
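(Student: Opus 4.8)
The plan is to invoke the classical fact that a Killing vectorfield generates a flow of isometries, and then to translate the invariance of the metric into a skew-symmetry statement for $\nabla\kappa$ using the torsion-free, metric-compatible structure of the Levi-Civita connection $\nabla$.

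First I would recall that, as constructed at the end of Subsection \ref{sec:evolution-mirror-isometries}, the vectorfield $\kappa$ is the infinitesimal generator of the $C^1$ curve of isometries $s\mapsto G_s$ through the identity in the Lie group $\mathcal{G}=\operatorname{Iso}(M)$; since elements of the Lie algebra of $\mathcal{G}$ act on $M$ by smooth vectorfields, $\kappa$ is smooth and each $G_s$ satisfies $G_s^{*}g=g$. Differentiating this identity in $s$ at $s=0$ gives that the Lie derivative of the metric vanishes, $\mathcal{L}_\kappa g=0$.

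Next I would expand $\mathcal{L}_\kappa g$ in terms of $\nabla$. For vectorfields $u,v$ on $M$,
\[
(\mathcal{L}_\kappa g)(u,v)\quad=\quad \kappa\langle u,v\rangle-\langle[\kappa,u],v\rangle-\langle u,[\kappa,v]\rangle .
\]
Using metric compatibility, $\kappa\langle u,v\rangle=\langle\nabla_\kappa u,v\rangle+\langle u,\nabla_\kappa v\rangle$, together with the torsion-free identities $[\kappa,u]=\nabla_\kappa u-\nabla_u\kappa$ and $[\kappa,v]=\nabla_\kappa v-\nabla_v\kappa$, the right-hand side collapses to
\[
(\mathcal{L}_\kappa g)(u,v)\quad=\quad \langle\nabla_u\kappa,v\rangle+\langle u,\nabla_v\kappa\rangle .
\]
Combining with $\mathcal{L}_\kappa g=0$ and putting $v=u$ yields $2\langle\nabla_u\kappa,u\rangle=0$, which is the claim; since both sides are tensorial in $u$, it suffices to check this after extending any $u\in T_\x M$ to a local vectorfield. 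Equivalently, this says $\nabla\kappa$ is skew as a $(1,1)$-tensor.

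There is essentially no obstacle here: this is a textbook result, appearing for instance as \citep[Prop.~27]{Petersen-2006}, which we may simply cite. The only point deserving a word of care is that $s\mapsto G_s$ was produced only as a $C^1$ curve of isometries, so one should note that it nonetheless corresponds to a genuine smooth Killing field (indeed the isometry group acts by analytic diffeomorphisms), which legitimises differentiating $G_s^{*}g$ and manipulating $\mathcal{L}_\kappa g$ as above.
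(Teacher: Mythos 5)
Your proof is correct: the identity $(\mathcal{L}_\kappa g)(u,v)=\langle\nabla_u\kappa,v\rangle+\langle u,\nabla_v\kappa\rangle$ together with $\mathcal{L}_\kappa g=0$ is exactly the standard argument, and your remark about the regularity of the curve $s\mapsto G_s$ (isometries being smooth by Myers--Steenrod, so the generated Killing field is a genuine smooth vectorfield) is the right point to flag. The paper does not prove this lemma at all but simply cites \citet[Prop.~27]{Petersen-2006}, so your write-up is just the textbook proof behind that citation and is entirely in line with the paper's intent.
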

Isometries take geodesics to geodesics, so any Killing vectorfield is a \emph{Jacobi field}, i.e.~the variation field of a variation through geodesics. 
Thus, Killing vectorfields satisfy the Jacobi equation, as given by the following lemma \citep[Theorem 10.2]{Lee-1997}.
\begin{lem}\label{lem:jacobi}
Let $\kappa$ be a Killing vectorfield. Then $\kappa$ satisfies the Jacobi equation along any (unit speed) geodesic $\gamma$:
\begin{equation}\label{eq:jacobi}
\nabla_{\dot{\gamma}}\nabla_{\dot{\gamma}}\kappa + R(\kappa, \dot{\gamma})\dot{\gamma}\quad=\quad 0\,.
\end{equation}
\end{lem}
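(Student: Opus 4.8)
The plan is to derive \eqref{eq:jacobi} from the standard variational characterization of Jacobi fields, exploiting the fact that a Killing vectorfield is the infinitesimal generator of a flow of isometries. First I would recall that the defining property of a Killing vectorfield (vanishing Lie derivative of the metric, equivalently the skew-symmetry identity underlying Lemma \ref{lem:Killing}) is equivalent to the statement that the local flow $(\varphi_s)$ of $\kappa$ consists of isometries; since $(M,g)$ is complete this flow is in fact globally defined (Killing fields on complete manifolds are complete, their integral curves having constant speed), although only the germ of the flow near a chosen point is needed here.

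Next, fix a unit-speed geodesic $\gamma:I\rightarrow M$ and form the smooth variation
\[
\Gamma(s,t)\quad=\quad\varphi_s(\gamma(t))\,,\qquad (s,t)\in\Reals\times I\,.
\]
For each fixed $s$ the curve $t\mapsto\Gamma(s,t)$ is the image of a geodesic under the isometry $\varphi_s$, hence again a geodesic; so $\Gamma$ is a variation of $\gamma$ \emph{through geodesics}, with variation field
\[
J(t)\quad=\quad\left.\frac{\partial}{\partial s}\right|_{s=0}\Gamma(s,t)\quad=\quad\kappa(\gamma(t))\,.
\]

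The last step invokes the classical fact (see \citealp{Lee-1997}) that the variation field of \emph{any} variation through geodesics satisfies the Jacobi equation $\nabla_{\dot\gamma}\nabla_{\dot\gamma}J+R(J,\dot\gamma)\dot\gamma=0$ along $\gamma$; no properness or fixed-endpoint hypothesis is required. Applying this to $J=\kappa\circ\gamma$ yields \eqref{eq:jacobi}. I do not expect a serious obstacle: the only delicate point is the identification of a Killing vectorfield with the generator of a one-parameter group of isometries, which rests on the equivalence between $\mathcal{L}_\kappa g=0$ and $\varphi_s^*g=g$ for the (local) flow, together with the completeness of $M$ to promote this to a global flow. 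If one prefers to bypass the flow entirely, an alternative is a direct intrinsic computation: differentiate the Killing identity $\langle\nabla_u\kappa,v\rangle=-\langle u,\nabla_v\kappa\rangle$ twice in the direction $\dot\gamma$, use $\nabla_{\dot\gamma}\dot\gamma=0$, and rearrange by means of the definition of the curvature tensor to recover \eqref{eq:jacobi} — this involves more calculation but is entirely self-contained.
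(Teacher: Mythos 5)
Your argument is correct and is exactly the paper's own justification: the paper notes that isometries take geodesics to geodesics, so a Killing vectorfield is the variation field of a variation through geodesics (generated by its flow of isometries) and hence a Jacobi field, citing \citet[Theorem 10.2]{Lee-1997} for the fact that such variation fields satisfy the Jacobi equation. Your fleshed-out version, including the completeness remark and the optional direct computation, adds detail but no new idea.
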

Because of Theorem \ref{thm:Mclass}, we can confine attention to the case when $M$ is of constant curvature $K$,
in which case there is a simple representation for the curvature tensor $R$ \citep[Lemma 8.10]{Lee-1997}:
\begin{equation}\label{eq:constcurvendo}
R(X,Y)Z\quad=\quad
K(\langle Y, Z \rangle X - \langle X, Z \rangle Y)\,.
\end{equation}

We now define the \emph{symmetric \(2\)-form} associated with the drift vectorfield \(\textbf{b}\): for $u,v\in T_{\x}M$,
\begin{equation}\label{eq:symmtwoform}
S_{\x}(u,v)\quad=\quad
\frac{1}{2}\left(\langle\nabla_u\mathbf{b},v\rangle + \langle \nabla_v\mathbf{b},u\rangle\right)\,.
\end{equation}
The following lemma describes this symmetric \(2\)-form \(S_{\x}\) under {\LPC}.
\begin{lem}\label{lem:symmclass}
Suppose that the standing assumptions of diffusion-geodesic completeness and stochastic completeness both hold.
Under {\LPC}, there is a scalar $\lambda$ such that, for all $\x \in M$ and all $u,v \in T_{\x}M$,
$$
S_{\x}(u,v)\quad=\quad
\lambda \langle u,v \rangle\,.
$$
\end{lem}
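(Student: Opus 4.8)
The plan is to mimic the proof of Lemma~\ref{lem:symmrig} from the Euclidean case, replacing the matrix identity of Lemma~\ref{lem:consteqns}(i) by its invariant counterpart obtained by differentiating the fundamental coupling equation \eqref{eq:couplingfundeqn}. The first step is to record the transformation rule for $S$. Under {\LPC}, for every pair $(\x,\y) \in \ball(\x_0,r) \times \ball(\y_0,r)$ there is a Markovian maximal coupling started from $(\x,\y)$, so Theorem~\ref{thm:couplingfundthm} applies and gives $\mathbf{b} = (f_{\x,\y})_*\mathbf{b} + \kappa^{(\x,\y)}$ on all of $M$, where $\kappa^{(\x,\y)}$ is the Killing vectorfield associated with the corresponding curve of isometries at time $0$ (Lemma~\ref{lem:globiso}). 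Taking the covariant derivative $\nabla$ of both sides and symmetrising, the contribution of $\kappa^{(\x,\y)}$ vanishes by Lemma~\ref{lem:Killing} (polarise the identity $\langle \nabla_u\kappa,u\rangle = 0$), while the pushforward term transforms covariantly because an isometry preserves the Levi-Civita connection; since $f_{\x,\y}$ is an involution this yields, for every $\z \in M$ and $u,v \in T_{\z}M$,
\begin{equation*}
S_{\z}(u,v) \quad=\quad S_{f_{\x,\y}(\z)}\bigl(\d f_{\x,\y}\, u,\ \d f_{\x,\y}\, v\bigr)\,.
\end{equation*}
Equivalently, writing $\hat S_{\z}$ for the $g$-self-adjoint operator with $S_{\z}(u,v) = \langle \hat S_{\z}\, u, v\rangle$, we have $\hat S_{f_{\x,\y}(\z)} = \d f_{\x,\y}\, \hat S_{\z}\, (\d f_{\x,\y})^{-1}$.

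Next I would evaluate this at the point $\x^*$, the midpoint of the minimal geodesic $\gamma_{\x_0,\y_0}$. From the proof of Lemma~\ref{lem:isotropic} (Step~1) there is $\eps>0$ and, for every unit vector $v' \in T_{\x^*}M$ with $\angle(v',v_0)<\eps$, points $\x_{v'} \in \ball(\x_0,r)$ and $\y_{v'} \in \ball(\y_0,r)$ such that $f_{\x_{v'},\y_{v'}}$ fixes $\x^*$, inverts the geodesic through $\x^*$ in direction $v'$, and fixes pointwise the orthogonal complement $(v')^{\perp} \subseteq T_{\x^*}M$ (this is the local picture of $\d F_s$ discussed after Lemma~\ref{lem:globiso}). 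Applying the transformation rule at $\z = \x^*$ for $f = f_{\x_{v'},\y_{v'}}$ gives $S_{\x^*}(v',w) = S_{\x^*}(-v',w) = -S_{\x^*}(v',w)$ for all $w \in (v')^{\perp}$, hence $S_{\x^*}(v',w)=0$; thus $\hat S_{\x^*} v' \in ((v')^{\perp})^{\perp} = \Reals v'$, so $v'$ is an eigenvector of $\hat S_{\x^*}$. Since this holds for all $v'$ in an open neighbourhood of $v_0$ in the unit sphere of $T_{\x^*}M$, and a $g$-self-adjoint operator cannot have two non-orthogonal eigenvectors with distinct eigenvalues, all such $v'$ share a common eigenvalue $\lambda$; choosing $d$ linearly independent ones among them (as in the proof of Lemma~\ref{lem:symmrig}) shows $\hat S_{\x^*} = \lambda\,\Identity$, i.e.\ $S_{\x^*}(u,v) = \lambda\langle u,v\rangle$.

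Finally I would propagate this to all of $M$ using homogeneity. Conjugating $\hat S$ along any composition of the involutions $f_{\x,\y}$ by the (linear isometric) differential of that composition carries $\lambda\,\Identity$ to $\lambda\,\Identity$; hence $\hat S_{\z} = \lambda\,\Identity$ at every point $\z$ lying in the orbit of $\x^*$ under the group generated by $\{f_{\x,\y} : \x\in\ball(\x_0,r),\ \y\in\ball(\y_0,r)\}$. But the argument of Lemma~\ref{lem:homogeneous} shows precisely that this orbit is all of $M$. Therefore $S_{\z}(u,v) = \lambda\langle u,v\rangle$ for all $\z\in M$ and all $u,v\in T_{\z}M$, proving the lemma.

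The step I expect to be the main obstacle is the second one: making sure the reflections fixing $\x^*$ are genuinely available for a full neighbourhood of normal directions $v'$ (not merely the single direction $v_0$ coming from $f_{\x_0,\y_0}$), together with the precise local normal form of $\d f$ at a fixed point. Both facts were established in the proof of Lemma~\ref{lem:isotropic} and in the discussion following Lemma~\ref{lem:globiso}, so in practice this reduces to citing those constructions carefully; the remainder --- differentiating \eqref{eq:couplingfundeqn}, discarding the Killing part via Lemma~\ref{lem:Killing}, and running the orbit argument --- is routine.
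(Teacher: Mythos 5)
Your argument follows the paper's own proof essentially step for step: differentiate the $t=0$ coupling identity $\mathbf{b}=(f_{\x,\y})_*\mathbf{b}+\kappa$, discard the Killing contribution via Lemma \ref{lem:Killing} to get the invariance of $S$ under each $f_{\x,\y}$, use the {\LPC}-supplied family of reflections fixing $\x^*$ (with linearly independent, pairwise non-orthogonal inverted directions) to force $\hat S_{\x^*}=\lambda\,\Identity$, and propagate by the homogeneity orbit argument of Lemma \ref{lem:homogeneous}. The only point to tidy is that Lemma \ref{lem:homogeneous} gives the orbit of $\x^*$ under the \emph{closed} subgroup generated by the $f_{\x,\y}$, so you should either extend the invariance relation to the closure by continuity of the isometry action (as the paper does, citing Myers--Steenrod) or note that $S$ is continuous and the plain orbit is dense.
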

\begin{proof}
Recall that $\x^*$ is the midpoint of a minimal geodesic connecting $\x_0$ and $\y_0$. 
Let $\{e_1, \dots, e_d\}$ denote the canonical orthonormal frame of $T_{\x^*}M$.
From previous discussions, $F$ `inverts' one geodesic through $\x^*$ (the minimal geodesic joining $\x_0$ and $\y_0$) and keeps all geodesics orthogonal to this one fixed. Let $\mathbf{n} \in T_{\x^*}M$ denote the direction of the inverted geodesic.

Now, consider any isometry $G$ that satisfies
\begin{equation}\label{eq:isodrift}
\mathbf{b}(\x)\quad=\quad
G_*\mathbf{b}(\x) + \kappa(\x)
\end{equation}
for some Killing vectorfield $\kappa$, for all $\x \in M$. Then, it follows that for any $\x \in M$ and $u,v \in T_{\x}M$,
\begin{eqnarray*}
\langle \nabla_u\mathbf{b}(\x),v\rangle \quad&=\quad&\langle \nabla_u(G_*\mathbf{b})(\x),v\rangle + \langle\nabla_u\kappa(\x),v\rangle\\
\quad&=\quad&\langle \nabla_{\d G^{-1}(u)}\mathbf{b}(G^{-1}(\x)),\d G^{-1}(v)\rangle + \langle\nabla_u\kappa(\x),v\rangle
\end{eqnarray*}
which, along with Lemma \ref{lem:Killing}, yields
\begin{equation}\label{eq:Shomo}
S_{\x}(u,v)\quad=\quad
S_{G^{-1}(\x)}(\d G^{-1}(u),\d G^{-1}(v))\,.
\end{equation}
In particular, equation (\ref{eq:couplingfundeqn}) at time $t=0$ gives
\begin{equation}\label{eq:ShomoF}
S_{\x^*}(u,v)\quad=\quad
S_{\x^*}(\d F(u),\d F(v))\,.
\end{equation}
where (\ref{eq:ShomoF}) follows from (\ref{eq:Shomo}) by noting that $F$ fixes $\x^*$ and $F^{-1}=F$. Let $S(\x^*)$ denote the matrix
$$
(S(\x^*))_{ij}=S_{\x^*}(e_i,e_j).
$$
Using the description above of \(F\) as 'inverting' the geodesic with tangent vector \(\mathbf{n}\) at \(\x^*\), and leaving orthogonal geodesics at \(\x^*\) fixed,
\eqref{eq:ShomoF} 
yields
\begin{equation}\label{eq:symmmat}
S(\x^*)\quad=\quad
({\Identity}-2\mathbf{n}\mathbf{n}^\top)S(\x^*)({\Identity}-2\mathbf{n}\mathbf{n}^\top)\,.
\end{equation}
By {\LPC}, we can choose $d$ pairs of starting points $\{(\x_i,\y_i): \x_i \in \ball(\x_0,r), \  \y_i \in \ball(\y_0,r), \ 1 \le i \le d\}$ 
such that the directions of the inverted geodesics $\mathbf{n}_i$ (for $1 \le i \le d$) based at \(\x^*\) form $d$ linearly independent vectors in \(T_{\x^*}M\) and $\mathbf{n}_i$ is not orthogonal to $\mathbf{n}_j$ for any $i \neq j$. 
Now, noting from equation \eqref{eq:symmmat} that $\mathbf{n}_i$ are eigenvectors of $S(\x^*)$, 
we find
\begin{equation}\label{eq:Sid}
S(\x^*)\quad=\quad
\lambda(\x^*)\Identity
\end{equation}
for some scalar $\lambda(\x^*)$. In coordinate-free terms, this is the assertion of the lemma at point $\x^*$.\\\\
Now, we want to show that the assertion of the lemma holds at any $\x \in M$. Denote
$$
\mathcal{Z}=\{G \in \mathcal{G}: G \text{ satisfies (\ref{eq:isodrift}) for some Killing vectorfield } \kappa \text{ and all } \x \in M\}.
$$
Recall that (\ref{eq:Shomo}) holds for all $G \in \mathcal{Z}$. Thus, by (\ref{eq:Sid}), we get 
$$
S_{G^{-1}(\x^*)}(u,v)=\lambda(\x^*) \langle u,v \rangle
$$
 for all $u,v \in T_{G^{-1}(\x^*)}M$.\\\\
By continuity of the map 
$$
G \mapsto S_{G^{-1}(\x^*)}(\d G^{-1}(u),\d G^{-1}(v))
$$
 in the topology of isometries \citep[Lemma 4]{MyersSteenrod-1939}, \eqref{eq:Shomo} holds for all $G \in \overline{\mathcal{Z}}$, where $\overline{\mathcal{Z}}$ denotes the closed subgroup generated by $\mathcal{Z}$.\\\\ 
Now, from the developments in subsection \ref{sec:interfaceman}, observe that, under {\LPC}, for any $\x \in \ball(\x_0,r)$ and $\y \in \ball(\y_0,r)$, there exists a unique involutive isometry $f_{\x,\y}$ whose fixed point set is exactly the set $H(\x,\y)$. These isometries satisfy (\ref{eq:isodrift}) as this equation corresponds to (\ref{eq:couplingfundeqn}) at time $t=0$ when the starting points of $X$ and $Y$ are taken to be $\x$ and $\y$ respectively. Furthermore, exactly along the lines of the proof of Lemma \ref{lem:homogeneous}, we see that the orbit of $\x^*$ under the closed subgroup of isometries generated by $\{f_{\x,y} : \x \in \ball(\x_0,r), \y \in \ball(\y_0,r)\}$ is the whole of $M$. In particular, the orbit of $\x^*$ under $\overline{\mathcal{Z}}$ is $M$. Thus, for all $\x \in M$, 
$$
S_{\x}(u,v)\quad=\quad
\lambda(\x^*) \langle u,v \rangle
$$
 for all $u,v \in T_{\x}M$, proving the lemma.
\end{proof}
Now we describe the drift vectorfield along geodesics issuing from $\x^*$, the midpoint of a minimal geodesic joining $\x_0$ and $\y_0$. In the following, we will denote the canonical orthonormal basis of $T_{\x^*}M$ by $\{e_1,\dots,e_d\}$. Also, for any vector $u \in T_{\x^*}M$ and any $d \times d$ matrix $T$, $Tu$ will denote the vector obtained by matrix multiplication when we identify $T_{\x^*}M$ with $\Reals^d$.
\begin{lem}\label{lem:driftclass}
Suppose that the standing assumptions of diffusion-geodesic completeness and stochastic completeness both hold.
If the drift vectorfield $\mathbf{b}$ permits \textbf{MMC} with {\LPC}, then it must satisfy the following. Let $\x^* \in M$ be the midpoint of a minimal geodesic connecting $\x_0$ and $\y_0$ and $u,v \in T_{\x^*}M$ be unit vectors with $u \perp v$. Let $\gamma$ represent the geodesic issuing from $\x^*$ in direction $u$ and let $V_t$ represent the parallel transport of $v$ along $\gamma$. Then the following holds.
\begin{equation}\label{eq:symm}
\langle \mathbf{b}(\gamma(t)), \dot{\gamma}_t\rangle\quad=\quad\lambda t + \langle \mathbf{b}(\x^*),u\rangle  
\end{equation}
where $\lambda$ is as in Lemma \ref{lem:symmclass}, and 
\begin{eqnarray}\label{eqnarray:antsymm}
\langle \mathbf{b}(\gamma(t)), V_t\rangle\quad=\quad
\left\{
\begin{array}{lll}
\langle \mathbf{b}(\x^*),v \rangle \cos \sqrt{K}t + \displaystyle{\langle Tu,v\rangle \frac{\sin \sqrt{K}t}{\sqrt{K}}} & \mbox{if } K>0\,,\\
\langle \mathbf{b}(\x^*),v \rangle + \langle Tu,v\rangle t & \mbox{if } K=0\,,\\
\langle \mathbf{b}(\x^*),v \rangle \cosh \sqrt{-K}t + \displaystyle{\langle Tu,v\rangle \frac{\sinh \sqrt{-K}t}{\sqrt{-K}}} & \mbox{if } K<0\,.
\end{array}
\right.
\end{eqnarray}
where the matrix $T$ given by $T_{ij}=\langle\nabla_{e_i}\mathbf{b}(\x^*),e_j\rangle - \lambda\langle e_i, e_j\rangle$ is a skew-symmetric matrix.
\end{lem}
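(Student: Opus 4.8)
The plan is to combine Lemma \ref{lem:symmclass} (which pins down the symmetric part of $\nabla\mathbf{b}$ at every point) with the Jacobi equation for Killing fields (Lemma \ref{lem:jacobi}) and the constant-curvature form of the curvature tensor (equation \eqref{eq:constcurvendo}). First I would establish \eqref{eq:symm}. Fix the unit-speed geodesic $\gamma$ issuing from $\x^*$ in direction $u$ and set $\phi(t)=\langle\mathbf{b}(\gamma(t)),\dot\gamma_t\rangle$. Since $\dot\gamma$ is parallel along $\gamma$, we have $\dot\phi(t)=\langle\nabla_{\dot\gamma}\mathbf{b},\dot\gamma\rangle = S_{\gamma(t)}(\dot\gamma_t,\dot\gamma_t)=\lambda$ by Lemma \ref{lem:symmclass}. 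Integrating and using $\phi(0)=\langle\mathbf{b}(\x^*),u\rangle$ gives \eqref{eq:symm} immediately.

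Next I would turn to \eqref{eqnarray:antsymm}. Here I set $\psi(t)=\langle\mathbf{b}(\gamma(t)),V_t\rangle$ with $V_t$ the parallel transport of $v$; since $V$ is parallel, $\dot\psi(t)=\langle\nabla_{\dot\gamma}\mathbf{b},V_t\rangle$. Differentiating once more, $\ddot\psi(t)=\langle\nabla_{\dot\gamma}\nabla_{\dot\gamma}\mathbf{b},V_t\rangle$. The key identity to exploit is that, because $\mathbf{b}$ is forced by \eqref{eq:couplingfundeqn} (equivalently \eqref{eq:isodrift}) to differ from an isometric pushforward of itself only by a Killing field, the ``Killing part'' of $\mathbf{b}$ carries the second-order behaviour along geodesics. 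More precisely, the skew-symmetric two-form $x\mapsto\langle\nabla_u\mathbf{b},v\rangle-\langle\nabla_v\mathbf{b},u\rangle$ (twice the skew part) together with Lemma \ref{lem:symmclass} shows $\nabla\mathbf{b}(\x)=\lambda\,\mathrm{Id}+T(\x)$ with $T(\x)$ skew; I would then argue, using \eqref{eq:Shomo}-type covariance of the full gradient under $\mathcal{Z}$ and the Jacobi equation, that the skew part behaves exactly like a Killing-field Jacobi field. Concretely, one shows $\nabla_{\dot\gamma}\nabla_{\dot\gamma}\mathbf{b}$ restricted to directions orthogonal to $\dot\gamma$ satisfies, via Lemma \ref{lem:jacobi} and \eqref{eq:constcurvendo}, the scalar ODE $\ddot\psi = -K\psi$. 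Imposing the initial conditions $\psi(0)=\langle\mathbf{b}(\x^*),v\rangle$ and $\dot\psi(0)=\langle\nabla_u\mathbf{b}(\x^*),v\rangle=\langle Tu,v\rangle$ (the last equality because the symmetric part contributes $\lambda\langle u,v\rangle=0$ as $u\perp v$), the three cases $K>0$, $K=0$, $K<0$ of \eqref{eqnarray:antsymm} drop out as the standard solutions $\cos$, linear, $\cosh$.

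Finally I would check skew-symmetry of $T_{ij}=\langle\nabla_{e_i}\mathbf{b}(\x^*),e_j\rangle-\lambda\langle e_i,e_j\rangle$: its symmetric part is precisely $S_{\x^*}(e_i,e_j)-\lambda\langle e_i,e_j\rangle$, which vanishes by Lemma \ref{lem:symmclass}. The main obstacle I anticipate is the reduction showing that $\mathbf{b}$ itself (not merely an \emph{a priori} Killing field) satisfies the Jacobi equation in the directions transverse to $\dot\gamma$; this requires carefully extracting, from the family of constraints \eqref{eq:couplingfundeqn} over all admissible starting pairs and over time $t$, that the obstruction to $\mathbf{b}$ being Killing is a \emph{globally} quadratic/affine phenomenon controlled by $\lambda$ and a single skew matrix $T$, so that $\nabla_{\dot\gamma}\nabla_{\dot\gamma}\mathbf{b}+R(\mathbf{b},\dot\gamma)\dot\gamma$ has vanishing component along $V_t$. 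Once that structural fact is in hand, the ODE integration is routine.
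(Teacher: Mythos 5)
Your treatment of \eqref{eq:symm} and of the skew-symmetry of $T$ matches the paper exactly, and your identification of the target ODE $\ddot\psi+K\psi=0$ for $\psi(t)=\langle\mathbf{b}(\gamma(t)),V_t\rangle$, with initial conditions $\psi(0)=\langle\mathbf{b}(\x^*),v\rangle$ and $\dot\psi(0)=\langle Tu,v\rangle$, is also correct. But the step you yourself flag as the ``main obstacle'' --- showing that $\langle \nabla_{\dot\gamma}\nabla_{\dot\gamma}\mathbf{b}+R(\mathbf{b},\dot\gamma)\dot\gamma, V_t\rangle$ vanishes --- is precisely where the substance of the proof lies, and your sketch does not close it. Covariance of the constraint \eqref{eq:couplingfundeqn} under the isometries in $\mathcal{Z}$ only shows that this ``Jacobi defect'' of $\mathbf{b}$ is equivariant under those isometries (this is the paper's identity \eqref{eq:bdiffthree}); equivariance alone cannot force the defect to vanish, and the input you propose --- that the obstruction to $\mathbf{b}$ being Killing is globally controlled by $\lambda$ and a single skew matrix $T$ --- is essentially the conclusion of the lemma (it amounts to the decomposition \eqref{eq:driftdec}), so invoking it here is circular.

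What is missing is a base case from which equivariance can propagate. The paper obtains it by taking geodesics $\gamma$ lying inside a mirror $H(\x,\y)$ through $\x^*$, with $n_t$ the parallel transport of the normal: since $F_0=f_{\x,\y}$ fixes $H(\x,\y)$ pointwise and reverses $n_t$, equation \eqref{eq:couplingfundeqn} at time $0$ yields $\langle\mathbf{b}(\gamma(t)),n_t\rangle=\tfrac12\langle\kappa(\gamma(t)),n_t\rangle$ (equation \eqref{eq:eqone}); differentiating twice along $\gamma$ and using the Jacobi equation for the Killing field $\kappa$ (Lemma \ref{lem:jacobi}) together with \eqref{eq:constcurvendo} gives the ODE \eqref{eq:bdifftwo} in the normal direction along such geodesics. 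It then upgrades this to arbitrary geodesics through $\x^*$ and arbitrary orthogonal parallel fields by composing the reflections $f_{\x,\y}$ as in the proof of Lemma \ref{lem:isotropic}, applying \eqref{eq:bdiffthree} at each composition, and passing to limits in the isometry group via Myers--Steenrod. Alternatively, your gap could be closed without any propagation argument by observing that Lemma \ref{lem:symmclass} says $\mathcal{L}_{\mathbf{b}}\,g=2\lambda g$ with $\lambda$ constant, so $\mathbf{b}$ is a homothetic, hence infinitesimally affine, vector field, and such fields satisfy $\nabla_{Y}\nabla_{Z}\mathbf{b}-\nabla_{\nabla_Y Z}\mathbf{b}+R(\mathbf{b},Y)Z=0$ for all $Y,Z$; taking $Y=Z=\dot\gamma$ gives exactly the equation you need. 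As written, however, your proposal supplies neither argument, so its central step is a genuine gap.
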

\begin{proof}
To see (\ref{eq:symm}), note that 
\[
\frac{\d}{\d t}\langle \mathbf{b}(\gamma(t)), \dot{\gamma}_t\rangle \quad=\quad
\langle \nabla_{\dot{\gamma}_t}\mathbf{b}(\gamma(t)), \dot{\gamma}_t\rangle
\quad=\quad S( \dot{\gamma}_t, \dot{\gamma}_t)\quad=\quad\lambda\,.
\]
Take any $\x \in \ball(\x_0,r)$ and $\y \in \ball(\y_0,r)$ such that $\x^* \in H(\x,\y)$. 
Since $H(\x,\y)$ is the fixed point set of the isometry $f_{\x,\y}$, it is therefore a totally geodesic submanifold of $M$.
Let $\kappa$ denote the Killing vectorfield for which (\ref{eq:couplingfundeqn}) holds at time $t=0$ with $F_0=f_{\x,\y}$. 
Take any unit speed geodesic $\gamma$ passing through $\x^*$ and lying in \(H(\x,\y)\). 
(Note that, if a geodesic lies in $H(\x,\y)$ for a short time, it should lie in $H(\x,\y)$ for all time. See, for example, the proof of Proposition 24 of \citealp[p.~145]{Petersen-2006}.) 

Let $(n_t : t \ge 0)$ be the parallel transport of the vector normal to the hypersurface $H(\x,\y)$ at $\x^*$ along the geodesic $\gamma$.
Note that, as $H(\x,\y)$ is totally geodesic, the second fundamental form 
vanishes identically on $H(\x,\y)$ \citep[Exercise 8.4]{Lee-1997}. 
This fact implies that parallel transportation of a vector $v \in T_{\x^*}H(\x,\y)$ with respect to the induced metric on $H(\x,\y)$ 
agrees with parallel transportation of $v$ in the ambient manifold $M$ \citep[Lemma 8.5]{Lee-1997}. 
Thus, $n_t$ is precisely the direction that is reversed at $\gamma(t)$ by $f_{\x,\y}$.

Equation (\ref{eq:couplingfundeqn}) gives us 
\begin{equation}\label{eq:eqone}
\langle \mathbf{b}(\gamma(t)), n_t\rangle \quad=\quad \frac{1}{2}\langle \kappa(\gamma(t)), n_t\rangle\,.
\end{equation}
Differentiating the above twice with respect to $t$ along the geodesic \(\gamma\), and using the fact that \(\nabla_{\dot{\gamma}(t)}n_t=0\) because \(n_t\) was defined using parallel transport along \(\gamma\), we obtain 
$$
\langle D_t^2\mathbf{b}(\gamma(t)), n_t\rangle \quad=\quad \frac{1}{2}\langle D_t^2\kappa(\gamma(t)), n_t\rangle 
$$
(using \(D_t\) as shorthand for covariant differentiation \(\nabla_{\dot{\gamma}}\) along the geodesic \(\gamma\))
which, along with (\ref{eq:jacobi}) and (\ref{eq:constcurvendo}), gives
\begin{equation}\label{eq:bdiffone}
\frac{\d^2}{\d t^2}\langle \mathbf{b}(\gamma(t)), n_t\rangle + \frac{K}{2}\langle \kappa(\gamma(t)), n_t\rangle\quad=\quad0\,.
\end{equation}
Consequently equation \eqref{eq:eqone} shows that the function $t \mapsto \langle \mathbf{b}(\gamma(t)), n_t\rangle$ satisfies the following differential equation
\begin{equation}\label{eq:bdifftwo}
\frac{\d^2}{\d t^2}\langle \mathbf{b}(\gamma(t)), n_t\rangle + K\langle \mathbf{b}(\gamma(t)), n_t\rangle\quad=\quad 0\,.
\end{equation}
For any geodesic $\gamma$ passing through $\x^*$, not necessarily lying in $H(\x,\y)$, and for any parallel vectorfield $V_t$ along $\gamma$ orthogonal to $\dot{\gamma}_t$, 
a similar technique
uses \eqref{eq:couplingfundeqn}, \eqref{eq:jacobi} and \eqref{eq:constcurvendo} to
give us
\begin{equation}\label{eq:bdiffthree}
\frac{\d^2}{\d t^2}\langle \mathbf{b}(\gamma(t)), V_t\rangle + K\langle \mathbf{b}(\gamma(t)), V_t\rangle 
\quad=\quad
\frac{\d^2}{\d t^2}\langle \mathbf{b}(f_{\x,\y}\circ\gamma(t)), \d f_{\x,\y}(V_t)\rangle + K\langle \mathbf{b}(f_{\x,\y}\circ\gamma(t)), \d f_{\x,\y}(V_t)\rangle\,.
\end{equation}
Now, following the lines of the proof of Lemma \ref{lem:isotropic},
we can iteratively compose the isometries in  
$$
\mathcal{S}\quad=\quad
\left\lbrace f_{\x,\y} \in \mathcal{G} \;:\; \x \in \ball(\x_0,r)\,, \y \in \ball(\y_0,r)\,, \dist(\x,\x^*)=\dist(\y,\x^*)=\frac{1}{2}\dist(\x,\y) \right \rbrace
$$
to deduce that the closed subgroup of isometries $\mathcal{G}^*$ generated by $\mathcal{S}$ is the whole isotropy group of $\x^*$ in $\mathcal{G}$. Further, from Step 1 and Step 2 in the proof of Lemma \ref{lem:isotropic}, it can be seen that for any pair of linearly independent unit vectors $u, v \in T_{\x^*}M$, there is a sequence of isometries $\{F_k\}_{k \ge 1}$ such that for each $k$, $F_k$ is a composition of isometries in $\mathcal{S}$, $\d F_k$ fixes vectors in $T_{\x^*}M$ that are orthogonal to $\{u,v\}$, and $\d F_k(u) \rightarrow v$ as $k \rightarrow \infty$.

Take any geodesic $\gamma$ issuing from $\x^*$ and lying in $H(\x,\y)$ for some $\x \in \ball(\x_0,r)$, $\y \in \ball(\y_0,r)$ and let $n_t$ denote the parallel vectorfield along $\gamma$ that is inverted by $f_{\x,\y}$. Let $G \in \mathcal{G}$ be a composition of isometries in $\mathcal{S}$ which fix $\gamma$ and let $v=\d G(n_0)$. Let $V^v_t$ denote the parallel transport of $v$ along $\gamma$. As $G$ is an isometry, \citep[Proposition 5.6 (b)]{Lee-1997} implies $G_*n_t=V^v_t$. Applying (\ref{eq:bdiffthree}) at each composition corresponding to $G$, we get
\begin{equation}\label{eq:bdifffour}
\frac{\d^2}{\d t^2}\langle \mathbf{b}(\gamma(t)), n_t\rangle + K\langle \mathbf{b}(\gamma(t)), n_t\rangle 
\quad=\quad
\frac{\d^2}{\d t^2}\langle \mathbf{b}(\gamma(t)), V^v_t\rangle + K\langle \mathbf{b}(\gamma(t)), V^v_t\rangle\,.
\end{equation}
By (\ref{eq:bdifftwo}), the left hand side of the above is zero. 
Thus, the right hand side should vanish too. Solving this gives (\ref{eqnarray:antsymm}) with $V^v$ in place of $V$ and the given matrix $T$.

Now, consider any parallel vectorfield $V_t$ along $\gamma$ which is orthogonal to $\dot{\gamma}_t$. By the discussion following the definition of $\mathcal{S}$, there exists a sequence of isometries $\{F_k\}_{k \ge 1}$ such that each $F_k$ is a composition of isometries in $\mathcal{S}$, $F_k$ fixes $\gamma$, and $\d F_k(n_0) \rightarrow V_0$ as $k \rightarrow \infty$. As $F_k$ fixes $\x^*$ for each $k$, by \citep[p. 7]{MyersSteenrod-1939}, we can choose a subsequence $k_l$ such that $F_{k_l} \rightarrow F$ in $\mathcal{G}$ as $l \rightarrow \infty$. Write $V^{(k)}_t= F_{k*}n_t$. By \citep[Lemma 4]{MyersSteenrod-1939}, for each $t \ge 0$, $V^{(k_l)}_t \rightarrow \d F(n_t)$ in $T_{\gamma(t)}M$ as $l \rightarrow \infty$. In particular, $\d F(n_0) = V_0$, and as $F$ is an isometry fixing $\gamma$, $\d F(n_t) = V_t$ for all $t \ge 0$. Thus, we have $V^{(k_l)}_t \rightarrow V_t$ in $T_{\gamma(t)}M$ for each $t \ge 0$. From the discussion in the previous paragraph, (\ref{eqnarray:antsymm}) holds with $V^{(k_l)}$ in place of $V$ for each $l \ge 1$. Taking $l \rightarrow \infty$, we obtain (\ref{eqnarray:antsymm}) for the vectorfield $V$.

Finally, take any pair of unit vectors $u, v \in T_{\x^*}M$ satisfying $u \perp v$. Let $\sigma$ be the geodesic issuing from $\x^*$ such that $\dot{\sigma}(0)=u$. We can obtain a sequence of isometries $\{G_k\}_{k \ge 1}$ such that each $G_k$ is a composition of isometries in $\mathcal{S}$ and $\d G_k(\dot{\gamma}(0)) \rightarrow u$ as $k \rightarrow \infty$. Write $u_k=\d G_k(\dot{\gamma}(0))$ and let $\sigma_k$ be the geodesic issuing from $\x^*$ in the direction $u_k$. Denote by $V^{v,k}_t$ and $V^v_t$ the parallel transport of $v$ along $\sigma_k$ and $\sigma$ respectively. By the previous discussion, we know that (\ref{eqnarray:antsymm}) holds with $V^{v,k}$ in place of $V$ and $\sigma_k$ in place of $\gamma$ for each $k \ge 1$. Observe that for each fixed $t \ge0$, both sides of (\ref{eqnarray:antsymm}) depend continuously on $u$ and $v$ (this observation for the left hand side follows from the fact that the solution to the geodesic and parallel transport equations depends continuously on the initial data). Thus, we can take $k \rightarrow \infty$ to get (\ref{eqnarray:antsymm}) with $V^v$ in place of $V$ and $\sigma$ in place of $\gamma$.

The fact that $T$ is skew-symmetric follows from the observation that $S_{\x^*}(e_i,e_j)=\lambda\langle e_i, e_j\rangle$ (by Lemma \ref{lem:symmclass}) and therefore
\begin{equation*}
\langle\nabla_{e_i}\mathbf{b}(\x^*),e_j\rangle - \lambda\langle e_i, e_j\rangle \quad=\quad
\frac{1}{2}\left(\langle\nabla_{e_i}\mathbf{b}(\x^*),e_j\rangle - \langle \nabla_{e_j}\mathbf{b}(\x^*),e_i\rangle\right)\,.
\end{equation*}
\end{proof}
Since $M$ is a maximally symmetric space (by Theorem \ref{thm:Mclass}), the dimension of its set of Killing vectorfields is $\frac{d(d+1)}{2}$. 
Thus, for any vector $w \in T_{x^*}M$ and any skew-symmetric matrix $T$, there exists a unique Killing vectorfield $\mathcal{K}$ with $\mathcal{K}(\x^*)=w$ and $\langle\nabla_{e_i}\mathcal{K}(\x^*),e_j\rangle=T_{ij}$. 
Moreover, as every Killing vectorfield is a Jacobi field (i.e.~satisfies (\ref{eq:jacobi})),
it follows that $\mathcal{K}$ satisfies the following equation analogous to (\ref{eqnarray:antsymm}), for unit vectors $u,v \in T_{\x^*}M$ with $u \perp v$.
\begin{eqnarray}\label{eqnarray:Killjacobi}
\langle \mathcal{K}(\gamma(t)), V_t\rangle\quad=\quad
\left\{
\begin{array}{lll}
\langle w,v \rangle \cos \sqrt{K}t + \displaystyle{\langle Tu,v\rangle \frac{\sin \sqrt{K}t}{\sqrt{K}}} & \mbox{if } K>0\,,\\
\langle w,v \rangle + \langle Tu,v\rangle t & \mbox{if } K=0\,,\\
\langle w,v \rangle \cosh \sqrt{-K}t + \displaystyle{\langle Tu,v\rangle \frac{\sinh \sqrt{-K}t}{\sqrt{-K}}} & \mbox{if } K<0\,.
\end{array}
\right.
\end{eqnarray}

Thus, if we set $\mathcal{K}_{\x^*}$ as the Killing vectorfield uniquely determined by $w=\mathbf{b}(\x^*)$ and $T_{ij}=\langle\nabla_{e_i}\mathbf{b}(\x^*),e_j\rangle - \lambda\langle e_i, e_j\rangle$, we see from Lemmas \ref{lem:symmclass} and \ref{lem:driftclass} that the vectorfield $\mathbf{b}$ can be written as
\begin{equation}\label{eq:driftdec}
\mathbf{b}\quad=\quad\mathcal{D}_{\x^*}^{\lambda} + \mathcal{K}_{\x^*}
\end{equation}
where $\mathcal{D}_{\x^*}^{\lambda}$ is the \emph{dilation vectorfield about} $\x^*$ with dilation coefficient $\lambda$ defined as
\begin{equation}\label{eq:dilation}
\mathcal{D}_{\x^*}^{\lambda}(\gamma(t))\quad=\quad\lambda t \, \dot{\gamma}(t)
\end{equation}
for any geodesic $\gamma$ issuing from $\x^*$. Now, we claim that dilation vectorfields do not arise in the case of non-zero-curvature.
\begin{lem}\label{lem:lambdazero}
$K \neq 0$ implies $\lambda=0$.
\end{lem}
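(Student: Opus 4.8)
The plan is to compute the divergence of the drift vectorfield $\mathbf{b}$ in two different ways and compare. On the one hand, Lemma~\ref{lem:symmclass} tells us that $S_{\x}=\lambda\langle\cdot\,,\cdot\rangle$ for every $\x\in M$; tracing over an orthonormal frame $\{e_i\}$ of $T_{\x}M$ and using $S_{\x}(e_i,e_i)=\langle\nabla_{e_i}\mathbf{b},e_i\rangle$ gives $\operatorname{div}\mathbf{b}(\x)=\sum_i S_{\x}(e_i,e_i)=\lambda d$ at every point of $M$. On the other hand, the decomposition $\mathbf{b}=\mathcal{D}_{\x^*}^{\lambda}+\mathcal{K}_{\x^*}$ of \eqref{eq:driftdec}, together with the fact that $\mathcal{K}_{\x^*}$ is a Killing vectorfield (so $\langle\nabla_{e_i}\mathcal{K}_{\x^*},e_i\rangle=0$ for each $i$ by Lemma~\ref{lem:Killing}, whence $\operatorname{div}\mathcal{K}_{\x^*}=0$), shows that $\operatorname{div}\mathcal{D}_{\x^*}^{\lambda}\equiv\lambda d$ on $M$.

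The second step is to evaluate $\operatorname{div}\mathcal{D}_{\x^*}^{\lambda}$ directly. Working in geodesic polar coordinates centred at $\x^*$ and writing $r=\dist(\cdot,\x^*)$, the dilation vectorfield of \eqref{eq:dilation} is just the radial field $\lambda r\,\partial_r$, since the unit-speed geodesic $\gamma$ issuing from $\x^*$ satisfies $\dot\gamma(t)=\partial_r$ and $t=r$ along it. By Theorem~\ref{thm:Mclass} the space $M$ has constant curvature $K$, so in these coordinates the Riemannian metric is the warped product $\d r^{2}+\phi_{K}(r)^{2}\,g_{\mathbb{S}^{d-1}}$, where $\phi_{K}(r)$ equals $\sin(\sqrt{K}\,r)/\sqrt{K}$, $r$, or $\sinh(\sqrt{-K}\,r)/\sqrt{-K}$ according as $K$ is positive, zero, or negative. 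The standard formula for the divergence of a radial field in a warped product then yields, on the punctured geodesic ball where these coordinates are valid,
\[
\operatorname{div}\mathcal{D}_{\x^*}^{\lambda}\quad=\quad\lambda\left(1+(d-1)\,r\,\frac{\phi_{K}'(r)}{\phi_{K}(r)}\right).
\]

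Comparing the two computations forces $\lambda(d-1)\bigl(r\,\phi_{K}'(r)/\phi_{K}(r)-1\bigr)=0$ for every admissible $r$. When $K=0$ we have $\phi_0(r)=r$ and this holds automatically, consistent with the Euclidean classification; but when $K\neq0$ the function $r\mapsto r\,\phi_{K}'(r)/\phi_{K}(r)$ is not identically $1$ — for instance it vanishes at $r=\pi/(2\sqrt{K})$ when $K>0$, and tends to $+\infty$ as $r\to\infty$ when $K<0$ — and since $d\geq2$ whenever $K\neq0$ (a one-dimensional manifold being flat), we conclude $\lambda=0$. I do not expect a genuine obstacle; the only points needing a little care are the remark that $\mathcal{D}_{\x^*}^{\lambda}=\mathbf{b}-\mathcal{K}_{\x^*}$ is a genuine smooth vectorfield so that the radial-field divergence identity may legitimately be applied throughout a punctured neighbourhood of $\x^*$ (rather than only at $\x^*$ itself), and the recording of the two elementary facts used (vanishing of the divergence of a Killing field, and the warped-product divergence formula). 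As an aside, in the case $K>0$ there is an even shorter route: apply \eqref{eq:symm} along the two geodesics leaving $\x^*$ in the opposite directions $u$ and $-u$, which reconverge at the antipode of $\x^*$ at parameter $\pi/\sqrt{K}$ with opposite velocities, and add the two resulting identities to obtain $2\lambda\,\pi/\sqrt{K}=0$ at once.
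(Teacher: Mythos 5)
Your argument is correct, and it takes a genuinely different route from the paper's. The paper proves the lemma by exploiting {\LPC} to apply the decomposition \eqref{eq:driftdec} at two distinct nearby base points $\x_1,\x_2$, so that $\mathbf{b}=\mathcal{D}_1^{\lambda}+\mathcal{K}_1=\mathcal{D}_2^{\lambda}+\mathcal{K}_2$; it then builds a geodesic triangle on $\x_1,\x_2$ and a third vertex $\z$, and uses the Toponogov comparison theorem (together with the conservation of $\langle\mathcal{K},\dot\gamma\rangle$ along geodesics) to show that the radial components of $\mathcal{D}_1^{\lambda}$ and $\mathcal{D}_2^{\lambda}$ at $\z$ are incompatible unless $\lambda=0$ when $K\neq0$. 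You instead use the decomposition at the single point $\x^*$ and compare two computations of $\operatorname{div}\mathbf{b}$: the trace of Lemma \ref{lem:symmclass} gives $\operatorname{div}\mathbf{b}\equiv\lambda d$, Killing fields are divergence-free by Lemma \ref{lem:Killing}, and the warped-product formula in geodesic polar coordinates gives $\operatorname{div}\mathcal{D}^{\lambda}_{\x^*}=\lambda\bigl(1+(d-1)r\phi_K'(r)/\phi_K(r)\bigr)$, which fails to equal $\lambda d$ for small $r>0$ (indeed $r\phi_K'/\phi_K=1-Kr^2/3+O(r^4)$) unless $\lambda=0$. Your route is more elementary — no comparison geometry, no second base point (hence no extra appeal to {\LPC} beyond what Lemmas \ref{lem:symmclass} and \ref{lem:driftclass} already require) — and the smoothness point you flag is easily settled since $\mathcal{D}^{\lambda}_{\x^*}=\mathbf{b}-\mathcal{K}_{\x^*}$ (or equivalently $\tfrac{\lambda}{2}\nabla r^2$ inside the injectivity radius). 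What the paper's comparison-theorem argument buys is a direct geometric explanation of the mechanism — nonzero curvature makes dilation fields about different centres inconsistent — which is the ``real geometric reason'' emphasised in the Note following the lemma; your divergence identity captures the same obstruction more algebraically. Note also that your closing aside for $K>0$ essentially reproduces that Note in the paper, which evaluates \eqref{eq:symm} around the closed geodesic of length $2\pi/\sqrt{K}$ rather than at the antipode.
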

\begin{proof}
Under {\LPC}, the description of $\mathbf{b}$ given in Lemma \ref{lem:driftclass} holds for $\x^*$ replaced by $\hat{x} \in \ball(\x^*,\rho)$ for some $\rho>0$. 
Take any two points $\x_1,\x_2 \in \ball(\x^*,\rho)$ with $\x_1 \neq \x_2$. 
Lemmas \ref{lem:symmclass} and \ref{lem:driftclass}, applied at $\x_1$ and $\x_2$, show that $\mathbf{b}$ satisfies
\begin{equation}\label{eq:drifttwopts}
\mathbf{b}\quad=\quad\mathcal{D}_1^{\lambda} + \mathcal{K}_1\quad=\quad\mathcal{D}_2^{\lambda} + \mathcal{K}_2
\end{equation}
where $\mathcal{K}_1$ and $\mathcal{K}_2$ are Killing vectorfields and $D_1^{\lambda}$ and $D_2^{\lambda}$ are dilation vectorfields with the same coefficient $\lambda$ about $\x_1$ and $\x_2$ respectively.

Denote by $\sigma$ the geodesic issuing from $\x_2$ and passing through $\x_1$, and set
$\gamma$ to be a geodesic issuing from $\x_2$ in a direction orthogonal to $\sigma$. 
Locate $\z=\gamma(\dist(\x_1,\x_2))$. Taking $\rho$ sufficiently small, we can ensure that $\gamma$ restricted to $[0, \dist(\x_1,\x_2)]$ is a minimal geodesic from $\x_2$ to $\z$. Finally, denote the geodesic issuing from $\x_1$ and passing through $\z$ by $\eta$. Consider the geodesic triangle $\Delta$ formed by $\x_1$, $\x_2$ and $\z$. Thus, the sides of $\Delta$ are formed by the geodesics $\sigma$, $\gamma$ and $\eta$.\\\\
Now, recall that the curvature $K$ can also be interpreted in terms of the rate at which geodesics diverge  when they issue from a point in different directions. 
Thus \citep[Proposition 2.6]{Maubon-2004} we see that if $\x_1$ is taken sufficiently close to $\x_2$, then
\begin{eqnarray}\label{eqnarray:lengthcurv}
\dist(\x_1,\z)&\quad<\quad&\sqrt{2}\dist(\x_1,\x_2) \ \text{if } K>0\,,\nonumber\\
\dist(\x_1,\z)&\quad>\quad&\sqrt{2}\dist(\x_1,\x_2) \ \text{if } K<0\,.
\end{eqnarray}
Applying the triangle version of the Toponogov comparison theorem \citep[Theorem 79, p.~339]{Petersen-2006},
we see that the interior angle $\theta$ formed at the vertex $\z$ of $\Delta$ satisfies $\theta \ge \pi/4$ if $K>0$ and $\theta \le \pi/4$ if $K<0$.
But \eqref{eq:dilation} implies
\begin{eqnarray*}
\langle D_1^{\lambda}(\z),\dot{\gamma}(\dist(\x_1,\x_2))\rangle\quad&=\quad& \langle D_1^{\lambda}(\z),\dot{\eta}(\dist(\x_1,\z))\rangle \cos \theta\\
\quad&=\quad& \lambda \dist(\x_1,\z) \cos \theta\,.
\end{eqnarray*}
Thus, if $\lambda>0$, we get
\begin{eqnarray}\label{eqnarray:inncurv}
\langle D_1^{\lambda}(\z),\dot{\gamma}(\dist(\x_1,\x_2))\rangle&\quad<\quad&\lambda \dist(\x_1,\x_2) \ \text{if } K>0\,,\nonumber\\
\langle D_1^{\lambda}(\z),\dot{\gamma}(\dist(\x_1,\x_2))\rangle&\quad>\quad&\lambda \dist(\x_1,\x_2) \ \text{if } K<0\,.
\end{eqnarray}
and the inequalities are reversed if $\lambda<0$.

From \eqref{eq:drifttwopts}
\begin{equation}\label{eq:dilcomp}
\langle D_2^{\lambda}(\z),\dot{\gamma}(\dist(\x_1,\x_2))\rangle\quad=\quad
\langle D_1^{\lambda}(\z),\dot{\gamma}(\dist(\x_1,\x_2))\rangle + \langle (\mathcal{K}_1-\mathcal{K}_2)(\z),\dot{\gamma}(\dist(\x_1,\x_2))\rangle\,.
\end{equation}
Lemma \ref{lem:Killing} implies that the inner product of a Killing vectorfield with the velocity vector of a geodesic is conserved along the geodesic, yielding 
$$
\langle (\mathcal{K}_1-\mathcal{K}_2)(\z),\dot{\gamma}(\dist(\x_1,\x_2))\rangle\quad=\quad
\langle (\mathcal{K}_1-\mathcal{K}_2)(\x_2),\dot{\gamma}(0)\rangle\,.
$$
From \eqref{eq:dilation} it follows that $D_2^{\lambda}(\x_2)=0$ and also
$$
\langle D_1^{\lambda}(\x_2), \dot{\gamma}(0)\rangle\quad=\quad
\lambda\dist(\x_1,\x_2) \,\langle \dot{\sigma}(0),\dot{\gamma}(0)\rangle
\quad=\quad0\,.
$$
Combining this with \eqref{eq:drifttwopts},
$$
\langle (\mathcal{K}_1-\mathcal{K}_2)(\x_2),\dot{\gamma}(0)\rangle\quad=\quad0\,.
$$

Thus, (\ref{eq:dilcomp}) gives us 
$$
\langle D_2^{\lambda}(\z),\dot{\gamma}(\dist(\x_1,\x_2))\rangle\quad=\quad
\langle D_1^{\lambda}(\z),\dot{\gamma}(\dist(\x_1,\x_2))\rangle\,.
$$

By \eqref{eq:dilation}, $\langle D_2^{\lambda}(\z),\dot{\gamma}(\dist(\x_1,\x_2))\rangle=\lambda \dist(\x_1,\x_2)$. 
Together with \eqref{eqnarray:inncurv}, this forces $\lambda = 0$ if the curvature is non-zero, hence proving the lemma.
\end{proof}
\textbf{Note: }When $K>0$, observe that 
$$
\langle \mathbf{b}(\gamma(0)), \dot{\gamma}_0\rangle\quad=\quad\left\langle \mathbf{b}(\gamma(2\pi/\sqrt{K})), \dot{\gamma}_{2\pi/\sqrt{K}}\right\rangle
$$
 yields $\lambda=0$. 
 But the above proof works for both positive and negative curvatures, and is in some sense, the real geometric reason why the dilation part of the vectorfield $\mathbf{b}$ vanishes for non-zero curvature.
 
Finally we can state and prove the main theorem of this section.
\begin{thm}\label{thm:drift}
Suppose that the standing assumptions of diffusion-geodesic completeness and stochastic completeness both hold.
The drift vectorfield $\mathbf{b}$ permits \textbf{MMC} with {\LPC} if and only if both of the following hold:
\begin{itemize}
\item[(i)] The underlying Riemannian manifold $M$ is one of the three model spaces $\mathbb{S}^d$ $(K>0)$, $\Reals^d$ $(K=0)$ or $\mathbb{H}^d$ $(K<0)$,
in the sense that the diffusion must be expressible as Riemannian Brownian motion plus drift vectorfield \(\mathbf{b}\) for such an \(M\).
\item[(ii)] For $K \neq 0$, the drift $\mathbf{b}$ must and can be any Killing vectorfield $\mathcal{K}$ on $M$. 
For $K=0$, the drift $\mathbf{b}$ must and can be described in Euclidean coordinates by $\mathbf{b}(\x)=\lambda \x +T\x + \mathbf{c}$ for any scalar $\lambda$, any skew-symmetric matrix $T$ and any vector $\mathbf{c}$,
where $\x \mapsto \lambda \x$ is a dilation vectorfield about the origin and $\x \mapsto T\x + \mathbf{c}$ is a Killing vectorfield.
\end{itemize}
\end{thm}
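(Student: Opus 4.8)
The plan is to prove the two implications separately, splitting the ``if'' direction according to the sign of the curvature $K$ and leaning on the structural results already in hand. For the ``only if'' direction, suppose $\mathbf{b}$ permits \textbf{MMC} with {\LPC}. Then (i) is exactly Theorem \ref{thm:Mclass}. For (ii) I would combine Lemmas \ref{lem:symmclass} and \ref{lem:driftclass}, which pin down $\mathbf{b}$ along every geodesic issuing from the midpoint $\x^*$ of a minimal geodesic joining $\x_0$ and $\y_0$: the symmetric part of $\nabla\mathbf{b}$ is $\lambda$ times the metric, and the tangential and normal components of $\mathbf{b}$ solve the explicit equations \eqref{eq:symm}--\eqref{eqnarray:antsymm}. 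Since the Killing field $\mathcal{K}_{\x^*}$ determined by the $1$-jet of $\mathbf{b}$ at $\x^*$ satisfies the matching Jacobi equations \eqref{eqnarray:Killjacobi} (and, via Lemma \ref{lem:Killing}, has constant tangential component along geodesics), uniqueness for linear second-order ODEs along geodesics forces the decomposition \eqref{eq:driftdec}, $\mathbf{b}=\mathcal{D}_{\x^*}^{\lambda}+\mathcal{K}_{\x^*}$, valid on all of $M$ because every point lies on a geodesic from $\x^*$. Lemma \ref{lem:lambdazero} then makes $\lambda=0$ when $K\neq0$, so $\mathbf{b}$ is Killing; when $K=0$ the space is $\Reals^d$, the dilation part is $\x\mapsto\lambda(\x-\x^*)$ and $\mathcal{K}_{\x^*}$ is affine, so absorbing $-\lambda\x^*$ into the constant gives $\mathbf{b}(\x)=\lambda\x+T\x+\mathbf{c}$ (this last case being already Theorem \ref{thm:rigidity}).

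For the ``if'' direction, when $K=0$ this is the converse half of Theorem \ref{thm:rigidity}, equivalently Theorem \ref{thm:affine-drift} applied to $A=\lambda\Identity+T$: then $S=\lambda\Identity$, the eigenspace condition on $\{T^k(\x_0-\y_0)\}$ is vacuous, an \textbf{MMC} exists from every pair of starting points, and {\LPC} follows. When $K\neq0$ --- the genuinely new case --- let $\mathbf{b}=\mathcal{K}$ be a Killing vectorfield with generating flow $(\psi_s)_{s\in\Reals}\subset\operatorname{Iso}(M)$ and fix arbitrary $\x,\y\in M$. The crucial step is to show that conjugating by the flow removes the drift exactly, i.e.\ that $\widetilde{X}_t=\psi_{-t}(X_t)$ is a \emph{driftless} Brownian motion on $M$ from $\x$, and likewise $\widetilde{Y}_t=\psi_{-t}(Y_t)$ from $\y$. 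I would prove this by running the frame-bundle computation of Subsection \ref{sec:evolution-mirror-isometries} with the given flow $\psi_t$ playing the role of $F_t$: lifting $\psi_s$ to $\widehat{\psi}_s$ via \eqref{eq:isolift}, Lemma \ref{lem:Stratlem} preserves the horizontal vectorfields, and the flow formula \citep[Equation (2.3)]{Kendall-1987} shows the $\d t$-contribution of the moving frame is $-\mathcal{K}$, exactly cancelling the lifted drift $\mathbf{B}$ because $\mathcal{K}$ is the infinitesimal generator of $(\psi_s)$, so that projecting back to $M$ leaves $\tfrac12\Delta_M$. (Alternatively one checks that $\widetilde{X}$ solves the martingale problem for $\tfrac12\Delta_M$, using that isometries preserve $\Delta_M$ and that $(\psi_{-t})_*\mathcal{K}=\mathcal{K}$.) Since $M$ is a model space, reflection in the totally geodesic hypersurface $H(\x,\y)$ (Remark \ref{rem:mirrorshape}) is a reflection structure in the sense of \citet[Definition 2.1]{Kuwada-2007}, giving a Markovian maximal coupling of the Brownian motions $\widetilde{X},\widetilde{Y}$ by \citet[Proposition 2.2]{Kuwada-2007} (see also \citet{Kuwada-2009}); then Lemma \ref{lem:mapsto} applied to the time-dependent bijection $F_t=\psi_t$ transfers both the Markovian property and maximality back to $(X_t,Y_t)=\bigl(\psi_t(\widetilde{X}_t),\psi_t(\widetilde{Y}_t)\bigr)$. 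As $(\x,\y)$ was arbitrary, {\LPC} holds --- in fact an \textbf{MMC} exists from every pair --- and combined with the ``only if'' direction this shows every Killing field occurs, completing (ii).

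The ``only if'' direction is essentially bookkeeping built on Lemmas \ref{lem:symmclass}--\ref{lem:lambdazero}, so I expect no difficulty there. The real work, and the main obstacle, is the $K\neq0$ sufficiency: making rigorous that conjugation by $(\psi_s)$ kills the drift exactly. The computation parallels Subsection \ref{sec:evolution-mirror-isometries} and is therefore largely available, but one must be careful that the cancellation uses precisely the property of $\mathcal{K}$ as the generator of the flow, and that transferring maximality through Lemma \ref{lem:mapsto} is legitimate for a \emph{time-dependent} bijection. A secondary point, already settled in \citet{Kuwada-2007,Kuwada-2009}, is that reflection coupling of Brownian motions on a model space is a Markovian maximal coupling from an arbitrary pair of starting points.
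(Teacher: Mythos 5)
Your proposal is correct and follows essentially the paper's own argument: necessity via Theorem \ref{thm:Mclass} together with Lemmas \ref{lem:symmclass}, \ref{lem:driftclass} and \ref{lem:lambdazero} (and Section \ref{sec:Euclidean} for $K=0$), and sufficiency for $K\neq0$ by conjugating with the isometry flow of $\mathcal{K}$ via the frame-bundle computation of Subsection \ref{sec:evolution-mirror-isometries}, Kuwada's reflection-structure MMC of Brownian motions on the model spaces, and Lemma \ref{lem:mapsto}. The only (inessential) difference is that you remove the drift by $\psi_{-t}$, whereas the paper adds it by pushing a Brownian motion forward with $\Upsilon_t$; these are the same computation run in opposite directions.
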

\begin{proof}
The classification of the space $M$ is essentially the content of Theorem \ref{thm:Mclass}. 
Lemmas \ref{lem:driftclass} and \ref{lem:lambdazero} show that if {\LPC} holds then the drift vectorfield $\mathbf{b}$ has to be of the form described in the theorem. 
For the case $K=0$, section \ref{sec:Euclidean} shows the existence of a Markovian maximal coupling with any pair of starting points $\x \in \ball(\x_0,r)$ and $\y \in \ball(\y_0,r)$ and fully describes the coupling.

To show existence and to describe the coupling for $K \neq 0$, recall that any Killing vectorfield $\mathcal{K}$ generates a one-parameter subgroup of isometries 
starting from the identity, say $(\Upsilon_t : t \in \Reals)$. 
Let $Z$ denote a Brownian motion on $M$, and consider the law of 
$$
X_t\quad=\quad\Upsilon_t(Z_t)\,.
$$
 Consider the lift $U$ of the Brownian motion $Z$ onto the orthonormal frame bundle $\mathcal{O}(M)$. 
 Recall that the Stratonovich stochastic differential equation for this lifted process is given by
\begin{equation}\label{eq:stratkillU}
\d U_t\quad=\quad \sum_i H_i(U_t) \circ \d W^i_t
\end{equation}
where $W=(W^1,\dots, W^d)$ is a \(d\)-dimensional Euclidean Brownian motion. 
The process $Z$ is recovered from $U$ by $Z=\pi(U)$. 
Recall that the lift of an isometry $F$ on $M$ to $\hat{F}$ on $\mathcal{O}(M)$ is given by (\ref{eq:isolift}). 
Defining the process $V$ on $\mathcal{O}(M)$ by
\begin{equation}\label{eq:liftmap}
V_t\quad=\quad \hat{\Upsilon}_t(U_t)
\end{equation}
the arguments used to derive (\ref{eq:stochastic differential equationV}) also show that the Stratonovich stochastic differential equation for $V$ is given by
\begin{equation}\label{eq:stratkillV}
\d V_t\quad=\quad 
\sum_i H_i(V_t) \circ \d W^i_t + \hat{\mathcal{K}}_t(\hat{\Upsilon}_t^{-1}(V_t))\d t
\end{equation}
where 
$$
\hat{\mathcal{K}}_t(u)\quad=\quad
\left.\frac{\d}{\d s}\right|_{s=t}\hat{\Upsilon}_s(u)
$$
 for $u \in \mathcal{O}(M)$. Note that, for any $\x \in M$,  
$$
\left.\frac{\d}{\d s}\right|_{s=t}\Upsilon_s(\x)
\quad=\quad
\mathcal{K}(\Upsilon_t(\x))\,.
$$
 Using this, and the fact that $\pi(V_t)=X_t$, we see that
\begin{eqnarray*}
\d X_t\quad&=\quad&\sum_i(\d\pi(H_i(V_t))) \circ \d W^i_t + (\d\pi(\hat{\mathcal{K}}_t(\hat{\Upsilon}_t^{-1}(V_t))))\d t\\
\quad&=\quad&\sum_i V_te_i \circ \d W^i_t + \mathcal{K}(X_t) \d t
\end{eqnarray*}
which demonstrates that \(X\) is a Riemannian Brownian motion with drfit vectorfield given by the Killing vectorfield $\mathcal{K}$.

As discussed in \citet[Example 6.1]{Kuwada-2009} and references therein, 
if $M$ is $\mathbb{S}^d$ or $\mathbb{H}^d$ then
there exists a Markovian maximal coupling $(Z, \widetilde{Z})$ of Brownian motions starting from any two distinct points on $M$. 
Consider a diffusion representable as Riemannian Brownian motion with drift given by any Killing vectorfield $\mathcal{K}$ on such a manifold $M$.
Thus Lemma \ref{lem:mapsto} implies that a Markovian maximal coupling for this diffusion exists between any pair of starting points, and can be constructed by 
$$
\left((\Upsilon_t(Z_t), \Upsilon_t(\widetilde{Z}_t)\;:\;t \ge 0\right)
$$
 where $(\Upsilon_t:t \in\Reals)$ is the one-parameter subgroup of isometries starting from the identity which is generated by the Killing vectorfield $\mathcal{K}$. This proves the theorem.
\end{proof}
\begin{cor}\label{cor:mirrorevolve}
Under the hypothesis of part (ii) of Theorem \ref{thm:drift}, let $(\Upsilon_t : t \in \Reals)$ denote the one-parameter subgroup of isometries corresponding to the Killing vectorfield $\mathcal{K}$. Then for $t \ge 0$, the mirror $H_t$ and the corresponding reflection isometries $F_t$ satisfy $H_t=\Upsilon_t(H_0)$ and $F_t=\Upsilon_t \circ F_0 \circ \Upsilon_t^{-1}$.
\end{cor}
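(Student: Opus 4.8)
The plan is to reduce the statement to the explicit construction of the Markovian maximal coupling produced in the proof of Theorem~\ref{thm:drift}(ii), combined with the fact (Remark~\ref{rem:bzero}; see also \citet[Proposition~4.2]{Kuwada-2009}) that the mirror of a Markovian maximal coupling of two Brownian motions is time-independent. Recall that in that proof the coupling is realized as $(X_t,Y_t)=(\Upsilon_t(Z_t),\Upsilon_t(\widetilde Z_t))$, where $(Z,\widetilde Z)$ is a Markovian maximal coupling of two Brownian motions on $M$ started from $\x_0$ and $\y_0$, and $(\Upsilon_t:t\in\Reals)$ is the one-parameter subgroup of isometries generated by the Killing vectorfield $\mathcal{K}=\mathbf{b}$. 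First I would observe that, since each $\Upsilon_t$ is a bijection of $M$, the coupling time $\tau$ of $(X,Y)$ agrees with that of $(Z,\widetilde Z)$, and that by Remark~\ref{rem:bzero} the reflection isometry of $(Z,\widetilde Z)$ is constant in time and equal to $F_0$ (its value at time $0$ depends only on $(\x_0,\y_0)$, and so, by the uniqueness of reflection isometries noted after Lemma~\ref{lem:globiso}, it is the same $F_0$, with fixed point set $H_0=H(\x_0,\y_0)$, as for the coupling $(X,Y)$); consequently $\widetilde Z_s=F_0(Z_s)$ for all $s\in[0,\tau)$.

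The second step is a substitution. For $s\in[0,\tau)$, using $Z_s=\Upsilon_s^{-1}(X_s)$,
\[
Y_s\quad=\quad\Upsilon_s(\widetilde Z_s)\quad=\quad\Upsilon_s\bigl(F_0(\Upsilon_s^{-1}(X_s))\bigr)\quad=\quad\bigl(\Upsilon_s\circ F_0\circ\Upsilon_s^{-1}\bigr)(X_s)\,.
\]
Writing $\Phi_s=\Upsilon_s\circ F_0\circ\Upsilon_s^{-1}$, one checks at once that $\Phi_s$ is an isometry (a conjugate of one), is involutive ($\Phi_s^{2}=\Upsilon_s F_0^{2}\Upsilon_s^{-1}=\operatorname{Id}$), and has fixed point set $\Upsilon_s(\operatorname{Fix}(F_0))=\Upsilon_s(H_0)$, which is a codimension-one totally geodesic submanifold of $M$.

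The last step identifies $\Phi_s$ with the deterministic isometry $F_s$ of Lemma~\ref{lem:globiso}. By Lemma~\ref{lem:globiso} and Corollary~\ref{cor:alltimeman}, $F_s$ is deterministic and satisfies $Y_s=F_s(X_s)$ $\mu$-almost surely on $\{\tau>s\}$, while Theorem~\ref{thm:zerosetman} together with Lemma~\ref{lem:disjsupp} shows that on $\{0<s<\tau\}$ the law of $X_s$ has full support on the open set $H^-_s$. Hence $F_s$ and $\Phi_s$ agree on a nonempty open subset of $M$, and since two isometries of a connected manifold agreeing together with their differentials at an interior point coincide on a normal neighbourhood — so that the set on which they agree is both open and closed — they coincide on all of $M$. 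Therefore $F_s=\Upsilon_s\circ F_0\circ\Upsilon_s^{-1}$ for all $s\in[0,\tau)$, and passing to fixed point sets yields $H_s=\Upsilon_s(H_0)$, as claimed. I expect the only mildly delicate point to be this final identification — ensuring that the deterministic isometry extracted from the coupling really is determined by the realized positions of $X_s$ — and it is exactly the full-support conclusion of Theorem~\ref{thm:zerosetman} that supplies what is needed; everything else is bookkeeping.
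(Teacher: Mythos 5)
Your proof is correct, and it rests on the same two pillars as the paper's: the explicit realization $(X_t,Y_t)=(\Upsilon_t(Z_t),\Upsilon_t(\widetilde Z_t))$ from the proof of Theorem \ref{thm:drift}(ii), and the time-independence of the Brownian mirror from Remark \ref{rem:bzero}. The difference is in how the final identification is made. The paper goes mirror-first: by Theorem \ref{thm:zerosetman}, $H_t=H(X_t,Y_t)=H(\Upsilon_t(Z_t),\Upsilon_t(\widetilde Z_t))=\Upsilon_t\bigl(H(Z_t,\widetilde Z_t)\bigr)=\Upsilon_t(H_0)$ almost surely (using that $\Upsilon_t$ is an isometry), and then concludes $F_t=\Upsilon_t\circ F_0\circ\Upsilon_t^{-1}$ from the uniqueness, noted after Lemma \ref{lem:globiso}, of the non-identity isometry with fixed point set $H_t$. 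You go isometry-first: you establish $Y_s=(\Upsilon_s\circ F_0\circ\Upsilon_s^{-1})(X_s)$ pathwise and then identify this conjugate with $F_s$ via the full support of $\Law{X_s\mid\tau>s}$ on $H^-_s$ together with the rigidity of isometries agreeing on a nonempty open set, finally recovering $H_s=\Upsilon_s(H_0)$ by passing to fixed point sets. Both are valid; the paper's ordering lets it skip the full-support/rigidity step you flagged as delicate, since Theorem \ref{thm:zerosetman} pins down the mirror directly and the uniqueness of the reflection isometry then does the rest. One small point of phrasing: since $F_s$ and $\Upsilon_s\circ F_0\circ\Upsilon_s^{-1}$ are deterministic, your conclusion ``for all $s\in[0,\tau)$'' should be read as: for each fixed $s\ge0$ the identity holds, because the event $\{\tau>s\}$ on which you argue has positive probability for every $s$ (as noted in the remark following Assumption (A2) and its manifold analogue), which is exactly how the paper also uses it.
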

\begin{proof}
Let $Z, \widetilde{Z}$ be maximally coupled Brownian motions on $M$. For any $t \ge 0$, by Remark \ref{rem:bzero}, $H_0=H(Z_t, \widetilde{Z}_t)$ almost surely. By Theorem \ref{thm:zerosetman}, $H_t=H(\Upsilon(Z_t), \Upsilon(\widetilde{Z}_t))$ almost surely. From this, $H_t=\Upsilon_t(H_0)$ easily follows. Further, as $F_t$ and $\Upsilon_t \circ F_0 \circ \Upsilon_t^{-1}$ have the same set of fixed points, namely $H_t$, and neither of them is the identity, therefore $F_t=\Upsilon_t \circ F_0 \circ \Upsilon_t^{-1}$ follows from uniqueness of isometry with fixed point set $H_t$.
\end{proof}
In the following theorem, we characterise the class of drifts $\mathbf{b}$ and starting points $\x_0, \y_0$ for which the interface $I(\x_0,\y_0,t)$ does not depend on time $t$.
\begin{thm}\label{thm:mirrorstat}
Suppose that the standing assumptions of diffusion-geodesic completeness and stochastic completeness both hold.
Suppose the drift vectorfield $\mathbf{b}$ permits \textbf{MMC} with {\LPC}. Let $I(\x_0,\y_0,t)$ denote the interface for the \textbf{MMC} $(X,Y)$ of diffusions $X$ and $Y$ starting from $\x_0$ and $\y_0$ respectively. Then $I(\x_0,\y_0,t) = I(\x_0,\y_0,0)$ for all $t \ge 0$ if and only if one of the following holds:
\begin{itemize}
\item[(i)] $K=0$, $\mathbf{b}(\x)=\lambda \x +T\x + \mathbf{c}$ for some scalar $\lambda$, skew-symmetric matrix $T$ and vector $\mathbf{c}$, and $\x_0,\y_0,\lambda, T, \mathbf{c}$ satisfy $T(\x_0-\y_0)=0$ and $(\x_0-\y_0)^\top(\lambda(\x_0+\y_0) + 2\mathbf{c})=0$.
\item[(ii)] $K \neq 0$ and $\mathbf{b}$ is a Killing vectorfield $\mathcal{K}$ on $M$ which satisfies the following: if $\x^*$ is the midpoint of a minimal geodesic joining $\x_0$ and $\y_0$ and $\mathbf{n}$ is the vector normal to the hypersurface $H(\x_0,\y_0)$ at $\x^*$, then $\langle \mathcal{K}(\x^*), \mathbf{n}\rangle = 0$ and $\nabla_{n}\mathcal{K}(\x^*)=0$.
\end{itemize}
\end{thm}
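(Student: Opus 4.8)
The plan is to use the classification already obtained (Theorem \ref{thm:drift}) to split into the cases $K=0$ and $K\neq0$, and in each case to observe that the interface $I(\x_0,\y_0,t)$ is a codimension-one ``hyperplane'' (in the model-space sense) that is completely determined by its unit normal direction at a chosen basepoint together with one scalar offset; hence ``$I(\x_0,\y_0,t)\equiv I(\x_0,\y_0,0)$'' is equivalent to the requirement that \emph{both} of these data be stationary in $t$. Each of the two stationarity conditions will then be converted into the stated algebraic (for $K=0$) or geometric (for $K\neq0$) condition on $\mathbf{b}$ and on $\x_0,\y_0$.

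For $K=0$: By Theorem \ref{thm:rigidity} the drift is $\mathbf{b}(\x)=\lambda\x+T\x+\mathbf{c}$ with $T$ skew-symmetric, and Theorem \ref{thm:affine-drift} furnishes the explicit parametrization $\mathbf{n}(t)=\exp(Tt)\tfrac{\x_0-\y_0}{|\x_0-\y_0|}$ of the mirror normal, together with the scalar $l(t)$ solving the linear ODE \eqref{eq:ldiffeq}, $\dot l=\lambda l+\mathbf{n}^\top\mathbf{c}$, with $l(0)=\mathbf{n}(0)^\top\tfrac{\x_0+\y_0}{2}$. Since the parametrization $(l,\mathbf{n})$ is continuous (Lemma \ref{lem:contpar}) and $\mathbf{n}(t)$ is the unit normal up to sign, the hyperplane $\{\x:\mathbf{n}(t)^\top\x=l(t)\}$ is independent of $t$ if and only if $\mathbf{n}(\cdot)$ and $l(\cdot)$ are both constant. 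From $\mathbf{n}(t)=\exp(Tt)\mathbf{n}(0)$ one has $\mathbf{n}(\cdot)\equiv\mathbf{n}(0)$ iff $T(\x_0-\y_0)=0$. Assuming this, $\mathbf{n}\equiv\tfrac{\x_0-\y_0}{|\x_0-\y_0|}$ is constant, so \eqref{eq:ldiffeq} is autonomous and $l$ is constant iff $\dot l(0)=0$, i.e.\ $\lambda\, l(0)+\mathbf{n}^\top\mathbf{c}=0$; clearing the denominator $|\x_0-\y_0|$ and using $\mathbf{n}^\top\mathbf{c}=\tfrac{(\x_0-\y_0)^\top\mathbf{c}}{|\x_0-\y_0|}$ and $\mathbf{n}(0)^\top(\x_0+\y_0)=\tfrac{|\x_0|^2-|\y_0|^2}{|\x_0-\y_0|}=\tfrac{(\x_0-\y_0)^\top(\x_0+\y_0)}{|\x_0-\y_0|}$ rewrites this as $(\x_0-\y_0)^\top(\lambda(\x_0+\y_0)+2\mathbf{c})=0$. (Alternatively one substitutes $T(\x_0-\y_0)=0$ directly into the integral form \eqref{eq:lform}, obtaining the same condition.) This is part (i).

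For $K\neq0$: By Theorem \ref{thm:drift} the drift is a Killing vectorfield $\mathcal{K}$, and Corollary \ref{cor:mirrorevolve} gives $H_t=\Upsilon_t(H_0)$, where $(\Upsilon_t)$ is the one-parameter isometry group generated by $\mathcal{K}$. Hence $H_t=H_0$ for all $t\geq0$ iff $\Upsilon_t(H_0)=H_0$ for all $t$, which (differentiating the flow at $t=0$, and using that $H_0$ is a smooth submanifold) holds iff $\mathcal{K}$ is everywhere tangent to $H_0=H(\x_0,\y_0)$. Now $H_0$ is a connected, totally geodesic hypersurface and every point of $H_0$ is joined to $\x^*$ by a geodesic $\gamma$ lying in $H_0$; since $H_0$ is totally geodesic, the parallel transport $n_t$ of the normal $\mathbf{n}$ along $\gamma$ stays normal to $H_0$ (as in the proof of Lemma \ref{lem:driftclass}). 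Because $\mathcal{K}$ is Killing it is a Jacobi field (Lemmas \ref{lem:Killing}, \ref{lem:jacobi}), so in constant curvature the scalar function $t\mapsto\langle\mathcal{K}(\gamma(t)),n_t\rangle$ solves the second-order linear equation of type \eqref{eq:bdifftwo} (equivalently \eqref{eqnarray:Killjacobi}). Therefore this function vanishes identically along $\gamma$ iff it and its $t$-derivative vanish at $t=0$, i.e.\ iff $\langle\mathcal{K}(\x^*),\mathbf{n}\rangle=0$ and $\langle\nabla_u\mathcal{K}(\x^*),\mathbf{n}\rangle=0$ for every $u\in T_{\x^*}H_0=\mathbf{n}^\perp$. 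By skew-symmetry of the endomorphism $\nabla\mathcal{K}(\x^*)$ (Lemma \ref{lem:Killing}), $\langle\nabla_u\mathcal{K}(\x^*),\mathbf{n}\rangle=-\langle u,\nabla_\mathbf{n}\mathcal{K}(\x^*)\rangle$ and $\langle\nabla_\mathbf{n}\mathcal{K}(\x^*),\mathbf{n}\rangle=0$, so the second family of conditions collapses to $\nabla_\mathbf{n}\mathcal{K}(\x^*)=0$. Letting $\gamma$ range over all geodesics in $H_0$ issuing from $\x^*$ and running the implications in both directions yields part (ii).

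The step I expect to be the main obstacle is the reduction, in the $K\neq0$ case, of the \emph{global} invariance $\Upsilon_t(H_0)=H_0$ to the \emph{pointwise} data at $\x^*$. This requires (a) the standard but slightly delicate flow-box/connectedness argument that invariance of the hypersurface under the flow is equivalent to tangency of $\mathcal{K}$ along all of $H_0$ (using completeness of the model space, so that $H_0$ is a complete, connected, totally geodesic submanifold whose every point is reached from $\x^*$ by an intrinsic geodesic), and (b) the observation that the Jacobi equation for $\langle\mathcal{K}(\gamma(t)),n_t\rangle$ propagates the condition from $\x^*$ along the whole of $H_0$; once these are in place the algebraic manipulation with the skew-symmetry of $\nabla\mathcal{K}(\x^*)$ is routine. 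The $K=0$ case is essentially bookkeeping with the formulas of Section \ref{sec:Euclidean}.
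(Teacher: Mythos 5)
Your proposal is correct, and for the most part it runs along the same lines as the paper's proof: the same case split via Theorem \ref{thm:drift}, the same bookkeeping with $\mathbf{n}(t)=\operatorname{\mathtt{exp}}(Tt)\mathbf{n}(0)$ and the evolution of $l$ in the flat case (you use the ODE \eqref{eq:ldiffeq} where the paper substitutes into the explicit solution \eqref{eq:lform} — an equivalent computation), and the same converse argument for $K\neq0$ (vanishing of $\langle\mathcal{K}(\gamma(t)),n_t\rangle$ via the constant-curvature Jacobi representation \eqref{eqnarray:Killjacobi}, tangency of $\mathcal{K}$ along the totally geodesic $H_0$, then flow-invariance of $H_0$ by the completeness/compactness argument and Corollary \ref{cor:mirrorevolve}). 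The one place where your route genuinely differs is the forward implication when $K\neq0$: the paper notes that constancy of the interface forces $F_t\equiv F$ (uniqueness of the isometry with fixed-point set $H(\x_0,\y_0)$), feeds this into \eqref{eq:couplingfundeqn} to get $\mathcal{K}=F_*\mathcal{K}$, and reads off $\langle\mathcal{K}(\x^*),\mathbf{n}\rangle=0$ and $\nabla_{\mathbf{n}}\mathcal{K}(\x^*)=0$ by pure linear algebra with $\d F$ at the fixed point $\x^*$; you instead invoke Corollary \ref{cor:mirrorevolve} to convert constancy into $\Upsilon_t$-invariance of $H_0$, deduce tangency of $\mathcal{K}$ along $H_0$, and extract the two conditions as the initial data of the Jacobi solution along geodesics through $\x^*$. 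Your version has the virtue of treating both implications through the single criterion ``$\mathcal{K}$ tangent to $H_0$'', but it makes the equivalence between global invariance of $H_0$ under the flow and pointwise tangency load-bearing in both directions (you correctly flag this as the delicate step), whereas the paper only needs the easier ``tangency $\Rightarrow$ invariance'' half and gets the necessity of the two conditions more cheaply from reflection-invariance of the drift. Both arguments are sound.
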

\begin{proof}
When $K=0$, we observe from \eqref{eq:nform} that $\mathbf{n}(t)=\mathbf{n}(0)$ for all $t \ge 0$ if and only if $T(\x_0-\y_0)=0$. Using this in \eqref{eq:lform}, we get for $\lambda \neq 0$,
$$
l(t)=l(0)e^{\lambda t} + \frac{\mathbf{n}(0)^\top\mathbf{c}}{\lambda} (e^{\lambda t}-1)=e^{\lambda t}\left(l(0) + \frac{\mathbf{n}(0)^\top\mathbf{c}}{\lambda}\right) - \frac{\mathbf{n}(0)^\top\mathbf{c}}{\lambda}.
$$
Thus $l(t)=l(0)$ for all $t \ge 0$ if and only if $l(0) + \frac{\mathbf{n}(0)^\top\mathbf{c}}{\lambda}=0$. Substituting $l(0)=\frac{|\x_0|^2-|\y_0|^2}{2|\x_0-\y_0|}$ and $\mathbf{n}(0)=\frac{\x_0-\y_0}{|\x_0-\y_0|}$ in this equation, we get $(\x_0-\y_0)^\top(\lambda(\x_0+\y_0) + 2\mathbf{c})=0$.

When $\lambda=0$, we get $l(t)=l(0) + t (\mathbf{n}(0)^\top\mathbf{c})$. Thus $l(t)=l(0)$ for all $t \ge 0$ if and only if $(\x_0-\y_0)^\top\mathbf{c}=0$.

Now, suppose $K \neq 0$ and $\mathbf{b}$ is the Killing vectorfield $\mathcal{K}$ on $M$. As there is at most one isometry whose fixed point set is $H(\x_0,\y_0)$, we deduce that $I(\x_0,\y_0,t) = I(\x_0,\y_0,0)$ for all $t \ge 0$ if and only if $F_t = F$ for all $t \ge 0$.

Suppose $F_t = F$ for all $t \ge 0$. Then by \eqref{eq:couplingfundeqn}, $\mathcal{K}(\x)= F_*\mathcal{K}(\x)$ for all $\x \in M$. In particular, $\langle \mathcal{K}(\x^*), \mathbf{n} \rangle = \langle F_*\mathcal{K}(\x^*), \mathbf{n} \rangle$. But, as $F$ is an involutive isometry, $\langle F_*\mathcal{K}(\x^*), \mathbf{n} \rangle = \langle \mathcal{K}(\x^*), F_*\mathbf{n} \rangle= \langle \mathcal{K}(\x^*), -\mathbf{n} \rangle$ from which we get $\langle \mathcal{K}(\x^*), \mathbf{n} \rangle=0$. Now, observe that as $\mathcal{K}$ is a Killing vectorfield, therefore by Lemma \ref{lem:Killing}, $\langle \nabla_{\mathbf{n}}\mathcal{K}(\x^*), \mathbf{n} \rangle=0$. If $u \in T_{\x^*}M$ is orthogonal to $\mathbf{n}$, then
$$
\langle \nabla_{\mathbf{n}}\mathcal{K}(\x^*), u \rangle = \langle \nabla_{\mathbf{n}}F_*\mathcal{K}(\x^*), u \rangle=\langle F_*\nabla_{-\mathbf{n}}\mathcal{K}(\x^*), u \rangle= \langle \nabla_{-\mathbf{n}}\mathcal{K}(\x^*), F_*u \rangle = \langle -\nabla_{\mathbf{n}}\mathcal{K}(\x^*), u \rangle
$$
which gives $\langle \nabla_{\mathbf{n}}\mathcal{K}(\x^*), u \rangle=0$. Hence, $\langle \nabla_{\mathbf{n}}\mathcal{K}(\x^*), u \rangle=0$ for all $u \in T_{\x^*}M$, and therefore, $\nabla_{\mathbf{n}}\mathcal{K}(\x^*)=0$.

Conversely, suppose $\langle \mathcal{K}(\x^*), \mathbf{n}\rangle = 0$ and $\nabla_{n}\mathcal{K}(\x^*)=0$ holds. Let $\gamma$ be any geodesic issuing from $\x^*$ and lying in $H(\x_0,\y_0)$ and let $n_t$ denote the parallel transport of $\mathbf{n}$ along $\gamma$. As $\langle \mathcal{K}(\x^*), \mathbf{n}\rangle = 0$ and $\langle\nabla_{\dot{\gamma}(0)}\mathcal{K}(\x^*), \mathbf{n}\rangle = -\langle\nabla_{\mathbf{n}}\mathcal{K}(\x^*), \dot{\gamma}(0)\rangle = 0$, using the representation \eqref{eqnarray:Killjacobi} for $\mathcal{K}$, we see that $\langle\mathcal{K}(\gamma(t), n_t\rangle = 0$ and hence, $\mathcal{K}(\gamma(t)) \in T_{\gamma(t)}H(\x_0,\y_0)$ for all $t \ge 0$. As the submanifold $H(\x_0,\y_0)$ is a geodesic space, we conclude that $\mathcal{K}$ restricted to $H(\x_0,\y_0)$ is a vectorfield tangent to this submanifold. Thus, if $\Upsilon_t$ denotes the flow of isometries generated by $\mathcal{K}$, then for each $\z_0 \in H(\x_0,\y_0)$, $\Upsilon_t(\z_0)$ lies in $H(\x_0,\y_0)$ at least for a short time. As $\Upsilon_t$ is a global flow (because $M$ is complete), a routine compactness argument implies that $\Upsilon_t(\z_0) \in H(\x_0,\y_0)$ for all $t \ge 0$. Thus, by Corollary \ref{cor:mirrorevolve}, $H_t \subseteq H(\x_0,\y_0)$, and hence $F_t = F$, for all $t \ge 0$.
\end{proof}
\section{Conclusion}\label{sec:conclusion}
In this paper we have shown that Markovian maximal couplings of regular elliptic diffusions with smooth coefficients 
(and satisfying diffusion-geodesic completeness and stochastic completeness)
have to be reflection couplings tied to involutive isometries of the corresponding Riemannian structure on state space;
moreover as soon as the existence of a Markovian maximal coupling is stable (in the sense of {\LPC}) then a rigidity result requires the Riemannian structure to be Euclidean, hyperspherical, or hyperbolic, and the space must be simply connected.
In such cases the drift must also be of a very simple form, corresponding to a rotation with possibly (but only in the Euclidean case) a dilation component.

Thus Markovian maximal couplings of elliptic diffusions are rare, and their existence enforces severe geometric constraints.

It is natural to ask whether the assumptions of diffusion-geodesic completeness and stochastic completeness are required. 
It seems likely that they are not required, but (this paper already being long) we save this question for another occasion.

The scarcity of Markovian maximal couplings places a natural premium on questions of efficiency of Markovian coupling, as discussed for example in \citet{BurdzyKendall-2000}, for the case of reflecting Brownian motion in compact regions. 
One could ask, for example, when it is possible to construct Markovian couplings \((X, Y)\) which are optimal in the sense that 
the tail probability of the coupling time \(\Prob{\tau>t}\) is minimized for all \(t\) amongst Markovian couplings if not amongst all possible couplings.
(Note that this notion of optimality differs from the optimality discussed in \cite{Chen-2004}, which is defined relative to a specified Wasserstein metric.) 
Little is known as yet about such couplings, though \cite{Kendall-2013a} exhibits a coupling of two copies of scalar Brownian motion and local time which is Markovian, non-maximal,
but optimal amongst all Markovian couplings.
The question of whether similar geometric rigidity results for existence of such optimal Markovian couplings remains entirely open, and its answer would be of great interest.

We expect that in fact such optimal Markovian couplings are also rare. Further refinements are possible (for example, one could consider the existence of Markovian couplings 
which minimize the Laplace transform \(\Expect{\exp\left(-u\tau\right)}\) for some or all values of \(u>0\)); 
however the probable rarity of such couplings would focus attention on developing the notions of efficiency from \citet{BurdzyKendall-2000} to apply to non-compact regions.
In particular there is a natural question concerning criteria for existence of efficient Markovian couplings, where ``efficient'' here means, the rate of decay 
of \(\Prob{\tau>t}\) with \(t\) for the Markovian coupling is comparable to that of the total variation distance \(\|\mu_{1,t}-\mu_{2,t}\|_{TV}\) between the one-point distributions 
$\mu_{1,t}$ and $\mu_{2,t}$ (the distributions of $X_t$ and $Y_t$ respectively).

Two other natural extensions of these results are:
\begin{enumerate}
 \item extension of the notion of Markovian maximal coupling to the hypoelliptic case (in which case in fact the very existence of Markovian couplings is moot: but see the positive results of \citealp{KendallPrice-2004,Kendall-2007});
 \item examination of the extent to which the ideas of this paper carry over to Markov processes which are not skip-free (and here a natural first step would be to consider the case of couplings of L\'evy processes, 
 though a potentially significant result in the random walk case is to be found in \citealp{Rogers-1999}).
\end{enumerate}
We hope to consider many of these questions in future work.\\\\
\textbf{Acknowledgement: }We wish to thank an anonymous referee whose very careful reading of the manuscript and detailed comments greatly improved the article.
\bibliographystyle{chicago}
\bibliography{rigidity}
\bigskip
\bigskip
\noindent
{\sc Sayan Banerjee}, {\em University of Warwick}, \url{Sayan.Banerjee@warwick.ac.uk} \\
{\sc Wilfrid S. Kendall}, {\em University of Warwick}, \url{W.S.Kendall@warwick.ac.uk}
\end{document}